\newcounter{commentcounter}[section]
\DeclareRobustCommand{\crefnosort}[1]{%
  \begingroup\@cref@sortfalse\cref{#1}\endgroup
}
\newcommand{\subjclass}[2][2010]{%
  \let\@oldtitle\@title%
  \gdef\@title{\@oldtitle\footnotetext{\hspace{0.3cm}\emph{#1 Mathematics Subject Classification:} #2}}%
}
\newcommand{\grant}[1]{%
  \let\@@oldtitle\@title%
  \gdef\@title{\@@oldtitle\footnotetext{\hspace{0.3cm}\parbox{0.85\linewidth}{#1}}}%
}
\newcommand{\keywords}[1]{%
  \let\@@@oldtitle\@title%
  \gdef\@title{\@@@oldtitle\footnotetext{\hspace{0.3cm}\emph{Key words and phrases:} #1}}%
}
\newcommand{\footnotedate}[1]{%
  \let\@@@@oldtitle\@title%
  \gdef\@title{\@@@@oldtitle\footnotetext{\hspace{0.3cm}\emph{Date:} #1}}%
}
\tikzset{
  >=cm to,
  descr/.style={fill=white,inner sep=1.5pt},
  grid/.style={matrix of math nodes, row sep=2em, column sep=2.5em, text height=1.5ex, text depth=0.25ex},
  arrow/.style={overlay,->, font=\scriptsize},
}
\chardef\bslash=`\\ 
\mathchardef\mhyphen="2D
\newtheorem{thm}{Theorem}[section]\crefname{thm}{Theorem}{Theorems}
\newtheorem*{thm*}{Theorem}
\crefname{conj}{Conjecture}{Conjectures}
\newtheorem*{conj*}{Conjecture}
\newtheorem{cor}[thm]{Corollary}\crefname{cor}{Corollary}{Corollaries}
\newtheorem*{cor*}{Corollary}
\newtheorem{lem}[thm]{Lemma}\crefname{lem}{Lemma}{Lemmas}
\newtheorem*{lem*}{Lemma}
\newtheorem{prop}[thm]{Proposition}\crefname{prop}{Proposition}{Propositions}
\newtheorem*{prop*}{Proposition}
\crefname{idea}{Idea}{Ideas}
\newtheorem*{idea*}{Idea}
\crefname{defn}{Definition}{Definitions}
\newtheorem*{defn*}{Definition}
\newtheorem*{question*}{Question}
\newtheorem*{mainthm}{Main Theorem}
\newtheorem*{mainthmcont}{Main Theorem (cont.)}
\newtheorem{workhorse}[thm]{``Work Horse Theorem''}\crefname{workhorse}{the ``Work Horse Theorem''}{}\Crefname{workhorse}{The ``Work Horse Theorem''}{}
\theoremstyle{remark}
\newtheorem{rem}[thm]{Remark}\crefname{rem}{Remark}{Remarks}
\newtheorem*{rem*}{Remark}
\newtheorem{example}[thm]{Example}\crefname{eg}{Example}{Examples}
\newtheorem*{example*}{Example}
\newtheorem{examples}[thm]{Examples}\crefname{egs}{Examples}{Examples}
\newtheorem*{examples*}{Examples}
\newcommand*{\closedhookrightarrow}{\mathrel{\ooalign{$\hookrightarrow$\cr\hidewidth\raise0.1ex\hbox{$\rule{0.5pt}{1ex}\;\;$}\cr}}}
\newcommand*{\closedhookleftarrow}{\mathrel{\ooalign{$\hookleftarrow$\cr\hidewidth\raise0.1ex\hbox{$\rule{0.5pt}{1ex}\,\;$}\cr}}}
\newcommand\fg{f.\,g.\@ifnextchar){}{\ }} 
\newcommand\fd{f.\,d.\@ifnextchar){}{\ }} 
\newcommand{\ie}{i.\,e.\ }
\newcommand{\eg}{e.\,g.\ }
\newcommand{\cf}{c.\,f.\ }
\newcommand*{\F}{\mathbb{F}}
\newcommand*{\C}{\mathbb{C}}
\newcommand*{\R}{\mathbb{R}}
\newcommand*{\Z}{\mathbb{Z}}
\newcommand*{\N}{\mathbb{N}}
\newcommand{\cx}{\mathsf{c}}
\newcommand{\rx}{\mathsf{r}}
\newcommand{\tx}{\mathsf{t}}
\newcommand{\qx}{\mathsf{q}}
\renewcommand*{\SS}{\mathbb{S}} 
\newcommand*{\CC}{\mathbb{C}}
\newcommand*{\NN}{\mathbb{N}}
\newcommand*{\QQ}{\mathbb{Q}}
\newcommand*{\RR}{\mathbb{R}}
\newcommand*{\ZZ}{\mathbb{Z}}
\newcommand*{\HH}{\mathbb{H}}
\newcommand{\CP}{\mathbb{CP}}
\newcommand{\RP}{\mathbb{RP}}
\newcommand{\HP}{\mathbb{HP}}
\newcommand*{\paircolumn}[2]{\binom{#1}{#2}}
\newcommand{\reduced}[1]{\smash{\resizebox{!}{2ex}{$\widetilde{#1}$}}}
\newcommand{\id}{\mathrm{id}}
\newcommand{\Spin}{Spin}
\DeclareMathOperator{\Rep}{Rep}
\DeclareMathOperator{\Vect}{Vect}
\DeclareMathOperator{\diag}{diag}
\DeclareMathOperator{\Ext}{Ext}
\DeclareMathOperator{\Hom}{Hom}
\DeclareMathOperator{\im}{im}
\DeclareMathOperator{\rank}{rank}
\DeclareMathOperator{\projdim}{\mathrm{pd}}
\title{
  \vspace{-1.5cm}
  The stable converse soul question for positively curved homogeneous spaces}
\author{David González-Álvaro \& Marcus Zibrowius}
\date{}
\subjclass{
53C21  
(19L64,  
57R22)  
}
\begin{document}
\maketitle
\thispagestyle{empty}

\begin{abstract}
\noindent
The stable converse soul question (SCSQ) asks whether, given a real vector bundle \(E\) over a compact manifold, some stabilization \(E\times\R^k\) admits a metric with non-negative (sectional) curvature. We extend previous results to show that the SCSQ has an affirmative answer for all real vector bundles over any simply connected homogeneous manifold with positive curvature, except possibly for the Berger space \(B^{13}\).
Along the way, we show that the same is true for all simply connected homogeneous spaces of dimension at most seven, for arbitrary products of simply connected compact rank one symmetric spaces of dimensions multiples of four, and for certain products of spheres.  Moreover, we observe that the SCSQ is ``stable under tangential homotopy equivalence'':  if it has an affirmative answer for all vector bundles over a certain manifold \(M\), then the same is true for any manifold tangentially homotopy equivalent to~\(M\).  Our main tool is topological K-theory.  Over \(B^{13}\), there is essentially one stable class of real vector bundles for which our method fails.

\renewcommand{\contentsname}{\vspace{-0.5cm}}
\tableofcontents
\end{abstract}
\bigskip

\section{Introduction and statement of results}
The \textit{Soul Theorem} by Cheeger and Gromoll \cite{CG:on} determines the structure of complete open manifolds with non-negative sectional curvature:
given such a manifold \(M\), there exists a totally geodesic, totally convex compact submanifold \(S\) (called the soul) such that \(M\) is diffeomorphic to the normal bundle of \(S\).
The \textit{Converse Soul Question} asks, conversely, which vector bundles over a compact non-negatively curved manifold \(S\) admit a non-negatively curved metric.
In general, this question is still widely open.
The only such vector bundles that are known \emph{not} to admit a non-negatively curved metric occur over manifolds \(S\) with infinite fundamental group \cites{BK:obstructions,OW:vector}.
On the other hand, the only simply connected manifolds over which \emph{all} vector bundles are known to admit a non-negatively curved metric are the spheres \(\SS^n\) with \(n\leq 5\) \cite{GroveZiller:Milnor}.
Other than that, there are only partial results, and a weaker question has been studied, which we
formulate as follows.
From now on, \(M\) will denote a (smooth) compact manifold.

\begin{question*}[Stable Converse Soul Question, SCSQ]
Let \(E\) be the total space of a real vector bundle over a compact manifold \(M\).
Is there an integer \(k\geq 0\) such that the manifold \(E\times\R^k\) admits a metric of non-negative sectional curvature?
\end{question*}

For simply connected \(M\) with non-negative sectional curvature, this is precisely Problem~5.4 of \cite{AIMpl}. In our formulation, we do not explicitly require any curvature related conditions on~\(M\).
Note however that, as a consequence of the Soul Theorem, one can expect positive results for the SCSQ only when \(M\) is at least homotopy equivalent to a compact manifold with non-negative curvature.
The existence of manifolds over which the SCSQ has a negative answer for \emph{all} vector bundles is therefore immediate: simply consider compact manifolds whose Betti numbers violate Gromov's bound for the existence of metrics with non-negative curvature  \cite{Gromov:curvature}, for example connected sums \(\#^m\CP^n\) for sufficiently large \(m\) and arbitrary \(n\).

On the positive side, the SCSQ is already known to have an affirmative answer for all real vector bundles over any sphere \(\SS^n\) \cite{R:geodesics}, and for all real vector bundles over \(\CP^2\), \(\SS^2\times\SS^2\) and \(\CP^2\#(-\CP^2)\) \cite{GroveZiller:Lifting}. In \cite{Gonzalez:nonnegative},  the first-named author extended these results to include all compact rank one symmetric spaces (CROSSes): the spheres \(\SS^n\), the projective spaces \(\RP^n\), \(\CP^n\) and \(\HP^n\), and the Cayley plane.

In this article, we study the SCSQ over the larger class of compact simply connected homogeneous manifolds with an invariant metric of positive sectional curvature.
These manifolds are completely classified (see the recent classification \cite{WZ:Revisitng}).
In addition to the CROSSes they include the Wallach flag manifolds \(W^6\), \(W^{12}\) and \(W^{24}\), the Berger spaces \(B^7\) and \(B^{13}\), and the infinite family of Aloff-Wallach spaces \(W^{7}_{p,q}\).
Our main result is the following.

\begin{mainthm}
The SCSQ has a positive answer for all real vector bundles over any compact simply connected homogeneous manifold \(M\) with an invariant metric of positive sectional curvature, except possibly for \(M \cong B^{13}\).
\end{mainthm}

In fact, we can show a more general statement.
Recall that two manifolds \(M,N\) of the same dimension are defined to be tangentially homotopy equivalent if there exists a homotopy equivalence \(f\colon M\to N\) such that the tangent bundle \(TM\) and \(f^*TN\) are stably isomorphic, \ie such that \(TM\times\R^k\) and \(f^*TN\times\R^k\) are isomorphic as bundles over \(M\) for some integer \(k\geq 0\).

\begin{mainthmcont}
More generally, the SCSQ has a positive answer for any real vector bundle over any compact manifold tangentially homotopy equivalent to one of the above manifolds \(M\not\cong B^{13}\).
\end{mainthmcont}

Results of Wilking suggest that, in this form, the Main Theorem covers a fairly general class of manifolds with positive sectional curvature:  any \(n\)-dimensional simply connected manifold \(M^n\) with positive sectional curvature is either isometric to a homogeneous space with positive sectional curvature or tangentially homotopy equivalent to a CROSS, provided its isometry group \(I(M)\) satisfies one of the following \cite{Wilking:positively}:
\begin{compactitem}
\item the dimension of \(I(M)\) is at least \(2n-6\), or
\item the cohomogeneity of the \(I(M)\)-action is \(\ell\geq 1\) and \(n\geq 18(\ell+1)^2\).
\end{compactitem}
On the other hand, the general form of the Main Theorem also applies to manifolds that do not admit a metric of non-negative sectional curvature:

\begin{example} \label{EXAM:EXOTIC-SPHERE}
All homotopy spheres are tangentially homotopy equivalent since their tangent bundles are stably trivial \cite{KM:Groups}*{Theorem 3.1}.
But there are homotopy spheres  \(\Sigma^m\) of dimensions \(m = 8n+1\) and \(8n+2\) that do not admit a metric of non-negative sectional curvature.  (See \cite{DT:note} for these and other pairs of homotopy equivalent manifolds of which only one admits a metric of non-negative curvature.)
\end{example}

We can use this example to illustrate the need for stabilization:
\begin{example}\label{EXAM:EXOTIC}
Let \(\Sigma^m\) be one of the homotopy spheres of the previous example that does not admit a metric of non-negative sectional curvature.
Then the total space \(\Sigma^m\times\R\) of the trivial bundle likewise admits no metric of non-negative sectional curvature (see \Cref{L:soulsCodim1}).
Our Main Theorem implies that, nonetheless, some stabilization \(\Sigma^m\times\R^k\) does admit such a metric.
In this particular case, this can also be seen directly as follows:  by the ``Work Horse Theorem'' of \cite{TW:moduli} (see \cref{sec:WorkHorse} below),
the product \(\Sigma^m\times\RR^{m+1}\) is diffeomorphic to \(\SS^m\times\RR^{m+1}\), on which a metric of non-negative curvature is provided by the product metric.
It would be interesting to
find the minimum \(2\leq k \leq m+1\) such that \(\Sigma^m\times\RR^{k}\) admits such a metric.
\end{example}

In the case of \(B^{13}\) our results are inconclusive.
We can, however, make precise to what extent the general strategy used in this article fails for \(B^{13}\): see \cref{THM:B13} below.

\subsubsection{Outline of the proof and related results}
We proceed to sketch our proof of the Main Theorem and to describe some positive results for other families of homogeneous spaces that we have obtained along the way.
First, we note that the second part of the theorem follows from the first:

\begin{prop}\label{SCST for tangentially homotopy equivalent manifolds}
If the SCSQ has a positive answer for all real (resp.\ all complex) vector bundles over \(M\), then it also has a positive answer for all real (resp.\ all complex) vector bundles over any tangentially homotopy equivalent manifold~\(N\).
\end{prop}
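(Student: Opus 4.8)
The plan is to pull the bundle back along the tangential homotopy equivalence, invoke the hypothesis over $M$, and then transport the resulting non-negatively curved metric along a \emph{diffeomorphism} of total spaces produced after sufficiently much stabilisation.

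In detail, fix a homotopy equivalence $f\colon M\to N$ with $TM$ stably isomorphic to $f^*TN$, and let $E\to N$ be a real (resp.\ complex) vector bundle, writing $E$ also for its total space as in the statement of the SCSQ. First I would pass to the pullback $f^*E\to M$, again a real (resp.\ complex) bundle of the same rank, and record two observations. One: for every $k\ge 0$ the manifold $E\times\R^k$ is the total space of the bundle $E\times\R^k$ over $N$, and $f^*E\times\R^k$ is the total space of $f^*(E\times\R^k)=f^*E\times\R^k$ over $M$. Two: the natural map $f^*E\to E$ covering $f$ is a homotopy equivalence of total spaces which is moreover \emph{tangential}: the tangent bundle of the total space $E$ is the pullback along the projection of $TN\oplus E$, so its further pullback to $f^*E$ is the pullback of $f^*TN\oplus f^*E$, which is stably isomorphic to $TM\oplus f^*E$, \ie to the tangent bundle of the total space $f^*E$. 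The same persists after replacing $E$ throughout by $E\times\R^k$.

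The key step is a stable-diffeomorphism statement: for all sufficiently large $k$ the smooth manifolds $E\times\R^k$ and $f^*E\times\R^k$ are diffeomorphic. This is where tangentiality is used. One route: these open manifolds are diffeomorphic to the interiors of the disk bundles of $E\times\R^k\to N$ and of $f^*E\times\R^k\to M$, which are compact thickenings of $N$ and of $M$ --- regular neighbourhoods of the zero sections --- of the common dimension $\dim N+\rank E+k$. Once $k$ is large enough for this to fall in the stable range, such thickenings are classified up to diffeomorphism by the tangential homotopy type of their core; since $f$ matches the stable tangent data (the class $[TN]+[E]$ pulls back to $[TM]+[f^*E]$, using $f^*TN\cong TM$ stably), this produces a diffeomorphism of the two disk bundles, hence of their interiors. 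Alternatively, such a diffeomorphism can be extracted from the ``Work Horse Theorem'' of \cite{TW:moduli} (see \cref{sec:WorkHorse}). In either case, once the diffeomorphism holds for one value of $k$ it holds for all larger ones, by taking products with $\R$.

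Granting the key step, the proposition follows: by the hypothesis applied to $f^*E\to M$ there is a $k_0\ge 0$ with $f^*E\times\R^{k_0}$ admitting a metric of non-negative sectional curvature; choosing $k\ge k_0$ also large enough for the key step, the manifold $f^*E\times\R^k\cong(f^*E\times\R^{k_0})\times\R^{k-k_0}$ carries the product of that metric with a flat one, and transporting it along the diffeomorphism $f^*E\times\R^k\cong E\times\R^k$ endows $E\times\R^k$ with such a metric too. I expect the only genuine work to lie in the stable-diffeomorphism step: pinning down the correct ``stable range'' bound with exactly the available hypotheses (equal dimensions of $M$ and $N$, a tangential rather than merely homotopy equivalence, and a high-rank stabilisation), together with the minor issue --- relevant only when $M$ is not simply connected --- of whether Whitehead torsion obstructs the identification of thickenings, which is moot for the simply connected manifolds to which the Main Theorem is applied. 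The remaining ingredients --- identifying $E\times\R^k$ with a total space, crossing with a flat Euclidean factor, and pulling a metric back along a diffeomorphism --- are routine.
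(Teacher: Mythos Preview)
Your proof is correct and follows essentially the same approach as the paper: pull back along the tangential homotopy equivalence, verify that the induced map of total spaces is again a tangential homotopy equivalence, stabilise, and then invoke \cref{Work horse} to obtain the diffeomorphism along which the metric is transported. The paper uses the Work Horse Theorem directly (your second route) rather than a thickening-classification argument, and as stated that theorem carries no simple-connectivity hypothesis, so your concern about Whitehead torsion does not arise.
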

Here, the SCSQ for a complex vector bundle is simply the SCSQ for the underlying real vector bundle.
The proof of \cref{SCST for tangentially homotopy equivalent manifolds} given in \cref{sec:WorkHorse} relies on \namecref{Work horse} already mentioned: the main point is that  the total space of a vector bundle over \(N\) of sufficiently high rank is diffeomorphic to the total space of some vector bundle over~\(M\).

Given \cref{SCST for tangentially homotopy equivalent manifolds} and the results of \cite{Gonzalez:nonnegative} on CROSSes, it remains to prove the Main Theorem for
\(W^6\), \(W^{12}\), \(W^{24}\), \(B^7\) and \(W^{7}_{p,q}\).
Each of these is a compact homogeneous space, \ie an orbit space \(G/H\) of a subgroup \(H\) acting on a compact Lie group \(G\).
Our basic strategy is the same as in \cite{Gonzalez:nonnegative}:  we prove that every vector bundle over one of these spaces is stably isomorphic to a homogeneous vector bundle \(G\times_H\R^{m}\) for some representation of \(H\).
Such a homogeneous vector bundle always admits a metric of non-negative sectional curvature by O'Neill's theorem on Riemannian submersions.
In fact, for each homogeneous metric \(\langle,\rangle\) of non-negative curvature on \(G/H\), there is a metric on \(G\times_H\R^{m}\) with non-negative curvature and soul isometric to \((G/H,\langle,\rangle)\).

\begin{rem}
Given a stably homogeneous vector bundle \(E\) over \(G/H\),  one can estimate the minimal \(k\) needed to obtain non-negative curvature on \(E\times\RR^k\) as follows: Let \(\rho\) be a representation of \(H\) of \(\rank \rho \geq \dim G/H +1\) and minimal among all representations such that the associated homogeneous bundle is stably isomorphic to \(E\).
Then the minimum \(k\) needed satisfies \(k \leq \rank\rho - \rank E\).
See \cite{Gonzalez:nonnegative} for concrete bounds in the case \(G/H=\SS^n\).
\end{rem}

The K-rings of real or complex vector bundles over a manifold \(M\), denoted \(KO(M)\) and \(K(M)\), respectively, provide the natural framework to study stable classes of vector bundles.
Let \(RO(H)\) and \(R(H)\) likewise denote the real and complex representation rings of a Lie group \(H\).
Sending a real or complex representation of \(H\) to the corresponding real or complex homogeneous vector bundle over \(M\) defines ring homomorphisms:
\begin{align*}
\alpha_O\colon & RO(H) \to KO(G/H)\\
\alpha\colon & R(H) \to K(G/H)
\end{align*}
As observed in \cite{Gonzalez:nonnegative}, every real vector bundle over \(G/H\) can be stabilized to a homogeneous real vector bundle if and only if \(\alpha_O\) is surjective.
The same equivalence holds for complex vector bundles and \(\alpha\).
What is more, the complex version \(\alpha\colon R(H)\to K(G/H)\) is indeed surjective in many  situations.
In particular, it is well-known that \(\alpha\) is surjective whenever \(G\) is compact simply connected and \(H\subset G\) is a closed connected subgroup of maximal rank, \ie when \(\rank H = \rank G\).
(The rank of a Lie group is the dimension of a maximal torus.)  As we explain in \cref{Improvement of Pittie's} below, this is true more generally, though less well-known, for subgroups \(H\) with \(\rank G - \rank H \leq 1\).
All compact homogeneous positively curved spaces \(G/H\) satisfy this inequality, with equality if and only if the dimension of the space is odd.
As an immediate corollary, we obtain the following result for complex vector bundles over such spaces.

\begin{thm}\label{THM: complex SCST for homogeneous}
Let \(G\) be a compact connected Lie group with \(\pi_1(G)\) torsion-free, and let \(H\subset G\) be a closed connected subgroup such that \(\rank G - \rank H \leq 1\).
Then the SCSQ has a positive answer for all complex vector bundles over any compact
manifold tangentially homotopy equivalent to \(G/H\).
\end{thm}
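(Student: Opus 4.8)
The plan is to reduce the statement to the surjectivity of \(\alpha\colon R(H)\to K(G/H)\) and then to establish that surjectivity by homological algebra over the representation ring \(R(G)\). By \cref{SCST for tangentially homotopy equivalent manifolds} it suffices to verify the SCSQ for all complex vector bundles over \(G/H\) itself. If \(\alpha\) is surjective then, as recalled above following \cite{Gonzalez:nonnegative}, every complex vector bundle over \(G/H\) stabilises to a homogeneous complex bundle \(G\times_H V\); its underlying real bundle is a homogeneous real bundle, which by O'Neill's theorem carries a metric of non-negative curvature with soul \(G/H\) (endowed with a homogeneous metric of non-negative curvature), so the total space of the relevant stabilisation \(E\times\R^k\) is non-negatively curved. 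Hence the theorem follows once we know that \(\alpha\) is surjective whenever \(\pi_1(G)\) is torsion-free and \(\rank G-\rank H\le 1\); this is precisely \cref{Improvement of Pittie's}, and the rest of the plan is to prove it.

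For the surjectivity of \(\alpha\) the key input is the description of \(K^*(G/H)\) in terms of \(\mathrm{Tor}\) over \(R(G)\) available when \(\pi_1(G)\) is torsion-free: by Steinberg, \(R(G)\) is then a polynomial ring over \(\Z\) and \(K^*(G)\) is the corresponding exterior algebra, and there is a Hodgkin-type spectral sequence with \(E_2\)-term \(\mathrm{Tor}^{R(G)}_i(\Z,R(H))\) (placed in total degree \(-i\)) converging to \(K^*(G/H)\); here \(\Z=R(G)/I_G\) via the augmentation and \(R(H)\) is an \(R(G)\)-module by restriction. Since \(R(G)\) acts on \(K(G/H)\) through the augmentation, \(\alpha\) annihilates \(I_G\,R(H)\) and factors through \(R(H)/I_G R(H)=\mathrm{Tor}_0\); this factorisation is exactly the edge homomorphism \(\mathrm{Tor}_0\to K^0(G/H)\). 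It therefore suffices that this edge map be surjective, which holds once the spectral sequence degenerates with \(\mathrm{Tor}_0\) surviving --- and this is guaranteed as soon as
\[
\projdim_{R(G)} R(H)\ \le\ 1 ,
\]
for then \(\mathrm{Tor}_i=0\) for \(i\ge 2\), whence \(K^0(G/H)\cong\mathrm{Tor}_0\) and \(K^1(G/H)\cong\mathrm{Tor}_1\).

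It remains to prove this projective-dimension bound, and I expect this to be the main obstacle. For \(\rank H=\rank G\) it is immediate from the Pittie--Steinberg theorem, by which \(R(H)\) is in fact a free \(R(G)\)-module (of rank \(|W_G|/|W_H|\)), so the projective dimension is \(0\). The remaining corank-one case is the substance of \cref{Improvement of Pittie's}. One approach is to interpose a connected maximal-rank subgroup \(H\subseteq\widetilde H\subseteq G\): since \(R(\widetilde H)\) is then free, hence flat, over \(R(G)\), every free \(R(\widetilde H)\)-resolution of \(R(H)\) is also a free \(R(G)\)-resolution, so \(\projdim_{R(G)}R(H)\le\projdim_{R(\widetilde H)}R(H)\), and when \(\widetilde H/H\cong S^1\) the right-hand side is at most \(1\). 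Such an \(\widetilde H\) need not exist, however (for instance there is none with \(SO(3)\subsetneq\widetilde H\subsetneq SU(3)\)), so in general one must argue directly that \(\projdim_{R(G)}R(H)\le 1\) --- for example by exhibiting \(R(H)\) as a quotient of a free \(R(G)\)-module by a single relation, as in \(R(SO(3))=R(SU(3))/(\lambda_1-\lambda_2)\), or by combining the regularity of \(R(G)\) with depth (Cohen--Macaulay) estimates for \(R(H)\). Carrying this out uniformly over all \(G\) and all corank-one connected \(H\), in particular without assuming \(\pi_1(H)\) torsion-free, is the technical core; once it is in place, the theorem follows from the reductions above.
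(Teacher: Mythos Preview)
Your reduction is correct and matches the paper: apply \cref{SCST for tangentially homotopy equivalent manifolds}, then the complex version of \cref{PROP: surjectivity of alphaO implies SISC}, so everything rests on the surjectivity of \(\alpha\), i.e.\ on \cref{Improvement of Pittie's}. You also correctly identify Hodgkin's spectral sequence and the target inequality \(\projdim_{R(G)}R(H)\le 1\).

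The gap is in your proof of that inequality in the corank-one case. Your idea of interposing a maximal-rank \(\widetilde H\) with \(H\subset\widetilde H\subset G\) fails exactly where you say it does, and your fallback (``exhibit \(R(H)\) as a quotient of a free \(R(G)\)-module by a single relation'', or invoke unspecified depth estimates) is not an argument. The paper's proof of \cref{cor:proj-dim} avoids this by going \emph{down} to tori rather than up to maximal-rank groups. First, for an inclusion of tori \(T'\subset T\) one has \(R(T)\cong R(T')[x_1^{\pm 1},\dots,x_m^{\pm 1}]\), and iterated use of the First Change of Rings Theorem on the elements \(x_i-1\) gives \(\projdim_{R(T)}R(T')=m=\rank T-\rank T'\). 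Second, for a torus \(T'\) inside arbitrary \(G\), choose a maximal torus \(T\supset T'\); the General Change of Rings Theorem yields \(\projdim_{R(G)}R(T')\le\projdim_{R(T)}R(T')+\projdim_{R(G)}R(T)\), and the last term vanishes by Pittie--Steinberg. Third, for arbitrary connected \(H\), take a maximal torus \(T'\subset H\): by \cite{Atiyah:Bott} there is an \(R(H)\)-linear (hence \(R(G)\)-linear) splitting \(i_*\colon R(T')\to R(H)\) of restriction, so \(R(H)\) is an \(R(G)\)-module retract of \(R(T')\) and \(\projdim_{R(G)}R(H)\le\projdim_{R(G)}R(T')\le\rank G-\rank H\). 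This handles all corank-one \(H\) uniformly, with no assumption on \(\pi_1(H)\), and is the missing ingredient in your argument.
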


The homomorphism \(\alpha_O\), on the other hand, is surjective only in very special situations.
For example, the results of the second-named author in \cite{Z:KO-rings} imply that \(\alpha_O\) is almost never surjective when \(G/H\) is a full flag manifold, \ie when \(H\subset G\) is a maximal torus~\(T\).
The few positive cases are:

\begin{thm}\label{THM: SCST for flag manifolds}
The SCSQ has a positive answer for all real vector bundles over the full flag manifolds \(SU(m)/T\) with \(m\in\{2,3,4,5,7\}\), over \(\Spin(7)/T\), over \(G_2/T\) and over the products of these specified in \cref{surjectivity-of-alphaO-for-full-flags} below.
\end{thm}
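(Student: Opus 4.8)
The plan is to reduce \cref{THM: SCST for flag manifolds} to surjectivity statements for the real homomorphism $\alpha_O$ and then to extract those from the $KO$-theory of the spaces involved. As recalled above, and as established in \cite{Gonzalez:nonnegative}, the map $\alpha_O\colon RO(H)\to KO(G/H)$ is surjective if and only if every real vector bundle $E$ over $G/H$ admits a stabilization $E\oplus\RR^k$ that is itself a homogeneous bundle. Since every homogeneous bundle carries a metric of non-negative sectional curvature with soul $G/H$ (O'Neill's submersion theorem), and since the total space of $E\oplus\RR^k$ is precisely $E\times\RR^k$, surjectivity of $\alpha_O$ over $G/H$ gives a positive answer to the SCSQ for all real vector bundles over $G/H$. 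Thus \cref{THM: SCST for flag manifolds} follows once we establish: (i) $\alpha_O\colon RO(T)\to KO(G/T)$ is surjective for $G=SU(m)$ with $m\in\{2,3,4,5,7\}$ and for $G=\Spin(7)$ and $G=G_2$; and (ii) the analogous map for the maximal torus of a product is surjective for precisely the products listed in \cref{surjectivity-of-alphaO-for-full-flags}.

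For a single flag manifold $G/T$ with $G$ compact simply connected I would combine three inputs. First, the Bruhat decomposition gives $G/T$ a CW structure with cells only in even dimensions, so $K(G/T)$ is a finitely generated free abelian group and, by Pittie and Steinberg, the complex homomorphism $\alpha\colon R(T)\to K(G/T)$ is surjective; indeed $K(G/T)\cong R(T)\otimes_{R(G)}\ZZ$, generated as a ring by the homogeneous line bundles. Second, the comparison between $KO$ and $K$ via the cofibre sequence $\Sigma KO\xrightarrow{\eta}KO\xrightarrow{c}KU$, the associated long exact sequence, the realification map $r$, and the relations $rc=2$, $cr=1+\psi^{-1}$: because $G/T$ is ``even'' these sequences degenerate enough to pin down $KO(G/T)$ in terms of the conjugation action $\psi^{-1}$ on $K(G/T)$ together with a finite $2$-torsion part detected by $\eta$-multiplication. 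This is exactly the computation carried out in \cite{Z:KO-rings}. Third, $c\circ\alpha_O=\alpha\circ c$, together with the fact that for a torus complexification identifies $RO(T)$ with the conjugation-invariant subgroup of $R(T)$, reduces surjectivity of $\alpha_O$ to two checks: that the relevant conjugation-invariant classes of $K(G/T)$ genuinely lift through $c$ (the boundary map of the long exact sequence must vanish on them --- a $2$-divisibility question), and that the $\eta$-torsion summand lies in the image of $RO(T)$ --- a finite computation with the characters of $T$. For $m\in\{2,3,4,5,7\}$, and for $\Spin(7)$ and $G_2$, both checks succeed; for the remaining $SU(m)$, in particular $m=6$ and all $m\geq 8$, the obstruction computed in \cite{Z:KO-rings} is non-trivial, which is why those flag manifolds do not appear.

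For products the plan is a Künneth argument, and this step I expect to be the main obstacle. On the complex side each $K(G_i/T_i)$ is finitely generated and free, so the Künneth theorem identifies $K$ of the product with the tensor product over $\ZZ$ with no correction terms, and $\alpha$ for the product torus is surjective because each factor is. On the real side one must work over $KO^*(\mathrm{pt})$, which carries the order-two classes $\eta$ and $\eta^2$: the external product map $\bigotimes_i KO^*(G_i/T_i)\to KO^*\bigl(\prod_i G_i/T_i\bigr)$ is in general no longer surjective, its defect being governed by the higher $\mathrm{Tor}$-groups over $KO^*(\mathrm{pt})$ in the $KO$-Künneth spectral sequence. A second, compensating subtlety is that $RO(T_1\times\dots\times T_r)$ is strictly larger than $\bigotimes_i RO(T_i)$ --- it contains the realifications $r(\chi_1\cdots\chi_r)$ of mixed characters --- so $\alpha_O$ can still reach some classes that the external product of $KO$-classes misses. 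I would therefore first record, using \cite{Z:KO-rings}, for which factors $KO^*(G_i/T_i)$ is free (hence flat) over $KO^*(\mathrm{pt})$, so that the spectral sequence collapses and $\alpha_O$ is automatically surjective for any product involving such a factor, and then analyse for the remaining pairings the conjugation-invariant classes and the $\eta$-torsion in $KO$ of the product directly, deciding case by case whether the mixed representations suffice. The products for which they do are exactly those collected in \cref{surjectivity-of-alphaO-for-full-flags}.

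In summary, the heavy lifting is $KO$-theoretic bookkeeping. For the single spaces it consists of identifying $\mathrm{im}(\alpha_O)$ inside the ring $KO(G/T)$ computed in \cite{Z:KO-rings} via the $\eta$-Bockstein and checking that it exhausts $KO(G/T)$ in each of the finitely many cases; for the products it consists of controlling the higher $\mathrm{Tor}$-terms over $KO^*(\mathrm{pt})$ and weighing them against the extra mixed real representations of the product torus. The geometric content --- O'Neill submersions producing non-negatively curved homogeneous bundles, and the passage from stable isomorphism to diffeomorphism of total spaces --- enters only through the reduction in the first step.
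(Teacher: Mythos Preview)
Your reduction to the surjectivity of \(\alpha_O\) is correct and matches the paper exactly. Where you diverge is in how you propose to verify that surjectivity, and in particular how you treat products.

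The paper's argument is much shorter than your plan suggests. It invokes a single structural result from \cite{Z:KO-rings}: for simply connected \(G\), the \(\ZZ_4\)-graded ``Witt ring'' \(\bigoplus_i KO^{2i}(G/T)/\rx\) is an exterior \(\ZZ_2\)-algebra on \(b_\HH\) generators in degree~\(1\) and \((b_\CC/2)+b_\RR\) generators in degree~\(3\), where \(b_\RR,b_\CC,b_\HH\) count the fundamental representations of \(G\) by type. By \cite{Z:KO-rings}*{Example~2.3}, \(\alpha_O\) is surjective precisely when the degree-zero piece of this exterior algebra is just \(\ZZ_2\); that is a purely numerical condition on the triple \((b_\RR,b_\CC,b_\HH)\), and reading off Table~1 of \cite{Z:KO-rings} gives exactly the list in the theorem. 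There is no separate \(\eta\)-Bockstein bookkeeping and no case-by-case lifting through \(\cx\).

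The key point you miss for products is that a product of full flag manifolds is again a full flag manifold: \(G_1/T_1\times G_2/T_2=(G_1\times G_2)/(T_1\times T_2)\) with \(G_1\times G_2\) still compact and simply connected. Its fundamental representations are the union of those of the factors, so the counts \(b_\RR,b_\CC,b_\HH\) are additive, and the \emph{same} Witt-ring criterion decides the product cases without any K\"unneth spectral sequence over \(KO^*(\mathrm{pt})\). Your proposed route through \(\mathrm{Tor}^{KO^*(\mathrm{pt})}\) and ``mixed characters'' is therefore unnecessary; moreover, the step where you assert that freeness of one factor over \(KO^*(\mathrm{pt})\) makes \(\alpha_O\) ``automatically surjective'' for the product is not justified as stated, since collapse of the K\"unneth spectral sequence controls \(KO^*\) of the product but says nothing directly about the image of \(RO(T_1\times T_2)\).
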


It therefore came as a surprise that, nonetheless, \(\alpha_O\) \emph{is} surjective for all compact simply connected manifolds admitting a homogeneous metric of positive sectional curvature, except for \(B^{13}\).
For any given homogeneous space \(M\), the surjectivity of \(\alpha_O\)  may of course depend on the precise presentation of \(M\) as \(G/H\), but for all spaces considered here, it turns out that the ``standard'' presentation with simply connected \(G\) works.
We will in fact show that \(\alpha_O\) is surjective for all homogeneous spaces in certain families that include the positively curved ones, except for \(B^{13}\).
In particular, in each of our positive cases \(W^6,W^{12},W^{24},B^7,W_{p,q}^{7}\), the surjectivity of \(\alpha_O\) fits into
one of the following two more general results.

Firstly, the spaces \(W^6\), \(B^7\) and \(W_{p,q}^{7}\) are covered by a result in low dimensions:  \(\alpha_O\) is surjective for any simply connected compact homogeneous space \(G/H\) of dimension at most seven, at least if we choose, as we may, a presentation with simply connected \(G\) (\cref{P: surjectivity alpha_O dim 7}).
The dimensional bound is perhaps not surprising---real K-theory generally becomes more complicated starting from dimension eight---and indeed,
for \(\SS^2\times \SS^2\times\SS^2\times \SS^2\) the result is false (\cref{optimility of P: surjectivity alpha_O dim 7}).
In any case, as a consequence we obtain:
\begin{thm}\label{THM: SISC for homogeneous of dim 7}
The SCSQ has a positive answer for all real vector bundles over any simply connected compact homogeneous space of dimension at most seven.
\end{thm}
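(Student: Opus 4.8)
The plan is to read the theorem off from \cref{P: surjectivity alpha_O dim 7} via the mechanism of \cite{Gonzalez:nonnegative}. Let $M$ be a simply connected compact homogeneous space with $\dim M\leq 7$, and fix a presentation $M\cong G/H$ with $G$ a compact connected \emph{simply connected} Lie group and $H\subset G$ a closed subgroup; such a presentation exists (pass to a suitable cover of a compact transitive group, as allowed in the hypothesis of \cref{P: surjectivity alpha_O dim 7}), and $H$ is automatically connected by the homotopy exact sequence of $H\to G\to G/H$, since $G$ and $G/H$ are both simply connected. By \cref{P: surjectivity alpha_O dim 7} the ring homomorphism $\alpha_O\colon RO(H)\to KO(G/H)$ is surjective. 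As recalled in the introduction, this is equivalent to the statement that every real vector bundle over $M$ can be stabilised to a homogeneous bundle, i.e.\ becomes, after adding a trivial summand, a bundle of the form $G\times_H\R^m$ associated to an orthogonal representation of $H$.

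It remains to convert ``stably homogeneous'' into a positive answer to the SCSQ. Let $E\to M$ be a real vector bundle. By the previous paragraph, $E$ has the same class in $KO(M)$ as some associated bundle $G\times_H\R^m$, so there is an isomorphism $E\oplus\R^a\cong (G\times_H\R^m)\oplus\R^b$ for some $a,b\geq 0$; since the right-hand side equals $G\times_H(\R^m\oplus\R^b)$, again a homogeneous bundle, we may assume $E\oplus\R^a$ is itself homogeneous. Every homogeneous bundle $G\times_H\R^n$ carries a metric of non-negative sectional curvature with soul the zero section $G/H$, by O'Neill's theorem applied to the Riemannian submersion $G\times_H\R^n\to G/H$ obtained from a bi-invariant metric on $G$ and an invariant non-negatively curved metric on $G/H$ (as recalled above). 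Finally, the total space of $E\oplus\R^a$ is diffeomorphic to $(\text{total space of }E)\times\R^a$, so $E\times\R^a$ admits a metric of non-negative sectional curvature. Hence the SCSQ has a positive answer for $E$, and $E$ was arbitrary.

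For this particular statement there is thus essentially no obstacle beyond assembling inputs that are already available: the surjectivity of $\alpha_O$ supplied by \cref{P: surjectivity alpha_O dim 7}, and the standard O'Neill construction of non-negatively curved metrics on homogeneous bundles. The real difficulty is concentrated in \cref{P: surjectivity alpha_O dim 7} itself, where the dimension bound $\leq 7$ is needed to keep $KO(G/H)$ under control — in dimensions $6$ and $7$ it may already contain $2$-torsion not obviously in the image of a representation — so that proof proceeds along the classification of low-dimensional simply connected compact homogeneous spaces, checking surjectivity in each case with the help of the Atiyah--Hirzebruch spectral sequence; in the present theorem we simply invoke it. The bound is sharp, as \cref{optimility of P: surjectivity alpha_O dim 7} shows that $\alpha_O$ is not surjective for $\SS^2\times\SS^2\times\SS^2\times\SS^2$.
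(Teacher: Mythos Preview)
Your proof is correct and follows essentially the same route as the paper: invoke \cref{P: surjectivity alpha_O dim 7} to obtain surjectivity of \(\alpha_O\) for a presentation with simply connected \(G\), then apply the standard mechanism (\cref{PROP: surjectivity of alphaO implies SISC}, i.e.\ the argument of \cite{Gonzalez:nonnegative}) that converts surjectivity of \(\alpha_O\) into a positive answer to the SCSQ via O'Neill. The only difference is that you spell out the content of \cref{PROP: surjectivity of alphaO implies SISC} rather than citing it.
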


Secondly, the Wallach manifolds \(W^{12}=Sp(3)/Sp(1)^3\) and \(W^{24}=F_4 /\Spin(8)\) are covered by the following result (see \cref{PROP: surjectivity alpha_O for rkG rk H conjugation}):
\begin{thm}\label{THM: SCSQ for rkG rkH and conjugation}
Let \(G\) be a compact connected Lie group with \(\pi_1(G)\) torsion-free, and let \(H\subset G\) be a closed connected subgroup with \(\rank H = \rank G\), and such that complex conjugation acts on the complex representation ring \(R(H)\) as the identity map.
Then the SCSQ has a positive answer for all real vector bundles over \(G/H\).
\end{thm}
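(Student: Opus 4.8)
The goal is to prove that the ring homomorphism $\alpha_O\colon RO(H)\to KO(G/H)$ is surjective; once this is known, every real vector bundle over $G/H$ stabilises to a homogeneous bundle $G\times_H\R^{m}$, which carries a metric of non-negative sectional curvature by O'Neill's theorem on Riemannian submersions, so that the SCSQ has a positive answer (this is the equivalence recalled above).

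First I would record the consequences of the hypotheses. Since $\rank H=\rank G$ and $\pi_1(G)$ is torsion-free, the complex homomorphism $\alpha\colon R(H)\to K(G/H)$ is surjective by \cref{Improvement of Pittie's}. Triviality of complex conjugation on $R(H)$ forces $H$ to be semisimple (a central circle in $H$ would contribute non-self-conjugate characters) and, combined with $\rank H=\rank G$ and $\pi_1(G)$ torsion-free, forces $G$ to be simply connected. Because $\alpha$ is surjective and equivariant for complex conjugation on both sides, conjugation of bundles acts as the identity on $K(G/H)$; hence, writing $c$ and $r$ for complexification and realification, $c\circ r=2$ on $K(G/H)$ (in general one only has $c\circ r=1+\overline{(\,\cdot\,)}$), while $r\circ c=2$ on $KO(G/H)$ always.

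Next I would reduce the surjectivity of $\alpha_O$ to a statement about multiplication by $\eta$. Since $\alpha$ is surjective and realification is natural, $\im\alpha_O$ contains $r(K(G/H))$, and hence $2\cdot KO(G/H)$. In the $\eta$-Bott exact sequence
\[
\cdots\to K^{-2}(G/H)\xrightarrow{\partial}KO(G/H)\xrightarrow{\eta}KO^{-1}(G/H)\to\cdots
\]
(coming from the cofibre sequence $\Sigma KO\xrightarrow{\eta}KO\to KU$) the connecting map $\partial$ has the same image as $r$, so $\im r=\ker(\text{mult.\ by }\eta)$ and therefore
\[
\coker\!\bigl(r\colon K(G/H)\to KO(G/H)\bigr)\;\cong\;\eta\cdot KO(G/H)\ \subseteq\ KO^{-1}(G/H),
\]
an $\F_2$-vector space. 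Thus $\coker\alpha_O$ is a quotient of $\eta\cdot KO(G/H)$, and it remains to show that the homogeneous bundles exhaust $\eta\cdot KO(G/H)$ modulo $r(K(G/H))$. Here the conjugation hypothesis is used in an essential way: every irreducible complex representation of $H$ is then of real or of quaternionic type, so besides $r(K(G/H))$ the subgroup $\im\alpha_O$ also contains the homogeneous bundles $G\times_H\rho$ with $\rho$ of real type, and the realifications of the homogeneous bundles $G\times_H\sigma$ with $\sigma$ of quaternionic type (a realification of this kind being $\alpha_O$ of the underlying real representation). Comparing the $\eta$-Bott sequence of $G/H$ with the $H$-equivariant $\eta$-Bott sequence over a point — along which $\alpha$, $\alpha_O$ and the analogous quaternionic map form a morphism of exact sequences — one checks that these extra classes generate $\eta\cdot KO(G/H)$ modulo $r(K(G/H))$, giving $\coker\alpha_O=0$.

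The main obstacle is exactly this last verification. For general $G/H$ the map $\alpha_O$ is very far from surjective — the obstruction is $\eta$-torsion in $KO(G/H)$, which for full flag manifolds almost never disappears (cf.\ \cref{THM: SCST for flag manifolds}) — so the content of the theorem is that, when $H$ is semisimple with conjugation-trivial representation ring, this $\eta$-torsion is ``representation-theoretic'': it is already carried by $KO^{*}(BH)$ and is annihilated by the real/quaternionic dichotomy of $\Rep H$. Making this precise comes down to controlling the connecting homomorphisms $\partial$ in the two $\eta$-Bott sequences. For the positively curved cases $W^{12}=Sp(3)/Sp(1)^{3}$ and $W^{24}=F_4/\Spin(8)$ one may instead argue by hand, as $KO^{*}(W^{12})$, $KO^{*}(W^{24})$ and the representation rings $RO(Sp(1)^{3})$ and $RO(\Spin(8))$ can all be computed explicitly.
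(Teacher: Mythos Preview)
Your reduction is correct up to the point where you identify \(\coker(\rx\colon K(G/H)\to KO(G/H))\) with an \(\F_2\)-vector space and note that \(\im\alpha_O\supseteq \rx(K(G/H))\). But the decisive step---showing that the homogeneous classes actually kill this \(\F_2\)-cokernel---is not carried out. You invoke a comparison of the Bott sequence of \(G/H\) with an ``\(H\)-equivariant \(\eta\)-Bott sequence over a point'' and say ``one checks'' that the extra classes generate; you then concede in the final paragraph that this is the main obstacle and leave it at controlling connecting maps. That is the gap. There is no Bott sequence for representation rings in the naive sense (no \(RO^{-1}(H)\) to receive \(\eta\)), so to make your comparison precise you would need genuine equivariant \(KO\)-theory and a compatibility of edge maps with \(\alpha_O\), none of which you set up. As written, the argument stops exactly where the content is.

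The paper avoids this difficulty entirely by exploiting a hypothesis-specific fact you do not use: since \(\rank H=\rank G\), \cref{Improvement of Pittie's} gives not only surjectivity of \(\alpha\) but also \(K^1(G/H)=0\). Bousfield's lemma (\cref{Bousfield's-lemma}) then says that complexification induces an isomorphism
\[
KO(G/H)/\rx \;\oplus\; KSp(G/H)/\qx \;\xrightarrow{\cong}\; h^+(K(G/H),\tx),
\]
and the analogous statement holds for \(RO(H)\oplus RSp(H)\) and \(h^+(RH)\). Because conjugation is trivial on \(R(H)\), it is trivial on \(K(G/H)\), so \(h^+\) on both sides is simply reduction mod~2; surjectivity of \(\alpha\) then gives surjectivity of \(h^+(\alpha)\), and a two-line diagram chase with the exact rows
\[
R(H)^{\oplus 2}\to RO(H)\oplus RSp(H)\to h^+(RH)\to 0
\]
and its \(K\)-theoretic counterpart yields surjectivity of \(\alpha_O\oplus\alpha_{Sp}\). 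The whole argument is a paragraph. The moral: the vanishing of \(K^1\) lets you replace the Bott sequence (and its awkward \(KO^{-1}\)-term) by Tate cohomology of \((K^0,\tx)\), which under the conjugation hypothesis is transparent.
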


The maximal rank closed subgroups of compact Lie groups are described in \cite{BDe:Les}.
Conjugation acts trivially on \(R(H)\) for all compact connected Lie groups \(H\) that have no circle factor and no simple factors of types \(A_n\) with \(n\geq 2\), \(D_{2n+1}\) with \(n\geq 4\) or \(E_6\) (\cref{rem:t-trivial-on-R}).

\subsubsection{The SCSQ for products}
The class of spaces satisfying \cref{THM: SCSQ for rkG rkH and conjugation} is closed under taking products.
In particular, it includes arbitrary products of \(4n\)-dimensional spheres, quaternionic projective spaces and the Cayley plane (for the usual homogeneous presentations).
It turns out that, in good situations, surjectivity of \(\alpha_O\) is also preserved when taking products with \(\CP^{2n}\) (\cref{surjectivity of alpha_O for products}).
Altogether, this gives the following result.

\begin{thm}\label{THM: SCST for products of 4n CROSSes}
Let \(M\) be a finite product of simply connected CROSSes whose dimensions are multiples of four.
Then the SCSQ has a positive answer for all real vector bundles over~\(M\).
\end{thm}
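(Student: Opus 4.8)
The plan is to reduce the statement, as throughout the paper, to the surjectivity of the ring homomorphism $\alpha_O\colon RO(H)\to KO(G/H)$ for a suitable presentation $M\cong G/H$: by the criterion recalled in the introduction (from \cite{Gonzalez:nonnegative}), surjectivity of $\alpha_O$ implies that every real vector bundle over $G/H$ stabilizes to a homogeneous bundle, and such a bundle carries an invariant metric of non-negative curvature with soul $G/H$, so the SCSQ holds. For the CROSS factors of dimension divisible by four I fix the standard presentations $\SS^{4n}=\Spin(4n+1)/\Spin(4n)$, $\HP^{n}=\Sp(n+1)/(\Sp(n)\times\Sp(1))$, the Cayley plane $F_4/\Spin(9)$, and $\CP^{2n}=SU(2n+1)/S(U(2n)\times U(1))$. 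A finite product of such $G_i/H_i$ is again of the form $G/H=(\prod_iG_i)/(\prod_iH_i)$, and $\alpha_O$ for $G/H$ is compatible with external products of homogeneous bundles.

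The first step is to deal with every factor other than the $\CP^{2n}$'s. For $\SS^{4n}$, $\HP^n$ and the Cayley plane the isotropy group $H$ above is connected, satisfies $\rank H=\rank G$, and has no simple factor of one of the types on which complex conjugation acts nontrivially ($A_m$ with $m\geq2$, $D_{2m+1}$ with $m\geq4$, or $E_6$); moreover the corresponding $G$ is simply connected, so $\pi_1(G)$ is torsion-free. Hence each of these spaces satisfies the hypotheses of \cref{THM: SCSQ for rkG rkH and conjugation}, and so does any finite product of such factors, since those hypotheses are preserved under products. By the surjectivity result behind that theorem (\cref{PROP: surjectivity alpha_O for rkG rk H conjugation}), $\alpha_O$ is surjective for any product $M'$ of factors of these three types.

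The second step handles the $\CP^{2n}$ factors by induction. Write $M=M'\times\CP^{2n_1}\times\dots\times\CP^{2n_r}$ with $M'$ a (possibly trivial) product as in the first step; the case $r=0$ is done. For the inductive step I would apply \cref{surjectivity of alpha_O for products} to pass from $N:=M'\times\CP^{2n_1}\times\dots\times\CP^{2n_{j-1}}$, for which $\alpha_O$ is surjective by the inductive hypothesis, to $N\times\CP^{2n_j}$. One must check that $N$ still meets the running hypotheses of that proposition, which is automatic: adjoining the maximal-rank presentation $SU(2n_j+1)/S(U(2n_j)\times U(1))$ leaves both the rank defect $\rank G-\rank H$ and the torsion-freeness of $\pi_1(G)$ unchanged. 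After $r$ steps $\alpha_O$ is surjective for $M$, and the SCSQ for $M$ follows.

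The heart of the matter, and the expected main obstacle, is \cref{surjectivity of alpha_O for products} itself, i.e.\ the passage from $N$ to $N\times\CP^{2n}$. In contrast to complex $K$-theory, $KO$ of a product is not the graded tensor product of the factors over $KO^*(\mathrm{pt})=\ZZ[\eta,\alpha,\beta^{\pm1}]/(2\eta,\eta^3,\eta\alpha,\alpha^2-4\beta)$: the relevant Künneth spectral sequence carries genuine $\mathrm{Tor}$ contributions involving the exotic torsion classes $\eta$ and $\alpha$, which enter here precisely because $\CP^{2n}$ has cells in dimensions $\equiv2\pmod4$ and hence $\widetilde{KO}(\CP^{2n})$ has $2$-torsion --- unlike $\SS^{4n}$, $\HP^n$ and the Cayley plane, whose reduced $KO$ is torsion-free with cells only in degrees divisible by four, so that their products behave like the complex case. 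To push the induction through one needs an explicit description of $KO^*(\CP^{2n})$ as a $KO^*(\mathrm{pt})$-module, the fact (from \cite{Gonzalez:nonnegative}) that $\alpha_O$ is surjective for $\CP^{2n}$ itself, and a careful check that every class in $KO(N\times\CP^{2n})$ --- including the mixed $\mathrm{Tor}$-terms and the $2$-torsion --- is an external product of classes already in the images of $\alpha_O$ on the two factors. The divisibility-by-four condition is exactly what keeps these correction terms controllable; its necessity is shown by the failure of $\alpha_O$ to be surjective for $\SS^2\times\SS^2\times\SS^2\times\SS^2$ (\cref{optimility of P: surjectivity alpha_O dim 7}).
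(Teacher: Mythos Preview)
Your overall strategy is essentially the paper's: reduce to surjectivity of $\alpha_O$ and build the product inductively via \cref{surjectivity of alpha_O for products}. The split into ``conjugation-trivial factors first, then adjoin the $\CP^{2n}$'s'' is a harmless reorganisation of the paper's more uniform argument, which simply cites \cref{surjectivity of alpha_O for products}, Prop.~5.1 in \cite{Gonzalez:nonnegative} (for $\alpha_O$ and $\alpha_{Sp}$ on each individual CROSS), and \cref{REM: satisfying h-()}.

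There is, however, a genuine gap in your inductive step. \Cref{surjectivity of alpha_O for products} does not merely require the rank and $\pi_1$ conditions you check; it requires (i)~that $\alpha_O$ \emph{and} $\alpha_{Sp}$ be surjective for both factors, and crucially (ii)~that $h^-(K(-))=0$ for at least one of them. You verify neither. Condition~(ii) is the whole point of the dimension hypothesis: it is exactly what distinguishes $\CP^{2n}$, for which $h^-=0$ by \cref{REM: satisfying h-()}, from $\CP^{2n+1}$ or $\SS^{8n+2}$, for which $h^-=\ZZ_2$. Since $h^-(K(\CP^{2n_j}))=0$ at each step, the induction does go through --- but this is precisely the content you must supply, and it is what the paper's reference to \cref{REM: satisfying h-()} is doing.

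Your final paragraph is off track. The proof of \cref{surjectivity of alpha_O for products} does \emph{not} go through a K\"unneth spectral sequence for $KO$, and no $\mathrm{Tor}$-terms over $KO^*(\mathrm{pt})$ enter. Everything is done in \emph{complex} K-theory, where K\"unneth is trivial because $K(G_i/H_i)$ is free by \cref{Improvement of Pittie's}, and is then transported to $KO\oplus KSp$ via Bousfield's lemma (\cref{Bousfield's-lemma}): the cokernel of $(\alpha_O,\alpha_{Sp})$ is controlled by $h^+(\alpha)$, and the K\"unneth formula for Tate cohomology (\cref{Kunneth-for-Tate}) gives
\[
h^+\big(K(M_1\times M_2)\big)\;\cong\; h^+(K(M_1))\otimes h^+(K(M_2))\ \oplus\ h^-(K(M_1))\otimes h^-(K(M_2)).
\]
The extra summand is the only obstruction, and it vanishes exactly when one $h^-$ does. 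This route entirely sidesteps the $KO$-K\"unneth difficulties you anticipate.
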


Besides the products of spheres included in \cref{THM: SCST for products of 4n CROSSes}, our \(K\)-theoretic computations yield positive results for other products of spheres.

\begin{thm}\label{THM: SCST for products of spheres}
\newcommand{\evennr}{\text{\textnormal{\small even}}}
\newcommand{\oddnr}{\text{\textnormal{\small odd}}}
Consider a product of spheres \(S=\SS^{n_1}\times\SS^{n_2} \times\dots\times\SS^{n_\ell}\)
Assume that the unordered tuple \(\{n_1,\dots ,n_\ell \}\) is congruent modulo eight to one of the following, where \(\evennr\), \(\evennr'\) denote arbitrary even numbers and \(\oddnr\) denotes an arbitrary odd number:
\begin{compactitem}[\(\ell\in\NN:\)]
\item[\(\ell=1:\)] \(\{n\}\) arbitrary
\item[\(\ell=2:\)]
  \(\{3,3\}\), \(\{7,7\}\), \(\{\evennr,\oddnr\}\) or \(\{\evennr,\evennr'\}\not\equiv \{2,6\}\)
\item[\(\ell=3:\)]
  \(\{2,2,2\},\{6,6,6\},\{7,7,7\}\)
  or \mbox{\(\{\evennr,\evennr',\oddnr\}\not\in \left\{\{2,6,\oddnr\},\{4,4,\oddnr\}\right\}\)}
\item[\(\ell=4:\)]
  \(\{2,2,2,\oddnr\}\) or \(\{6,6,6,\oddnr\}\)
\item[\(\ell\in\NN:\)]
  \(\{\evennr,n_2,\dots, n_{\ell}\}\), where \(n_i\equiv 0\textrm{ or }4\) for all \(i\)
\item[\(\ell\in\NN:\)]
  \(A\cup \{n_k,\dots, n_{\ell}\}\), where \(n_i\equiv 0\) for all \(i\) and \(A\) is any of the tuples above.
\end{compactitem}
Then the SCSQ has a positive answer for all real vector bundles over~\(S\).
\end{thm}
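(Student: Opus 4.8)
The plan is to prove that $\alpha_O\colon RO(H)\to KO(S)$ is surjective for the standard product presentation $S=G/H$ with $G=\prod_{i=1}^{\ell}G_i$, $H=\prod_{i=1}^{\ell}H_i$ and $\SS^{n_i}=G_i/H_i$; by the criterion of \cite{Gonzalez:nonnegative} this yields the SCSQ for $S$. (We may restrict to tuples in which some $n_i$ is odd or $\equiv2\pmod 4$, the others being covered already by \cref{THM: SCST for products of 4n CROSSes}.) Since $\alpha_O$ is a ring homomorphism and $G\times_H(\rho_1\boxtimes\dots\boxtimes\rho_\ell)\cong(G_1\times_{H_1}\rho_1)\boxtimes\dots\boxtimes(G_\ell\times_{H_\ell}\rho_\ell)$, the image of $\alpha_O$ is a subring of $KO(S)$ that contains every $p_i^*KO(\SS^{n_i})$ --- by \cite{Gonzalez:nonnegative} applied to the factors --- and, more importantly, the class of every tensor product $\bigotimes_i p_i^*E_i$ of homogeneous bundles and of every real form of such a product. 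The statement is thus reduced to a $K$-theoretic computation.

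Iterating the natural splitting $\widetilde{KO}(X\times Y)\cong\widetilde{KO}(X)\oplus\widetilde{KO}(Y)\oplus\widetilde{KO}(X\wedge Y)$ together with $\SS^m\wedge\SS^n=\SS^{m+n}$ gives
\[
\widetilde{KO}(S)\;\cong\;\bigoplus_{\varnothing\neq I\subseteq\{1,\dots,\ell\}}\widetilde{KO}\bigl(\SS^{n_I}\bigr),\qquad n_I=\textstyle\sum_{i\in I}n_i,
\]
under which the $I$-component of the reduced class of $\bigotimes_{i\in I}p_i^*E_i$ is the external product of the reduced classes $\widetilde{[E_i]}\in\widetilde{KO}(\SS^{n_i})$ --- equivalently, the corresponding monomial in $KO^{-*}(\mathrm{pt})=\ZZ[\eta,\alpha,\beta^{\pm1}]/(2\eta,\eta^3,\eta\alpha,\alpha^2-4\beta)$ placed in the summand $\widetilde{KO}(\SS^{n_I})=KO^{-n_I}(\mathrm{pt})$. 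Since $\widetilde{KO}(\SS^{m})$ is $0$ for $m\equiv3,5,6,7$, equals $\ZZ/2$ for $m\equiv1,2$, and equals $\ZZ$ for $m\equiv0,4\pmod 8$, surjectivity of $\alpha_O$ is equivalent to the assertion that for every $I$ with $n_I\not\equiv3,5,6,7\pmod 8$ the generator of $\widetilde{KO}(\SS^{n_I})$ is the $I$-component of some homogeneous class.

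These generators are to be produced by three mechanisms. (i) \emph{Products of axial classes}: the product $\prod_{i\in I}\sigma_{n_i}$ of the \cite{Gonzalez:nonnegative}-realizations over the factors hits $\widetilde{KO}(\SS^{n_I})$ whenever the corresponding monomial in $KO^{-*}(\mathrm{pt})$ is a generator; by the relations $\eta^3=\eta\alpha=0$ and $\alpha^2=4\beta$ this happens on an explicit set of residue patterns and fails, for instance, as soon as two of the $n_i$ with $i\in I$ are $\equiv4\pmod 8$, where one only reaches $4\ZZ$. (ii) \emph{Realification of complex homogeneous bundles}: a Künneth argument (each $K(\SS^{n_i})$ is torsion-free) together with the surjectivity of the complex homomorphism $\alpha$ for spheres shows $\alpha\colon R(H)\to K(S)$ is onto; realifying the homogeneous complex bundle realizing the complex Bott class of $\widetilde{K}(\SS^{n_I})$, and using that $r\colon\widetilde{K}(\SS^{m})\to\widetilde{KO}(\SS^{m})$ is \emph{surjective} for $m\equiv2,4$ (but only $2$-divisible for $m\equiv0\pmod 8$), settles all summands with $n_I\equiv2,4\pmod 8$. (iii) \emph{The $\HH\otimes\HH=\RR$ construction} for the remaining free summands ($n_I\equiv0\pmod8$): using quaternionic presentations of the factors of dimension $\equiv4\pmod 8$, if $V_i,V_j$ are quaternionic representations realizing the complex Bott classes of $\widetilde{K}(\SS^{n_i})$ and $\widetilde{K}(\SS^{n_j})$, then $V_i\otimes_{\CC}V_j$ is of real type, descends to a real representation $W$, and $c(E_W)$ has $\widetilde{K}(\SS^{n_i+n_j})$-component equal to the complex Bott class; since $c\colon\widetilde{KO}(\SS^{8k})\xrightarrow{\ \sim\ }\widetilde{K}(\SS^{8k})$, the bundle $E_W$ realizes the generator $\beta^{k}$, and tensoring with axial classes of the remaining factors finishes $I$. (The $\ZZ/2$-summands in degrees $\equiv1\pmod 8$ are caught either by (i), whenever $I$ splits off a sphere of dimension $\equiv1$ from a part of dimension $\equiv0\pmod 8$, or else by a finer $2$-local argument.)

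Carrying this out matches, residue class by residue class, the summands of $\widetilde{KO}(S)$ against the homogeneous classes produced above, and exactly the tuples in the list survive. For $\ell\le4$ this is a finite check; for the excluded tuples one proves non-surjectivity by exhibiting a surjection $KO(S)\twoheadrightarrow\ZZ/2$ that vanishes on $\im\alpha_O$ --- e.g.\ over $\SS^{2}\times\SS^{6}$ every homogeneous real bundle is a pullback from a factor or a realification, so its $\widetilde{KO}(\SS^{8})$-component lies in $2\ZZ\beta$, which is why tuples involving $\SS^2\times\SS^6$, the tuples $\{4,4,b\}$ with $b$ odd, and the three-even tuples other than $\{2,2,2\}$ and $\{6,6,6\}$ are omitted. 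The two ``$\ell\in\NN$'' items then follow by induction on the number of factors: if $\alpha_O$ is surjective for a tuple $A$, adjoining a sphere of dimension $m\equiv0\pmod 8$ turns the realization of each summand $\widetilde{KO}(\SS^{n_I})$ into a realization of $\widetilde{KO}(\SS^{n_I+m})$ upon multiplying by $\sigma_m=\beta^{m/8}$ (a unit), and adjoining $m\equiv4\pmod 8$ is handled in the same way, the one residual obstruction being cured by the $\HH\otimes\HH$ trick of (iii). I expect the genuine difficulty to be precisely this bookkeeping --- matching the direct summands of $\widetilde{KO}(S)$ against the homogeneous classes, and establishing that the list is sharp --- rather than any single computation.
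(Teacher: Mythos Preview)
Your approach differs substantially from the paper's and is considerably more elaborate. The paper never decomposes $\widetilde{KO}(S)$ into smash summands. Instead: for the tuples $\{3,3\},\{7,7\},\{7,7,7\}$ it simply notes that $\widetilde{KO}(S)=0$ (\cref{lem: triviality ok KO product spheres}); for all remaining tuples with $\ell\le 4$, at most one $n_i$ is odd, so $\rank G-\rank H\le 1$ and Pittie--Steinberg (\cref{Improvement of Pittie's}) makes the complex $\alpha$ surjective, after which the realification criterion \cref{lem: surjectivity of realif product spheres} (no nonempty partial sum $\equiv 0\pmod 8$) gives $\rx$ surjective, hence $\alpha_O$ surjective. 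The two $\ell\in\NN$ families are dispatched by the ready-made product theorems (\cref{PROP: surjectivity alpha_O for rkG rk H conjugation,surjectivity of alpha_O for products,PROP:alpha_O product S8n}), which package your mechanism~(iii) and the $\SS^{8k}$-induction once and for all. Your mechanisms (ii) and (iii) are, in effect, re-deriving by hand what the paper gets from Pittie--Steinberg plus these product results. One caution on (ii): your K\"unneth justification of surjectivity of the complex $\alpha$ is only valid when at most one factor is odd-dimensional --- for $\SS^3\times\SS^3$ the map $\alpha$ is \emph{not} onto (\cref{REM: optimal Pittie's}) --- and you should say so explicitly.

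There is also a genuine gap. You defer the summands with $n_I\equiv 1\pmod 8$ to ``a finer 2-local argument'' without supplying one, and this case does arise. Take $\{2,7\}$: the summand $\widetilde{KO}(\SS^9)\cong\ZZ/2$ is not hit by any of your three mechanisms --- the axial product vanishes because $\widetilde{KO}(\SS^7)=0$, realification has source $\widetilde{K}(\SS^9)=0$, and there are no quaternionic factors to pair. You need an actual argument here; as it stands this is the one place where your outline does not close.

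Finally, the theorem does not assert that the list is sharp, so your discussion of non-surjectivity for the excluded tuples is extraneous to what is being proved.
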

Finally, all of the above results that rely on the surjectivity of \(\alpha_O\) can be improved using Bott periodicity, in the sense that we can allow products with an arbitrary number of spheres whose dimensions are multiples of eight.  In combination with \cref{SCST for tangentially homotopy equivalent manifolds} we obtain:

\begin{thm}\label{THM: SCST for products with S8}
\renewcommand{\creflastconjunction}{ or~}
Let \(M\) be any of the homogeneous spaces satisfying the hypotheses in
\cref{THM: SISC for homogeneous of dim 7,,%
  THM: SCSQ for rkG rkH and conjugation,,%
  THM: SCST for flag manifolds,,%
  THM: SCST for products of 4n CROSSes,,%
  THM: SCST for products of spheres},
and let \(S\) be a product of arbitrarily many spheres whose dimensions are multiples of eight.
The SCSQ has a positive answer for all real vector bundles over any compact manifold tangentially homotopy equivalent to \(M\times S\).
\end{thm}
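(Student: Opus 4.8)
The plan is to reduce the theorem to a single statement about the surjectivity of \(\alpha_O\). By \cref{SCST for tangentially homotopy equivalent manifolds} it is enough to prove the SCSQ for all real vector bundles over \(M\times S\) itself. By the observation of \cite{Gonzalez:nonnegative} recalled above --- every real vector bundle over a compact homogeneous space \(G'/H'\) stabilizes to a homogeneous bundle, and hence, via O'Neill's theorem, to one whose total space carries a non-negatively curved metric, precisely when \(\alpha_O\colon RO(H')\to KO(G'/H')\) is surjective --- this in turn follows once we exhibit a homogeneous presentation of \(M\times S\) for which \(\alpha_O\) is onto. We use the presentation of \(M\times S\) as \(\bigl(G\times\prod_{i=1}^{r}\Spin(8k_i+1)\bigr)\big/\bigl(H\times\prod_{i=1}^{r}\Spin(8k_i)\bigr)\), where \(M=G/H\) is one of the homogeneous spaces appearing in \cref{THM: SISC for homogeneous of dim 7,THM: SCSQ for rkG rkH and conjugation,THM: SCST for flag manifolds,THM: SCST for products of 4n CROSSes,THM: SCST for products of spheres} (with the presentation used there, for which \(\alpha_O^M\) is surjective by hypothesis) and \(S=\SS^{8k_1}\times\dots\times\SS^{8k_r}\).

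The key step is a ``one-sphere'' lemma: if \(X=G'/H'\) is any compact homogeneous space with \(\alpha_O^X\) surjective and \(k\geq1\), then \(\alpha_O^{X\times\SS^{8k}}\) is surjective for the presentation \((G'\times\Spin(8k+1))/(H'\times\Spin(8k))\). Indeed, the image of \(\alpha_O^{X\times\SS^{8k}}\) is a subring of \(KO(X\times\SS^{8k})\), since \(\alpha_O\) is a ring homomorphism. It contains \(\mathrm{pr}_X^*KO(X)\), because a homogeneous bundle \(G'\times_{H'}V\) pulls back along \(\mathrm{pr}_X\) to the homogeneous bundle \((G'\times\Spin(8k+1))\times_{H'\times\Spin(8k)}V\) (with \(\Spin(8k)\) acting trivially on \(V\)) and \(\alpha_O^X\) is onto; similarly it contains \(\mathrm{pr}_{\SS}^*KO(\SS^{8k})\), because \(\alpha_O\) is surjective for the sphere \(\SS^{8k}\) (this is part of the CROSS case treated in \cite{Gonzalez:nonnegative}). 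Finally, the external tensor product of a homogeneous bundle over \(X\) and one over \(\SS^{8k}\) is again homogeneous over the product, so the image also contains every product \(\mathrm{pr}_X^*a\cdot\mathrm{pr}_{\SS}^*b\) with \(a\in KO(X)\), \(b\in KO(\SS^{8k})\). Now the Künneth splitting \(\widetilde{KO}(X\times\SS^{8k})\cong\widetilde{KO}(X)\oplus\widetilde{KO}(\SS^{8k})\oplus\widetilde{KO}(X\smsh\SS^{8k})\), together with \(8\)-fold Bott periodicity, which identifies \(\widetilde{KO}(X\smsh\SS^{8k})\) with \(\widetilde{KO}(X)\) via multiplication by the Bott generator \(\beta\in\widetilde{KO}(\SS^{8k})\cong\ZZ\), shows that the classes just listed generate all of \(KO(X\times\SS^{8k})\) as a ring. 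Hence \(\alpha_O^{X\times\SS^{8k}}\) is surjective.

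With the lemma in hand, the theorem follows by induction on the number \(r\) of sphere factors of \(S\): the base case is the hypothesis that \(\alpha_O^M\) is surjective, and in the inductive step one regards \(M\times\SS^{8k_1}\times\dots\times\SS^{8k_{i-1}}\) as a homogeneous space in its own right and applies the lemma with the next factor \(\SS^{8k_i}\). This proves that \(\alpha_O\) is surjective for \(M\times S\) in the stated presentation, hence the SCSQ for all real vector bundles over \(M\times S\), and then, via \cref{SCST for tangentially homotopy equivalent manifolds}, over any compact manifold tangentially homotopy equivalent to \(M\times S\). I do not expect a genuinely hard obstacle here; the main point to get right is the correct interaction of the Künneth theorem for \(KO\), \(8\)-fold Bott periodicity, and the compatibility of \(\alpha_O\) with external tensor products. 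In particular, the hypothesis that the sphere factors have dimension divisible by eight is forced: it is exactly what makes \(\widetilde{KO}(X\smsh\SS^{8k})\) reduce to \(\widetilde{KO}(X)\) rather than to \(\widetilde{KO}^{-j}(X)\) with \(j\not\equiv0\pmod 8\) --- which is why the \(4n\)-dimensional CROSSes must instead be handled by the separate argument behind \cref{THM: SCST for products of 4n CROSSes}.
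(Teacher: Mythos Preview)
Your proposal is correct and follows essentially the same route as the paper: the ``one-sphere lemma'' you prove is precisely \cref{PROP:alpha_O product S8n}, which the paper derives in one line from the external product isomorphism \(KO(M)\otimes KO(\SS^{8k})\cong KO(M\times\SS^{8k})\) of \eqref{eq:external-product-iso} together with the surjectivity of \(\alpha_O\) for \(\SS^{8k}\), and the remaining reductions via \cref{PROP: surjectivity of alphaO implies SISC} and \cref{SCST for tangentially homotopy equivalent manifolds} are identical. Your K\"unneth-plus-Bott argument is just an unpacking of that external product isomorphism, so the two proofs coincide.
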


\subsubsection{The Berger space \texorpdfstring{$B^{13}$}{B13}}

For the standard presentation of the Berger space \(B^{13}\) as \(SU(5)/Sp(2)\times_{\Z_2}S^1\), in turns out that \(\alpha_O\) is \emph{not} surjective. As is shown in \cite{FloritZiller:Bazaikin}*{Proof of Theorem A}, any compact homogeneous space homotopy equivalent to \(B^{13}\) is already diffeomorphic to \(B^{13}\), and the standard presentation is its only presentation as an orbit space of a simply connected Lie group. It follows that \(\alpha_O\) cannot be surjective for \emph{any} presentation of \(B^{13}\) as a homogeneous space (\cref{suffices-to-consider-simply-connected-G}).
In other words, the SCSQ over \(B^{13}\) cannot be addressed by the techniques used here.
However, surjectivity of \(\alpha_O\) fails only by a small margin:

\begin{thm}\label{THM:B13}
  The map \(\alpha_O\) is not surjective for any presentation of \(B^{13}\) as a homogeneous space.
  Rather, its image is contained in a fixed subring \(S\subset KO(B^{13})\) of index two.
  For the standard presentation of  \(B^{13}\) as \(SU(5)/Sp(2)\times_{\Z_2}S^1\), the image of \(\alpha_O\) is equal to \(S\).

  Let \(\reduced{KO}(B^{13})\subset KO(B^{13})\) and \(\reduced{S}\subset S\) denote the ideals consisting of all virtual vector bundles of rank zero.
  Then \(\reduced{KO}(B^{13})\) splits, as a non-unitary ring, as a product
  \[
    \reduced{KO}(B^{13}) \cong S \times \ZZ_2 w
  \]
  with \(w^2 = 0\).
  The generator \(w\) is the unique nonzero class in \(\reduced{KO}(B^{13})\) with trivial Pontryagin classes.
\end{thm}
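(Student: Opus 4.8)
The plan is to reduce to the standard presentation and then argue entirely with topological $K$-theory. By \cref{suffices-to-consider-simply-connected-G} it suffices to treat presentations $B^{13}=G/H$ with $G$ simply connected, and by the rigidity result of Florit--Ziller quoted above the only such presentation is the standard one, $SU(5)/(Sp(2)\times_{\ZZ_2}S^1)$. So everything reduces to identifying $\im\alpha_O$, $\reduced{KO}(B^{13})$ and $w$ for this presentation; the assertion for an arbitrary presentation then follows because, via the same reduction, its $\im\alpha_O$ is contained in the $\im\alpha_O$ of the standard presentation.

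First I would assemble the input. The integral and mod $2$ cohomology of $B^{13}$ are known (one convenient viewpoint: $Sp(2)\times_{\ZZ_2}S^1\subset U(4)\subset SU(5)$, so that $B^{13}$ is an $\RP^5$-bundle over $\CP^4=SU(5)/U(4)$; rationally $B^{13}\simeq_{\QQ}\CP^4\times\SS^5$, so $\reduced{KO}(B^{13})\otimes\QQ\cong\bigoplus_{i>0}H^{4i}(B^{13};\QQ)\cong\QQ^2$ via the Pontryagin character). Running the Atiyah--Hirzebruch spectral sequences for complex $K$-theory and for $KO$ and comparing them along the realification map $r$ should then yield: $\reduced{KU}(B^{13})$ torsion-free; $\reduced{KO}(B^{13})\cong\ZZ^2\oplus\ZZ/2$, with the $\ZZ/2$ generated by a class $w$ of high Atiyah--Hirzebruch filtration (the only $\ZZ/2$ that can survive to $E_\infty$ in the $KO^0$-spectral sequence lies in cohomological degree $9$ or $10$, since $KO^q(\mathrm{pt})$ is $2$-torsion only for $q\equiv 6,7\bmod 8$, while $H^1(B^{13})=0$ and $H^2(B^{13})$ is torsion-free); and $\im(r\colon\reduced{KU}(B^{13})\to\reduced{KO}(B^{13}))$ equal to the free part $\ZZ^2$, so that $\coker(r)\cong\ZZ/2$ — equivalently, by the Bott exact sequence $\reduced{KU}(B^{13})\xrightarrow{\,r\,}\reduced{KO}(B^{13})\xrightarrow{\,\cdot\eta\,}\reduced{KO}^{-1}(B^{13})$, the subgroup $\eta\cdot\reduced{KO}(B^{13})$ has order two. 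Here $c$ denotes complexification and $\eta\in\pi_1(KO)$ the usual generator, so $rc=2$. I take $S\subset KO(B^{13})$ to be the subring with $S\cap\reduced{KO}(B^{13})=\im(r)$; as $rc=2$, this has index two. The characterisation of $w$ is then formal: the Pontryagin character is injective modulo torsion, so (using that the relevant $H^{4i}(B^{13};\ZZ)$ are torsion-free) a class has trivial Pontryagin classes iff it is torsion, and the torsion subgroup of $\reduced{KO}(B^{13})$ is exactly $\{0,w\}$.

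The ring splitting is a filtration estimate: every reduced class lies in filtration $\ge 2$ (as $H^1(B^{13})=0$) and $w$ lies in filtration $\ge 9$, so $w\cdot x$ lies in filtration $\ge 11$ for all $x\in\reduced{KO}(B^{13})$; but $H^{11}(B^{13};KO^{-11}(\mathrm{pt}))=0$ (as $KO^{-11}(\mathrm{pt})=0$) and $H^{12}(B^{13};\ZZ)\cong H_1(B^{13})=0$ by Poincaré duality, so the corresponding filtration quotients vanish and $w\cdot\reduced{KO}(B^{13})=0$. In particular $w^2=0$ and $w\cdot\reduced S=0$; together with the fact that $\reduced S=r(\reduced{KU}(B^{13}))$ is an ideal (by the projection formula $r(a)\cdot x=r(a\cdot c(x))$), this promotes the additive decomposition to a splitting $\reduced{KO}(B^{13})\cong\reduced S\times\ZZ_2 w$ of non-unital rings.

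It remains to compute $\im\alpha_O$ for the standard presentation, which is the crux. One inclusion is soft: $\rank SU(5)-\rank(Sp(2)\times_{\ZZ_2}S^1)=1$, so the complex map $\alpha\colon R(H)\to K(B^{13})$ is surjective (\cref{Improvement of Pittie's}, cf.\ \cref{THM: complex SCST for homogeneous}), and $r\circ\alpha=\alpha_O\circ r_H$ gives $\reduced S=r(\reduced{KU}(B^{13}))\subseteq\im\alpha_O$. For the reverse inclusion it is enough, since $\reduced S$ has index two, to show $\alpha_O$ is \emph{not} surjective, i.e.\ that $\eta\cdot\alpha_O(\rho)=0$ in $\reduced{KO}^{-1}(B^{13})$ for every $\rho\in RO(H)$ (given the first inclusion, this forces $\im\alpha_O$ to meet $\reduced{KO}(B^{13})$ in $\reduced S$, not all of it, hence $w\notin\im\alpha_O$). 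This vanishing holds automatically when $\rho$ lies in $\im(r_H)$ (as $\eta\cdot r=0$), and a Frobenius--Schur type analysis of $RO(Sp(2)\times_{\ZZ_2}S^1)$ shows that $RO(H)/r_H(R(H))$ is spanned by the real-type irreducibles of $H$, all of which have trivial $S^1$-factor and therefore factor through $\pi\colon H\twoheadrightarrow Sp(2)/\{\pm1\}=SO(5)$. Since $R(SO(5))$ is generated by the vector representation $W_5$ and $\Lambda^2 W_5$, and since $2[E_5]$ and $2[\Lambda^2 E_5]$ lie in $\im(r)$ automatically, the whole question collapses to showing that the homogeneous bundles $E_5:=SU(5)\times_H\pi^*W_5$ and $\Lambda^2 E_5$ lie in $\im(r)$, equivalently $\eta\cdot[E_5]=\eta\cdot[\Lambda^2 E_5]=0$. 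I expect this final step to be the main obstacle: it requires an explicit computation of the $\eta$-action on $\reduced{KO}^*(B^{13})$ in the Atiyah--Hirzebruch spectral sequence, where multiplication by $\eta$ is governed by $\mathrm{Sq}^2$ on mod $2$ reductions, combined with the mod $2$ characteristic classes of $E_5$ over $B^{13}$. Dually, the same computation should exhibit the nonzero class $w$ as $\eta\cdot\xi$ for a low-filtration class $\xi$ built from the generator of $H^2(B^{13};\ZZ)$ that is not itself realifiable — this is the index-two gap by which $\alpha_O$ just fails to be onto. Combining the two inclusions, the rank-zero part of $\im\alpha_O$ is exactly $\reduced S$, so $\im\alpha_O=S$ for the standard presentation and $\im\alpha_O\subseteq S$ (index two) for every presentation.
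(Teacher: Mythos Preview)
Your overall architecture is right, and your filtration argument for \(w\cdot\reduced{KO}(B^{13})=0\) is correct and in fact cleaner than the paper's explicit ring computation. But there is a computational error that propagates through several steps: you assert that \(\reduced K(B^{13})\) is torsion-free and that \(\reduced{KO}(B^{13})\cong\ZZ^2\oplus\ZZ_2\). In fact \(H^6(B^{13};\ZZ)\cong H^8(B^{13};\ZZ)\cong\ZZ_5\) (from the relation \(5\beta^3=0\)), and these survive the AHSS, giving \(\reduced K(B^{13})\cong\ZZ^2\oplus\ZZ_5^2\) and \(\reduced{KO}(B^{13})\cong\ZZ\oplus\ZZ_5\oplus\ZZ_2\). (Relatedly, your rational model is off: rationally \(B^{13}\) looks like \(\CP^2\times\SS^9\), not \(\CP^4\times\SS^5\), so \(\reduced{KO}(B^{13})\otimes\QQ\cong\QQ\), not \(\QQ^2\).) This matters most for the Pontryagin-class characterisation of \(w\): the torsion subgroup of \(\reduced{KO}(B^{13})\) is \(\ZZ_5\oplus\ZZ_2\), not \(\{0,w\}\), and the \(5\)-torsion generator \(y'^2\) has \emph{nontrivial} \(p_2\in H^8(B^{13};\ZZ)\cong\ZZ_5\). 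So ``trivial Pontryagin classes \(\Leftrightarrow\) torsion'' is simply false here; the paper has to (and does) compute \(p_2\) on the \(5\)-torsion explicitly. Likewise \(S\) is not ``the free part'' but \(\ZZ\cdot 1\oplus\ZZ y'\oplus\ZZ_5 y'^2\); it still equals \(\ZZ\cdot 1+\im(r)\) because \(r\) hits the \(5\)-torsion.

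For the hard inclusion \(\im\alpha_O\subset S\), your reduction to representations factoring through \(SO(5)\) and a computation of the \(\eta\)-action is plausible but, as you say, incomplete. The paper avoids this entirely by a more algebraic device: it shows \(RO(H)\) is generated over \(\im(r_H)\) by \(u^2\) and \(\lambda^2 u\), observes that the quaternionification \(q\colon K(B^{13})\to KSp(B^{13})\) is \emph{surjective}, and then uses the identities \(qx\cdot qy=r(x\cdot c'qy)\) (handling \(\alpha_O(u^2)=\alpha_{Sp}(u)^2\)) and \(\lambda^2(qz)=r(\lambda^2 z)+\phi(z)\) for Bousfield's multiplicative map \(\phi\), together with an explicit check that \(\im\phi\subset\im r\) on \(B^{13}\), to handle \(\alpha_O(\lambda^2 u)\). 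This sidesteps any direct analysis of \(\eta\) or of Stiefel--Whitney classes of \(E_5\).
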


In particular, there exists a real vector bundle \(E\) (of rank \(\leq 13\)) over \(B^{13}\) such that \(E\times\R^k\) is not a homogeneous vector bundle for any integer \(k\).
There are, in fact, countably infinitely many such bundles -- all representatives of all elements in the \(w\)-coset of \(S\) -- but there is a unique stable class of vector bundles with the additional property that the Pontryagin classes are trivial.

\subsubsection{Acknowledgements}
The second author is grateful to Wilhelm Singhof for inspiration regarding the projective dimension of representation rings (\cref{cor:proj-dim}).

\label{S:Intro}

\section{K-theory}\label{S:K-theory}
In this section we review mostly well-known facts on the real and complex \(K\)-theory of a manifold. Classical references include \cites{Atiyah: K-theory} and \cite{Husemoller}.  We place particular emphasis on relations between the real and the complex theory.
In particular, we discuss cohomological conditions that ensure the surjectivity of the realification map from complex to real K-theory for manifolds of dimension at most seven.

\subsection{K-theory classifies vector bundles}\label{SS:K-theory rings}

Throughout this section, \(M\) will denote a compact Hausdorff space.
We denote by \(\Vect_\F(M)\) the set of isomorphism classes of (continuous) \(\F\)-vector bundles over \(M\), where \(\F\) is either \(\R\), \(\C\) or \(\HH\).  (In the last case, each fiber is considered as a right module over \(\HH\).)  Over a manifold, any continuous vector bundle can be equipped with an essentially unique smooth structure, \ie \(\Vect_\F(M)\) agrees with the set of isomorphism classes of \emph{smooth} \(\F\)-bundles.
The Whitney sum \(\oplus\) endows \(\Vect_\F(M)\) with a semigroup structure, and we define \(K_\F(M)\) as its group completion. The elements of \(K_\F(M)\) can be described as formal differences of elements in \(\Vect_\F(M)\).  As an example, for a point we have \(K_\F(pt) \cong \ZZ\).  As usual in the literature, we denote \(K_\R\), \(K_\C\) and \(K_\HH\) by \(KO\), \(K\) and \(KSp\), respectively.

\subsubsection{Products}
When \(\F=\R\) or \(\C\), the \(\F\)-tensor product gives \(\Vect_\F(M)\) a semiring structure. Thus \(K_\F(M)\) is the ring completion of \(\Vect_\F(M)\) and \(K(M)\) and \(KO(M)\) are referred to as the complex and real K-rings  of \(M\), respectively.
Moreover, recall that there is a bijective correspondence between \(\Vect_\RR(M)\) (resp. \(\Vect_\HH(M)\)) and the set of isomorphism classes of \(\CC\)-vector bundles endowed with a conjugate-linear automorphism \(J\) such that \(J^2 = \id\) (resp. \(J^2 = -\id\)) \cite{Atiyah: K-theory}*{\S\,1.5}.
In this way, the \(\CC\)-tensor product induces product maps \(KO(M)\otimes KSp(M)\to KSp(M)\) and \(KSp(M)\otimes KSp(M)\to KO(M)\) giving the direct sum \(KO(M)\oplus KSp(M)\) the structure of a \(\ZZ_2\)-graded ring.
Similarly, the \(i\)-th exterior power of vector spaces induces the corresponding exterior power operations \(\lambda^i\) on vector bundles, making \(K(M)\) and \(KO(M)\) \(\lambda\)-rings and \(KO(M)\oplus KSp(M)\) a \(\ZZ_2\)-graded \(\lambda\)-ring \citelist{\cite{Bott:quelques}\cite{Allard}}.
For two spaces \(M_1\) and \(M_2\) we also have \emph{external} products defined in the usual way in terms of the (internal) products discussed above and the canonical projections \(\pi_i\colon M_1\times M_2\to M_i\):
\[
\begin{array}{ccc}
\mu\colon K_{\F_1}(M_1)\otimes K_{\F_2}(M_2) & \to & K_{\F_{12}}(M_1\times M_2) \\
\alpha\otimes\beta & \mapsto & \pi_1^*(\alpha)\cdot \pi_2^*(\beta)
\end{array}
\]
Here, \((\F_1,\F_2,\F_{12})\) is one of the triples \((\CC,\CC,\CC)\), \((\RR,\RR,\RR)\), \((\RR,\HH,\HH)\), \((\HH,\RR,\HH)\) or \((\HH,\HH,\RR)\).  Bott periodicity implies that the external product for \(K\)- and \(KO\)-theory is an isomorphism when one of the two spaces is an even-dimensional sphere or a sphere of dimension a multiple of eight, respectively \citelist{\cite{Atiyah:K-theory}*{Cor.~2.2.3}\cite{Husemoller}*{Thm~11.1.2}}:
\begin{equation}\label{eq:external-product-iso}
  \begin{alignedat}{8}
    K(M)&\otimes K(\SS^{2k})&&\xrightarrow{\;\cong\;} & K(&M\times \SS^{2k})
    \\
    KO(M)&\otimes KO(\SS^{8k}) &&\xrightarrow{\;\cong\;}\; & KO(&M\times \SS^{8k})
  \end{alignedat}
\end{equation}
(Most references only state the case \(k=1\).  The more general statement follows from \cite{Husemoller}*{Prop.~11.1.1}.)
See also \cite{Atiyah:Kunneth} for a general Künneth formula for complex K-theory.

\subsubsection{Reduced K-groups}
For a pointed space \((M,pt)\), the reduced \(K_\F\)-theory
is defined as the kernel of the restriction map \(K_\F(M)\to K_\F(pt)\).  When \(M\) is connected, this subgroup
is independent of the chosen basepoint and we denote it by \(\reduced{K}_\F(M)\).  It may in this case be described explicitly as follows.  Denote by \(n_\F\) the trivial \(\F\)-vector bundle over \(M\) of rank \(n\in\N\). Then
\[
\reduced{K}_\F(M) = \{ E-n_\F \in K_F(M) \mid \rank E = n\}.
\]
Alternatively, \(\reduced{K}_\F(M)\) can be described as the abelian group of stable classes of \(\F\)-vector bundles over \(M\).  Two vector bundles \(E,F\in\Vect_\F(M)\) are stably equivalent if there exist trivial bundles \(n_\F,m_\F\) such that \(E\oplus n_\F\) is isomorphic to \(F\oplus m_\F\).
In any case, we have a natural isomorphism of abelian groups:
\begin{equation}
K_\F(M)=\Z\oplus\reduced{K}_\F(M).
\label{EQ: bracket Z}
\end{equation}
As we will ultimately be interested in the reduced group \(\reduced{K}_\F(M)\), we will often write the constant \(\Z\)-term in \eqref{EQ: bracket Z} in parentheses, \ie
\(
K_\F(M)=[\Z ]\oplus\reduced{K}_\F(M)
\).

\subsection{K-theory as a generalized cohomology theory}
Atiyah and Hirzebruch constructed generalized cohomology theories \(K^*(-)\) and \(KO^*(-)\) on topological spaces such that \(K^0(M)\cong K(M)\) and \(KO^0(M)\cong KO(M)\) for any compact Hausdorff~\(M\).
For negative indices \(i\), the groups \(K^i(M)\) and \(KO^i(M)\) can be defined geometrically in terms of vector bundles over suspensions of~\(M\).
An equivalent way to state Bott periodicity \eqref{eq:external-product-iso} is that these cohomology theories are periodic in the sense that there are natural isomorphisms \(K^{i+2}(-)\cong K^{i}(-)\) and \(KO^{i+8}(-)\cong KO^{i}(-)\).
Their coefficient groups are as follows:
\[
\newcolumntype{L}{>{\raggedright\arraybackslash\(}p{1.1em}<{\)}}
\begin{array}{lLLLLLLLL}
{}^{i \text{ mod 8}} &{}^0 & {}^1 & {}^2 & {}^3 & {}^4 & {}^5 & {}^6 & {}^7 \\
K^{-i}(pt) & \Z & 0  & \Z & 0  & \Z & 0  & \Z & 0 \\
KO^{-i}(pt) & \Z & \Z_2  & \Z_2 & 0  & \Z & 0  & 0 & 0 \\
\end{array}
\]
Both cohomology theories are multiplicative.  Their coefficient rings may be written as
\begin{align*}
K^*(pt) &= \ZZ[\beta^{\pm 1}]  \\
 KO^*(pt) &= \ZZ[\eta,\alpha,\beta_O^{\pm 1}]/(2\eta,\eta^3,\eta\alpha,\alpha^2-4\beta_O)
\end{align*}
with \(\deg{\beta} = -2\), \(\deg{\eta} = -1\), \(\deg{\alpha} = -4\) and \(\deg{\beta_O} = -8\).
The graded rings \(K^*(M):=\bigoplus_{i\in\Z} K^{i}(M)\) and \(KO^*(M):=\bigoplus_{i\in\Z} KO^{i}(M)\)
are graded \(K^*(pt)\)- and \(KO^*(pt)\)-modules, respectively, and the periodicity isomorphisms are given by multiplication with the so-called Bott elements \(\beta\) and \(\beta_O\).
In the quaternionic case, the same construction yields a graded abelian group \(KSp^*(M):=\bigoplus_{i\in\Z} KSp^{i}(M)\). The periodicity in this case is again of period eight, as there are natural group isomorphisms:
\begin{equation}\label{eq:KSp=KO-4}
 KSp^{i}(M)\cong KO^{i-4}(M), \text{ for all \(i\in\Z\).}
\end{equation}

For connected \(M\), we define higher \emph{reduced} groups \(\reduced{K}^i(M)\) as in the case \(i=0\), so that we have natural splittings:
\begin{align*}
K^i(M)&=[K^i(pt)]\oplus\reduced{K}^i(M), & KO^i(M)&=[KO^i(pt)]\oplus\reduced{KO}^i(M).
\end{align*}

\subsection{Relations between complex and real K-theory}\label{sec:Tate}\label{sec:c,r,q,t}
We have the following natural maps between the K-groups of a space, referred to as complexification (\(\cx\), \(\cx'\)), realification (\(\rx)\), quaternionification (\(\qx\)) and conjugation (\(\tx\)):
\begin{align*}
&\cx\colon KO(M)\to K(M) & & \rx\colon K(M)\to KO(M) & &\tx\colon K(M)\to K(M)  \\
  & \cx'\colon KSp(M)\to K(M) & &\qx\colon K(M)\to KSp(M) & &
\end{align*}
They satisfy the following identities, where \(2\) denotes multiplication by two:
\begin{align*}
& \rx\circ\cx=2 & & \cx\circ\rx = \id+\tx & & \qx\circ\cx' =2  \\
& \cx'\circ\qx=\id+\tx & & \tx\circ\cx=\cx & & \rx\circ\tx=\rx \\
&\tx\circ\cx'=\cx' & & \qx\circ\tx = \qx & & \tx^2=\id
\end{align*}
The maps \(\cx\) and \(\tx\) are ring homomorphisms, while \(\cx'\), \(\rx\) and \(\qx\) are only group homomorphisms.  More precisely, \((\cx,\cx')\colon KO(M)\oplus KSp(M)\to K(M)\) is a ring homomorphism.  This gives \(K(M)\) the structure of a \(KO(M)\oplus KSp(M)\)-module.  The map \(\paircolumn{\rx}{\qx}\colon K(M)\to KO(M)\oplus KSp(M)\) is a \(KO(M)\oplus KSp(M)\)-module homomorphism with respect to this module structure on \(K(M)\).  For example, \(\rx(\cx' x \cdot y) = x\cdot \qx y\) for all \(x\in KSp(M)\), \(y\in K(M)\).

All of the above maps extend to natural transformations of cohomology theories.
Under the identification~\eqref{eq:KSp=KO-4}, \(\qx\colon K(M)\to KSp(M)\) corresponds to \(\rx\colon K^{-4}(M)\to KO^{-4}(M)\) and \(\cx'\) corresponds to \(\cx\) \cite{Bott:quelques}.  The values of \(\cx\), \(\rx\) and \(\tx\) on the coefficient rings are determined by:
\begin{align}\label{eq:c-r-t-on-coefficients}
 \tx\beta &= -\beta &&& \cx\beta_O&= \beta^4 &  \cx\alpha&=2\beta^2 &&& \rx\beta&= \eta^2 & \rx(\beta^2) &= \alpha
\end{align}
\cite{Bousfield:K-local}*{\S\,1.1}.
The realification and complexification maps in different degrees can be assembled into a periodic exact sequence known as the \emph{Bott sequence}, of the following form:
\begin{equation}\label{EQN: Bott sequence}
\newcommand{\lbl}[1]{\parbox{2em}{\centering\(#1\)}}
\cdots\to KO^{i}(M) \xrightarrow{\lbl{\cx}} K^{i}(M)\xrightarrow{\lbl{\rx\beta^{-1}}} KO^{i+2}(M)\xrightarrow{\lbl{\eta}} KO^{i+1}(M)\to\cdots
\end{equation}

\subsubsection{A lemma of Bousfield}
We can view the conjugation \(\tx\colon K(M)\to K(M)\) as an involution on the K-ring.
In general, given a ring \(A=(A,\tx)\) with an involution, \ie with a ring automorphism \(\tx\) such that \(\tx^2=\id\), one can define the associated Tate ring as the \(\ZZ_2\)-graded ring
\(
  h^*(A,\tx) := h^+(A,\tx) \oplus h^-(A,\tx)
\)
with homogeneous components:
\begin{align*}
  h^+(A,\tx) &= \frac{\ker(\id-\tx)}{\im(\id+\tx)}\\
  h^-(A,\tx) &= \frac{\ker(\id+\tx)}{\im(\id-\tx)}
\end{align*}

\begin{lem}[Bousfield]\label{Bousfield's-lemma}
For any space \(M\) with \(K^1(M)=0\), the complexification maps induce isomorphisms:
\begin{align*}
KO(M)/\rx \oplus KSp(M)/\qx &\xrightarrow[\;\cx\,+\,\cx']{\cong}
   h^+(K(M),\tx) \\
KO^2(M)/\rx \oplus KO^6(M)/\rx &\xrightarrow[\;\cx\,+\,\cx\;]{\cong}
   h^-(K(M),\tx)
\end{align*}
\end{lem}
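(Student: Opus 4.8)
The plan is to deduce the lemma from the Bott sequence \eqref{EQN: Bott sequence} together with the hypothesis $K^1(M) = 0$. First, I would unpack what $K^1(M) = 0$ buys us: since complex K-theory is $2$-periodic, we also get $K^{-1}(M) = 0$, and the Bott sequence degenerates into short exact sequences. Concretely, the fragment
\[
KO^{i}(M) \xrightarrow{\;\cx\;} K^{i}(M) \xrightarrow{\;\rx\beta^{-1}\;} KO^{i+2}(M) \xrightarrow{\;\eta\;} KO^{i+1}(M) \xrightarrow{\;\cx\;} K^{i+1}(M)
\]
shows that for $i \equiv 0, -1 \pmod 2$ the map $\eta\colon KO^{i+2}(M) \to KO^{i+1}(M)$ has image hitting the kernel of $\cx\colon K^{i+1}(M)\to\ldots$; when $K^{\mathrm{odd}}(M) = 0$ the relevant $\eta$-maps land appropriately and I can read off that $\cx\colon KO^i(M)\to K^i(M)$ is injective modulo the image of $\eta$, and that $\rx\beta^{-1}$ is surjective onto $\ker\eta$. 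The upshot I want to extract is: for each $i$, there is an exact sequence $0 \to \operatorname{coker}(\eta\colon KO^{i+1}\to KO^{i}) \xrightarrow{\cx} K^i(M) \xrightarrow{\rx\beta^{-1}} \ker(\eta\colon KO^{i+2}\to KO^{i+1}) \to 0$, at least in the degrees $i$ we need.

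Next I would translate the Tate-cohomology groups $h^{\pm}(K(M),\tx)$ into the language of realification and complexification. The identities in \eqref{eq:c-r-t-on-coefficients} and the relations $\cx\circ\rx = \id + \tx$, $\rx\circ\cx = 2$, $\tx\circ\cx = \cx$ give, on the nose, $\im(\id+\tx) \subseteq \im(\cx)$ and $\im(\id - \tx)$ related to $\ker(\rx)$-type data. The key computations are: $\ker(\id - \tx) = \im(\cx) = $ the image of complexification from $KO$, while $\ker(\id+\tx)$ (after multiplying by $\beta$, which swaps the sign of $\tx$) is $\beta\cdot\im(\cx') = \beta\cdot(\text{image of }\cx'\text{ from }KSp)$; and $\im(\id\pm\tx)$ is computed using $\cx\rx = \id+\tx$ and $\beta^{-1}\tx\beta = -\tx$. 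Carrying these out, $h^+(K(M),\tx)$ should come out as $\im(\cx)/(\text{stuff}) \oplus \beta\cdot\im(\cx')/(\text{stuff})$, and since $\cx$ and $\cx'$ are surjective onto their images with kernels controlled by $\rx$ and $\qx$ respectively (via $\rx\cx = 2 = \qx\cx'$ and the Bott-sequence exactness above), this becomes $KO(M)/\rx \oplus KSp(M)/\qx$. The identification $\qx\colon K(M)\to KSp(M) \cong \rx\colon K^{-4}(M)\to KO^{-4}(M)$ together with $4$-periodicity-type shifts handles the $KSp$ summand; the negative-degree version handles $h^-$ with $KO^2$ and $KO^6$ appearing because multiplication by $\beta$ shifts complex degree by $2$ and toggles the $\tx$-sign, so the "$-$" eigenspace picks out degrees where the realification Bott map lands in $KO^{2}$ and $KO^{6}$.

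I expect the main obstacle to be the bookkeeping of degree shifts and sign conventions rather than any conceptual difficulty: getting $h^-$ to land precisely on $KO^2 \oplus KO^6$ (and not, say, $KO^0 \oplus KO^4$) requires being careful about the fact that $\tx\beta = -\beta$, so conjugation-invariance in $K^0$ corresponds to anti-invariance after twisting by $\beta$, which is a class in $K^{-2}$, and realification of $K^{-2}$ lands in $KO^{-2} \cong KO^6$. A secondary subtlety is confirming that the natural maps $\cx + \cx'$ and $\cx + \cx$ really do descend to well-defined maps on the Tate quotients — this amounts to checking $\rx(\im\,\cx') = \rx(\qx(\ldots))$ lands in $\im(\rx)$, which is immediate, and that the images are exactly right, which is where the degeneracy $K^1(M) = 0$ is essential (without it the Bott sequence has longer exact pieces and the quotients acquire extra terms). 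Once the exact sequences are in hand, surjectivity and injectivity of the displayed maps follow by a diagram chase comparing the Bott sequence in the relevant degrees with the defining presentation of $h^{\pm}$.
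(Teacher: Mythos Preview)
The paper does not prove this lemma at all; its ``proof'' is a bare citation to Bousfield \cite{Bousfield:2-primary}*{Lemma~4.7} and to \cite{Z:KO-rings}.  Your plan, by contrast, sketches the actual argument behind those references, and the Bott-sequence approach you describe is indeed the correct engine.

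That said, your plan contains a genuine error in how the direct-sum decomposition arises.  You assert that \(\ker(\id-\tx)=\im(\cx)\), the image of complexification from \(KO\) alone.  This is false even under the hypothesis \(K^1(M)=0\): take \(M=\SS^4\), where \(\tx\) acts trivially on \(\reduced K(\SS^4)\cong\ZZ\) but \(\cx\colon\reduced{KO}(\SS^4)\to\reduced K(\SS^4)\) is multiplication by~\(2\) (since \(\cx(\alpha)=2\beta^2\)).  If your claim held, \(h^+\) would be a quotient of \(KO(M)\) by itself, leaving no room for the \(KSp\)-summand.  What actually happens is that \(\im(\cx)\) and \(\im(\cx')\) \emph{together} cover \(\ker(\id-\tx)\) modulo \(\im(\id+\tx)\), and one must further show that their images in \(h^+\) intersect trivially.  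Your treatment of \(h^-\) has a related type error: you write \(\ker(\id+\tx)=\beta\cdot\im(\cx')\), but \(\beta\) has degree \(-2\), so \(\beta\cdot K^0(M)\subset K^{-2}(M)\), not \(K^0(M)\).  The correct statement is that multiplication by \(\beta\) intertwines \(\ker(\id+\tx)\) on \(K^0\) with \(\ker(\id-\tx)\) on \(K^{-2}\), which is what brings \(KO^2\) and \(KO^6\) into play --- but the summands come from degrees shifted by \(\beta\) and \(\beta^3\), not from \(\cx'\).  Establishing both the surjectivity and the direct-sum splitting requires chasing the Bott sequence through all eight \(KO\)-degrees and invoking \(K^{\text{odd}}(M)=0\) at each stage; this is the substance of Bousfield's CRT-module analysis, and it is more than the single diagram chase your last paragraph suggests.
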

\begin{proof}
This is essentially \cite{Bousfield:2-primary}*{Lemma~4.7}.  See also \cite{Z:KO-rings}*{\S\,1.2, \S\,2.1}.
\end{proof}

\begin{rem}\label{Bousfield's-lemma-for-RG}
Likewise, for any compact Lie group \(G\), complex conjugation defines an involution \(\tx\) on the complex representation ring \(RG\), and the complexification map from the real\slash quaternionic representation ring \(RO(G)\oplus RSp(G)\) to the complex representation ring induces a ring isomorphism:
\[
RO(G)/\rx \oplus RSp(G)/\qx \xrightarrow[\cx\,+\,\cx']{\cong}  h^+(RG,\tx)
\]
The odd Tate groups \(h^-(RG,\tx)\) vanish in this case (see section \cref{SS: homogeneous vector bundles}).

\end{rem}

\begin{lem}\label{Kunneth-for-Tate}
  Suppose \((A,\tx)\) and \((B,\tx)\) are commutative rings with involutions.  Consider the tensor product ring \(A\otimes B\) with the induced involution \(\tx\otimes\tx\).  If \(A\) is free as an abelian group, then the canonical ring homomorphism of \(\ZZ_2\)-graded rings \(h^*(A)\otimes h^*(B)\to h^*(A\otimes B)\) is an isomorphism.
\end{lem}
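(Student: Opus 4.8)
The plan is to recognise \(h^*(A,\tx)=h^+(A,\tx)\oplus h^-(A,\tx)\) as the Tate cohomology \(\widehat H^*(\ZZ_2;A)\) of the group of order two acting on \(A\) through \(\tx\); in particular it depends only on the underlying module structure over \(R:=\ZZ[\sigma]/(\sigma^2-1)\), with \(\sigma\) acting as \(\tx\). Write \(h^*(A):=h^*(A,\tx)\). The comparison map of the statement is
\[
  \varphi_A\colon h^*(A)\otimes_\ZZ h^*(B)\longrightarrow h^*(A\otimes_\ZZ B),\qquad [a]\otimes[b]\longmapsto[a\otimes b].
\]
First I would record its elementary formal properties. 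It is already well defined at the level of \(R\)-modules: if \(\tx a=\varepsilon a\) and \(\tx b=\delta b\) with \(\varepsilon,\delta\in\{+1,-1\}\) then \(\tx(a\otimes b)=\varepsilon\delta\,(a\otimes b)\), so \(a\otimes b\) represents a class in degree \(\varepsilon\delta\), and the identity \((1+\varepsilon\delta\tx)(a\otimes b')=a\otimes(1+\delta\tx)b'\) (valid whenever \(\tx a=\varepsilon a\)) shows that this class depends only on \([a]\) and \([b]\). Moreover \(\varphi\) is natural in the \(R\)-module \(A\) and commutes both with arbitrary direct sums and with filtered colimits in the \(A\)-variable. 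Finally every element of \(h^*(-)\) is annihilated by \(2\), so all the groups involved are \(\ZZ_2\)-vector spaces and there are no Koszul-sign subtleties; it therefore suffices to show that \(\varphi_A\) is an isomorphism of \(\ZZ_2\)-graded abelian groups, and since it coincides with the canonical ring homomorphism of the statement (both send \([a]\otimes[b]\) to \([a\otimes b]\)) this also yields the ring isomorphism.

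The strategy is then to simplify \(A\) in two reductions. Since \(A\) is free over \(\ZZ\), every finitely generated \(\ZZ\)-submodule is contained in a \(\tx\)-stable one (adjoin its image under \(\tx\)), which is again free over \(\ZZ\) and hence a finitely generated \(R\)-module that is free over \(\ZZ\); as \(A\) is the filtered colimit of these submodules, and \(h^*(-)\), \(-\otimes_\ZZ B\), \(-\otimes_\ZZ h^*(B)\) and \(\varphi\) all commute with filtered colimits, it is enough to treat finitely generated \(A\). For such \(A\) I would invoke the classical classification of integral representations of \(\ZZ_2\): a finitely generated \(R\)-module that is free over \(\ZZ\) is a finite direct sum of copies of the three indecomposables \(\ZZ_+\) (trivial action), \(\ZZ_-\) (sign action) and the regular module \(R\). (For \(\ZZ_2\) this is elementary: \(\ker(1-\tx)\oplus\ker(1+\tx)\) has finite \(2\)-power index in \(A\), and one peels off one copy of \(R\) per unit of that index.) By additivity of \(\varphi\), the claim is reduced to the three cases \(A\in\{\ZZ_+,\ZZ_-,R\}\).

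These are checked directly. For \(A=\ZZ_+\) one has \(A\otimes_\ZZ B=B\) with its original involution and \(h^*(\ZZ_+)=\ZZ_2\) concentrated in degree \(0\), so under the obvious identifications \(\varphi\) becomes the identity of \(h^*(B)\). For \(A=\ZZ_-\) one has \(A\otimes_\ZZ B=(B,-\tx)\); since \(h^\pm(B,-\tx)=h^\mp(B,\tx)\) while \(h^*(\ZZ_-)=\ZZ_2\) sits in degree \(1\), \(\varphi\) is the evident degree-shifting isomorphism. The only case with content is \(A=R\): here \(h^*(R)=0\) because \(R\) is induced from the trivial subgroup, and the point is that \(h^*(R\otimes_\ZZ B)=0\) too. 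This is the projection formula---the diagonal \(\ZZ_2\)-action on \(R\otimes_\ZZ B\) is an induced module, concretely \(B\oplus B\) with the coordinate-swap involution, whose Tate groups vanish manifestly---so both sides of \(\varphi_R\) vanish. Assembling the two reductions completes the proof.

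I expect no serious obstacle: the argument is almost entirely formal, and the only points that need care are organisational---verifying that \(\varphi\) really is compatible with the filtered-colimit and direct-sum reductions, and correctly recognising the diagonal module \(R\otimes_\ZZ B\) in the case \(A=R\). That last case is precisely where the hypothesis that \(A\) be free over \(\ZZ\) enters; the statement genuinely fails without it, \eg for \(A=\ZZ/4\) with the trivial involution. One can also bypass the classification by comparing the long exact \(h^*\)-sequence attached to \(0\to\ker(1-\tx)\to A\to A/\ker(1-\tx)\to 0\)---whose outer terms are direct sums of copies of \(\ZZ_\pm\)---with its \(\otimes_\ZZ B\)-counterpart via \(\varphi\); this works because tensoring the resulting \(\ZZ_2\)-linear exact sequences with a \(\ZZ_2\)-vector space stays exact, but it needs a chain-level model of \(\varphi\) and is messier.
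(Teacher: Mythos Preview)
Your proof is correct and follows essentially the same approach as the paper: decompose \(A\) as a \(\ZZ[\ZZ_2]\)-module into pieces with trivial action, sign action, and permutation action, then verify the comparison map on each piece separately. The only organizational difference is that the paper bypasses your filtered-colimit reduction by citing directly a decomposition \(A\cong A_+\oplus A_-\oplus(A'\oplus A')\) valid for arbitrary free abelian groups with involution (Bousfield, \emph{K-local}, Prop.~3.7), whereas you first reduce to the finitely generated case before invoking the classification; the three building blocks you arrive at---\(\ZZ_+\), \(\ZZ_-\), and the regular module \(R\)---are exactly the paper's \(A_+\), \(A_-\), and \(A'\oplus A'\) specialised to rank one.
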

\begin{proof}
  As an abelian group, we can decompose \(A\) into a direct sum \(A_+\oplus A_- \oplus A' \oplus A'\) such that \(\tx\) acts on \(A_+\) as the identity, on \(A_-\) as minus the identity and on \(A'\oplus A'\) by interchanging the two factors \cite{Bousfield:K-local}*{Prop.~3.7}. It suffices to verify the claim separately for each of these three factors.
\end{proof}

\subsection{The Atiyah-Hirzebruch spectral sequences}\label{sec:AHSS}
In \cite{AH:bundles homogeneous}, Atiyah and Hirzebruch constructed a spectral sequence for calculating generalized cohomology theories like \(K^*\) and \(KO^*\) of a space from its ordinary cohomology. It is commonly known today as the Atiyah-Hirzebruch spectral sequence (AHSS).

A spectral sequence is a sequence of pages \(\{E_k^{p,q}\}_{k\in\Z}\), each consisting of a grid of abelian groups together with certain homomorphisms \(d_k^{p,q}\) between them, called differentials, that determine the next page.  See \cite{McCleary:SS} for a comprehensive introduction to this machinery.  A spectral sequence converges to some graded group \(G^*\) if there exists a limit page \(E_\infty^{p,q}\) such that each diagonal \(p+q=c\) is isomorphic to the graded group associated with some bounded exhaustive filtration on \(G^c\). We refer to this situation with the notation \(E_2^{p,q} \Rightarrow G^{p+q}\).

The AHSS for the generalized cohomology theories \(K^*(M)\) and \(KO^*(M)\) have the form:
\begin{align*}
E_2^{p,q}=H^p (M, K^q(pt))  &\Rightarrow K^{p+q}(M) \\
E_2^{p,q}=H^p (M, KO^q(pt)) &\Rightarrow KO^{p+q}(M)
\end{align*}
Alternatively, using reduced cohomology, we obtain a spectral sequence converging to the reduced K- and KO-theory of \(M\).  It differs from the unreduced AHSS only in that the first column is zero.  We will repeatedly use the following facts:
\begin{enumerate}[(a)]
\item
  The images of all differentials are torsion.
  In particular, if \(H^*(M,\Z)\) is torsion-free, then there are no nontrivial differentials in the AHSS for \(K^*(M)\), and consequently \(E_2^{p,q} = E_\infty^{p,q}\).
  (See \cite{AH:bundles homogeneous}*{\S\,2.4} for the complex case.  The real case follows by comparing the two spectral sequence for \(K^*(-)\otimes\QQ\) and \(KO^*(-)\otimes\QQ\).)
\item
  In the AHSS for \(KO^*(M)\), the differential \(d_2\colon E_2^{p,q}\to E_2^{p+2,q-1}\) is as follows:
  \[
    d_2^{p,q}=
    \begin{cases}
      Sq^2\circ\mathsf{red} & \text{ for } q\equiv 0 \mod 8  \\
      Sq^2                & \text{ for } q\equiv -1 \mod 8 \\
      0                   & \text{ otherwise, }
    \end{cases}
  \]
  Here, \(Sq^2\) denotes the second Steenrod square and \(\mathsf{red}\) is reduction of coefficients modulo two. \cite{Fujii}*{\S\,1}

\item
  The spectral sequences are multiplicative.  That is, for each page \(E_r\) of the spectral sequence, we have a pairing of bigraded abelian groups \(E_r\otimes E_r\to E_r\), and with respect to this pairing, the differential \(d_r\) satisfies the Leibniz rule \(d_r(x\cdot y) = d_r(x) \cdot y +(-1)^{p} x \cdot d_r(y)\) for \(x\in E_r^{p,q}\) and \(y\in E_r^{s,t}\).
  Up to a sign, the pairing on the \(E_2\)-page can be identified with the usual cup product \cite{Dugger:MII}*{Thm~3.4 and Rem.~3.6}.

\item \label{AHSS-facts:characteristic-classes}
The first few columns of the spectral sequences are closely related to certain characteristic classes.  Write \(K(M) = F^0 K(M) \supset F^1 K(M) \supset F^2 K(M) \supset \cdots\) for the filtration on \(K(M)\) whose associated graded ring is isomorphic to the zero diagonal of the \(E_\infty\)-page of the AHSS converging to \(K(M)\).  As all odd rows vanish, \(F^{2i-1}K(M) = F^{2i}K(M)\) for all~\(i\). Moreover, \(F^2 K(M) = \reduced K(M)\), the morphism \(F^2 K(M) \to H^2(M,\ZZ)\) coming from the spectral sequence can, up to a sign, be identified with the first Chern class, and the morphism \(F^4 K(M)\to H^4(X,\ZZ)\) can be identified with the restriction of the second Chern class, again up to a possible sign.  Similarly, for the corresponding filtration on \(KO(M)\), we have \(F^1 KO(M) = \reduced{KO}(M)\), the map \(F^1 KO(M)\to H^1(M,\ZZ_2)\) is the first Stiefel-Whitney class, and the map \(F^2 KO(M)\to H^2(M,\ZZ_2)\) is the second Stiefel-Whitney class \cite{Z:WCS}*{Lem.\,3.13}.
\end{enumerate}

Here's one quick application:
\begin{lem}\label{lem: triviality ok KO product spheres}
A product of spheres \(S=\SS^{n_1}\times\SS^{n_2}\times\dots\times\SS^{n_\ell}\) satisfies \(\reduced{KO}(S)=0\) only in the following cases:
\begin{compactitem}
\item[\(\ell=1\)] and \(n_1\equiv 3,5,6,7 \text{ modulo } 8\).
\item[\(\ell=2\)] and the unordered pair \(\{n_1,n_2\}\) is congruent to one of the pairs \(\{3,3\},\) \(\{5,6\}\), \(\{6,7\}\) or \(\{7,7\}\) modulo~\(8\).
\item[\(\ell=3\)] and the triple \(\{n_1,n_2,n_3\}\) is congruent to \(\{7,7,7\}\) modulo~\(8\).
\end{compactitem}
\end{lem}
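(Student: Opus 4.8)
The plan is to compute $\reduced{KO}(S)$ by reducing everything to single spheres, using the classical stable splitting of a product. Recall that for well-pointed spaces $X_1,\dots ,X_\ell$ one has, after one suspension, the homotopy equivalence obtained by iterating $\Sigma(X\times Y)\simeq\Sigma X\vee\Sigma Y\vee\Sigma(X\wedge Y)$ and using that smashing distributes over wedges; stably this reads
\[
  \Sigma^\infty\bigl(X_1\times\dots\times X_\ell\bigr)\;\simeq\;\bigvee_{\emptyset\ne I\subseteq\{1,\dots ,\ell\}}\Sigma^\infty\bigl(\textstyle\bigwedge_{i\in I}X_i\bigr).
\]
Applying this to $S=\SS^{n_1}\times\dots\times\SS^{n_\ell}$ and using $\SS^a\wedge\SS^b\cong\SS^{a+b}$ exhibits $S$, stably, as a wedge of spheres, with one sphere of dimension $n_I:=\sum_{i\in I}n_i$ for each nonempty $I$. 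Since reduced $KO$-theory depends only on the stable homotopy type and sends wedges to direct sums, this yields a natural isomorphism
\[
  \reduced{KO}(S)\;\cong\;\bigoplus_{\emptyset\ne I\subseteq\{1,\dots ,\ell\}}\reduced{KO}\bigl(\SS^{n_I}\bigr).
\]

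Next I would record the single-sphere case: from the coefficient table of $KO^*(pt)$ and the isomorphism $\reduced{KO}(\SS^m)\cong KO^{-m}(pt)$ one has $\reduced{KO}(\SS^m)=0$ precisely when $m\equiv 3,5,6,7\pmod 8$. Combined with the splitting above, $\reduced{KO}(S)=0$ if and only if $n_I\not\equiv 0,1,2,4\pmod 8$ for every nonempty $I$. In particular, taking $I$ a singleton forces each $n_i$ to be congruent to one of $3,5,6,7$ modulo $8$, so from now on all residues lie in $\{3,5,6,7\}$.

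It then remains to carry out a finite combinatorial analysis modulo $8$: I want all unordered tuples of residues in $\{3,5,6,7\}$ all of whose nonempty subset sums avoid $\{0,1,2,4\}$. For $\ell\le 2$ this is a direct check of the four admissible singletons and the ten multisets of pairs, and the only admissible pairs turn out to be $\{3,3\}$, $\{5,6\}$, $\{6,7\}$, $\{7,7\}$. For $\ell=3$, in an admissible triple every pair of entries is itself an admissible pair, and checking which residues are mutually compatible leaves only the multisets $\{3,3,3\}$, $\{7,7,7\}$ and $\{7,7,6\}$; of these, only $\{7,7,7\}$ has admissible total sum ($21\equiv 5\pmod 8$), so it is the unique admissible triple. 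For $\ell\ge 4$, every $3$-element sub-multiset of an admissible tuple is again admissible, hence equals $\{7,7,7\}$; this forces the tuple to be $\{7,\dots ,7\}$, but then the sum over any $4$-element subset is $28\equiv 4\pmod 8$, a contradiction. So no admissible tuple exists for $\ell\ge 4$, and we recover exactly the list in the statement.

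The formal input — the stable splitting and its two consequences — is routine, so the only part that requires real care is the last paragraph: the bookkeeping modulo $8$, and in particular the reduction for $\ell\ge 4$ to the all-$7$ tuple via $3$-element sub-multisets followed by the size-$4$ subset sum. (One could instead run the Atiyah--Hirzebruch spectral sequence for $KO^*(S)$ directly — note that all $Sq^i$ with $i\ge 1$ vanish on $H^*(S;\Z_2)$, since they vanish on the generators, so $d_2=0$ — but that route still needs an argument that the higher differentials vanish, which the stable splitting supplies for free.)
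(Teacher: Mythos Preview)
Your proof is correct, and it takes a genuinely different route from the paper's own argument. The paper argues directly with the Atiyah--Hirzebruch spectral sequence: the differentials for $KO^*$ vanish on each single sphere, and since the cohomology of the product is generated multiplicatively by the pulled-back fundamental classes, the Leibniz rule forces all differentials to vanish on $S$ as well; hence $\reduced{KO}(S)=0$ if and only if the zero diagonal of the $E_2$-page is trivial in positive degrees, i.e.\ $H^k(S,\ZZ)=0$ for all positive $k\equiv 0,1,2,4\pmod 8$. You bypass the spectral sequence entirely by invoking the stable splitting of a product into a wedge of smash products, which for spheres gives $S$ stably as a wedge of spheres $\SS^{n_I}$; this delivers the same combinatorial criterion without any discussion of differentials, and is arguably the cleaner route (as you yourself note parenthetically, the AHSS approach still owes an argument for the higher differentials, which the paper handles via multiplicativity). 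The paper then simply asserts that the cohomological condition is ``equivalent to the numerical conditions stated,'' whereas you actually carry out the mod~$8$ case analysis in full --- the reduction for $\ell\geq 4$ via $3$-element sub-multisets is a nice touch and makes the finiteness of the list transparent.
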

\begin{proof}
The differentials in the AHSS for \(KO\) are trivial for any sphere.  It follows from the multiplicativity of the spectral sequence that they are also trivial for~\(S\). Thus, \(\reduced{KO}(S)\) vanishes if and only if all terms on the diagonal \(E_2^{p,-p}\) are zero for \(p\geq 1\).  As the cohomology of \(S\) is finitely generated free, this happens if and only if \(H^k(S,\Z)\) vanishes in all positive degrees \(k\equiv 0,1,2,4 \text{ mod } 8\).  This condition is equivalent to the numerical conditions stated.
\end{proof}

The structure maps \(\rx\), \(\cx\) and \(\tx\) induce morphisms of spectral sequences between the AHSS computing the K- and KO-theory of a space.  For example, \(\rx\) induces a sequence of maps \(\rx_k^{p,q}\colon E_k^{p,q}(K(M))\to E_k^{p,q}(KO(M))\) that commute with the differentials and converge to \(\rx\colon K^*(M)\to KO^*(M)\).   On the \(E_2\)-pages, these morphisms are simply the change-of-coefficient maps induced by the corresponding maps between the K- and KO-theory of a point.  Using \eqref{eq:c-r-t-on-coefficients}, we can easily describe them explicitly:

\begin{description}
\item[Conjugation] \(\tx_2^{p,q}\colon E_2^{p,q}(K(M))\to E_2^{p,q}(K(M))\)
  \begin{compactitem}
  \item [For \(q\equiv 0\text{ mod } 4\),]  \(\tx_2^{p,q}\colon H^p(M,\Z)\to H^p(M,\Z)\) is the identity.
  \item [For \(q\equiv 2\text{ mod } 4\),]  \(\tx_2^{p,q}\colon H^p(M,\Z)\to H^p(M,\Z)\) is multiplication by \(-1\).
  \item [In all other cases,] \(\tx_2^{p,q}\) vanishes.
  \end{compactitem}

\item[Complexification] \(\cx_2^{p,q}\colon E_2^{p,q}(KO(M))\to E_2^{p,q}(K(M))\)
  \begin{compactitem}
  \item [For \(q\equiv 0\text{ mod } 8\),]  \(\cx_2^{p,q}\colon H^p(M,\Z)\to H^p(M,\Z)\) is the identity.
  \item [For \(q\equiv 4\text{ mod } 8\),]  \(\cx_2^{p,q}\colon H^p(M,\Z)\to H^p(M,\Z)\) is multiplication by \(2\).
  \item [In all other cases,] \(\cx_2^{p,q}\) vanishes.
  \end{compactitem}

\item[Realification] \(\rx_2^{p,q}\colon E_2^{p,q}(K(M))\to E_2^{p,q}(KO(M))\)
  \begin{compactitem}
  \item [For \(q\equiv 0\text{ mod } 8\),]  \(\rx_2^{p,q}\colon H^p(M,\Z)\to H^p(M,\Z)\) is multiplication by~\(2\).
  \item [For \(q\equiv 4\text{ mod } 8\),]  \(\rx_2^{p,q}\colon H^p(M,\Z)\to H^p(M,\Z)\) is the identity.
  \item [For \(q\equiv 6\text{ mod } 8\),]  \(\rx_2^{p,q}\colon H^p(M,\Z)\to H^p(M,\Z_2)\) is reduction of coefficients.
  \item [In all other cases,] \(\rx_2^{p,q}\) vanishes.
  \end{compactitem}
\end{description}

For example, the description of \(\rx_2\) implies:
\begin{lem}\label{lem: realification argument}
  Suppose all differentials in the AHSS's affecting elements in the diagonals computing \(K^0(M)\) and \(KO^0(M)\) vanish. If 
  \begin{compactitem}
  \item \(H^k(M)=0\) for positive \(k\equiv 0\text{ mod } 8\), and
  \item reduction of coefficients \(H^k(M,\Z)\to H^k(M,\Z_2)\) is surjective for \(k\equiv 2\text{ mod } 8\)
  \end{compactitem}
  then the map \(\rx\colon  \reduced K(M)\to \reduced{KO}(M)\) is surjective.
\end{lem}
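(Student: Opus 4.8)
The plan is to compare, via the morphism of spectral sequences induced by $\rx$, the two Atiyah--Hirzebruch spectral sequences converging to $\reduced{K}(M)$ and $\reduced{KO}(M)$, and to deduce surjectivity of $\rx$ from surjectivity on the associated graded groups. Recall that $\reduced{K}(M)=F^{2}K(M)$ and $\reduced{KO}(M)=F^{1}KO(M)$ carry decreasing filtrations whose graded pieces are the entries on the zero diagonal of the $E_\infty$-pages of the two AHSS's. By the hypothesis that all differentials affecting these diagonals vanish we have $E_\infty^{p,-p}=E_2^{p,-p}$ there, so $\mathrm{gr}^{p}\reduced{K}(M)=H^{p}(M;K^{-p}(pt))$ and $\mathrm{gr}^{p}\reduced{KO}(M)=H^{p}(M;KO^{-p}(pt))$ for all $p\geq 1$. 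The natural transformation $\rx$ is filtration-preserving and its degree-$p$ graded piece is precisely the map $\rx_2^{p,-p}$ described above. Since $M$ is finite-dimensional the filtrations are finite, so a downward induction on $p$, using a diagram chase (the five lemma) on the short exact sequences $0\to F^{p+1}\to F^{p}\to\mathrm{gr}^{p}\to 0$ on both sides, reduces everything to showing that $\rx_2^{p,-p}$ is surjective for each $p\geq 1$.

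I would then run through $p$ modulo $8$ using the coefficient tables and the case list for $\rx_2^{p,q}$ recorded above. For $p\equiv 3,5,7\pmod 8$ one has $KO^{-p}(pt)=0$, for $p\equiv 6\pmod 8$ one has $KO^{-6}(pt)=0$, and for $p\equiv 0\pmod 8$ with $p>0$ the target $H^{p}(M;\Z)$ vanishes by the first hypothesis; in all of these cases the target is zero and nothing has to be checked. For $p\equiv 4\pmod 8$ the map $\rx_2^{p,-p}$ is the identity of $H^{p}(M;\Z)$, and for $p\equiv 2\pmod 8$ it is the reduction of coefficients $H^{p}(M;\Z)\to H^{p}(M;\Z/2)$, which is onto by the second hypothesis. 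This leaves only $p\equiv 1\pmod 8$: here $K^{-p}(pt)=0$, so the source is trivial, while the target is $H^{p}(M;KO^{-1}(pt))=H^{p}(M;\Z/2)$, and surjectivity of $\rx_2^{p,-p}$ amounts to the vanishing $H^{p}(M;\Z/2)=0$. This is the combined effect of the cohomological hypotheses in these degrees: the first controls the free and $2$-primary summands of $H^{p}(M;\Z)$, and the second, through the universal coefficient sequence, the $2$-torsion coming from degree $p+1\equiv 2$.

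The one place needing genuine care is therefore the case $p\equiv 1\pmod 8$: everywhere else the target is manifestly zero or $\rx_2^{p,-p}$ is manifestly surjective, but for $p\equiv 1$ the complex side contributes nothing to the filtration and one must see that the real side does not either. A useful cross-check is to reformulate the statement via the Bott exact sequence \eqref{EQN: Bott sequence}: restricting to reduced groups, exactness gives $\im\bigl(\rx\colon\reduced{K}(M)\to\reduced{KO}(M)\bigr)=\ker\bigl(\eta\colon\reduced{KO}(M)\to\reduced{KO}^{-1}(M)\bigr)$, so $\rx$ is surjective if and only if $\eta\cdot\reduced{KO}(M)=0$. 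Multiplication by $\eta$ acts on the AHSS as the coefficient map $KO^{-p}(pt)\to KO^{-p-1}(pt)$, which is an isomorphism $\Z/2\to\Z/2$ exactly on the rows with $p\equiv 1\pmod 8$ and vanishes on the other rows meeting the diagonals in question; this again localises the whole content to filtration degree $\equiv 1\pmod 8$ and confirms that this is the single spot where the hypotheses on $H^{*}(M)$ are actually used.
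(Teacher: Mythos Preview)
Your overall strategy --- comparing the two Atiyah--Hirzebruch spectral sequences via the realification morphism and showing surjectivity on each graded piece of the zero diagonal --- is precisely what the paper intends; the paper's entire ``proof'' is the phrase ``the description of $\rx_2$ implies''. Your treatment of the cases $p\equiv 0,2,3,4,5,6,7\pmod 8$ is correct and more detailed than anything the paper writes down.

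The case $p\equiv 1\pmod 8$, however, is a genuine gap, and your attempt to close it does not work. Here the source is $E_2^{p,-p}(K)=H^p(M;K^{-1}(pt))=0$ while the target is $E_2^{p,-p}(KO)=H^p(M;\ZZ_2)$, so surjectivity of $\rx_2^{p,-p}$ is exactly the vanishing $H^p(M;\ZZ_2)=0$. Your claim that this vanishing is ``the combined effect of the cohomological hypotheses'' is false: the first hypothesis concerns degrees $\equiv 0\pmod 8$, not $\equiv 1$, so it says nothing about $H^p(M;\ZZ)$; and the second hypothesis (surjectivity of reduction in degrees $\equiv 2$) is, via the Bockstein, equivalent to the absence of $2$-torsion in $H^{k+1}$ for $k+1\equiv 3$, not $\equiv 2$. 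Neither touches $H^p(M;\ZZ)\otimes\ZZ_2$ for $p\equiv 1$ or the $2$-torsion of $H^{p+1}$ for $p+1\equiv 2$, which are the two pieces the universal coefficient sequence requires.

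In fact the lemma as stated fails without an extra hypothesis. Take $M=\SS^9$ (or already $\SS^1$): all AHSS differentials vanish, $H^k(M;\ZZ)=0$ for all positive $k\equiv 0\pmod 8$, and reduction of coefficients is surjective everywhere since the integral cohomology is free; yet $\reduced{K}(\SS^9)=0$ while $\reduced{KO}(\SS^9)\cong\ZZ_2$, so $\rx$ is not surjective. Your Bott-sequence reformulation via $\eta$ actually pinpoints this: the obstruction lives entirely in filtration $\equiv 1\pmod 8$, and the stated hypotheses simply do not kill it. The honest fix is to add the assumption $H^p(M;\ZZ_2)=0$ for $p\equiv 1\pmod 8$ (or something that implies it in the intended applications).
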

A similar argument to that in the proof of \cref{lem: triviality ok KO product spheres} yields the following corollary:
\begin{cor}\label{lem: surjectivity of realif product spheres}
Consider a product of spheres \(S=\SS^{n_1}\times\SS^{n_2}\times\dots\times\SS^{n_\ell}\). If, among all possible sums \( n_{i_1}+\dots +n_{i_k}\) (\( 1\leq i_1 < \dots < i_k \leq \ell\)), only the empty sum is congruent to zero modulo eight, then the realification map \(\rx\colon  \reduced{K}(S)\to \reduced{KO}(S)\) is surjective.
\end{cor}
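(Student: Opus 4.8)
The plan is to deduce this from \cref{lem: realification argument} applied to $M=S$, translating the numerical hypothesis into the required cohomological conditions just as in the proof of \cref{lem: triviality ok KO product spheres}. By the Künneth theorem, $H^*(S,\Z)$ is a finitely generated \emph{free} abelian group with basis the external products $e_{i_1}\cdots e_{i_k}$ of the fundamental classes $e_i\in H^{n_i}(\SS^{n_i},\Z)$, indexed by the subsets $\{i_1<\dots<i_k\}\subseteq\{1,\dots,\ell\}$; the class indexed by such a subset lies in degree $n_{i_1}+\dots+n_{i_k}$. Hence $H^k(S,\Z)$ is free of rank equal to the number of subsets whose index sum is $k$, and the hypothesis---that the empty subset is the only one with index sum divisible by eight---is exactly the assertion that $H^k(S,\Z)=0$ for every positive $k\equiv 0\bmod 8$. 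This is the first bullet of \cref{lem: realification argument}. The second bullet is automatic: since $H^*(S,\Z)$ is torsion-free, the universal-coefficient theorem identifies the reduction map $H^k(S,\Z)\to H^k(S,\Z_2)$ with $H^k(S,\Z)\to H^k(S,\Z)\otimes\Z_2$, which is surjective in every degree, and in particular for $k\equiv 2\bmod 8$.

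It then remains to check the hypothesis on differentials. For the AHSS computing $K^*(S)$, all differentials vanish because $H^*(S,\Z)$ is torsion-free. For the AHSS computing $KO^*(S)$, I would argue exactly as in the proof of \cref{lem: triviality ok KO product spheres}: the AHSS of each sphere factor $\SS^{n_i}$ has no nontrivial differentials, the $E_2$-page of the AHSS for $S$ is generated as a ring by the classes $e_i$ (which are permanent cycles by naturality along the projection $S\to\SS^{n_i}$) together with the coefficient ring $KO^*(pt)$, and so by the Leibniz rule the AHSS for $S$ collapses at $E_2$. With both bullets of \cref{lem: realification argument} verified and the differentials under control, the lemma delivers the desired surjectivity of $\rx\colon\reduced{K}(S)\to\reduced{KO}(S)$.

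The argument is essentially a bookkeeping translation, so I do not anticipate any real obstacle; the one point that deserves a moment's thought is the collapse of the $KO$-theory AHSS for $S$, but this is already implicit in the proof of \cref{lem: triviality ok KO product spheres}, and only its behaviour along the diagonals computing $K^0(S)$ and $KO^0(S)$ is actually needed here.
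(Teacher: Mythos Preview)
Your proof is correct and follows exactly the approach the paper intends: verify the hypotheses of \cref{lem: realification argument} for $S$ by translating the numerical condition via K\"unneth into the vanishing of $H^k(S,\Z)$ for positive $k\equiv 0\bmod 8$, invoke torsion-freeness for surjectivity of reduction mod~$2$, and use the multiplicativity argument from the proof of \cref{lem: triviality ok KO product spheres} to collapse the spectral sequences. The paper's proof is just the one-line pointer ``a similar argument to that in the proof of \cref{lem: triviality ok KO product spheres}'', and you have unpacked precisely that.
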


Observe that of course the products in \cref{lem: triviality ok KO product spheres} are included in \cref{lem: surjectivity of realif product spheres}.

\subsection{Manifolds of dimension at most seven}
Here, we apply the preceding discussion to simply connected compact manifolds \(M^n\) of small dimensions.  By the reduced realification, we will mean the restriction of the realification map to reduced K-groups: \(\rx\colon \reduced K(M)\to \reduced{KO}(M)\).

\begin{prop}\label{P: description of manifolds leq 7}
Let \(M^n\) be a simply connected compact manifold of dimension \(n\leq 7\).
If \(H^{n-2}(M,\Z)\) contains no two-torsion, then the reduced realification is surjective.
If \(H^5(M,\Z)\) is torsion-free, then the absence of two-torsion in \(H^{n-2}(M,\Z)\) is \emph{equivalent} to the surjectivity of the reduced realification.
\end{prop}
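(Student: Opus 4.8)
The plan is to deduce the first statement from \cref{lem: realification argument} and to establish the converse by comparing the edge homomorphisms $c_1$ and $w_2$ of the two Atiyah-Hirzebruch spectral sequences. I would open by recording the consequences of simple connectivity and Poincar\'e duality in this dimension range: $H^1(M,\Z)=H^1(M,\Z_2)=0$, the group $H^2(M,\Z)$ is free, the groups $H^6(M,\Z)$ and $H^7(M,\Z)$ are torsion-free (they are $H_{n-6}(M)$ and $H_{n-7}(M)$ with $n-6,n-7\le 1$), and, by the universal coefficient theorem and duality, the torsion subgroup of $H^3(M,\Z)$ coincides with that of $H_2(M,\Z)\cong H^{n-2}(M,\Z)$. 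This last identity is the key bookkeeping point: it makes the mod-two reduction $\rho\colon H^2(M,\Z)\to H^2(M,\Z_2)$ surjective exactly when $H^{n-2}(M,\Z)$ has no two-torsion.

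Assuming $H^{n-2}(M,\Z)$ has no two-torsion, I would verify the remaining hypothesis of \cref{lem: realification argument}, namely that no differential affects the diagonal $p+q=0$ in either AHSS. On the $K^*$-side that diagonal carries $H^0,H^2,H^4$ (and $H^6$ if $n=6$); every incoming differential starts in a vanishing group, every outgoing one is torsion-valued and lands in the torsion-free groups $H^6,H^7$ or out of range, except $d_3\colon E_3^{2,-2}\to E_3^{5,-4}$ --- and this one vanishes because $c_1\colon\reduced K(M)\to H^2(M,\Z)$ is onto (every degree-two class is the Chern class of a line bundle), so that $E_\infty^{2,-2}=E_2^{2,-2}$. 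On the $KO^*$-side the diagonal carries $H^0$, then $H^2(M,\Z_2)$ at $(2,-2)$ and $H^4(M,\Z)$ at $(4,-4)$ (the entry at $(1,-1)$ is $H^1(M,\Z_2)=0$); all incoming differentials start at the unit, at $H^1(M,\Z_2)=0$, or at the permanent cycle $\eta\in KO^{-1}(M)$ pulled back from a point, and the only possibly nonzero outgoing one is again $d_3\colon E_3^{2,-2}\to E_3^{5,-4}$. This I would kill by naturality under realification: the induced map $\rx_3^{2,-2}\colon E_3^{2,-2}(K)\to E_3^{2,-2}(KO)$ is the mod-two reduction $H^2(M,\Z)\to H^2(M,\Z_2)$, which is onto by the bookkeeping point, it commutes with $d_3$, and the source differential vanishes as just noted. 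Since the other hypotheses of \cref{lem: realification argument} are vacuous in this range or again amount to surjectivity of $\rho$ in degree two, the lemma gives surjectivity of the reduced realification.

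For the converse, assume in addition that $H^5(M,\Z)$ is torsion-free and that the reduced realification $\rx\colon\reduced K(M)\to\reduced{KO}(M)$ is onto; I must deduce that $H^{n-2}(M,\Z)$ has no two-torsion. Torsion-freeness of $H^5$ now forces $d_3\colon E_3^{2,-2}(KO)\to E_3^{5,-4}(KO)$ to vanish unconditionally --- its image is torsion but sits inside a subquotient of the torsion-free $H^5(M,\Z)$ --- so $E_\infty^{2,-2}(KO)=H^2(M,\Z_2)$, and the identification of low-degree edge maps with characteristic classes recalled in \cref{sec:AHSS} makes $w_2\colon\reduced{KO}(M)\to H^2(M,\Z_2)$ onto. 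On the other hand, for $\xi\in\reduced K(M)$ one has $w_2(\rx\xi)=\rho(c_1\xi)$: the standard relation $w_2(V_\R)=c_1(V)\bmod 2$, extended over $K$-theory because $c_1$ and $w_2$ are the (additive) edge homomorphisms and $w_1$ vanishes on the simply connected $M$. Hence surjectivity of $\rx$ forces $\rho$ onto in degree two, and the bookkeeping point then yields that $H^{n-2}(M,\Z)$ is free of two-torsion.

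I expect the main obstacle to be the spectral-sequence accounting for $KO^*(M)$: pinning down that $d_3\colon E_3^{2,-2}\to E_3^{5,-4}$ is the only differential that can interfere with the diagonal $p+q=0$, and then disposing of it by the two separate mechanisms --- naturality under realification for the direct implication, torsion-freeness of $H^5$ for the converse. The remainder is dimension counting together with standard properties of Chern and Stiefel-Whitney classes.
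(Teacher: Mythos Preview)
Your proposal is correct and follows the paper's overall strategy: reduce the hypothesis on $H^{n-2}$ to surjectivity of the mod-two reduction $\rho\colon H^2(M,\Z)\to H^2(M,\Z_2)$, then compare the two Atiyah--Hirzebruch spectral sequences along the realification map. For the converse you argue via the characteristic-class identity $w_2\circ\rx=\rho\circ c_1$, which is exactly the paper's filtration argument phrased in terms of edge homomorphisms.

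The one genuine variation is in the forward direction. You use surjectivity of $c_1\colon\reduced K(M)\to H^2(M,\Z)$ (line bundles realize every degree-two class) to force $d_3\colon E_3^{2,-2}(K)\to E_3^{5,-4}(K)$ to vanish outright, then transport this vanishing to the $KO$-side by naturality and surjectivity of $\rho$, and invoke \cref{lem: realification argument}. The paper instead allows both $d_3$'s to be nonzero and shows directly, via the commutative square with $\rx_2^{2,-2}$ surjective and $\rx_2^{5,-4}$ an isomorphism, that realification stays surjective on $E_3^{2,-2}$ regardless. Your route is a touch slicker in that it actually collapses the zero diagonals, at the cost of importing the extra fact about $c_1$; the paper's diagram chase is more self-contained. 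One minor imprecision: your claim that ``every incoming differential starts in a vanishing group'' on the $K$-side is not literally true for $d_3\colon H^3\to H^6$ when $n=6$, though that differential still vanishes because its torsion image lands in the torsion-free top group.
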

The proof below will moreover show that, when both cohomological assumptions are satisfied, the group \(\reduced{KO}(M)\) can be described as an extension of \(H^2(M,\Z_2)\) by \(H^4(M,\Z)\).  If \(H^4(M,\Z)\) is a torsion group of odd order, this extension splits, so
  \begin{equation}
    KO(M)\cong[\Z]\oplus H^2(M,\Z_2)\oplus H^4(M,\Z).
  \end{equation}

\begin{examples}\Cref{P: description of manifolds leq 7} easily implies surjectivity of the reduced realification in the following cases:\mbox{}
\begin{compactitem}
\item
  \(M\) compact simply connected of \(\dim M\leq 4 \).
\item
  \(M\) compact simply connected and \emph{homogeneous} of \(\dim M\leq 7\), except for two cases (see \cref{L: realification dim 7}).
\item
  \(M\) is an Eschenburg biquotient with positive curvature \cite{E:new}*{Proposition 36}.
  %
  %
  %
\item
  \(M\) is the total space of an \(\SS^3\)-bundle over \(\SS^4\). Grove and Ziller proved in \cite{GroveZiller:Milnor} that they all admit non-negatively curved metrics.
  %
  %
  %
\end{compactitem}
\end{examples}

\begin{proof}[Proof of \cref{P: description of manifolds leq 7}]
Let \(H^i := H^i(M,\Z)\) and \(h^i := H^i(M,\Z_2)\).
We note first that for a simply connected compact \(n\)-manifold \(M\), our hypothesis that \(H^{n-2}\) contains no two-torsion is equivalent to the assumption that the reduction map \(H^2\to h^2\) is surjective.  Indeed, as \(H_1\) vanishes, the Universal Coefficient Theorem identifies \(H^2\) and \(h^2\) with \(\Hom(H_2,\Z)\) and \(\Hom(H_2,\Z_2)\), respectively. The long exact sequence obtained by applying \(\Hom(H_2,-)\) to the short exact sequence \(\Z\rightarrowtail \Z \twoheadrightarrow \Z_2\) therefore yields an isomorphism between the cokernel of the reduction map \(H^2\to h^2\) and the kernel of multiplication by two on \(\Ext(H_2,\Z)\).  As \(M\) is a  compact manifold, \(H_2\) is finitely generated, and \(\Ext(H_2,\Z)\) can be identified with the torsion subgroup of \(H_2\).  Moreover, by Poincaré duality, \(H_2\cong H^{n-2}\).  So altogether the cokernel of \(H^2\to h^2\) is isomorphic to the kernel of multiplication by two on \(H^{n-2}\).

Now assume in addition that \(n\leq 7\), and consider the realification map from the AHSS computing \(\reduced{K}(M)\) to the AHSS computing \(\reduced{KO}(M)\).  The surjectivity of \(H^2\to h^2\) implies that, on the \(E_2\)-page, realification is surjective on the zero diagonal computing \(\reduced K(M)\) and \(\reduced{KO}(M)\).

The only possibly nontrivial differentials on these two diagonals are the two differentials \(d_3\) defined on \(E_{2}^{2,-2}(K) = H^2\) and \(E_2^{2,-2}(KO) = h^2\), respectively.  Both take values in \(H^5\) and thus vanish when \(H^5\) is torsion-free.  In that case, \(\reduced{K}(M)\) is an extension of \(H^2\) by \(H^4\), \(\reduced{KO}(M)\) is an extension of \(h^2\) by \(H^4\), and the reduced realification \(\rx\) defines a morphism of extensions that restricts to an isomorphism on the subgroups identified with \(H^4\) (see above \cref{lem: realification argument}).  So \(\rx\) is surjective if and only if \(H^2\to h^2\) is.

However, even when the differentials \(d_3\) are nontrivial, the realification remains surjective on the zero diagonal of the \(E_3\)-page.  Indeed, we have the following commutative diagram:
\[\xymatrix@R=12pt@C=12pt{
    0 \ar[r] & E_3^{2,-2}(\reduced{K}(M)) \ar[r]\ar[d]^{\rx_3^{2,-2}} & H^2 = H^2(M,K^{-2}(pt))\ar[d]^{\rx_2^{2,-2}}\ar[r]^-{d_3} & H^5 = H^5(M,K^{-4}(pt))\quad\ar[d]^{\rx_2^{5,-4}} \\
    0 \ar[r] & E_3^{2,-2}(\reduced{KO}(M)) \ar[r]                   & h^2 = H^2(M,KO^{-2}(pt)) \ar[r]^-{d_3}                       & H^5 = H^5(M,KO^{-4}(pt))
  }\]
We are assuming that the second vertical arrow is surjective, and we know that the third vertical arrow is an isomorphism (again, see above \cref{lem: realification argument}). So it follows that the first vertical arrow is also surjective.
Thus, again, realification is surjective on the zero diagonal of the \(E_\infty\)-pages, and hence as a morphism \(\reduced{K}(M)\to\reduced{KO}(M)\).
\end{proof}


\section{Homogeneous spaces and bundles}
We outline the classical construction of a vector bundle over a homogeneous space from a representation.
Moreover, we recall and extend existing results on how many stable classes of vector bundles arise in this way.
These considerations will be key to the proof of the Main Theorem.

\subsection{Homogeneous spaces}\label{sec:homogeneous spaces}
A homogeneous space is a manifold \(M\) that admits a smooth transitive action by a Lie group~\(G\).  It may be identified with the orbit space \(G/H\) for a closed subgroup \(H\subset G\), the stabilizer of a point under the given action.
In general, there can be many different groups \(G\) acting transitively on \(M\), so the presentation of \(M\) as \(G/H\) is not unique. However, the difference \(\rank G-\rank H\) is a homotopy invariant of \(M\) (see \cite{GOV:Lie}*{II\,\S\,2.2, remark following Cor.~2}).
Moreover, when \(G/H\) is compact and simply connected, we may assume the same for~\(G\):
\begin{lem}\label{presentations-with-simply-connected-G}
  Given any presentation of a simply connected compact homogeneous space as \(G/H\), there is a simply connected compact Lie group \(G'\), a closed connected subgroup \(H'\subset G\) and a homomorphism of Lie groups \(f\colon G'\to G\) with \(f(H')\subset H\) inducing a diffeomorphism \(G'/H'\cong G/H\).
\end{lem}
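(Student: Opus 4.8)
The plan is to pass to the universal cover of $G$ and analyze its effect on $H$, then adjust by dividing out redundant factors. First I would let $G' \to G_0$ be the universal covering of the identity component $G_0$ of $G$; since $M = G/H$ is connected we may already assume $G = G_0$ is connected. Pull back $H$ to obtain $\widetilde{H} := f^{-1}(H) \subset G'$, a closed subgroup with $G'/\widetilde{H} \cong G/H$. The subtlety is that $\widetilde{H}$ need not be connected: its component group fits into an exact sequence with $\pi_1(G)$ and $\pi_0(H)$. However, $M = G/H$ simply connected forces, via the long exact homotopy sequence of $H \to G \to M$, that $\pi_1(G) \to \pi_0(H)$ is surjective; lifting to $G'$ this translates into $\widetilde{H}$ having the property that $G'/(\widetilde{H})_0 \to G'/\widetilde{H}$ is a covering whose total space $G'/(\widetilde{H})_0$ is simply connected (because $G'$ is simply connected and $(\widetilde{H})_0$ connected), while the base is $M$, also simply connected; hence the covering is trivial and $\widetilde{H} = (\widetilde{H})_0$ is already connected. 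So after replacing $\widetilde{H}$ by its identity component we have a simply connected compact $G'$ and connected closed $H' := \widetilde{H}$ with $f(H') \subset H$ and $G'/H' \cong G/H$.

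The one remaining defect is that $G'$, though simply connected and compact, might be larger than necessary — but the lemma as stated does not demand minimality, only the existence of \emph{some} such $G'$, $H'$, $f$, so this is not actually an obstacle. (If one did want $G'$ minimal, one would further quotient $G'$ by the largest normal subgroup acting trivially on $M$, i.e. contained in $H'$, and check compactness and simple-connectedness are preserved — but this is not needed here.) The homomorphism $f\colon G' \to G$ is the covering map, it is a Lie group homomorphism by construction, $f(H') \subset H$ holds since $H' \subset f^{-1}(H)$, and the induced map $G'/H' \to G/H$ is a diffeomorphism because it is a bijective (by the covering-triviality argument above) smooth map of homogeneous spaces, hence an isomorphism of $G'$-manifolds.

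The step I expect to require the most care is verifying that $\widetilde{H}$ is connected, i.e. correctly bookkeeping the homotopy long exact sequence to see that simple-connectedness of $M$ upstairs and downstairs forces the finite covering $G'/(\widetilde{H})_0 \to M$ to be one-sheeted. Concretely: from $\pi_1(G') = 0$ one gets $\pi_1(G'/(\widetilde H)_0) \cong \pi_0((\widetilde H)_0) = 0$, so $G'/(\widetilde H)_0$ is simply connected; it covers $G'/\widetilde H \cong M$ with deck group $\pi_0(\widetilde H)$; since $M$ is simply connected this deck group is trivial. The only thing to double-check is that $\widetilde H$ is compact (it is closed in the compact $G'$ — here one uses that $G'$ is compact, which holds because $G$ is compact with $\pi_1$ finite, as $G$ is compact connected; if $\pi_1(G)$ were infinite $G'$ would fail to be compact, but for the spaces of interest and more generally whenever $G$ is compact semisimple this is fine, and the statement is only claimed for compact $G/H$ where one may anyway first reduce $G$ to a compact form). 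With these points settled the proof is complete.
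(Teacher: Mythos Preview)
Your argument has a genuine gap that you yourself flag but then dismiss too quickly: the universal cover \(G'\) of a compact connected Lie group \(G\) need not be compact. A compact connected \(G\) decomposes as \((G_{ss}\times T)/\Gamma\) with \(G_{ss}\) simply connected semisimple, \(T\) a torus, and \(\Gamma\) finite central; its universal cover is \(G_{ss}\times\RR^{\dim T}\), which is compact only when \(T\) is trivial. For instance, if \(M=\SS^{2n+1}\) is presented as \(U(n{+}1)/U(n)\), your \(G'\) is \(SU(n{+}1)\times\RR\), which is not compact. Your suggestion to ``first reduce \(G\) to a compact form'' does not help, since \(G\) is already compact; what is actually needed is a reduction to the \emph{semisimple} part, and that is precisely the step your outline omits.

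The paper's proof supplies this missing step: after passing to connected components and (via Montgomery's theorem) to maximal compact subgroups, it takes \(G'\) to be the \emph{compact factor} \(G_{ss}\) of the universal cover rather than the full universal cover. The map \(j\colon G_{ss}\to G^c\) still induces a diffeomorphism \(G_{ss}/j^{-1}(H^c)\cong G^c/H^c\) because the semisimple part of a compact group already acts transitively on a simply connected homogeneous space. Your careful verification that the preimage \(H'\) is connected, by contrast, is more explicit than the paper's treatment, which leaves that point unremarked; your covering\slash homotopy-sequence argument for it is correct and transplants verbatim once \(G'\) is taken to be \(G_{ss}\).
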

\begin{proof}
Suppose \(G/H\) is some presentation of \(M\).
Let \(H_1\subset H\) and \(G_1\subset G\) be the identity components.
As \(M\) is simply connected, we find that \(H/H_1\to G/G_1\) is an isomorphism, and hence that \(G/H\) is diffeomorphic to \(G_1/H_1\).
Next, by Montgomery's Theorem \cite{GOV:Lie}*{II\,S\,2.3, Cor.~3}, we have \(G_1/H_1\cong G^c/H^c\) for maximal compact subgroups \(H^c\subset H_1\) and \(G^c\subset G_1\).
Let \(G'\) denote the compact factor of the universal cover of \(G^c\), and let \(H'\) denote the preimage of \(H\) under the canonical homomorphism \(j\colon G'\to G^c\).
This homomorphism induces a diffeomorphism \(G'/H'\cong G^c/H^c\).
\end{proof}
From now on, all homogeneous spaces and all Lie groups considered in this article will be compact.

\subsection{Representations and homogeneous bundles}\label{SS: homogeneous vector bundles}
As in \cref{S:K-theory}, let \(\F\) denote either \(\R\), \(\C\) or \(\HH\).
Given a compact Lie group \(G\) and a subgroup \(i\colon H\to G\) as above, a representation \(\rho\) of \(H\) on \(\F^m\) defines a right \(H\)-action on \(G\times\F^m\) by the rule \( ((g,v),h)\mapsto (gh,\rho(h^{-1})v)\). The orbit space
\[
E_\rho := G\times_H\F^m := (G\times \F^m)/H
\]
is the total space of  an \(\F^m\)-vector bundle over \(G/H\). Vector bundles constructed in this way are called homogeneous.

\begin{example}
  The tangent bundle of a homogeneous space \(G/H\) is always a homogeneous vector bundle \cite{GOV:Lie}*{II\,\S\,3.3}. 
  The representation of \(H\) inducing the tangent bundle is called the isotropy representation for the presentation \(G/H\).
\end{example}

The real and complex representation rings \(RO(H)\) and \(R(H)=RH\), and the quaternionic representation group \(RSp(H)\), are constructed in a manner completely analogous to the construction of K-groups of a space: one starts with the set \(\Rep_\F(H)\) of isomorphism classes of finite-dimensional \(\F\)-representations of \(H\), endows it with a semigroup structure via the direct sum, and passes to the group completion \(R_\F(H)\).  Unsurprisingly, the properties of \(R(H)\), \(RO(H)\) and \(RSp(H)\) are also analogous to those described in \cref{SS:K-theory rings} for \(K(M)\), \(KO(M)\) and \(KSp(M)\); see \cite{Adams:LieLectures}*{Ch.\,3} for details.  In particular, \(R(H)\) and \(RO(H)\) are rings via the tensor product, and \(RO(H)\oplus RSp(H)\) is a \(\ZZ_2\)-graded ring. However, in some ways representation rings are simpler than K-rings:
\begin{itemize}
\item
  The representation ring \(RH\) is always free as an abelian group.  A basis is given by the complex irreducible representations of~\(H\).
\item
  The complexification maps \(\cx\colon RO(H)\to R(H)\) and \(\cx'\colon RSp(H)\to R(H)\) are always injective \cite{Adams:LieLectures}*{Prop.~3.22}.
\item
  The negative Tate cohomology \(h^-(RH,\tx)\) always vanishes, as is immediate from the following description of \(\tx\) in terms of the given basis:
  Let us say that an irreducible complex representation is of real or quaternionic type if it is the complexification of a real or a quaternionic representation, respectively; otherwise we say that it is of complex type.  On the basis elements of \(RH\) corresponding to irreducible representations of real or quaternionic type, \(\tx\) acts as the identity.  The irreducible complex representations of complex type, on the other hand, come in pairs that are interchanged by \(\tx\).
\end{itemize}
The construction of homogeneous vector bundles above yields well-defined morphisms of semigroups:
\[
\begin{array}{ccc}
 \Rep_\F (H) & \longrightarrow & \Vect_\F (G/H) \\
  \rho & \longmapsto & E_\rho \end{array}
\]
These extend to group homomorphisms \(\alpha_\F\colon R_\F(H)\to K_\F(G/H)\), which we also denote by \(\alpha\), \(\alpha_O\) and \(\alpha_{Sp}\), respectively.  The following lemmas are well known and can be checked directly from the definitions.
\begin{lem}\label{alpha:internal-product}\label{alpha:c-r-t-q}\label{alpha:exterior-powers}
  The maps \(\alpha\), \(\alpha_O\) and \(\alpha_{Sp}\) commute with the maps \(\cx\), \(\rx\) , \(\tx\), \(\cx'\) and \(\qx\) defined on representation and K-groups.
  They also commute with tensor products and exterior powers, \ie they define \(\lambda\)-ring homomorphisms:
  \begin{align*}
    \alpha\colon &R(H)\phantom{O}\to K(G/H)\\
    \alpha_O\colon &RO(H)\to KO(G/H)\\
    \alpha_O\oplus \alpha_{Sp}\colon &  RO(H)\oplus RSp(H) \to KO(G/H)\oplus KSp(G/H)
  \end{align*}
  The third of these is of course a graded homomorphism of \(\ZZ_2\)-graded \(\lambda\)-rings.
\end{lem}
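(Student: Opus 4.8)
The plan is to verify everything at the level of the ``model fibre''. Each operation appearing in the statement --- direct sum, tensor product, $i$-th exterior power, complexification $\cx$ and $\cx'$, realification $\rx$, quaternionification $\qx$, and conjugation $\tx$ --- is a functorial construction $T$ on $\F$-vector spaces. Such a $T$ simultaneously produces a new representation $T(\rho)$ of $H$, by letting $H$ act on $T(\F^m)$ through $T(\rho(h^{-1}))$, and, applied fibrewise, a new vector bundle $T(E)$ over $G/H$. The whole lemma will follow from a single observation: for every such $T$ there is a canonical isomorphism of homogeneous bundles $E_{T(\rho)}\cong T(E_\rho)$, natural in $\rho$.

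For the binary operations this observation reads $E_{\rho\oplus\sigma}\cong E_\rho\oplus E_\sigma$ and, for $\F=\R$ or $\C$, $E_{\rho\otimes\sigma}\cong E_\rho\otimes E_\sigma$. In each case the isomorphism is induced by the evident $\F$-linear map on model fibres --- for instance $[g,v,w]\mapsto([g,v],[g,w])$ for the direct sum --- which descends to the orbit spaces precisely because $H$ acts on the relevant model fibre of the right-hand side through the corresponding construction applied to $\rho(h^{-1})$; well-definedness and fibrewise linearity are then immediate. Passing to group completions, these natural isomorphisms are exactly the assertions that $\alpha_\F$ commutes with $\cx$, $\cx'$, $\rx$, $\qx$, $\tx$ and with $\otimes$ and each $\lambda^i$ (the $\oplus$-case moreover being what shows that $\alpha_\F$ is well defined as a group homomorphism in the first place). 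Since the trivial one-dimensional representation maps to the trivial line bundle, the $\oplus$- and $\otimes$-identities say that $\alpha$ and $\alpha_O$ are ring homomorphisms, and together with the $\lambda^i$-identities that they are $\lambda$-ring homomorphisms. For the $\ZZ_2$-graded map $\alpha_O\oplus\alpha_{Sp}$ one checks in addition that the graded products $RO(H)\otimes RSp(H)\to RSp(H)$ and $RSp(H)\otimes RSp(H)\to RO(H)$ land correctly; but these are by definition induced from the $\C$-tensor product of the underlying complex data, so this is another instance of the tensor-product identity, and likewise for the exterior powers on $KO\oplus KSp$.

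The only point requiring genuine care --- and the only real candidate for an obstacle --- is the bookkeeping in the quaternionic and graded cases. There it is cleanest to phrase $\rx$, $\qx$, $\cx'$ and the module-type products through the description of real and quaternionic bundles (and, analogously, representations) as complex ones equipped with a conjugate-linear automorphism $J$ with $J^2=\pm\id$, recalled in \cref{SS:K-theory rings}. The associated bundle construction visibly carries the $J$ on a representation to the corresponding $J$ on $E_\rho$, so once these identifications are fixed consistently on both the representation side and the bundle side, every remaining claim reduces to a one-line check on model fibres, with no further input. In particular the word ``commute'' is then meant literally, not merely up to the periodicity isomorphism \eqref{eq:KSp=KO-4}.
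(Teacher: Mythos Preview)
Your proposal is correct and is essentially a careful elaboration of what the paper does: the paper gives no proof at all, merely remarking that the lemma ``is well known and can be checked directly from the definitions.'' Your argument spells out exactly this direct check, including the subtleties in the quaternionic\slash graded case via the description of real and quaternionic bundles as complex bundles with a conjugate-linear $J$.
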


The product of two homogeneous spaces \(G_1/H_1\) and \(G_2/H_2\) is again a homogeneous space: \((G_1/H_1)\times (G_2/H_2) \cong (G_1\times G_2)/(H_1\times H_2)\).  The maps \(\alpha_\F\) also commute with the external products defined on representation and K-rings, \ie we have commutative diagrams of the form:
\[\xymatrix{
    R_{\F_1}(H_1)\otimes R_{\F_2}(H_2) \ar[r]^{\mu} \ar[d]^{\alpha_{\F_1}\otimes\alpha_{\F_2}} & R_{\F_{12}}(H_1\times H_2) \ar[d]^{\alpha_{\F_{12}}}\\
    K_{\F_1}(G_1/H_1)\otimes K_{\F_2}(G_2/H_2) \ar[r]^{\mu} & K_{\F_{12}}(G_1/H_1\times G_2/H_2)
  }\]
Here, \((\F_1,\F_2,\F_{12})\) may be any of the triples \((\CC,\CC,\CC)\), \((\RR,\RR,\RR)\), \((\RR,\HH,\HH)\), \((\HH,\RR,\HH)\) or \((\HH,\HH,\RR)\).
The commutativity of the square follows from the previous \namecref{alpha:internal-product} and the fact that \(\alpha_\F\) commutes with the pullback maps along the projections to each factor. More generally, the maps \(\alpha_\F\) are natural in the following sense:

\begin{lem}[Naturality]\label{alpha:naturality}
  Let \(G\) and \(G'\) be compact Lie groups with closed subgroups \(H\) and \(H'\).
  Let \(f\colon G'\to G\) be a homomorphism of Lie groups such that \(f(H')\subset H\), and let \(\bar f\colon G'/H'\to G/H\) be the induced map.  Then we have commutative diagrams as follows:
  \[\xymatrix{
      {R_\F(H)} \ar[r]^{\alpha^{H'}_\F} \ar[d]^{f^*} & K_\F(G/H) \ar[d]^{\bar f^*}\\
      {R_\F(H')} \ar[r]^{\alpha^H_\F} & K_\F(G'/H')
    }\]
\end{lem}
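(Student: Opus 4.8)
The plan is to verify commutativity already at the level of the semigroups $\Rep_\F(-)$ and $\Vect_\F(-)$, before group completion; since $\alpha_\F$, $f^*$ and $\bar f^*$ are all additive, this is enough. So fix a representation $\rho$ of $H$ on $\F^m$ and write $f^*\rho$ for the restricted representation $\rho\circ(f|_{H'})$ of $H'$. I would exhibit an explicit isomorphism of $\F^m$-bundles over $G'/H'$
\[
  \Phi\colon\ G'\times_{H'}\F^m \xrightarrow{\ \cong\ } \bar f^*\bigl(G\times_H\F^m\bigr),
  \qquad [g',v]\longmapsto \bigl(g'H',\,[f(g'),v]\bigr),
\]
using the standard model $\bar f^*(G\times_H\F^m)=\{\,(g'H',[g,v]) : f(g')H = gH\,\}$; note that $(g'H',[f(g'),v])$ indeed lies in this fibre product.

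First I would check that $\Phi$ is well defined: the right $H'$-action defining the left-hand bundle sends $(g',v)$ to $(g'h',\rho(f(h')^{-1})v)$ for $h'\in H'$ (since $(f^*\rho)(h'^{-1})=\rho(f(h')^{-1})$), and as $f(h')\in H$ we have, in $G\times_H\F^m$,
\[
  [\,f(g'h'),\,\rho(f(h')^{-1})v\,]=[\,f(g')f(h'),\,\rho(f(h')^{-1})v\,]=[\,f(g'),v\,],
\]
so the two representatives of a point of $G'\times_{H'}\F^m$ have the same image. Next I would observe that $\Phi$ covers the identity of $G'/H'$, is $\F$-linear on fibres, and on the fibre over a point $g'H'$ is the bijection $v\mapsto[f(g'),v]$ identifying $\F^m$ with the fibre of $G\times_H\F^m$ over $f(g')H$; hence $\Phi$ is a bundle isomorphism. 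This says precisely that $\bar f^*E_\rho\cong E_{f^*\rho}$, i.e. $\bar f^*\circ\alpha^H_\F=\alpha^{H'}_\F\circ f^*$ on isomorphism classes of representations; extending additively to the group completion $R_\F(H)$ gives the claimed commutative square. The same formula works verbatim for $\F=\R,\C,\HH$ (remembering, in the quaternionic case, that $\HH$ acts on the right on each fibre).

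There is no real obstacle here; the content is bookkeeping, and this is exactly the kind of statement that, as claimed above, "can be checked directly from the definitions". The only point that needs a little care is keeping the conventions consistent — the right $H$-action on $G\times\F^m$, the restriction homomorphism $f|_{H'}\colon H'\to H$ underlying $f^*$, and the direction of the induced map $\bar f\colon G'/H'\to G/H$ — so that $\Phi$ genuinely lands in the fibre product as written. Once these are pinned down, the verification is immediate and uniform in $\F$.
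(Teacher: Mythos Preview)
Your proof is correct and is precisely the direct verification the paper intends: the paper states this lemma without proof, remarking only that it ``can be checked directly from the definitions.'' Your explicit bundle isomorphism $\Phi$ carries out exactly that check, and the reduction to the semigroup level followed by additive extension is the standard way to do it.
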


In the next two subsections, we discuss some general situations in which \(\alpha_\F\) is known to be surjective.  Of course, given a homogeneous space \(M\), the surjectivity of \(\alpha_\F\) may depend on the chosen presentation of \(M\) as \(G/H\).  For simply connected \(M\), we may restrict our attention to presentations with simply connected \(G\):

\begin{lem}\label{suffices-to-consider-simply-connected-G}
Let \(M\) be a simply connected compact homogeneous space.
If there is any presentation of \(M\) as \(G/H\) such that \(\alpha\colon R(H)\to K(M)\) is surjective, then there is also such presentation with simply connected \(G\) and connected \(H\).
\end{lem}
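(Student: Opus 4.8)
The plan is to deduce this directly from two facts already at our disposal: \cref{presentations-with-simply-connected-G}, which upgrades any presentation of $M$ to one with simply connected total group while recording a comparison homomorphism, and the naturality of the homogeneous-bundle map in \cref{alpha:naturality}. No new geometric input is needed.

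First I would start from a presentation $M \cong G/H$ for which $\alpha = \alpha^H \colon R(H)\to K(G/H)$ is surjective, and apply \cref{presentations-with-simply-connected-G}. This yields a simply connected compact Lie group $G'$, a closed connected subgroup $H'\subset G'$, and a Lie group homomorphism $f\colon G'\to G$ with $f(H')\subset H$ whose induced map $\bar f\colon G'/H'\to G/H$ is a diffeomorphism. Thus $G'/H'$ is again a presentation of $M$, now with $G'$ simply connected and $H'$ connected, and it only remains to see that $\alpha^{H'}$ is surjective for this presentation.

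Next I would invoke \cref{alpha:naturality} for the homomorphism $f$ and the subgroups $H\subset G$, $H'\subset G'$. It supplies a commutative square whose horizontal arrows are $\alpha^H$ and $\alpha^{H'}$ and whose vertical arrows are the restriction $f^*\colon R(H)\to R(H')$ and the pullback $\bar f^*\colon K(G/H)\to K(G'/H')$. Since $\bar f$ is a diffeomorphism, $\bar f^*$ is a ring isomorphism, so $\alpha^{H'}\circ f^* = \bar f^*\circ\alpha^H$ is the composite of a surjection with an isomorphism, hence surjective. As the surjectivity of a composite forces surjectivity of its final map, $\alpha^{H'}\colon R(H')\to K(G'/H')$ is surjective. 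Nothing needs to be said about $f^*$ itself.

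There is no genuine obstacle in this argument: all the real content lies inside \cref{presentations-with-simply-connected-G}. The only points warranting a moment's care are that $\bar f$ is an honest diffeomorphism, so that $\bar f^*$ is a bona fide isomorphism of $K$-rings and not merely of homotopy-invariant data, and the elementary observation about composites. I would close with the remark that, because both \cref{presentations-with-simply-connected-G} and \cref{alpha:naturality} are insensitive to the choice of $\F\in\{\R,\C,\HH\}$, the identical argument proves the analogous statements for $\alpha_O$ and $\alpha_{Sp}$ verbatim; this is the form in which the result is applied to $\alpha_O$ over $B^{13}$.
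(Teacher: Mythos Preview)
Your proof is correct and follows precisely the approach the paper takes: the paper's own proof is the single sentence ``this is immediate from \cref{presentations-with-simply-connected-G} and naturality (\cref{alpha:naturality}),'' and you have simply spelled out the diagram chase that makes this immediate. Your additional remark that the same argument works for $\alpha_O$ and $\alpha_{Sp}$ is accurate and useful, though not stated explicitly in the paper at this point.
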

Indeed, this is immediate from \cref{presentations-with-simply-connected-G} and naturality (\cref{alpha:naturality}).

\newcommand{\Tor}{\mathrm{Tor}}
\subsection{Complex bundles:  Pittie and Steinberg's Theorem}\label{S: improve Pittie}
It turns out that the complex version of \(\alpha\) is surjective in many interesting cases:

\begin{thm}\label{Improvement of Pittie's}
Let \(G\) be a compact connected Lie group with \(\pi_1(G)\) torsion-free, and let \(H\subset G\) be a closed connected subgroup such that \(\rank G - \rank H \leq 1\).  Then the natural map \(\alpha\colon RH\to K^0(G/H)\) is surjective and induces an isomorphism
\[
 \ZZ \otimes_{RG} RH \cong K^0(G/H).
\]
If \(\rank H = \rank G\), then moreover \(K^0(G/H)\) is a finitely generated free abelian group, and \(K^1(G/H)=0\).
\end{thm}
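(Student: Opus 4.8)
The plan is to derive the theorem from a Künneth-type spectral sequence in topological $K$-theory, fed with an a~priori bound on the projective dimension of $RH$ over $RG$. The equal-rank case will then amount to the classical theorem of Pittie and Steinberg, and the genuinely new input is the case $\rank G-\rank H=1$.

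First I would assemble the spectral sequence. Since $\pi_1(G)$ is torsion-free, $K^*(G)$ is an exterior $\ZZ$-algebra on $\rank G$ generators of odd degree (Hodgkin), which is exactly the situation in which Hodgkin's equivariant Künneth theorem is available. Applying it to the $G$-space $G$ (with the left-translation action, so that the action is free, $G/G$ is a point, and $K^*_G(G)=K^*(\mathrm{pt})=\ZZ$ is the augmentation module $RG/I_G$) and to $G/H$ (so that $K^*_G(G/H)=K^*_H(\mathrm{pt})=RH$), and using the homeomorphism $G/H\cong(G\times G/H)/G$ for the diagonal (free) action, one obtains a strongly convergent spectral sequence with $E_2$-term $\Tor^{RG}_*(\ZZ,RH)$ converging to $K^*(G/H)$, in which $\Tor_0,\Tor_2,\dots$ feed into $K^0(G/H)$ and $\Tor_1,\Tor_3,\dots$ feed into $K^1(G/H)$. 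Next I would check that $\alpha$ factors through the corner term $\Tor_0=\ZZ\otimes_{RG}RH=RH/(I_G\cdot RH)$: for a $G$-representation $V$ the homogeneous bundle $G\times_H V$ is the trivial bundle $(G/H)\times V$, so $\alpha$ annihilates $I_G\cdot RH$, and the resulting map $\ZZ\otimes_{RG}RH\to K^0(G/H)$ is the spectral-sequence edge homomorphism.

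The crux is the bound
\[
  \projdim_{RG}(RH)\ \le\ \rank G-\rank H,
\]
which I would invoke from \cref{cor:proj-dim}. Under the hypothesis $\rank G-\rank H\le 1$ it gives $\Tor^{RG}_p(\ZZ,RH)=0$ for all $p\ge 2$, so the spectral sequence is concentrated in the two columns $p=0,1$; it therefore degenerates at $E_2$ (every differential would have to enter or leave a zero group) and leaves no extension problem. Consequently $K^0(G/H)\cong\ZZ\otimes_{RG}RH$ via the edge map, hence via $\alpha$, and $K^1(G/H)\cong\Tor^{RG}_1(\ZZ,RH)$; in particular $\alpha$ is surjective. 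This proves the first two assertions. If moreover $\rank H=\rank G$, the Pittie--Steinberg theorem says that $RH$ is a free $RG$-module of rank $|W_G|/|W_H|$, so $\Tor_1$ vanishes: then $K^1(G/H)=0$ and $K^0(G/H)\cong\ZZ\otimes_{RG}RH$ is free abelian of rank $|W_G|/|W_H|$. (The last statement also has a purely topological proof: for equal rank $H^*(G/H;\ZZ)$ is torsion-free and concentrated in even degrees, so the Atiyah--Hirzebruch spectral sequence degenerates.)

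I expect the main obstacle to be the projective-dimension estimate. For equal rank it is precisely Pittie--Steinberg freeness, but when the ranks differ $RH$ is no longer $RG$-free, and this is where torsion-freeness of $\pi_1(G)$ is essential: it makes $RG$ regular (up to inverting finitely many of its generators it is a polynomial ring over $\ZZ$) and, by Steinberg's theorem, $RT$ free over $RG$; one can then compute $\projdim_{RG}(RH)$ after the faithfully flat base change $RG\to RT$ and bound it by an Auslander--Buchsbaum / depth count for $RH$ as a module over the regular ring $RT$. By comparison, checking the convergence hypotheses for Hodgkin's Künneth spectral sequence outside the equal-rank case is routine once finite projective dimension is in hand, since the spectral sequence then has only two nonzero columns.
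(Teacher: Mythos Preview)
Your proposal is correct and follows essentially the same route as the paper: Hodgkin's spectral sequence with $X=G$, $Y=G/H$, identification of the edge homomorphism with $\alpha$, and the projective-dimension bound of \cref{cor:proj-dim} to collapse the sequence to two columns; the equal-rank case is then exactly Pittie--Steinberg.

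The one point where you diverge is your sketch of how \cref{cor:proj-dim} itself might be proved. You suggest a faithfully flat base change along $RG\to RT$ followed by an Auslander--Buchsbaum\slash depth argument; the paper instead argues in three concrete steps: (i) for an inclusion of tori $T'\subset T$, compute $\projdim_{RT}RT'$ directly via the First Change of Rings Theorem applied to the regular sequence $(x_1-1,\dots,x_m-1)$; (ii) for a torus $T'\subset G$, combine this with Steinberg's freeness of $RT$ over $RG$ via the General Change of Rings Theorem; (iii) for arbitrary connected $H$, use the Atiyah--Bott transfer $i_*\colon RT'\to RH$ splitting the restriction as $RG$-modules, so that vanishing of $\Tor^{RG}_p(RT',-)$ forces vanishing of $\Tor^{RG}_p(RH,-)$. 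Your base-change idea is plausible in spirit but would need more work to make precise (identifying $RT\otimes_{RG}RH$ and computing its depth over $RT$ is not obviously simpler), whereas the paper's argument is elementary and self-contained.
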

\begin{rem}\label{REM: optimal Pittie's}
  The rank assumptions here are optimal:
  \begin{compactitem}[--]
  \item
     When \(\rank G-\rank H=1\),
    the higher K-group \(K^1(G/H)\) may no longer vanish, and \(K^0(G/H)\) may not be free.  For example, this happens for the Berger space \(B^{13}\) (\cref{B13:K-additive}).
  \item
    When \(\rank G-\rank H= 2\),
    even the first part of the result no longer holds. Consider for example \(G=\Spin(4)=\SS^3\times\SS^3\) and \(H\) the trivial subgroup. The map \(\alpha\) is clearly trivial, whereas \(K(\SS^3\times\SS^3)= [\Z]\oplus\Z \) is not trivial.
  \end{compactitem}
\end{rem}

When \(\rank H = \rank G\), \cref{Improvement of Pittie's} is well-known. In many cases, it was already verified by Atiyah and Hirzebruch in the paper that founded topological K-theory \cite{AH:bundles homogeneous}*{Thm~5.8}. In general, it can be obtained from a spectral sequence due to Hodgkin and a theorem of Pittie and Steinberg.  We recall both these ingredients here, derive a corollary of the latter result, and explain how to obtain \cref{Improvement of Pittie's} under the stated weaker hypothesis that \(\rank G - \rank H \leq 1\).

Hodgkin's spectral sequence is as follows:

\begin{thm}[\citelist{\cite{Hodgkin}\cite{McLeod}*{Thm~4.1}}]
Suppose \(G\) is a compact, connected Lie group with torsion-free fundamental group. Then for \(G\)-spaces \(X\) and \(Y\) we have a strongly convergent spectral sequence
\[
  E_2^{p,q} = \{\Tor_{-p}^{RG}(K_G^*(X),K_G^*(Y))\}^q
 \quad\Rightarrow\quad K^{p+q}_G(X\times Y).
\]
(The \(G\)-spaces are assumed to be compactly generated and have the homotopy type of a CW complex, see \cite{Hodgkin}*{\S\,I.1}.)
\end{thm}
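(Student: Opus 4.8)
The plan is to recognise the claimed spectral sequence as the Eilenberg--Moore/Künneth spectral sequence of the multiplicative cohomology theory $K_G^*$, regarded as taking values in graded modules over its coefficient ring $K_G^*(\mathrm{pt})=RG$. Its construction begins from the external product $K_G^*(X)\otimes_{RG}K_G^*(Y)\to K_G^*(X\times Y)$, built from the two projections and the diagonal $G$-action; the task is to derive this pairing, producing a spectral sequence whose $E_2$-page is $\Tor^{RG}(K_G^*(X),K_G^*(Y))$.

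The cleanest route I would take is through $K_G$-module spectra. Let $K_G$ denote the genuine $G$-equivariant $K$-theory spectrum, a homotopy-commutative ring $G$-spectrum with $\pi_*K_G\cong RG[\beta^{\pm1}]$, $|\beta|=2$. For a finite $G$-CW complex $X$ the spectrum $X_+$ is dualizable, so $F_G(X_+,K_G)\simeq DX_+\wedge K_G$ as a $K_G$-module, where $DX_+$ is the equivariant Spanier--Whitehead dual; hence
\[
  F_G(X_+,K_G)\wedge_{K_G}F_G(Y_+,K_G)\;\simeq\;DX_+\wedge DY_+\wedge K_G\;\simeq\;F_G\big((X\times Y)_+,K_G\big),
\]
using $D(X_+\wedge Y_+)=D((X\times Y)_+)$. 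Thus $K_G^{-*}(X\times Y)$ is the homotopy of a relative smash product of $K_G$-modules whose homotopy groups are $K_G^{-*}(X)$ and $K_G^{-*}(Y)$, and the standard $\Tor$ spectral sequence for modules over a ring spectrum produces a spectral sequence with $E_2^{p,q}=\{\Tor^{RG}_{-p}(K_G^*(X),K_G^*(Y))\}^q$ converging to $K_G^{p+q}(X\times Y)$, after matching up the homological and internal gradings (routine bookkeeping, since $\beta$ is a unit). Hodgkin's original, pre-spectral-category argument instead resolves the $RG$-module $K_G^*(Y)$ by a tower of $G$-spaces over $Y$ whose stages are of induced form --- built from $G\times_T(-)$ --- so that applying $K_G^*(X\times-)$ to the tower produces a complex of free $RG$-modules; the freeness of $K_G^*(G\times_T Z)\cong K_T^*(Z)$ over $RG$ is exactly the Pittie--Steinberg statement that $RT$ is free over $RG$, which is where the hypothesis on $\pi_1(G)$ enters the construction itself.

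For $G$-spaces that are not finite one approximates $X$ and $Y$ by their finite $G$-subcomplexes and passes to the colimit, with the usual $\lim^1$-terms under control; this uses the standing $G$-CW assumption. Strong convergence is the decisive other place where torsion-freeness of $\pi_1(G)$ is needed: in that case $RG$ is a regular ring of finite global dimension $d$ (for $G$ simply connected $RG$ is a polynomial ring $\ZZ[\rho_1,\dots,\rho_\ell]$, and in general it is a polynomial extension of a Laurent polynomial ring, hence regular), so that $\Tor^{RG}_{-p}$ vanishes for $p<-d$, the spectral sequence is concentrated in the strip $-d\le p\le 0$, collapses at a finite page, and converges strongly. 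Without this hypothesis $RG$ may have infinite global dimension, the spectral sequence need not be bounded, and the conclusion fails.

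I expect the main obstacle to be the geometric realisation of the free resolution together with the precise identification of the $E_1$- and $E_2$-pages: one must check that $K_G^*$ of the auxiliary induced $G$-spaces contributes genuinely free $RG$-modules, with the correct multiplicative structure and differentials, and not merely ``flat up to a correction term''. This is the delicate point in Hodgkin's construction, and is exactly where his original argument was subsequently repaired (McLeod). In the module-spectrum formulation the same subtlety is concentrated in verifying the Künneth equivalence above and in identifying $\pi_*$ of the bar resolution computing $-\wedge_{K_G}-$ with the algebraic $\Tor$; once equivariant Spanier--Whitehead duality is available this is formal, though still technical.
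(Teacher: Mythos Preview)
The paper does not prove this theorem; it is quoted from the literature with the citation \texttt{[Hodgkin; McLeod, Thm~4.1]} and then applied. There is therefore nothing in the paper to compare your argument against.

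That said, your sketch is a reasonable outline of a modern approach. A few comments. First, the module-spectrum argument via dualizability is clean for finite $G$-CW complexes, and your passage to general $G$-CW complexes by colimits is the right idea, though the $\lim^1$ bookkeeping is not quite as innocuous as you suggest. Second, your placement of the torsion-free $\pi_1(G)$ hypothesis is slightly off: in Hodgkin's original construction the hypothesis is not needed to build the tower or the spectral sequence, but rather to identify the $E_2$-page with the algebraic $\Tor$ (this is precisely what McLeod's paper establishes, correcting a gap in Hodgkin's identification). The Pittie--Steinberg freeness of $RT$ over $RG$ is not an ingredient of the construction itself. Third, your argument for strong convergence via finite global dimension of $RG$ gives convergence only when the $\Tor$-groups eventually vanish; Hodgkin's theorem asserts strong convergence more generally, and the argument for that is different (it uses properties of the geometric tower, not algebraic regularity). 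For the application actually made in the paper---namely $X=G$, $Y=G/H$ with $\rank G-\rank H\le 1$---your finite-global-dimension observation does suffice, and indeed the paper itself invokes exactly this kind of vanishing (via \Cref{cor:proj-dim}) to collapse the spectral sequence.
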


\begin{figure}
  \begin{adjustwidth}{-5em}{-5em}
    \begin{center}
      \begin{tikzpicture}[x=7em,y=5ex]
        \renewcommand{\d}{|[fill=gray!15!white,minimum width=7em,minimum height=5ex]|}
        \newcommand{\nr}[1]{|[minimum width=1ex,minimum height=1ex]|{\text{\small\texttt{#1}}}}
        \matrix (m)
        [
        matrix of math nodes,
        nodes in empty cells,
        column sep=0ex,
        row sep=0ex
        ]
        {
          \cdots    & \d 0                & 0                      & \d 0                & 0                                        & \nr{1}      \\
          \d \cdots & \Tor_3^{RG}(\ZZ,RH) & \d \Tor_2^{RG}(\ZZ,RH) & \Tor_1^{RG}(\ZZ,RH) & \d \ZZ\otimes_{RG} RH                    & \nr{0}      \\
          \nr{\dots} & \nr{-3}             & \nr{-2}                & \nr{-1}             & \nr{0}                    & \quad\strut \\
        };
        \path[arrow]
        (m-3-6.west) edge ($(m-3-6.west)+(0,2.5)$);
        \node at ($(m-3-6.west)+(0.1,2.6)$) {\small\texttt{q}};
        \path[arrow]
        ($(m-3-1.north)+(-0.5,0.3)$) edge ($(m-3-1.north east)+(4.6,0.3)$);
        \node at ($(m-3-1.north east)+(4.7,0)$) {\small\texttt{p}};
      \end{tikzpicture}
    \end{center}
  \end{adjustwidth}
  \caption{The \(E_2\)-page of Hodgkin's spectral sequence converging to \(K^*(G/H)\).  The shaded entries compute \(K^0(G/H)\), the remaining entries \(K^1(G/H)\).}
  \label{fig:Hodgkin-SS}
\end{figure}

In this theorem, the complex representation ring \(RG\) is to be viewed as a \(\ZZ_2\)-graded ring concentrated in degree zero,  the equivariant K-groups \(K_G^*(X)\) and \(K_G^*(Y)\) are graded modules over this graded ring, and the Tor functors \(\Tor_{-p}^{RG}\) are to be taken in the category of such graded modules.
The \(q\)-coordinate refers to the internal grading of these Tor modules.
To prove \cref{Improvement of Pittie's}, one applies the spectral sequence to \(X := G\) and \(Y := G/H\).  Then \(K^*_G(X) \cong \ZZ\), \(K^*_G(Y) \cong RH\) and \(K^*_G(X\times Y) \cong K^*(G/H)\).
In particular, all rings and modules appearing on the \(E_2\)-page are concentrated in degree zero, so that we can replace the graded Tor functor by the usual Tor functor.
The spectral sequence thus takes the simple form displayed in \cref{fig:Hodgkin-SS}.
Moreover, the edge homomorphism \(E_2^0 = \ZZ\otimes_{RG} RH\to K^0(G/H)\) is induced by \(\alpha\) \cite{Hodgkin}*{Lemma~9.1}.
The contribution of Pittie and Steinberg is that, when \(\rank H = \rank G\), all the other groups  on the \(E_2\)-page vanish:

\begin{thm}[\cites{Pittie:Homogeneous,Steinberg:Pittie}]\label{thm:Steinberg}
  Let \(G\) be as in \cref{Improvement of Pittie's}.  For every closed connected subgroup \(H\subset G\) of maximal rank, \(RH\) is a finitely generated free \(RG\)-module.
\end{thm}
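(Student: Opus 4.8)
The plan is to run Steinberg's argument: translate the statement into a comparison of Weyl-invariant subrings of the representation ring of a common maximal torus, and there exhibit an explicit basis.

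First, the reductions. Since $\rank H = \rank G$ we may fix a maximal torus $T$ with $T \subset H \subset G$; write $W := W(G,T)$ and $W_H := W(H,T) \subseteq W$ for the Weyl groups. The functor $R(-)$ sends direct products of compact groups to tensor products of rings over $\ZZ$, and a closed connected maximal-rank subgroup of $G_1 \times G_2$ is again a product $H_1 \times H_2$ (the root system is a disjoint union, so every closed subsystem splits accordingly), so it suffices to treat each simple factor of $G$ separately; torus factors are trivial, since there $H = G$. We may therefore take $G$ simply connected, although this is only a convenience: for any compact connected $G$, restriction to $T$ identifies $R(G)$ with $R(T)^{W}$, which is a polynomial ring $\ZZ[\rho_1,\dots,\rho_\ell]$ on the fundamental representations when $G$ is simply connected, and in general, when $\pi_1(G)$ is torsion-free, a polynomial ring possibly tensored with a Laurent-polynomial ring on the central torus --- a regular ring in any case, which is all that matters below. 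Likewise $R(H) \cong R(T)^{W_H}$.

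Second, I would import Pittie's theorem in Steinberg's sharp form: $R(T)$ is a free $R(G)$-module of rank $|W|$, equipped with an explicit basis $\{f_w\}_{w\in W}$ whose members are products of exponentials of fundamental weights, indexed by $W$ via chosen reduced decompositions. Linear independence over $R(G)$ comes from a triangularity argument --- modulo the augmentation ideal of $R(G)$ the $f_w$ reduce to a $\ZZ$-basis of the rank-$|W|$ group $R(T)\otimes_{R(G)}\ZZ$ --- and spanning is a computation in $R(T)$. The crux is then to carry this basis down to $H$. The plan is to arrange the Steinberg basis so that it is \emph{adapted to $W_H$}: decomposing $W$ into left cosets $W=\bigsqcup_{i=1}^{m}u_iW_H$ with $m=[W:W_H]=|W|/|W_H|=\chi(G/H)$, one wants (a) $\{f_{w'}:w'\in W_H\}$ to be an $R(H)$-basis of $R(T)$, and (b) $f_{u_iw'}=h_i\,f_{w'}$ for suitable $h_1,\dots,h_m\in R(H)$. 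Granting (a) and (b), the $R(G)$-decomposition of $R(T)$ regroups as
\[
R(T)=\bigoplus_{w\in W}R(G)f_w=\bigoplus_{i=1}^{m}\bigoplus_{w'\in W_H}R(G)\,h_if_{w'}=\bigoplus_{w'\in W_H}\Big(\bigoplus_{i=1}^{m}R(G)h_i\Big)f_{w'};
\]
comparing with $R(T)=\bigoplus_{w'\in W_H}R(H)f_{w'}$ and using that the $f_{w'}$ form a basis, matching coefficients yields $R(H)=\bigoplus_{i=1}^{m}R(G)h_i$: a free $R(G)$-module of rank $m=\chi(G/H)$, which is the assertion (and the rank agrees with $\rank_\ZZ K^0(G/H)$, as it must).

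The main obstacle is Steinberg's combinatorial lemma, i.e.\ producing one family $\{f_w\}$ that is simultaneously a Steinberg basis for $(G,T)$ and relatively adapted to $H$ in the sense of (a)--(b); in particular (a) is a Pittie-type freeness statement for $H$ that is not automatic, since $\pi_1(H)$ may have torsion (e.g.\ $SO(4)\subset G_2$), and must be extracted from the structure of $R(H)=R(T)^{W_H}$. An alternative, more homological route avoids explicit bases: $R(G)$ is regular, the inclusion $R(G)\hookrightarrow R(H)$ is finite (as $R(H)\subseteq R(T)$ and $R(T)$ is a finite $R(G)$-module by Pittie), and both are domains of Krull dimension $\ell=\rank G$, so by ``miracle flatness'' $R(H)$ is flat --- hence, being finitely generated over the regular ring $R(G)$, free --- over $R(G)$ \emph{provided} $R(H)$ is Cohen--Macaulay. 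On that route the difficulty migrates to the Cohen--Macaulayness of $R(T)^{W_H}$ over $\ZZ$, where $|W_H|$ is not invertible so Hochster--Eagon does not apply directly, and one would read it off from an explicit generators-and-relations presentation of $R(H)$. Either way, the combinatorics of Weyl-group invariant subrings of $R(T)$ is the heart of the matter, while the reductions and the final descent from $R(T)$ to $R(H)$ are formal.
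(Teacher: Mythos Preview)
The paper does not prove this theorem: it is quoted from Pittie and Steinberg and used as a black box, first to deduce \cref{Improvement of Pittie's} in the maximal-rank case and then as the base case for \cref{cor:proj-dim}. There is therefore no proof in the paper against which to compare your proposal.

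For what it is worth, your outline is in the spirit of Steinberg's argument. His main contribution is precisely the explicit $R(G)$-basis of $R(T)$ indexed by the Weyl group, and the passage to an arbitrary maximal-rank $H$ does go through the tower $R(T)^{W}\subset R(T)^{W_H}\subset R(T)$ as you describe. You are right to flag the two obstacles you name --- producing a Steinberg basis adapted to the coset decomposition $W=\bigsqcup u_iW_H$, or alternatively the Cohen--Macaulayness of $R(T)^{W_H}$ over $\ZZ$ when $|W_H|$ is not a unit --- as the substantive content; neither is a free lunch, and you do not pretend otherwise. One remark on your second route: the paper itself observes, in the paragraph immediately following the statement of \cref{thm:Steinberg}, that over $R(G)$ the notions of free, projective, and flat coincide for finitely generated modules (via Swan's extension of Quillen--Suslin to Laurent rings), which is exactly the step that lets miracle flatness land on \emph{freeness} rather than mere flatness. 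So your homological alternative dovetails with what the paper already records.
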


This implies \cref{Improvement of Pittie's} in the maximal rank case.

Note that the representation ring of Lie group \(G\) as above is isomorphic to the tensor product of a polynomial ring with a ring of Laurent polynomials.  Therefore, a finitely generated \(RG\)-module is free if and only if it is projective if and only if it is flat:  the first two notions coincide by a generalization of the Quillen-Suslin theorem \citelist{\cite{Swan:Laurent}\cite{Swan:Gubeladze}}, the second two coincide for finitely generated modules over arbitrary Noetherian rings.  In particular, the notions of ``projective dimension'' and ``weak\slash flat dimension'' are equivalent over such \(RG\).  Below, we always write ``projective dimension'', but we use \(\Tor\) rather than \(\Ext\) in all arguments because the \(\Tor\)-functors are the ones we are ultimately interested in.

Starting from Pittie and Steinberg's result, we prove:
\begin{cor}\label{cor:proj-dim}
  Let \(G\) be as in \cref{Improvement of Pittie's}, and let \(H\subset G\) be an arbitrary closed connected subgroup.  Then the projective dimension of \(RH\) as an \(RG\)-module is at most \(\rank G - \rank H\).
\end{cor}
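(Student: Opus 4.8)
The plan is to sandwich \(R(H)\) between representation rings of tori, where projective dimension can be read off directly, using \cref{thm:Steinberg} as the only substantial input. Write \(r:=\rank G-\rank H\), and recall the elementary change-of-rings inequality: for a homomorphism \(A\to B\) of commutative Noetherian rings and a \(B\)-module \(M\) one has \(\projdim_A M\le\projdim_A B+\projdim_B M\). (Take a length-\(\projdim_B M\) resolution of \(M\) by finitely generated projective \(B\)-modules \(P_\bullet\); each \(P_i\), being a summand of a free \(B\)-module, satisfies \(\projdim_A P_i\le\projdim_A B\); now splice this against \(A\)-projective resolutions of the \(P_i\).)

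First I would fix a maximal torus \(T_G\subset G\) containing a maximal torus \(T_H\subset H\), and choose a finite covering \(\pi\colon\tilde H\to H\) with \(\tilde H=A\times K\), where \(A\) is a torus and \(K\) is simply connected semisimple; then \(\pi_1(\tilde H)\) is torsion-free, \(\bar T:=A\times T_K\) (for \(T_K\subset K\) a maximal torus) is a maximal torus of \(\tilde H\), and \(\pi\) restricts to an isogeny \(\bar T\to T_H\) (the kernel of \(\pi\) is central, hence contained in \(\bar T\)). The restriction homomorphism \(R(G)\to R(\bar T)\) then factors in two ways:
\[
  R(G)\to R(T_G)\twoheadrightarrow R(T_H)\hookrightarrow R(\bar T)
  \qquad\text{and}\qquad
  R(G)\to R(H)\hookrightarrow R(\tilde H)\hookrightarrow R(\bar T).
\]
Along the first factorisation, \(R(T_G)\) is free over \(R(G)\) by \cref{thm:Steinberg}, so \(\projdim_{R(G)}R(T_G)=0\); picking a basis \(\chi_1,\dots,\chi_r\) of the rank-\(r\) kernel of \(X^*(T_G)\twoheadrightarrow X^*(T_H)\) and extending it to a basis of \(X^*(T_G)\) identifies \(R(T_H)\) with \(R(T_G)/(e^{\chi_1}-1,\dots,e^{\chi_r}-1)\), a quotient of the Laurent polynomial ring \(R(T_G)\) by a regular sequence, so \(\projdim_{R(T_G)}R(T_H)=r\); and \(R(\bar T)=\ZZ[X^*(\bar T)]\) is free over \(R(T_H)=\ZZ[X^*(T_H)]\) since \(X^*(T_H)\) has finite index in \(X^*(\bar T)\) (a basis is a set of coset representatives), so \(\projdim_{R(T_H)}R(\bar T)=0\). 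Iterating the change-of-rings inequality yields \(\projdim_{R(G)}R(\bar T)\le 0+r+0=r\).

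It then remains to see that \(R(H)\) is an \(R(G)\)-module direct summand of \(R(\bar T)\), which is where the second factorisation is used. The finite central subgroup \(\Gamma:=\ker\pi\) acts on each irreducible representation of \(\tilde H\) through a character, giving \(R(\tilde H)\) a \(\widehat\Gamma\)-grading whose degree-zero piece is precisely \(R(H)\); in particular \(R(H)\) is an \(R(H)\)-module direct summand of \(R(\tilde H)\). Moreover \(R(\tilde H)=R(A)\otimes R(K)\hookrightarrow R(A)\otimes R(T_K)=R(\bar T)\) splits as a map of \(R(\tilde H)\)-modules, because \cref{thm:Steinberg} applied to \(T_K\subset K\) furnishes an \(R(K)\)-basis of \(R(T_K)\) containing the trivial representation, hence an \(R(\tilde H)\)-basis of \(R(\bar T)\) containing \(1\). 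Composing these two splittings and restricting scalars along \(R(G)\to R(H)\) exhibits \(R(H)\) as an \(R(G)\)-module direct summand of \(R(\bar T)\), so \(\projdim_{R(G)}R(H)\le\projdim_{R(G)}R(\bar T)\le r\), as claimed.

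The homological and toral computations above are routine; I expect the hard part to be the bookkeeping in the last step — checking that the two factorisations of \(R(G)\to R(\bar T)\) genuinely coincide, that \(R(H)\) really is the degree-zero summand of \(R(\tilde H)\), and that the Steinberg basis of \(R(T_K)\) over \(R(K)\) may be normalised to contain \(1\) so that \(R(\tilde H)\hookrightarrow R(\bar T)\) is split — together with recalling that every compact connected \(H\) admits a cover of the form \(\tilde H=A\times K\). None of this is deep: the essential content is, as promised, Pittie and Steinberg's theorem.
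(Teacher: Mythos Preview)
Your argument is correct and follows the same three-step architecture as the paper: (1) compute the projective dimension for an inclusion of tori via a regular sequence, (2) use Steinberg's theorem for \(T_G\subset G\) together with the change-of-rings inequality to bound \(\projdim_{RG}\) of a torus, and (3) exhibit \(R(H)\) as an \(R(G)\)-module summand of the representation ring of that torus. The difference lies entirely in step~(3). The paper works with the maximal torus \(T_H\subset H\) itself and invokes a result of Atiyah--Bott (the existence of an \(R(H)\)-linear retraction \(i_*\colon R(T_H)\to R(H)\) of the restriction map) to obtain the splitting in one stroke, valid for arbitrary compact connected \(H\). You instead pass to the cover \(\tilde H=A\times K\), use Steinberg for the simply connected factor \(K\) to split \(R(\tilde H)\hookrightarrow R(\bar T)\), and use the \(\widehat\Gamma\)-grading to split \(R(H)\hookrightarrow R(\tilde H)\). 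Your route has the appeal of relying on Steinberg alone (no second black box), and the loose ends you flag do check out: Steinberg's explicit basis for \(R(T_K)\) over \(R(K)\) is indexed by the Weyl group and the identity element contributes the basis vector \(1\), so the splitting of \(R(\tilde H)\hookrightarrow R(\bar T)\) is genuine. The paper's route is shorter and avoids the covering bookkeeping, at the cost of citing the Atiyah--Bott pushforward.
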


The corollary implies, in particular, that for \(G\) and \(H\) as stated we have:
\begin{equation}\label{eq:Pittie:improved-Tor-vanishing}
  \Tor_p^{RG}(RH,\ZZ) = 0 \text{ for } p > \rank G -\rank H
\end{equation}
Together with Hodgkin's spectral sequence (\cref{fig:Hodgkin-SS}), this implies the general form of \cref{Improvement of Pittie's}.

The vanishing result \eqref{eq:Pittie:improved-Tor-vanishing} was observed even before Pittie announced his result by Snaith \cite{Snaith:Homogeneous}*{Thm~4.2}.  His strategy for passing from full rank to arbitrary rank is similar to the strategy used in the proof below.  However, in the absence of Steinberg and Pittie's result, Snaith had to work with completions.

\begin{proof}[Proof of \cref{cor:proj-dim}]
We use the usual notation \(\projdim_R M\) to denote the projective dimension of a module \(M\) over a ring \(R\).
Consider first the case that \(H\subset G\) is an inclusion of tori \(T' \subset T\).
Then \(T'\) is a direct factor of \(T\) \cite{HilgertNeeb}*{Lem.~15.3.2}, and \(R(T)\cong R(T')[x_1^{\pm 1},\dots,x_m^{\pm 1}]\), where \(m\) is the difference of ranks between \(T\) and \(T'\).
In this case, the projective dimension  \(\projdim_{RT}RT'\) is exactly \(m\):
we need only apply the First Change of Rings Theorem \cite{Weibel}*{Thm~4.3.3} inductively to the elements \((x_1 - 1), (x_2 - 1), \dots, (x_m - 1)\).

Next, consider the case of a torus \(T'\) contained in an arbitrary compact Lie group \(G\) with torsion-free fundamental group.
Choose a maximal torus \(T\) of \(G\) such that \(T'\subset T\), so that we may again view \(RT'\) as an \(RT\)-module.
Applying the General Change of Rings Theorem \cite{Weibel}*{Thm~4.3.1} to the restriction map \(RG\to RT\), we obtain the inequality
\(
  \projdim_{RG}RT' \leq \projdim_{RT}RT' + \projdim_{RG} RT
\).
By \cref{thm:Steinberg}, the second summand on the right side vanishes, and by the previous step \(\projdim_{RT}(RT')\leq \rank G-\rank T'\).
So altogether we obtain the required inequality.

Finally, for an arbitrary closed connected subgroup \(H\subset G\), consider the inclusion of a maximal torus \(i\colon T' \hookrightarrow H\).
By \cite{Atiyah:Bott}*{Rem.~1 after Prop.~4.9}, there exists a morphism of \(RH\)-modules \(i_*\colon RT' \to RH\) providing a left inverse to the restriction morphism \(i^*\).
In the situation at hand, we can view \(i_*\) and \(i^*\) as morphisms of \(RG\)-modules.
This shows that the vanishing of \(\Tor_p^{RG}(RT', -)\) implies the vanishing of \(\Tor_p^{RG}(RH,-)\), and hence that \(\projdim_{RG}RH \leq \projdim_{RG}RT'\).
\end{proof}

\begin{rem}
  The results on the projective dimensions can be extended to a slightly larger class of groups.
  Let us say that a Lie group is \emph{good} if it is compact and connected and if its complex representation ring \(RG\) is isomorphic to a polynomial ring tensored with a Laurent ring.  As mentioned above, all compact connected Lie groups whose fundamental group is torsion-free are good.  More precisely, a compact connected Lie group is good if and only if each simple factor of its semisimple part is either simply connected or of the form \(SO(2n+1)\) \cite{Steinberg:Pittie}.
  Steinberg's theorem in fact says:
  \begin{quote}
    Let \(G\) be a compact connected Lie group.  Then \(RH\) is free over \(RG\) for every closed connected subgroup \(H\subset G\) of maximal rank if and only if \(G\) is good.
  \end{quote}
  \Cref{cor:proj-dim} and its proof above likewise hold more generally for good~\(G\).
  However, in \cref{Improvement of Pittie's}, the assumption that \(\pi_1(G)\) is torsion-free \emph{cannot} be weakened to ``\(G\) is good''.
  For example, consider \(SO(2n+1)/SO(2n)\cong \Spin(2n+1)/\Spin(2n) \cong \SS^{2n}\).
  Both \(SO(2n+1)\) and \(\Spin(2n+1)\) are good, but the natural map \(RH \to K(\SS^{2n})\) referred to in the theorem is only surjective for the presentation with \(G=\Spin(2n+1)\).
  Indeed, by \cref{alpha:naturality} the pullback along the covering homomorphism \(\pi\colon \Spin(2n)\twoheadrightarrow SO(2n)\) fits into a commutative triangle:
  \[\xymatrix@R=0pt@C=4em{
      R(\Spin(2n)) \ar[dr]^-{\alpha(\Spin)} \\
      & K(\SS^{2n})\\
      R(SO(2n)) \ar[uu]^{\pi^*}\ar[ur]_-{\alpha(SO)}
    }\]
  The theorem implies that \(\alpha(\Spin)\) is an epimorphism that identifies \(K(\SS^{2n})\) with \(\ZZ[\reduced{\Delta}_+]/(\reduced{\Delta}_+^2)\), where \(\reduced{\Delta}_+=\Delta_+-\rank(\Delta_+)\) is one of the reduced half-spin representations.  This generator does not lie in the image of \(\alpha(SO)\).
\end{rem}

\subsection{Real bundles}\label{sec:homogeneous:real}
Suppose that \(G\) and \(H\) are as above: \(G\) is a compact connected Lie group with torsion-free fundamental group, and \(H\) is a closed connected subgroup.  The real and quaternionic versions of \(\alpha\) are surjective only under far more restrictive conditions than those of \cref{Improvement of Pittie's}.  In \cite{Seymour}*{Thm\,5.24}, Seymour describes how to compute the cokernel of \(\alpha_O\colon RO(H)\to KO(G/H)\) based on a detailed understanding of how the complex version of \(\alpha\) interacts with the involution on \(K(G/H)\) and the decomposition of \(R(H)\) into representations of different types.  The precise result is both difficult to state and to work with, but it has the following simple corollary, of which we here provide a short and independent proof:

\begin{prop}[\cite{Seymour}*{Cor.\,5.25}]\label{PROP: surjectivity alpha_O for rkG rk H conjugation}
  Let \(G\) and \(H\) be as in \cref{Improvement of Pittie's}, with \(\rank G = \rank H\). If the conjugation on \(R(H)\) is the identity map, then both
\(\alpha_O\colon RO(H)\to KO(G/H)\) and \(\alpha_{Sp}\colon RSp(H)\to KSp(G/H)\) are surjective.
\end{prop}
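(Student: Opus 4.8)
The plan is to deduce the proposition from Bousfield's lemma (\cref{Bousfield's-lemma}) and its representation-ring analogue (\cref{Bousfield's-lemma-for-RG}), using the surjectivity of the complex map $\alpha$ supplied by \cref{Improvement of Pittie's}. Since $\rank H=\rank G$, that theorem tells us that $\alpha\colon R(H)\to K(G/H)$ is surjective, that $K^0(G/H)$ is a finitely generated free abelian group, and that $K^1(G/H)=0$. The crucial first observation is that conjugation acts as the identity not only on $R(H)$ (by hypothesis) but also on $K(G/H)$: the map $\alpha$ intertwines the two conjugation involutions (\cref{alpha:c-r-t-q}), so for any $y\in K(G/H)$, writing $y=\alpha(x)$, we obtain $\tx y=\tx\alpha(x)=\alpha(\tx x)=\alpha(x)=y$.

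Because $\tx$ acts trivially on both rings, $h^+(K(G/H),\tx)$ is simply $K(G/H)/2K(G/H)$ and $h^+(R(H),\tx)$ is $R(H)/2R(H)$, and the homomorphism between them induced by $\alpha$ is surjective since $\alpha$ is. I would then assemble a commutative square from the two versions of Bousfield's lemma and the compatibility of $\alpha$ with $\cx$, $\cx'$, $\rx$, $\qx$ (\cref{alpha:c-r-t-q}): the top row is the isomorphism $RO(H)/\rx\oplus RSp(H)/\qx\to h^+(R(H),\tx)$ given by $\cx+\cx'$ (\cref{Bousfield's-lemma-for-RG}); the bottom row is the isomorphism $KO(G/H)/\rx\oplus KSp(G/H)/\qx\to h^+(K(G/H),\tx)$ given by $\cx+\cx'$, which applies since $K^1(G/H)=0$ (\cref{Bousfield's-lemma}); the right column is the surjection $h^+(R(H),\tx)\to h^+(K(G/H),\tx)$ just described; and the left column is $\alpha_O\oplus\alpha_{Sp}$, which descends to these cokernels because $\alpha_O\circ\rx=\rx\circ\alpha$ and $\alpha_{Sp}\circ\qx=\qx\circ\alpha$. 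A diagram chase then shows that $\alpha_O\oplus\alpha_{Sp}$ is surjective on the cokernels.

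To conclude, I would use the same relations $\rx\circ\alpha=\alpha_O\circ\rx$ and $\qx\circ\alpha=\alpha_{Sp}\circ\qx$ together with the surjectivity of $\alpha$ to see that $\rx\bigl(K(G/H)\bigr)=\alpha_O\bigl(\rx R(H)\bigr)\subseteq\im\alpha_O$ and, likewise, $\qx\bigl(K(G/H)\bigr)\subseteq\im\alpha_{Sp}$. Given $a\in KO(G/H)$, the surjectivity of $\alpha_O\oplus\alpha_{Sp}$ on cokernels applied to the class of $(a,0)$ produces $u\in RO(H)$ with $a-\alpha_O(u)\in\rx K(G/H)\subseteq\im\alpha_O$, whence $a\in\im\alpha_O$; the analogous argument with $(0,b)$ shows that $\alpha_{Sp}$ is surjective as well.

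The argument is short, and its only genuine content is the opening observation that $\tx$ must act trivially on $K(G/H)$; everything after that is a formal diagram chase. The points requiring some care are the applicability of Bousfield's lemma --- which is precisely where the maximal-rank hypothesis enters, through $K^1(G/H)=0$ from \cref{Improvement of Pittie's} --- and checking that $\alpha_O\oplus\alpha_{Sp}$ really descends to the displayed cokernels so that the chase goes through.
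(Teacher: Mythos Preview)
Your proof is correct and follows essentially the same route as the paper's: both deduce that \(\tx\) is trivial on \(K(G/H)\) from the surjectivity of \(\alpha\), then combine Bousfield's lemma with the surjectivity of \(\alpha\) and \(h^+(\alpha)\) to conclude. The only cosmetic difference is that the paper packages the final step as a single diagram chase on the exact rows \(R(H)^2\to RO(H)\oplus RSp(H)\to h^+(RH)\to 0\) (and the \(K\)-theory analogue), whereas you split it into ``surjective on cokernels'' plus ``\(\im\rx\subset\im\alpha_O\)''.
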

\begin{rem}\label{rem:t-trivial-on-R}
The condition on \(R(H)\) is satisfied for a large class of Lie groups:  conjugation acts trivially on \(R(H)\) for any compact connected Lie group \(H\) that has no circle factor and no simple factors of types \(A_n\) with \(n\geq 2\), \(D_{2n+1}\) with \(n\geq 4\) or \(E_6\). For simply connected \(H\), this follows from \cite{Z:KO-rings}*{Table~1}.  In general, \(H\) will be a quotient of a simply connected group \(\tilde H\) by a finite central subgroup, and then \(R(H)\) will embed into \(R(\tilde H)\); see the proof of \cref{B13:R(H)}.
\end{rem}

On the other hand, the condition on \(R(H)\) is by no means a necessary one.  For example, \(\alpha_O\) and \(\alpha_{Sp}\) are both surjective for even-dimensional projective spaces \(\CP^{2n}=U(2n+1)/(U(2n)\times U(1))\), even though the conjugation acts nontrivially on \(R(U(2n)\times U(1))\).

\begin{proof}
  As \(K^1(\tfrac{G}{H})\) vanishes, we can apply \cref{Bousfield's-lemma}.  Thus, the different versions of \(\alpha\) fit into the following exact diagram with exact rows
  (compare \cite{Z:KO-rings}*{Prop.\,2.2}):
  \[\xymatrix{
      {R(H)\oplus R(H)} \ar[r]\ar[d]^{\alpha\oplus\alpha}
      & RO(H)\oplus RSp(H) \ar[r]\ar[d]^{\alpha_O\oplus \alpha_{Sp}}
      & h^+(RH) \ar[r] \ar[d]^{h^+(\alpha)}
      & 0
      \\
      {K(\tfrac{G}{H})\oplus K(\tfrac{G}{H})} \ar[r]
      & KO(\tfrac{G}{H})\oplus KSp(\tfrac{G}{H}) \ar[r]
      & h^+(K(\tfrac{G}{H})) \ar[r]
      & 0
    }\]
   We already know that \(\alpha\) is surjective.
   As the involution on \(R(H)\) is trivial, so is the involution on \(K(\tfrac{G}{H})\). Therefore, \(h^+(\alpha)\) is obtained from \(\alpha\) by tensoring with \(\ZZ_2\).  In particular, \(h^+(\alpha)\) is also surjective.
   The surjectivity of \(\alpha_O\) and \(\alpha_{Sp}\) now follows from the diagram.
\end{proof}

Note that the class of spaces satisfying the hypotheses in \cref{PROP: surjectivity alpha_O for rkG rk H conjugation} is closed under products: if the conjugation acts as the identity both on \(R(H_1)\) and \(R(H_2)\), then it also acts as the identity on \(R(H_1\times H_2)\cong R(H_1)\otimes R(H_2)\). In fact, we have the slightly more general ``product theorem'':

\begin{prop}\label{surjectivity of alpha_O for products}
  Let \(G_1\), \(H_1\)  and \(G_2\), \(H_2\) be as in \cref{Improvement of Pittie's}, with \(\rank G_i = \rank H_i\).  Then \(\alpha_O\) and \(\alpha_{Sp}\) are surjective for the product \(G_1/H_1 \times G_2/H_2\) if and only if they are surjective for both \(G_1/H_1\) and \(G_2/H_2\) and one of \(h^-(K(G_1/H_1))\), \(h^-(K(G_2/H_2))\) is zero.
\end{prop}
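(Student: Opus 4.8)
The plan is to express the surjectivity of $\alpha_O$ and $\alpha_{Sp}$ purely in terms of a map of Tate cohomology groups and then multiply everything out via \cref{Kunneth-for-Tate}. Write $M_i := G_i/H_i$, $H := H_1\times H_2$, $G := G_1\times G_2$ and $M := M_1\times M_2 = G/H$. Since $\rank G = \rank H$, \cref{Improvement of Pittie's} applies to $M$ (and, separately, to each $M_i$): in every case $\alpha$ is surjective, $K^0$ is free, and $K^1$ vanishes. Because $K^1(M)=0$ we may invoke \cref{Bousfield's-lemma} and \cref{Bousfield's-lemma-for-RG}; running the diagram chase from the proof of \cref{PROP: surjectivity alpha_O for rkG rk H conjugation}, but \emph{without} assuming that conjugation is trivial, yields the following for each $N\in\{M_1,M_2,M\}$ with associated subgroup $K\in\{H_1,H_2,H\}$: because $\alpha\colon RK\to K(N)$ is surjective, $\alpha_O$ and $\alpha_{Sp}$ are both surjective for $N$ if and only if $h^+(\alpha)\colon h^+(RK)\to h^+(K(N))$ is surjective. (The ``if'' direction is the diagram chase already carried out for \cref{PROP: surjectivity alpha_O for rkG rk H conjugation}; the ``only if'' direction holds because $KO(N)\oplus KSp(N)\twoheadrightarrow h^+(K(N))$ and the right-hand square of that diagram commutes.)

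It therefore suffices to show that $h^+(\alpha)$ is surjective for $M$ if and only if it is surjective for both $M_1$ and $M_2$ and one of $h^-(K(M_1))$, $h^-(K(M_2))$ vanishes. I would first fix the Künneth identifications. One has $R(H_1\times H_2)\cong R(H_1)\otimes R(H_2)$ compatibly with complex conjugation, and, since $K^*(M_1)$ and $K^*(M_2)$ are finitely generated and free, Atiyah's Künneth theorem \cite{Atiyah:Kunneth} gives a ring isomorphism $K(M)\cong K(M_1)\otimes K(M_2)$ intertwining $\tx$ with $\tx\otimes\tx$. Under these identifications $\alpha$ corresponds to $\alpha_1\otimes\alpha_2$, by the external-product compatibility in \cref{alpha:c-r-t-q}. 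As $RH_i$ and $K(M_i)$ are free abelian groups, \cref{Kunneth-for-Tate}, applied naturally in both variables, produces an isomorphism of $\ZZ_2$-graded maps $h^*(\alpha)\cong h^*(\alpha_1)\otimes h^*(\alpha_2)$. Taking degree-$+$ parts,
\[
  h^+(\alpha)\;\cong\;\bigl(h^+(\alpha_1)\otimes h^+(\alpha_2)\bigr)\;\oplus\;\bigl(h^-(\alpha_1)\otimes h^-(\alpha_2)\bigr),
\]
a direct sum of two maps; and since $h^-(RH_i)=0$ the second summand is the zero map onto $h^-(K(M_1))\otimes h^-(K(M_2))$. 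Hence $h^+(\alpha)$ is surjective if and only if $h^+(\alpha_1)\otimes h^+(\alpha_2)$ is surjective and $h^-(K(M_1))\otimes h^-(K(M_2))=0$.

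The final ingredient is that all Tate groups in sight are $\FF_2$-vector spaces. For any free abelian group $A$ with involution, Bousfield's decomposition $A\cong A_+\oplus A_-\oplus A'\oplus A'$ used in the proof of \cref{Kunneth-for-Tate} gives $h^+(A)\cong A_+/2A_+$ and $h^-(A)\cong A_-/2A_-$, both $\FF_2$-vector spaces. Applying this to $RH_i$ and to $K(M_i)$ (free by \cref{Improvement of Pittie's}), we see that $h^+(\alpha_i)$ is $\FF_2$-linear and that $h^-(K(M_i))$ is an $\FF_2$-vector space. Now the tensor product over $\ZZ$ of two $\FF_2$-vector spaces is their tensor product over $\FF_2$, so it vanishes if and only if one of the factors does; and over a field the tensor product $f\otimes g$ of two linear maps has image $\im f\otimes\im g$, so $f\otimes g$ is surjective if and only if both $f$ and $g$ are. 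Therefore $h^-(K(M_1))\otimes h^-(K(M_2))=0$ precisely when one of $h^-(K(M_i))$ is zero, and $h^+(\alpha_1)\otimes h^+(\alpha_2)$ is surjective precisely when both $h^+(\alpha_i)$ are, \ie when $\alpha_O$ and $\alpha_{Sp}$ are surjective for both $M_1$ and $M_2$. Combined with the reduction of the first paragraph, this is exactly the claim.

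I expect the main obstacle to be bookkeeping: checking that the complex K-theory Künneth isomorphism, the representation-ring Künneth isomorphism, and the Tate-cohomology Künneth isomorphism of \cref{Kunneth-for-Tate} are mutually compatible and compatible with $\alpha$ and with the conjugation involutions, so that the displayed decomposition of $h^+(\alpha)$ is legitimate. Once the $\FF_2$-vector space structure is in hand, the remaining algebra is formal.
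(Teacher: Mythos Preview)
Your proof is correct and follows essentially the same route as the paper's: reduce the surjectivity of \(\alpha_O\oplus\alpha_{Sp}\) to the surjectivity of \(h^+(\alpha)\) via the diagram from \cref{PROP: surjectivity alpha_O for rkG rk H conjugation}, then factor everything through the K\"unneth isomorphisms for \(R(-)\), \(K(-)\), and Tate cohomology (\cref{Kunneth-for-Tate}). The paper states the final equivalence (``surjective for both factors and the \(h^-\otimes h^-\) summand vanishes'') without further comment; you supply the missing justification by observing that all Tate groups are \(\FF_2\)-vector spaces, so that surjectivity of a tensor product of linear maps and vanishing of a tensor product of vector spaces reduce to the factors. (A shorter argument for this step: \(h^\pm(A,\tx)\) is always \(2\)-torsion, since \(2a=(\id+\tx)a\) for \(a\in\ker(\id-\tx)\) and similarly for \(h^-\); no appeal to Bousfield's decomposition or freeness of \(A\) is needed.)
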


\begin{rem}\label{REM: satisfying h-()}
The condition \(h^-(K(M))=0\) is satisfied in each of the following cases:
\begin{compactitem}[--]
\item When \(M\) is any of the homogeneous spaces included in \cref{PROP: surjectivity alpha_O for rkG rk H conjugation}, since for these conjugation acts trivially on \(K(M)\) and \(K(M)\) is torsion-free by \cref{Improvement of Pittie's}.
\item When \(M=\CP^{2n}\).  This can be verified directly, or using \cref{Bousfield's-lemma} and the known results for \(KO^{2i}(\CP^{2n})/\rx\) \cite{Z:KO-rings}*{\S\,4.2}.
\item When the cohomology of \(M\) is concentrated in degrees multiples of~\(4\).
  This can be seen using the AHSS: we find that the conjugation acts trivially on each homogeneous summand of the graded ring associated with the cellular filtration on \(K(M)\), and hence that \(h^-\) vanishes on each summand.  It then follows inductively that \(h^-\) vanishes for the whole K-ring.
\end{compactitem}
In contrast, when \(M\) is \(\SS^{8n+2}\), \(\SS^{8n+6}\) or \(\CP^{2n+1}\), we have \(h^-(K(M))=\ZZ_2\).
\end{rem}

\begin{proof}[Proof of \Cref{surjectivity of alpha_O for products}]
 Consider the diagram used in the previous proof, with \(H_1\times H_2\) in place of \(H\) and \(G_1\times G_2\) in place of \(G\).
Using the Künneth theorems for complex representation rings \cite{Adams:LieLectures}*{Thm~3.65}, for K-theory \cite{Atiyah:Kunneth} and for Tate cohomology (see \cref{Kunneth-for-Tate} and \Cref{Bousfield's-lemma-for-RG}), we obtain:
 \begin{align*}
   R(H_1\times H_2)         & \cong R(H_1)\otimes R(H_2)         &   h^+(R(H_1\times H_2))         & \cong h^+(RH_1)\otimes h^+(RH_2) \\
   K\big(\tfrac{G_1}{H_1}\times\tfrac{G_2}{H_2}\big) & \cong K\big(\tfrac{G_1}{H_1}\big) \otimes K\big(\tfrac{G_2}{H_2}\big) &   h^+(K\big(\tfrac{G_1}{H_1}\times \tfrac{G_2}{H_2}\big)) & \cong h^+(K\big(\tfrac{G_1}{H_1}\big))\otimes h^+(K\big(\tfrac{G_2}{H_2}\big)) \\[0pt]
   & & & \quad\oplus h^-(K\big(\tfrac{G_1}{H_1}\big))\otimes h^-(K\big(\tfrac{G_2}{H_2}\big))
 \end{align*}

The vertical map \(\alpha_O\oplus \alpha_{Sp}\) is surjective if and only if the vertical map \(h^+(\alpha)\) on the right is surjective, which is the case if and only if it is surjective for both \(\tfrac{G_1}{H_1}\) and \(\tfrac{G_2}{H_2}\) and if the additional summand  \(h^-(K(\tfrac{G_1}{H_1}))\otimes h^-(K(\tfrac{G_2}{H_2}))\) vanishes.
\end{proof}

When one of the factors in the product is a sphere whose dimension is a multiple of eight, the other factor may be arbitrary --- the isomorphism \eqref{eq:external-product-iso} in \cref{SS:K-theory rings} and the surjectivity of \(\alpha_O\) for \(\SS^{8n}\) imply:
\begin{prop}\label{PROP:alpha_O product S8n}
If \(M=G/H\) is a compact homogeneous space such that $\alpha_O$ is surjective, then
\[
\alpha_O\colon RO(H\times \Spin(8n))\to KO(M \times \SS^{8n})
\]
is also surjective.
\end{prop}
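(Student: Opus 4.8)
The plan is to deduce this formally from the already-established surjectivity of $\alpha_O$ for $\SS^{8n}$ together with the external product isomorphism furnished by Bott periodicity. First I would present $\SS^{8n}$ as the homogeneous space $\Spin(8n+1)/\Spin(8n)$, which is the presentation for which $\alpha_O\colon RO(\Spin(8n))\to KO(\SS^{8n})$ is surjective by \cite{Gonzalez:nonnegative}; this choice is anyway forced on us, since the source group appearing in the statement is $H\times\Spin(8n)$. Then $M\times\SS^{8n}$ becomes the compact homogeneous space $(G\times\Spin(8n+1))/(H\times\Spin(8n))$, and the map in the statement is precisely the associated map $\alpha_O$.

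Next I would invoke the compatibility of $\alpha_O$ with external products --- the real case $(\F_1,\F_2,\F_{12})=(\R,\R,\R)$ of the commutative square displayed just before \cref{alpha:naturality} --- applied to $G_1/H_1=M$ and $G_2/H_2=\SS^{8n}$:
\[\xymatrix{
  RO(H)\otimes RO(\Spin(8n)) \ar[r]^{\mu} \ar[d]_{\alpha_O\otimes\alpha_O} & RO(H\times\Spin(8n)) \ar[d]^{\alpha_O} \\
  KO(M)\otimes KO(\SS^{8n}) \ar[r]^{\mu} & KO(M\times\SS^{8n})
}\]
The left vertical map is surjective, because the tensor product over $\ZZ$ of two surjective group homomorphisms is surjective (right-exactness of $\otimes$); here one uses the hypothesis that $\alpha_O$ is surjective for $M$ together with the result of \cite{Gonzalez:nonnegative} for $\SS^{8n}$. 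The bottom map $\mu$ is an isomorphism by the external product isomorphism \eqref{eq:external-product-iso}, since $\SS^{8n}$ has dimension a multiple of eight. Hence the composite $\mu\circ(\alpha_O\otimes\alpha_O)$ is surjective, and by commutativity of the square it coincides with $\alpha_O\circ\mu$; it follows that the right-hand map $\alpha_O\colon RO(H\times\Spin(8n))\to KO(M\times\SS^{8n})$ is surjective, as desired.

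I do not expect any real obstacle here: the only non-formal ingredient is the sphere case, which is already available, and the rest is a diagram chase. The one point I would take care to record is that the top horizontal map $\mu$ on \emph{real} representation rings need not be an isomorphism, nor even surjective --- in contrast to the complex Künneth isomorphism $R(H_1)\otimes R(H_2)\cong R(H_1\times H_2)$ --- but this is immaterial, since only right-exactness of $\otimes$ on the $KO$-theory side is used.
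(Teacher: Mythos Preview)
Your argument is correct and is exactly the approach the paper indicates: the paper simply states that the proposition follows from the external product isomorphism \eqref{eq:external-product-iso} together with the surjectivity of $\alpha_O$ for $\SS^{8n}$, and you have correctly spelled out the diagram chase using the compatibility of $\alpha_O$ with external products. Your closing remark that surjectivity of the top horizontal $\mu$ is not needed is apt and worth keeping.
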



\section{Curvature on vector bundles}
\subsection{Curvature on homogeneous bundles}
Any compact Lie group \(G\) admits a biinvariant metric, which in particular has non-negative sectional curvature.
Moreover, isometric quotients of non-negatively curved manifolds also have non-negative sectional curvature by O'Neill's theorem on Riemannian submersions.
It easily follows that compact homogeneous spaces \(G/H\) and homogeneous vector bundles \(E_\rho =G\times_H\F^m\) over compact homogeneous spaces admit metrics with non-negative sectional curvature (where \(\F^m\) is endowed with the Euclidean flat metric).

We will use the following result that relates the K and KO-theory of a homogeneous space to the SCSQ.

\begin{prop}[\cite{Gonzalez:nonnegative}]\label{PROP: surjectivity of alphaO implies SISC}
Let \(M=G/H\) be a compact homogeneous space, where \(G\) is a compact Lie group, and assume that the map \(\alpha_O\colon RO(H)\to KO(M)\) (respectively \(\alpha\colon R(H)\to K(M)\)) is surjective.
Then for every real (resp. complex) vector bundle \(E\) over \(M\), the product \(E\times\R^k\) carries a metric of non-negative sectional curvature for some integer~\(k\).
\end{prop}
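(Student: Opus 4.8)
The plan is to show that surjectivity of $\alpha_O$ forces every real vector bundle $E$ over $M=G/H$ to become a \emph{homogeneous} bundle after stabilization --- i.e.\ $E\oplus\R^k\cong E_\sigma$ as vector bundles for some representation $\sigma$ of $H$ and some $k\geq 0$ --- and then to quote the discussion preceding the proposition: a homogeneous bundle $E_\sigma=G\times_H\R^m$ over a compact homogeneous space admits a metric of non-negative sectional curvature (take a biinvariant metric on $G$ and the flat metric on $\R^m$, and push down along the Riemannian submersion $G\times\R^m\to E_\sigma$). Since the total space $E\times\R^k$ is exactly the total space of the Whitney sum $E\oplus\R^k$, this is the conclusion wanted. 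The complex case is formally identical after replacing $(KO,RO,\R)$ by $(K,R,\C)$.

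First I would unwind surjectivity of $\alpha_O$: the class $[E]\in KO(M)$ lies in the image, so $[E]=[E_{\rho}]-[E_{\tau}]$ for honest representations $\rho,\tau$ of $H$, and applying the rank homomorphism forces $\rank E_{\rho}=\rank E+\rank E_{\tau}$. Hence $[E]+[E_{\tau}]=[E_{\rho}]$ is an equality of \emph{actual} vector bundles of the same rank in $KO(M)$. Since $M$ is a compact manifold, two bundles with the same $KO$-class and the same rank become isomorphic after adding one and the same trivial bundle, so $E\oplus E_{\tau}\oplus\R^{n}\cong E_{\rho}\oplus\R^{n}$ for some $n\geq 0$; the right-hand side is the homogeneous bundle associated with $\rho$ summed with the $n$-dimensional trivial representation. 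The one remaining obstacle is the spurious summand $E_{\tau}$ on the left.

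The step that disposes of $E_{\tau}$ is the only place where it matters that $H$ is a \emph{subgroup} of $G$. I would use the standard fact that, for compact groups, every representation $\tau$ of $H$ is a direct summand of the restriction $V|_{H}$ of some representation $V$ of $G$ (for complex representations this is Frobenius reciprocity, taking $V$ a constituent of $\mathrm{Ind}_{H}^{G}\tau$; the real case follows by complexifying and then realifying). For such a $V$ the homogeneous bundle $G\times_{H}V$ is \emph{trivial}, via the well-defined bundle isomorphism $G\times_{H}V\xrightarrow{\;\cong\;}(G/H)\times V$, $[g,v]\mapsto(gH,\,g\cdot v)$. Writing $V|_{H}\cong\tau\oplus\tau'$ (possible since $V|_H$ is completely reducible), this gives $E_{\tau}\oplus E_{\tau'}\cong\R^{\dim V}$. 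Combining with the previous paragraph, $E\oplus\R^{\dim V}\cong E\oplus E_{\tau}\oplus E_{\tau'}$, and after adding $\R^{n}$ the right-hand side becomes $E_{\rho}\oplus\R^{n}\oplus E_{\tau'}$, which is the homogeneous bundle associated with $\rho\oplus\tau'$ summed with the $n$-dimensional trivial representation. Thus $E\oplus\R^{k}\cong E_{\sigma}$ with $k=\dim V+n$, which completes the reduction and hence the proof.

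I expect no serious difficulty: the only structural inputs are the two facts in the third paragraph (representations of $H$ extend, as summands, to representations of $G$; representations of $G$ induce trivial homogeneous bundles over $G/H$) together with O'Neill's theorem as recalled above, while everything else is bookkeeping with ranks and the harmless passage from equalities in $KO(M)$ to genuine bundle isomorphisms after adding trivial bundles. If one wanted the sharper statement recorded in the remark preceding the proposition --- a bound on the minimal $k$ --- one would instead choose $V$, and hence $\tau$, of smallest possible dimension, but for the proposition as stated this is unnecessary.
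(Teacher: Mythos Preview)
Your argument is correct and follows essentially the same route as the paper's: write $[E]=[E_{\rho_1}]-[E_{\rho_2}]$, complement $E_{\rho_2}$ by a homogeneous bundle to a trivial bundle, and conclude that $E$ is stably homogeneous, whence the curvature statement via O'Neill. The paper simply asserts the existence of the complementing representation as ``well known,'' whereas you supply the underlying reason (embed $\tau$ as a summand of the restriction of a $G$-representation, which induces a trivial bundle over $G/H$); this is a welcome elaboration but not a different approach.
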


We give here the idea of the proof, which consists in showing that every vector bundle \(E\) is stably isomorphic to a homogeneous vector bundle (see \cite{Gonzalez:nonnegative} for the details).
By assumption, the virtual class \(E-0\in K_\F(G/H)\) equals \(E_{\rho_1}-E_{\rho_2}\) for some representations \(\rho_1,\rho_2\) of \(H\).
It is well known that one can choose a representation \(\rho_3\) such that \(E_{\rho_2}\oplus E_{\rho_3}\) is a trivial bundle \(m\).
Thus
\[
E=E_{\rho_1}-E_{\rho_2}+E_{\rho_3}-E_{\rho_3}=E_{\rho_1\oplus \rho_3}-m
\]
in \(K_\F(G/H)\), and it follows that \(E\times\R^k \cong E_{\rho_1\oplus \rho_3\oplus (k-m)} \) for some \(k\).

\subsection{Vector bundles over tangentially homotopy equivalent spaces}\label{sec:WorkHorse}

In this section we prove \cref{SCST for tangentially homotopy equivalent manifolds}.
It is a consequence of \cref{Work horse} below, which follows from classical results by Haefliger \cite{Haefliger} and Siebenmann \cite{Siebenmann}.

\begin{workhorse}[\cite{TW:moduli}*{Theorem 10.1.6}]\label{Work horse}
Let \(E_i\to M_i^n\) be
vector bundles over compact manifolds of the same rank \(l\), for \(i=1,2\).
Suppose that \(f\colon  E_1 \to E_2\) is
a tangential homotopy equivalence, where \(l\geq 3\) and \(l>n\).
Then \(f\) is homotopic to a
diffeomorphism.
\end{workhorse}
\begin{proof}[Proof of \cref{SCST for tangentially homotopy equivalent manifolds}]
Let \(f\colon M \to N\) be a tangential homotopy equivalence and \(\pi\colon E\to N\) an arbitrary smooth real (resp.\ complex) vector bundle.
We shall prove that the manifold \(E\times\R^k\) carries a metric with non-negative sectional curvature for some~\(k\).

Consider the pullback of \(E\) along~\(f\):
\[
\xymatrix{  f^*E\ar[d]^{\pi'}\ar[r]^{h} & E\ar[d]^{\pi}\\
M\ar[r]^{f}  & N}
\]
When \(E\) is a complex vector bundle, the pullback \(f^*E\) is a also a complex vector bundle.
But let us first ignore any possible complex structure on \(E\) and concentrate on the (underlying) real bundle\slash the total space of \(E\).
The map \(f\) is a homotopy equivalence, and the same holds for the bundle projections \(\pi\) and \(\pi'\).
Hence \(h\) is also a homotopy equivalence.
By our assumption on \(f\), we have  \(f^*(TN)= TM\) in \(KO(M)\).
Using the isomorphisms
\begin{align*}
  TE    & \cong \pi^*TN \oplus \pi^*E \\
  T(f^*E) &\cong (\pi')^*(TM) \oplus (\pi')^*f^*E
\end{align*}
\cite{tomDieck:Topologie}*{Satz~IX.6.9} together with the commutativity of the above square we obtain that \(h^*(TE) = T(f^*E)\) in \(KO(f^*E)\).
Thus \(h\) is also a tangential homotopy equivalence.

If necessary, replace \(E\) by \(E \times \R^k\), where \(k\) is chosen large enough to satisfy the assumptions of \cref{Work horse}.
Possibly increasing \(k\) once more, we may by our assumption on \(M\) equip \(f^*(E \times \R^k) = f^*E \times \R^k\) with a metric of non-negative sectional curvature.
Then \cref{Work horse} implies that \(h\) defines a diffeomorphism  \( f^*E \times \R^k \to E \times \R^k\), and hence \(E \times \R^k\) can be equipped with a metric of non-negative sectional curvature.
\end{proof}

\subsection{Products with the real line}\label{sec:ProductsLine}

Here we provide a proof of the claim in \Cref{EXAM:EXOTIC}.
The starting point is the following consequence of Smale's h-cobordism theorem.

\begin{lem}[\cite{Br:Structures}]\label{L:h-cobordism}
Let \(M^n,N^n\) be compact simply connected manifolds of dimension \(n\geq 5\).
If \(M\times\RR\) is diffeomorphic to \(N\times\RR\), then \(M\) is diffeomorphic to \(N\).
\end{lem}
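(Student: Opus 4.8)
The plan is to manufacture an honest h-cobordism between \(M\) and \(N\) out of a diffeomorphism \(\phi\colon M\times\RR\xrightarrow{\cong}N\times\RR\), and then to invoke Smale's h-cobordism theorem.

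First I would normalize \(\phi\). Since \(M\times\{0\}\) is compact, there is an \(a>0\) with \(\phi(M\times\{0\})\subset N\times(-a,a)\). The submanifold \(\phi(M\times\{0\})\) separates \(N\times\RR\) into two connected pieces, the images of \(M\times(0,\infty)\) and of \(M\times(-\infty,0)\); as \(\phi\) is proper, each piece is unbounded, and since each of the connected sets \(N\times[a,\infty)\) and \(N\times(-\infty,-a]\) is either contained in one such piece or disjoint from it, one piece contains \(N\times[a,\infty)\) and the other contains \(N\times(-\infty,-a]\). After composing \(\phi\) with the reflection \((n,t)\mapsto(n,-t)\) if necessary, I may assume \(N\times[a,\infty)\subset\phi(M\times(0,\infty))\); consequently \(N\times(-\infty,-a]\subset\phi(M\times(-\infty,0))\), and hence also \(\phi(M\times[0,\infty))\subset N\times(-a,\infty)\) and \(\phi(M\times(-\infty,0])\subset N\times(-\infty,a)\).

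Next I would set \(W:=\phi(M\times[0,\infty))\cap(N\times(-\infty,a])\). This is closed in \(N\times\RR\) and contained in \(N\times[-a,a]\), hence compact; since the boundary \(\phi(M\times\{0\})\) of \(\phi(M\times[0,\infty))\) lies in the interior \(N\times(-\infty,a)\) and the boundary \(N\times\{a\}\) lies in the interior \(\phi(M\times(0,\infty))\), these two boundaries are disjoint and \(W\) is a smooth compact manifold with \(\partial W=\phi(M\times\{0\})\sqcup(N\times\{a\})\cong M\sqcup N\). The key point is then to check that \(W\) is an h-cobordism. For this I would use the two evident decompositions \(\phi(M\times[0,\infty))=W\cup(N\times[a,\infty))\), glued along \(N\times\{a\}\), and \(N\times(-\infty,a]=\phi(M\times(-\infty,0])\cup W\), glued along \(\phi(M\times\{0\})\). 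The space \(\phi(M\times[0,\infty))\) deformation retracts onto \(\phi(M\times\{0\})\) (transport the retraction of \(M\times[0,\infty)\) onto \(M\times\{0\}\) through \(\phi\)) and also onto \(W\) (collapse the collar \(N\times[a,\infty)\) linearly onto \(N\times\{a\}\) while fixing \(W\)); thus \(W\hookrightarrow\phi(M\times[0,\infty))\) is a homotopy equivalence, and since \(\phi(M\times\{0\})\hookrightarrow\phi(M\times[0,\infty))\) factors as \(\phi(M\times\{0\})\hookrightarrow W\hookrightarrow\phi(M\times[0,\infty))\), the two-out-of-three property shows \(\phi(M\times\{0\})\hookrightarrow W\) is a homotopy equivalence. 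Symmetrically, \(N\times(-\infty,a]\) deformation retracts onto \(N\times\{a\}\) and onto \(W\) (collapse \(\phi(M\times(-\infty,0])\) onto its boundary \(\phi(M\times\{0\})\subset W\)), so \(N\times\{a\}\hookrightarrow W\) is a homotopy equivalence as well. Hence \(W\) is an h-cobordism from \(M\) to \(N\), and since \(M\) is simply connected, so is \(W\).

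Finally, \(\dim W=n+1\geq 6\) and \(W\) is simply connected, so Smale's h-cobordism theorem gives \(W\cong\phi(M\times\{0\})\times[0,1]\); in particular its two boundary components are diffeomorphic, i.e.\ \(M\cong N\). The only genuinely delicate step is the middle one: identifying \(W\) as a compact manifold with the expected boundary and verifying the two homotopy equivalences via the half-space decompositions and the accompanying deformation retractions. Once \(\phi\) has been normalized as above, everything else — compactness of \(W\), the boundary computation, and the concluding appeal to the h-cobordism theorem — is routine.
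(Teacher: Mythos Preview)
The paper does not actually prove this lemma; it is stated with a citation to Browder and the remark that it is ``a consequence of Smale's h-cobordism theorem.'' Your argument is precisely the standard proof that underlies that citation: normalize the diffeomorphism, carve out the compact region \(W\) between \(\phi(M\times\{0\})\) and \(N\times\{a\}\), verify it is an h-cobordism via the two half-space decompositions, and apply Smale. The details you give are correct, including the separation argument in the normalization step and the two-out-of-three reasoning for the homotopy equivalences. One trivial imprecision: \(W\) lies in \(N\times(-a,a]\) rather than \(N\times[-a,a]\), but this is irrelevant for compactness.
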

We obtain the following equivalence for the existence of non-negatively curved metrics.

\begin{lem}\label{L:soulsCodim1}
Let \(M^n\) be a compact simply connected manifold of dimension \(n\geq 5\).
Then \(M\times\RR\) admits a metric with non-negative sectional curvature if and only if so does \(M\).
\end{lem}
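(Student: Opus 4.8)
The plan is to establish the non-trivial ``only if'' direction; the converse is immediate, since if $M$ carries a metric of non-negative sectional curvature, then the product with the flat metric on $\RR$ gives one on $M\times\RR$. So suppose $M\times\RR$ carries a (complete, as always in this context) metric of non-negative sectional curvature. The first step is to invoke the Soul Theorem of Cheeger and Gromoll: there is a compact totally geodesic submanifold $S\subset M\times\RR$, the soul, such that $M\times\RR$ is diffeomorphic to the total space of the normal bundle $\nu\to S$, and, $S$ being totally geodesic, the metric induced on $S$ again has non-negative sectional curvature. It thus suffices to show that $M$ is diffeomorphic to $S$.

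The key step is to pin down $\nu$ by a dimension and characteristic-class count. Since the total space of $\nu$ deformation retracts onto its zero section, we have homotopy equivalences $S\simeq M\times\RR\simeq M$; in particular $S$ is a compact connected simply connected manifold homotopy equivalent to the closed $n$-manifold $M$. From $H_n(S;\ZZ_2)\cong H_n(M;\ZZ_2)\cong\ZZ_2$ we get $\dim S\geq n$, while $\dim S=\dim(M\times\RR)-\rank\nu\leq n$ because $\rank\nu\geq 1$ (as $S$ is compact but $M\times\RR$ is not). Hence $\dim S=n$ and $\rank\nu=1$. A real line bundle over $S$ is classified by its first Stiefel--Whitney class in $H^1(S;\ZZ_2)\cong\Hom(\pi_1 S,\ZZ_2)=0$, so $\nu$ is trivial and $M\times\RR$ is diffeomorphic to $S\times\RR$.

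Finally I would apply \cref{L:h-cobordism} to the compact simply connected manifolds $M^n$ and $S^n$ of dimension $n\geq 5$: since $M\times\RR$ is diffeomorphic to $S\times\RR$, it follows that $M$ is diffeomorphic to $S$. Transporting across this diffeomorphism the non-negatively curved metric that $S$ inherits as the soul yields the desired metric on $M$.

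I do not expect a deep obstacle: the proof is an assembly of the Soul Theorem, a Poincar\'e-duality dimension count, the triviality of line bundles over a simply connected base, and \cref{L:h-cobordism}. The one point that deserves genuine care is the identification of the soul's normal bundle --- specifically verifying $\dim S=n$ rather than something smaller, so that $\nu$ has rank exactly one; this is precisely what makes its triviality automatic and lets \cref{L:h-cobordism} apply with matching dimensions.
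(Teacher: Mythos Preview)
Your proof is correct and follows essentially the same route as the paper's: Soul Theorem, then a dimension count to see that the normal bundle has rank one, then triviality of the line bundle over the simply connected soul, then \cref{L:h-cobordism}. The only difference is that you spell out the dimension argument via $H_n(-;\ZZ_2)$, whereas the paper simply asserts that a compact manifold homotopy equivalent to $M$ must have the same dimension.
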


\begin{proof}
One implication is trivial: if \(M\) admits a non-negatively curved metric, take the product metric on \(M\times\RR\).

Suppose conversely that \(M\times\RR\) admits a metric with non-negative sectional curvature.
Then by the Soul Theorem, \(M\times\RR\) is diffeomorphic to the total space of a vector bundle over a soul \(S\) of \(M\times\RR\).
Since any vector bundle projection is a homotopy equivalence, it follows that \(S\) is homotopy equivalent to \(M\), and in particular of the same dimension.
Hence \(M\times\RR\) is diffeomorphic to the total space of a real line bundle over \(S\), which must be \(S\times\RR\) as \(S\) is simply connected.
Using \Cref{L:h-cobordism} we see that \(M\) is diffeomorphic to \(S\).
This completes the proof: recall that a soul \(S\) of \(M\times\RR\) inherits a metric of non-negative sectional curvature, as it is a totally geodesic submanifold.
\end{proof}


\section{The positive cases}
We are now ready to apply the tools developed in the previous sections to the Stable Converse Soul Question (SCSQ).  Almost all statements made in \cref{S:Intro} are direct consequences of the results obtained in \cref{P: surjectivity alpha_O dim 7,P: surjectivity alpha_O products of spheres,surjectivity-of-alphaO-for-full-flags} below:

\begin{proof}[Proof of \Cref{THM: complex SCST for homogeneous}]
By \cref{Improvement of Pittie's}, the map \(\alpha\colon R(H)\to K(G/H)\) is surjective for all homogeneous spaces \(G/H\) in \Cref{THM: complex SCST for homogeneous}.  Now apply the complex versions of \crefnosort{PROP: surjectivity of alphaO implies SISC,SCST for tangentially homotopy equivalent manifolds}.
\end{proof}

\begin{proof}[Proofs of \cref{THM: SCST for flag manifolds,THM: SISC for homogeneous of dim 7,THM: SCSQ for rkG rkH and conjugation,THM: SCST for products of 4n CROSSes,THM: SCST for products of spheres,THM: SCST for products with S8}]
For all the homogeneous spaces contained in each theorem, there is a presentation \(G/H\) for which the map \(\alpha_O\colon RO(H)\to KO(G/H)\) is surjective: the summary below provides a reference for surjectivity of \(\alpha_O\) in each case.  Therefore \cref{PROP: surjectivity of alphaO implies SISC} gives the result.
\begin{center}
  \begin{tabular}{l @{$\quad\Leftarrow\quad$} l}
    \cref{THM: SCST for flag manifolds}          & \cref{surjectivity-of-alphaO-for-full-flags}\\
    \cref{THM: SISC for homogeneous of dim 7}    & \cref{P: surjectivity alpha_O dim 7}    \\
    \cref{THM: SCSQ for rkG rkH and conjugation} & \cref{PROP: surjectivity alpha_O for rkG rk H conjugation}\\
    \cref{THM: SCST for products of 4n CROSSes} & \cref{surjectivity of alpha_O for products}, Prop. 5.1 in \cite{Gonzalez:nonnegative} and \cref{REM: satisfying h-()}\\
    \cref{THM: SCST for products of spheres}     & \cref{P: surjectivity alpha_O products of spheres} \\
    \cref{THM: SCST for products with S8} & \cref{PROP:alpha_O product S8n}
  \end{tabular}
\end{center}
(\Cref{THM: SCST for products with S8} also relies on \cref{SCST for tangentially homotopy equivalent manifolds}.)
\end{proof}

\begin{proof}[Proof of the Main Theorem]
Given \cref{SCST for tangentially homotopy equivalent manifolds} and the classification of homogeneous spaces with positive curvature \cite{WZ:Revisitng}, we have to prove the statement for the CROSSes and for the spaces \(W^6\), \(B^7\), \(W^{7}_{p,q}\), \(W^{12}\) and \(W^{24}\). For any of the CROSSes, it is shown in \cite{Gonzalez:nonnegative} that there is a presentation \(G/H\) for which \(\alpha_O\colon RO(H)\to KO(G/H)\) is surjective, so \cref{PROP: surjectivity of alphaO implies SISC} gives the result. For the spaces \(W^6\), \(B^7\) and \(W^{7}_{p,q}\) the result follows from \Cref{THM: SISC for homogeneous of dim 7}. Finally, recall that the usual presentations of \(W^{12}\) and \(W^{24}\) are \(Sp(3)/ Sp(1)^3\) and \(F_4/\Spin(8)\), respectively, and observe that conjugation acts trivially on both \(Sp(1)^3\) and \(\Spin(8)\). Thus \cref{THM: SCSQ for rkG rkH and conjugation} completes the proof.
\end{proof}

\subsection{Homogeneous spaces of dimension at most seven}\label{S: Homogeneous spaces of dimension 7}

Compact simply connected homogeneous spaces of dimension \(\leq 7\) are classified (see \cites{Gorbatsevich:compact,Klaus:Thesis}). Such a space is diffeomorphic to one of the following:
\begin{itemize}
\item \(\SS^n=\Spin(n+1)/\Spin(n)$, with $2\leq n\leq 7\)
\item \(\CP^n= U(n+1)/(U(n)\times U(1))\) with \(n=2\) or \(n=3\)
\item a product of spheres, \(\CP^2\times\SS^2\), or \(\CP^2\times\SS^3\)
\item one of the following spaces appearing in the corresponding dimensions:
\begin{itemize}[AAAA]
\item[\(n=5:\)] \(Wu=SU(3)/SO(3)\), the Wu-manifold
\item[\(n=6:\)] \(W^6=SU(3)/T^2\), the Wallach flag manifold
\item[\(n=6:\)] \(\widetilde{Gr}_{2,5} =SO(5)/(SO(2)\times SO(3))\), the oriented real Grassmannian
\item[\(n=7:\)] \(T_1\SS^4 =\Spin(5)/\Spin(3)\), the unit tangent bundle of \(\SS^4\) \\
  (Here, the inclusion \(\Spin(3)\subset \Spin(5)\) is the obvious one.)
\item[\(n=7:\)] \(B^7=\Spin(5)/\Spin(3)\), the Berger space \\
  (Here, the inclusion of \(\Spin(3)\) is defined by a nontrivial \(5\)-dimensional real representation.)
\item[\(n=7:\)] \(W_{p,q}^7= SU(3)/T_{p,q}^1\), the Aloff-Wallach spaces
\item[\(n=7:\)] \(N_{p,q}^7= (SU(2)\times SU(3))/(U(1)\times SU(2))_{p,q}\), the Witten manifolds
\item[\(n=7:\)] \(N_{p,q,r}^7=SU(2)^3/T_{p,q,r}^2\)
\item[\(n=7:\)] \(\SS^2\times Wu\)
\end{itemize}
\end{itemize}
\(T^1\) and \(T^2\) denote the circle and the torus groups respectively. The subscripts in the quotient groups denote different inclusions parametrized by coprime integers \(p,q,r\).

\label{sec: cohomology}
\begin{table}
  \renewcommand{\arraystretch}{1.3}
  \begin{adjustwidth}{-5em}{-5em}
    \[
    \begin{array}{l@{\hspace{2em}} cccccccc@{\hspace{2em}}l}
      \toprule
      \text{space}       & H^0 & H^1 & H^2        & H^3  & H^4               & H^5        & H^6 & H^7 & \text{ reference }                                               \\
      \midrule
      Wu                 & \Z  & 0   & 0          & \Z_2 & 0                 & \Z         &     &     & \text{ \cite{Barden:simply}*{pp.\,9--10}} \\
      W^6                & \Z  & 0   & \ZZ^2 & 0    & \ZZ^2        & 0          & \Z  &     & \text{ \cite{AZ:Geometrically}*{p.\,1005} }                                  \\
      \widetilde{Gr}_{2,5} & \Z  & 0   & \Z         & 0    & \Z                & 0          & \Z  &     & \text{ \cite{Vanzura:cohomology}*{Thm~11} }                                \\
      B^7                & \Z  & 0   & 0          & 0    & \Z_{10}           & 0          & 0   & \Z  & \text{ \cite{CE:classification}*{p.\,365} }                                 \\
      T_1\SS^4    & \Z  & 0   & 0          & 0    & \Z_2              & 0          & 0   & \Z  & \text{ \cite{CE:classification}*{p.\,379} }                                 \\
      W_{p,q}^7          & \Z  & 0   & \Z         & 0    & \Z_{|p^2+q^2+pq|} & \Z         & 0   & \Z  &    \text{ \cite{E:new}*{Prop.~36}   }  \\
      N_{p,q}^7          & \Z  & 0   & \Z         & 0    & \Z_{q^2}          & \Z         & 0   & \Z  & \text{ \cite{KS:adiffeo}*{p.\,375} }                                        \\
      N_{p,q,r}^7        & \Z  & 0   & \ZZ^2 & 0    & \Z_{|2pqr|}       & \ZZ^2 & 0   & \Z  & \text{ \cite{Bermbach:Thesis}*{p.\,85} }                           \\
      \bottomrule
    \end{array}
    \]
  \caption{Cohomologies of homogeneous spaces of dimension at most seven}\label{table: cohomology-of-homogeneous}
  \end{adjustwidth}
\end{table}

\Cref{table: cohomology-of-homogeneous} displays the integer cohomology groups \(H^k(-,\Z)\) of the spaces described above, together with the corresponding references.
Using \cref{P: description of manifolds leq 7}, we easily deduce:

\begin{lem}\label{L: realification dim 7}
The reduced realification map \(\rx\colon \reduced{K}(M)\to \reduced{KO}(M)\) is surjective for all compact simply connected homogeneous spaces of dimension at most \(7\), except for \(Wu\) and \(\SS^2\times Wu\).
\end{lem}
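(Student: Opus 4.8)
The plan is to run through the classification of compact simply connected homogeneous spaces of dimension $\leq 7$ recalled above and to apply \cref{P: description of manifolds leq 7} space by space. Recall that this proposition guarantees surjectivity of the reduced realification $\rx\colon\reduced K(M)\to\reduced{KO}(M)$ whenever $H^{n-2}(M,\Z)$ has no two-torsion, where $n=\dim M$, and that this numerical condition is moreover \emph{equivalent} to surjectivity provided $H^5(M,\Z)$ is torsion-free. So the whole proof is a case check against \cref{table: cohomology-of-homogeneous}, together with one extra argument for the single space on the list that escapes \cref{P: description of manifolds leq 7} in either direction.

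The bulk of the list consists of spaces with torsion-free integral cohomology: the spheres $\SS^n$, the projective spaces $\CP^2$ and $\CP^3$, all products of spheres of dimension at most seven, and $\CP^2\times\SS^2$ and $\CP^2\times\SS^3$ (torsion-freeness being preserved under the Künneth theorem). For each of these $H^{n-2}(M,\Z)$ is free, so \cref{P: description of manifolds leq 7} immediately gives surjectivity. For the remaining individual spaces I would simply read off $H^{n-2}(M,\Z)$ from \cref{table: cohomology-of-homogeneous}: for $W^6$ and $\widetilde{Gr}_{2,5}$ (both six-dimensional) one needs $H^4$, which is free; for the seven-dimensional spaces $T_1\SS^4$, $B^7$, $W^7_{p,q}$, $N^7_{p,q}$ and $N^7_{p,q,r}$ one needs $H^5$, which is $0$, $0$, $\Z$, $\Z$ and $\Z^2$ respectively, in each case without two-torsion. (The two-torsion in $H^4(B^7,\Z)=\Z_{10}$ and in $H^4(T_1\SS^4,\Z)=\Z_2$ is irrelevant, since only $H^{n-2}$ enters.) Hence \cref{P: description of manifolds leq 7} settles all of these affirmatively.

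It remains to rule out surjectivity for $Wu$ and for $\SS^2\times Wu$. For the Wu manifold $Wu=SU(3)/SO(3)$ we have $n=5$, and the relevant group $H^3(Wu,\Z)=\Z_2$ does contain two-torsion; since $H^5(Wu,\Z)=\Z$ is torsion-free, the equivalence in \cref{P: description of manifolds leq 7} applies and shows that $\rx$ is not surjective over $Wu$. For $\SS^2\times Wu$ this route is unavailable: here $n=7$, and $H^5(\SS^2\times Wu,\Z)$ is \emph{not} torsion-free, because the class in $H^3(Wu,\Z)=\Z_2$ contributes two-torsion via the cross product with $H^2(\SS^2,\Z)$, so only the harmless implication of \cref{P: description of manifolds leq 7} is available. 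Instead I would argue by naturality: the projection $p\colon\SS^2\times Wu\to Wu$ and the inclusion $i\colon Wu\hookrightarrow\SS^2\times Wu$ of a slice $\{pt\}\times Wu$ satisfy $p\circ i=\id_{Wu}$, so $i^*$ is a split epimorphism on both $\reduced K(-)$ and $\reduced{KO}(-)$; since $\rx$ is a natural transformation of cohomology theories it commutes with $i^*$, and a one-line diagram chase then shows that surjectivity of $\rx$ over $\SS^2\times Wu$ would force surjectivity of $\rx$ over $Wu$, which we have just excluded. This last case is the only genuine obstacle, being the one space on the list to which \cref{P: description of manifolds leq 7} applies in neither direction; the point is to exploit the retraction onto the ``bad'' factor $Wu$ rather than to compute $KO(\SS^2\times Wu)$ directly, and everything else is bookkeeping against \cref{table: cohomology-of-homogeneous}.
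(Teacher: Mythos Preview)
Your proof is correct and, for all cases except \(\SS^2\times Wu\), identical to the paper's: both simply check the hypothesis of \cref{P: description of manifolds leq 7} against \cref{table: cohomology-of-homogeneous}.

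For \(\SS^2\times Wu\) you take a genuinely different (and cleaner) route. The paper does not argue this case within the proof of the lemma at all; it defers to the later proof of \cref{P: surjectivity alpha_O dim 7}, where the AHSS for \(\SS^2\times Wu\) is analysed directly and the cokernel of \(\rx\) is identified with the cokernel of the reduction map \(H^2(M,\ZZ)\to H^2(M,\ZZ_2)\), which picks up the \(\ZZ_2\) coming from \(H^2(Wu,\ZZ_2)\). Your retraction argument via \(p\circ i=\id_{Wu}\) avoids this computation entirely: since \(i^*\) is a split epimorphism on both \(\reduced K\) and \(\reduced{KO}\), surjectivity of \(\rx\) over the product would force it over the retract \(Wu\). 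This is more elegant for the purpose of the lemma alone. The paper's approach, on the other hand, identifies the cokernel explicitly as being generated by \(w_2(\SS^2\times Wu)\), which is what is actually needed downstream to show that \(\alpha_O\) \emph{is} surjective despite \(\rx\) failing.
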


\begin{proof}
As the integer cohomology of \(\SS^n\) and \(\CP^m\) has no torsion, the same holds for any possible product \(M= \SS^{n_1}\times\dots\times\SS^{n_i} \times\CP^{m_1}\times\dots\times\CP^{m_j}\).
In particular, they all satisfy the hypothesis on \(H^{n-2}\) in \cref{P: description of manifolds leq 7}.
All the spaces in \cref{table: cohomology-of-homogeneous} except for \(Wu\) likewise satisfy the hypothesis on \(H^{n-2}\). So for all of these spaces, the reduced realification is surjective.
For \(Wu\), the same \namecref{P: description of manifolds leq 7} shows that the reduced realification is \emph{not} surjective.
For the only remaining space, \(\SS^2\times Wu\), we will see non-surjectivity in the proof of \cref{P: surjectivity alpha_O dim 7} below.
\end{proof}

The ranks of the Lie groups involved in the spaces listed above are:
\begin{align*}
\rank\left( U(n) \right) = \rank \left( SU(n) \right) +1 & = n \\
 \rank \left( \Spin(2n+1)\right)=\rank \left( \Spin(2n)\right) & = n
\end{align*}
It is immediate to check which spaces satisfy \(\rank G - \rank H \leq 1\), for which \cref{Improvement of Pittie's} gives the following result.

\begin{lem}\label{L: surjectivity alpha dim 7}
Let \(G/H\) be a compact simply connected homogeneous space of dimension at most \(7\) and nondiffeomorphic to \(\SS^3\times\SS^3\), with \(G\) compact and simply connected. Then the map \(\alpha\colon  R(H)\to K(G/H)\) is surjective.%
\end{lem}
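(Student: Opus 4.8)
The plan is to deduce this lemma from \cref{Improvement of Pittie's} by verifying that every space in the classification list above, with the single exception of $\SS^3\times\SS^3$, admits a presentation as $G/H$ with $G$ compact simply connected and $\rank G-\rank H\le 1$. Two preliminary remarks make this a purely numerical check. First, the difference $\rank G-\rank H$ is a homotopy invariant of $G/H$, so it may be computed from any convenient presentation; in particular we may replace the structure groups $U(n+1)$ and $SO(m)$ occurring in the classification by their simply connected covers $SU(n+1)$ and $\Spin(m)$ (and the subgroups by their preimages), which changes neither the diffeomorphism type of the quotient nor the rank difference. Second, whenever $G/H$ is simply connected and $G$ is connected, the homotopy exact sequence of $H\to G\to G/H$ shows that $\pi_0(H)$ is a quotient of $\pi_1(G/H)=0$, so $H$ is automatically connected; hence the hypotheses of \cref{Improvement of Pittie's} are met as soon as the rank inequality holds. (One may alternatively invoke \cref{presentations-with-simply-connected-G}.)

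Next I would run through the list. For $\SS^n=\Spin(n+1)/\Spin(n)$ with $2\le n\le 7$ one has $\rank\Spin(n+1)-\rank\Spin(n)\in\{0,1\}$, and for $\CP^n\cong SU(n+1)/S(U(n)\times U(1))$ both groups have rank $n$, so the difference is $0$. For the sporadic spaces I would tabulate the ranks of the pairs appearing in their standard presentations: $SU(3)/SO(3)$, $SU(3)/T^2$, $\Spin(5)/(\Spin(2)\cdot\Spin(3))$, the two copies $\Spin(5)/\Spin(3)$ (for $T_1\SS^4$ and for $B^7$), $SU(3)/T^1_{p,q}$, $(SU(2)\times SU(3))/(U(1)\times SU(2))_{p,q}$ and $SU(2)^3/T^2_{p,q,r}$. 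In each of these cases $\rank G-\rank H=1$, except for $W^6=SU(3)/T^2$ and $\widetilde{Gr}_{2,5}$, where it is $0$. For the products, I would use that the rank difference is additive over products: a product of spheres has rank difference equal to the number of odd-dimensional factors, while multiplying by $\CP^2$ or by $Wu$ adds $0$ or $1$ respectively. Enumerating the finitely many products of total dimension at most seven (the various products of spheres, together with $\CP^2\times\SS^2$, $\CP^2\times\SS^3$ and $\SS^2\times Wu$), the only one containing two or more odd-dimensional spheres — equivalently, the only one with rank difference $\ge 2$ — is $\SS^3\times\SS^3$, whose rank difference equals $2$ for every presentation.

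Thus for every space in the list other than $\SS^3\times\SS^3$ we have $\rank G-\rank H\le 1$, and \cref{Improvement of Pittie's} gives the surjectivity of $\alpha\colon R(H)\to K(G/H)$. There is no substantive obstacle here: the argument is bookkeeping against the classification, and the only points requiring a moment's care are the reduction to simply connected structure groups (harmless for the rank count) and the automatic connectedness of $H$. The exclusion of $\SS^3\times\SS^3$ is genuinely necessary, as recorded in \cref{Improvement of Pittie's} and the remark following it.
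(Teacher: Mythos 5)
Your proposal is correct and matches the paper's argument: the paper likewise checks $\rank G-\rank H\le 1$ for each space in the classification and then invokes \cref{Improvement of Pittie's}. The only minor slip is calling $SU(n+1)$ a ``simply connected cover'' of $U(n+1)$ (it is a subgroup, not a cover), but you already note that the homotopy invariance of the rank difference, or \cref{presentations-with-simply-connected-G}, makes this harmless.
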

Moreover, we show that the real version \(\alpha_O\) is surjective for all homogeneous spaces of dimension at most \(7\).

\begin{prop}\label{P: surjectivity alpha_O dim 7}
Let \(G/H\) be a compact simply connected homogeneous space of dimension at most \(7\) with \(G\) compact and simply connected. Then the map \(\alpha_O\colon  RO(H)\to KO(G/H)\) is surjective.%
\end{prop}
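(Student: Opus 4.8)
The plan is to work through the classification of simply connected compact homogeneous spaces of dimension $\leq 7$ recalled above, using the two lemmas just established: $\alpha\colon R(H)\to K(G/H)$ is surjective in all cases except $\SS^3\times\SS^3$ (\cref{L: surjectivity alpha dim 7}), and the reduced realification $\rx\colon\reduced{K}(M)\to\reduced{KO}(M)$ is surjective in all cases except $Wu$ and $\SS^2\times Wu$ (\cref{L: realification dim 7}).

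The main mechanism is a ``realification argument''. Since $\alpha_O$ commutes with realification (\cref{alpha:c-r-t-q}), we have $\alpha_O\circ\rx=\rx\circ\alpha$ as maps $R(H)\to KO(G/H)$. Hence, whenever $\alpha$ is surjective, $\im\alpha_O$ contains $\rx(K(G/H))$, which in the splitting $KO(G/H)=[\ZZ]\oplus\reduced{KO}(G/H)$ is $[2\ZZ]\oplus\rx(\reduced{K}(G/H))$ (on trivial bundles $\rx$ is multiplication by two). If moreover the reduced realification is surjective, this equals $[2\ZZ]\oplus\reduced{KO}(G/H)$, and adjoining $1=\alpha_O(\text{trivial representation})$ gives $\im\alpha_O=KO(G/H)$. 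By the two lemmas this settles every space on the list except $\SS^3\times\SS^3$, $Wu$ and $\SS^2\times Wu$. For $\SS^3\times\SS^3$ there is nothing to prove: $\reduced{KO}(\SS^3\times\SS^3)=0$ by \cref{lem: triviality ok KO product spheres}, so $KO$ is generated by trivial bundles.

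It remains to treat $Wu=SU(3)/SO(3)$ and $\SS^2\times Wu$ directly. In both cases $\alpha$ is still surjective, so $\im\alpha_O$ still contains $[2\ZZ]\oplus\rx(\reduced{K}(M))$ and $1$, and only a single $\ZZ_2$-coset could be missing. The key point is that the failure of the reduced realification to be surjective is detected by $w_2$, while the tangent bundle $TM$ --- the homogeneous bundle associated with the isotropy representation --- has $w_2(TM)\neq 0$ for both spaces: for $Wu$ this is the classical nonvanishing of $\langle w_2w_3,[Wu]\rangle$, and for $\SS^2\times Wu$ it follows by naturality since $w_2(\SS^2\times Wu)=\pi_2^{*}w_2(Wu)$, where $\pi_2$ denotes the projection onto the $Wu$ factor. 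Concretely, I would run the Atiyah--Hirzebruch spectral sequence for these two low-dimensional spaces --- using multiplicativity, via pullbacks from the factors, to rule out the only potential $d_3$-differential in the product case --- to show that $w_2\colon\reduced{KO}(M)\to H^2(M;\ZZ_2)$ is an isomorphism, so that $\reduced{KO}(Wu)\cong\ZZ_2$ and $\reduced{KO}(\SS^2\times Wu)\cong\ZZ_2\oplus\ZZ_2$. On the other hand $w_2(\rx V)=c_1(V)\bmod 2$ for a complex bundle $V$, so $w_2$ carries $\rx(\reduced{K}(M))$ into the image of the reduction $H^2(M;\ZZ)\to H^2(M;\ZZ_2)$ --- a proper subgroup, precisely because $H_2(Wu)$ has two-torsion. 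Since $TM$ (respectively $\pi_2^{*}TWu$, which is a homogeneous bundle by \cref{alpha:naturality}) has $w_2$ outside that subgroup, it fills the missing coset, and therefore $\im\alpha_O=KO(M)$.

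I expect the main obstacle to be the explicit verification in these two exceptional cases, especially $\SS^2\times Wu$: one has to pin down $\reduced{KO}(M)$ exactly --- in particular that there are no extension problems and that the potential higher differential vanishes --- and confirm that $w_2$ detects the \emph{entire} reduced group rather than only a subquotient. This requires some bookkeeping with the two-torsion in the cohomology of $Wu$, but everything needed is available from \cref{sec:AHSS} and the characteristic-class interpretation of the cellular filtration in \cref{AHSS-facts:characteristic-classes}.
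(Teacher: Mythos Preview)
Your proposal is correct and follows essentially the same approach as the paper: the generic case is handled by combining surjectivity of \(\alpha\) with surjectivity of the reduced realification, \(\SS^3\times\SS^3\) is trivial because \(\reduced{KO}\) vanishes, and for \(Wu\) and \(\SS^2\times Wu\) the tangent bundle (a homogeneous bundle via the isotropy representation) supplies the missing \(\ZZ_2\)-class detected by \(w_2\). The paper's write-up differs only in packaging---it phrases the exceptional cases in terms of the cokernel of realification rather than via an explicit isomorphism \(w_2\colon\reduced{KO}(M)\xrightarrow{\cong}H^2(M;\ZZ_2)\), and for \(\SS^2\times Wu\) it uses the tangent bundle of the product directly (invoking the Whitney product formula and \(w_2(\SS^2)=0\)) rather than \(\pi_2^*TWu\)---but these amount to the same argument.
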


\begin{rem}\label{optimility of P: surjectivity alpha_O dim 7}
  \Cref{P: surjectivity alpha_O dim 7} is optimal in the sense that in dimension \(n=8\) it is no longer true: by \cref{surjectivity-of-alphaO-for-full-flags} below, surjectivity of \(\alpha_O\) fails for the presentation \(SU(2)^{\times 4}/T\) of the product \(\SS^2\times\SS^2\times\SS^2\times\SS^2\).
\end{rem}

\begin{proof}[Proof of \cref{P: surjectivity alpha_O dim 7}]
When \(G/H\) is nondiffeomorphic to \(\SS^3\times\SS^3,Wu\) or \(\SS^2\times Wu \), the result follows immediately from \cref{L: realification dim 7,L: surjectivity alpha dim 7}.

For \(G/H=\SS^3\times\SS^3\), recall that \( KO(\SS^3\times\SS^3 )=[\Z]\) (see \cref{lem: triviality ok KO product spheres}), hence \(\alpha_O\) is surjective for trivial reasons.

It remains to consider the cases when \(G/H\) is diffeomorphic to \(Wu\) or \(\SS^2\times Wu\).

For \(G/H=Wu\), the AHSS's and the description of the cohomology in \cref{table: cohomology-of-homogeneous} show that \(\reduced K(Wu)\) vanishes while \(\reduced{KO}(Wu) \cong \Z_2\), so clearly realification is not surjective in this case. An explicit generator of \(\reduced{KO}(Wu)\) is given by the reduced stable class of the tangent bundle of \(Wu\).  Indeed, by (\ref{AHSS-facts:characteristic-classes}) in \cref{sec:AHSS}, the second Stiefel-Whitney class defines an isomorphism
\(
\reduced{KO}(Wu) \to H^2(Wu; \Z_2)
\),
and it is known that the second Stiefel-Whitney class of \(Wu\) is nontrivial (see for example \cite{Barden:simply}*{Lem.~1.1}, where the Wu-manifold is denoted by \(X_{-1}\)). On the other hand, the tangent bundle of a homogeneous space is a homogeneous vector bundle (see \cref{SS: homogeneous vector bundles}). This implies the surjectivity of \(\alpha_O\colon  RO(H)\to KO(Wu)\).

\medskip

Essentially the same argument works for the product \(M=\SS^2\times Wu=G/H\). Its cohomology is easily computed using the Künneth theorem.
The AHSS's identify \(\reduced K(M)\) and \(\reduced KO(M)\) with the kernels of the corresponding differentials \(d_3\colon E_2^{2,-2}\to E_2^{5,-4}\), both of which vanish according to the Leibniz rule.  Using above the description of the realification map above \cref{lem: realification argument}, we can thus identify the cokernel of \(\rx\colon \reduced K(M)\to \reduced{KO}(M)\) with the cokernel of the reduction map \(\mathsf{red}\colon H^2(M,\ZZ)\to H^2(M,\ZZ_2)\).  This cokernel in turn can be identified with the \(\Z_2\)-term of \(H^2(M, \Z_2)\) corresponding to \(H^2(Wu,\Z_2 )\) under the Künneth formula.
Since the second Stiefel-Whitney class of \(\SS^2\) vanishes, it follows from the Whitney product formula that the cokernel of \(\mathsf{red}\) is generated by the second Stiefel-Whitney class of \(M\). So again, the reduced tangent bundle generates the cokernel of \(\rx\), and hence  \(\alpha_O\colon RO(H)\to KO(M)\) is surjective.
\end{proof}

\subsection{Some products of spheres}\label{SS: Products of cohomology spheres}
A product of spheres is a homogeneous space \(G/H\) and the difference \(\rank G-\rank H\) equals the number of odd-dimensional factors.

\begin{prop}\label{P: surjectivity alpha_O products of spheres}
Each of the products of spheres in \cref{THM: SCST for products of spheres} has a presentation  \(G/H\) for which the map \(\alpha_O\colon  RO(H)\to KO(G/H)\) is surjective.%
\end{prop}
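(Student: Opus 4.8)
The plan is to use the presentation $S=G/H$ with $G=\Spin(n_1+1)\times\dots\times\Spin(n_\ell+1)$ and $H=\Spin(n_1)\times\dots\times\Spin(n_\ell)$ (assuming $n_i\geq 2$, which is the relevant case): then $G$ is compact, connected and simply connected, $H\subset G$ is closed and connected, and $\rank G-\rank H$ equals the number of odd $n_i$. The engine is the elementary observation that if $\alpha\colon R(H)\to K(S)$ is surjective \emph{and} the reduced realification $\rx\colon\reduced{K}(S)\to\reduced{KO}(S)$ is surjective, then $\alpha_O$ is surjective: the constant summand $\ZZ\subset KO(S)$ comes from the trivial representations, while every class $\rx(x)\in\reduced{KO}(S)$ equals $\rx(\alpha(\rho))=\alpha_O(\rx\rho)$ by \cref{alpha:c-r-t-q}. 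Now $\alpha$ is surjective whenever $\rank G-\rank H\leq 1$ by \cref{Improvement of Pittie's}, i.e.\ whenever $S$ has at most one odd-dimensional factor; and $\rx$ is surjective whenever no nonempty sub-sum $n_{i_1}+\dots+n_{i_k}$ is divisible by eight, by \cref{lem: surjectivity of realif product spheres}.

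First I would dispatch, via this observation, every tuple in \cref{THM: SCST for products of spheres} with at most one odd entry and no nonempty sub-sum divisible by eight: a single sphere, $\{\even,\odd\}$, $\{\even,\even'\}$ with entries in $\{2,4,6\}$ other than $\{4,4\}$, $\{2,2,2\}$, $\{6,6,6\}$, $\{\even,\even',\odd\}\notin\{\{2,6,\odd\},\{4,4,\odd\}\}$, and $\{2,2,2,\odd\}$, $\{6,6,6,\odd\}$ --- here one uses that the only even pairs admitting a sub-sum divisible by eight are $\{2,6\}$ and $\{4,4\}$, both excluded above. Next I would note that $\{3,3\}$, $\{7,7\}$ and $\{7,7,7\}$ have $\reduced{KO}(S)=0$ by \cref{lem: triviality ok KO product spheres}, so $\alpha_O$ is trivially onto. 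What remains are the tuples of the last two bullets: products whose entries are all $\equiv 0$ or $4\pmod 8$ (where $4+4\equiv 0$ kills the realification argument, $\{4,4\}$ being the smallest such), such products with one extra factor of dimension $\equiv 2\pmod 4$, and any of these together with extra factors of dimension $\equiv 0\pmod 8$.

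For the remaining cases I would invoke the product theorem \cref{surjectivity of alpha_O for products}. A single even sphere $\SS^{2m}=\Spin(2m+1)/\Spin(2m)$ satisfies $\rank G=\rank H$, and $\alpha_O$ is surjective for it --- by the observation above if $2m\not\equiv 0\pmod 8$, and by \cref{PROP:alpha_O product S8n} with $M=\mathrm{pt}$ if $2m\equiv 0\pmod 8$ --- while $h^-(K(\SS^{2m}))$ vanishes exactly when $2m\equiv 0\pmod 4$, since then the cohomology of $\SS^{2m}$ lies in degrees divisible by four (\cref{REM: satisfying h-()}). Applying \cref{surjectivity of alpha_O for products} inductively then gives surjectivity of $\alpha_O$ for any product of spheres of dimensions $\equiv 0$ or $4\pmod 8$, and any such product again has $h^-(K(-))=0$; one more application of \cref{surjectivity of alpha_O for products}, with such a product as the $h^-=0$ factor and a single $\SS^{8k+2}$ or $\SS^{8k+6}$ as the other, handles the tuples with one extra factor of dimension $\equiv 2\pmod 4$. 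Finally, \cref{PROP:alpha_O product S8n} lets one adjoin arbitrarily many $\SS^{8n}$-factors to any already-treated tuple, covering the last bullet.

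The main difficulty is not any individual step but the bookkeeping: sorting each of the many families of \cref{THM: SCST for products of spheres} into exactly one of the three buckets (trivial $\reduced{KO}$, realification, product theorem) while tracking the two controlling invariants --- the number of odd factors, which must be $\leq 1$ for \cref{Improvement of Pittie's}, and the sub-sums modulo eight, which must avoid $0$ for \cref{lem: surjectivity of realif product spheres} and whose hitting $0$ must be repaired by the $h^-=0$ hypothesis of \cref{surjectivity of alpha_O for products}. In particular I would want to confirm that the excluded configurations $\{2,6\}$, $\{2,6,\odd\}$, $\{4,4,\odd\}$ are precisely those where a sub-sum hits $0\pmod 8$ \emph{and} the product theorem is powerless --- either because both relevant factors have nonzero $h^-$ (as for $\SS^2\times\SS^6$) or because an odd factor forces $\rank G-\rank H=1$, outside the hypotheses of \cref{surjectivity of alpha_O for products}.
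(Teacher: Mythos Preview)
Your approach parallels the paper's almost exactly --- the same presentation, the same split into the trivial-$\reduced{KO}$ tuples, the bounded-$\ell$ tuples handled by \cref{Improvement of Pittie's} plus \cref{lem: surjectivity of realif product spheres}, and the remaining ones handled by \cref{surjectivity of alpha_O for products} and \cref{PROP:alpha_O product S8n}. One cosmetic difference: for the penultimate family $\{n_1,n_2,\dots,n_\ell\}$ with $n_1$ even and each $n_i\equiv 0$ or $4$ for $i\geq 2$, you build the $\{n_2,\dots,n_\ell\}$ factor by iterating \cref{surjectivity of alpha_O for products}, whereas the paper applies \cref{PROP: surjectivity alpha_O for rkG rk H conjugation} in one stroke (conjugation acts trivially on each $R(\Spin(4m))$). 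Both routes are valid.

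There is, however, a genuine gap in your treatment of single spheres. You dispatch every $\SS^n$ with $n\not\equiv 0\pmod 8$ via the realification route, but for $n\equiv 1\pmod 8$ the reduced realification cannot be surjective: $\reduced K(\SS^{8k+1})=K^{-(8k+1)}(pt)=0$ whereas $\reduced{KO}(\SS^{8k+1})=KO^{-(8k+1)}(pt)=\ZZ_2$, so $\rx$ is the zero map onto a nontrivial group. The paper does not attempt the realification argument for $\ell=1$; it defers the entire case to \cite{Gonzalez:nonnegative} and explicitly flags $n\equiv 1$ as ``the most delicate case'', attributing it to Rigas \cite{R:geodesics} with the remark that ``it is not apparent which presentation this is''. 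So the standard $\Spin(n{+}1)/\Spin(n)$ presentation cannot simply be assumed to work there, and you need to invoke the external result rather than \cref{lem: surjectivity of realif product spheres}.
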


\begin{proof}
\newcommand{\evennr}{\text{\textnormal{\small even}}}
\newcommand{\oddnr}{\text{\textnormal{\small odd}}}
The case \(\ell=1\) has already been discussed in \cite{Gonzalez:nonnegative}.
Here, the most delicate case is the case of spheres of dimensions \(n\equiv 1\):  it is a result of Rigas \cite{R:geodesics} that in this case \(\alpha_O\) is surjective for some presentation of \(\SS^n\), though it is not apparent which presentation this is.  In all other dimensions, and for all products of spheres considered below, the usual presentation of (products) of \(\SS^n\) as (products of) \(\Spin(n+1)/\Spin(n)\) does the job, and we implicitly use this presentation without further specification.

For the tuples \(\{3,3\},\{7,7\},\{7,7,7\}\), the product \(G/H\) satisfies \(KO(G/H)=[\Z]\) by \cref{lem: triviality ok KO product spheres}, hence \(\alpha_O\colon RO(H)\to KO(G/H)\) is trivially surjective. For the remaining tuples in the cases \(\ell=2,3,4\), we have \(\rank G-\rank H\leq 1\), so that \(\alpha\colon R(H)\to K(G/H)\) is surjective by \cref{Improvement of Pittie's}. All of these remaining tuples (except for those of the form \(\{0,n\}\) and \(\{0,\evennr,\oddnr\}\), which are covered by the last item) also satisfy the numerical condition of \cref{lem: surjectivity of realif product spheres}, hence realification \(\rx\colon  K(G/H)\to KO(G/H)\) is surjective. Thus we get the surjectivity of \(\alpha_O\colon RO(H)\to KO(G/H)\).

Now let us consider the tuples \(\{\evennr,n_2,\dots,n_\ell\}\) where \(n_i\equiv 0\textrm{ or }4\) for all~\(i\).
For the product \(G_2/H_2\) corresponding to the tuple \(\{n_2,\dots, n_{\ell}\}\), \Cref{PROP: surjectivity alpha_O for rkG rk H conjugation} implies that the map \(\alpha_0\colon RO(H_2)\to KO(G_2/H_2)\) is surjective and that \(h^-(K(G_2/H_2))=0\) (see \Cref{REM: satisfying h-()}). Observe that for the usual presentation of an even dimensional sphere \(G_1/H_1\) we have that \(\rank G_1=\rank H_1\) and  \(\alpha_O\colon RO(H_1)\to KO(G_1/H_1)\) is surjective, hence \cref{surjectivity of alpha_O for products} applies to the product \(G/H=G_1/H_1\times G_2/H_2\) and shows that \(\alpha_O\colon RO(H)\to KO(G/H)\) is surjective.

Finally, consider \(A\cup \{n_k,\dots, n_{\ell}\}\), where \(n_i\equiv 0\) for all \(i\) and \(A\) is any of the tuples above.  We have already shown that \(\alpha_O\) is surjective for the appropriate presentation of the product corresponding to the tuple \(A\), so the claim follows directly from \cref{PROP:alpha_O product S8n}.
\end{proof}

\subsection{Few full flag manifolds}\label{SS: Flag manifolds}
When \(T\subset G\) is a maximal torus, the homogeneous space \(G/T\) is referred to as a full flag manifold.  As always, we may and will assume that \(G\) is simply connected. The KO-theory such full flag manifolds is investigated in \cite{Z:KO-rings}.  The results there easily imply:

\begin{prop}\label{surjectivity-of-alphaO-for-full-flags}
  The only full flag varieties for which \(\alpha_O\colon R(T)\to KO(G/T)\) is surjective are
  \(SU(m)/T\) with \(m\in\{ 2,3,4,5,7\}\), \(\Spin(7)/T\) and \(G_2/T\), and the following products:
  \begin{align*}
    SU(2)^{\times 2}  & /T & SU(3)^{\times 2} & /T & (SU(3)& \times SU(4))/T & (SU(3)\times G_2)& /T \\
    SU(2)^{\times 3} & /T & SU(3)^{\times 3} & /T & (SU(3)  & \times SU(5))/T
  \end{align*}
\end{prop}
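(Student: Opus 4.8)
\emph{The plan and the key reduction.}
I would first reduce, using \cref{suffices-to-consider-simply-connected-G} and \cref{presentations-with-simply-connected-G}, to presentations with \(G\) simply connected, so that \(G=G_1\times\dots\times G_r\) is a product of simple simply connected groups, \(T=T_1\times\dots\times T_r\) with \(T_i\subset G_i\) a maximal torus, and \(G/T=\prod_i G_i/T_i\).
Since \(\rank T=\rank G\), \cref{Improvement of Pittie's} gives that \(\alpha\colon R(T)\to K(G/T)\) is surjective, that \(K(G/T)\) is free, and that \(K^1(G/T)=0\).
The whole argument then rests on the observation that, for \(G\) simply connected, \(\alpha_O\colon RO(T)\to KO(G/T)\) is surjective \emph{if and only if} the reduced realification \(\rx\colon\reduced K(G/T)\to\reduced{KO}(G/T)\) is surjective.
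One direction is immediate from the identity \(\alpha_O\circ\rx=\rx\circ\alpha\) and the surjectivity of \(\alpha\).
For the other, one uses that for a torus \(T\) every virtual real representation of rank zero is a realification: \(RO(T)\) is generated by the trivial representation together with the realifications of the characters of \(T\), so that \(\ker(RO(T)\to\Z)=\rx\bigl(\ker(R(T)\to\Z)\bigr)\); comparing ranks then yields \(\reduced{KO}(G/T)=\alpha_O\bigl(\ker(RO(T)\to\Z)\bigr)=\rx\bigl(\alpha(\ker(R(T)\to\Z))\bigr)=\rx\bigl(\reduced K(G/T)\bigr)\).
Finally, via the Bott sequence \eqref{EQN: Bott sequence} and \(K^{-1}(G/T)=K^{1}(G/T)=0\), surjectivity of the reduced realification is equivalent to the vanishing \(\reduced{KO}^{-1}(G/T)=0\).

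\emph{Reduction to simple factors and input from \cite{Z:KO-rings}.}
Because the integral cohomology of each \(G_i/T_i\) is torsion-free and concentrated in even degrees, the Atiyah--Hirzebruch spectral sequence computing \(KO^*(\prod_i G_i/T_i)\) is governed, via its multiplicative structure, by the mod-\(2\) cohomology of the factors and the Steenrod operation \(Sq^2\) (together with the higher differentials) --- this is precisely the type of computation carried out in \cite{Z:KO-rings}.
For a \emph{simple} simply connected \(G_i\), the results there show that \(\reduced{KO}^{-1}(G_i/T_i)=0\), equivalently that \(\rx\colon\reduced K(G_i/T_i)\to\reduced{KO}(G_i/T_i)\) is surjective, precisely when
\[
  G_i\in\{\,SU(2),\,SU(3),\,SU(4),\,SU(5),\,SU(7),\,\Spin(7),\,G_2\,\},
\]
and for no other simple simply connected compact Lie group (in particular not for \(SU(6)\), nor for \(Sp(2)=\Spin(5)\), whose flag manifold already has dimension \(8\)).

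\emph{Products and the main obstacle.}
It then remains to determine, for a tuple \((G_1,\dots,G_r)\) with each \(G_i\) in the list above, when \(\reduced{KO}^{-1}(\prod_i G_i/T_i)\) vanishes.
I would do this with the multiplicativity of the Atiyah--Hirzebruch spectral sequence, in the same spirit as the proofs of \cref{lem: triviality ok KO product spheres,lem: surjectivity of realif product spheres}: the \(E_2\)-terms, differentials and extension problems on the relevant diagonal are assembled from those of the individual factors by means of the cup products and \(Sq^2\)-operations on \(\bigotimes_i H^*(G_i/T_i;\Z_2)\).
When \(\dim\prod_i G_i/T_i<8\) --- which happens exactly for products of at most three copies of \(SU(2)/T\cong\SS^2\) and for the single space \(SU(3)/T\cong W^6\) --- surjectivity is automatic; in all remaining cases one carries out the \(Sq^2\)-bookkeeping, and this singles out exactly the products named in the statement (and, as a consistency check, reproduces the failure of surjectivity for the presentation \(SU(2)^{\times 4}/T\) recorded in \cref{optimility of P: surjectivity alpha_O dim 7}).
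The genuine obstacle is the per-group and per-product mod-\(2\) cohomology computation; for the simple factors this is exactly what \cite{Z:KO-rings} supplies, so what remains here is the (routine, if somewhat tedious) product bookkeeping, together with the elementary reduction of the first step that allows one to replace \(\alpha_O\) by realification throughout.
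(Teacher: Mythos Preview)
Your reduction in the first paragraph---that surjectivity of \(\alpha_O\) for a full flag manifold is equivalent to surjectivity of the reduced realification, equivalently to \(\reduced{KO}^{-1}(G/T)=0\)---is correct and coincides with what the paper cites as \cite{Z:KO-rings}*{Example~2.3}.  The divergence begins at the next step.

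The paper does \emph{not} treat simple factors first and then bolt products together via the AHSS.  Instead it invokes the main structural theorem of \cite{Z:KO-rings}, valid for every compact simply connected \(G\) (products included): the \(\ZZ_4\)-graded Witt ring \(\bigoplus_i KO^{2i}(G/T)/\rx\) is an exterior \(\ZZ_2\)-algebra on \(b_\HH\) generators in degree~\(1\) and \((b_\CC/2)+b_\RR\) generators in degree~\(3\), where \(b_\CC,b_\RR,b_\HH\) count the basic representations of \(G\) of each type.  The criterion \(KO^0(G/T)/\rx=\ZZ_2\) then becomes the purely combinatorial condition \(b_\HH\leq 3\), \((b_\CC/2)+b_\RR\leq 3\), and not both kinds of generators present.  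Since the \(b\)-numbers are additive over direct factors, products come for free, and the classification is read off Table~1 of \cite{Z:KO-rings} in one line.

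Your route---use \cite{Z:KO-rings} only for simple \(G_i\), then do the products by hand with the multiplicative AHSS---could in principle be made to work, but it is not the ``routine bookkeeping'' you describe.  Multiplicativity of the AHSS gives only a Leibniz rule, not a determination of all differentials on the product from those on the factors; you would need, for each factor, the full action of \(Sq^2\) on \(H^*(G_i/T_i;\ZZ_2)\) together with control of all higher differentials and extensions, and then an argument bounding the (a~priori infinite) list of admissible products.  The example \(SU(2)\times SU(3)\)---where each factor passes but the product fails---already shows that nothing is automatic here.  So while your outline is not wrong, it leaves the genuinely hard step undone, and it misses the structural input (the exterior-algebra description of the Witt ring for arbitrary simply connected \(G\)) that turns the whole question into a three-line check.
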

\begin{proof}
  Let \(b_\CC\), \(b_\RR\) and \(b_\HH\) denote the number of basic representations of \(G\) of complex, real and quaternionic type, respectively. By the main result of  \cite{Z:KO-rings}, the \(\ZZ_4\)-graded ``Witt ring'' \(\bigoplus_i KO^{2i}(G/T)/\rx\) is an exterior algebra over \(\ZZ_2\) on \(b_\HH\) generators of degree~\(1\) and \((b_\CC/2)+b_\RR\) generators of degree~\(3\). By \cite{Z:KO-rings}*{Example~2.3}, \(\alpha_O\) is surjective if and only if \(KO^0(G/T)/\rx =\ZZ_2\), i.\,e.\ if the only additive generator of the Witt ring in degree zero is the multiplicative unit.  This happens if and only if all of the following conditions are met:
  \begin{compactitem}[-]
  \item There are at most \(3\) generators in degree~\(1\), \ie \(b_\HH\leq 3\).
  \item There are at most \(3\) generators in degree~\(3\), \ie \((b_\CC/2) + b_\RR\leq 3\).
  \item We do not simultaneously have generators in degree~\(1\) and in degree~\(3\), \ie \(b_\HH =0 \) or \(b_\CC+ b_\RR = 0\).
  \end{compactitem}
  The claim now follows from \cite{Z:KO-rings}*{Table~1}.
\end{proof}


\section{The 13-dimensional Berger manifold}\label{S:B13}
\renewcommand{\tilde}{\widetilde}

In this section we prove \cref{THM:B13} --- see \cref{B13:KO-ring,B13:Pontryagin,B13:image-of-alphaO} below.

The space under consideration can be constructed as follows. Consider the standard embedding $j\colon Sp(2)\to SU(4)$, and define:
\begin{equation}\label{eq:B13i}
  \begin{aligned}
    i\colon Sp(2)\times S^1 & \longrightarrow \quad  SU(5) \\
    (A,z) \quad & \mapsto   \diag(j(A)z, \overline{z}^4)
  \end{aligned}
\end{equation}
The homomorphism $i$ has nontrivial kernel given by
\(
\{ (Id,1), ( -Id,-1)  \} \cong \ZZ_2
\).  So \(i\) defines a two-fold covering from \(Sp(2)\times S^1\) onto its image, which is usually denoted \(Sp(2)\times_{\ZZ_2} S^1\) or simply \(Sp(2)\cdot S^1\).  The \(13\)-dimensional \textbf{Berger space $B^{13}$} is the quotient
\[
  B^{13} := \frac{SU(5)}{Sp(2)\times_{\ZZ_2} S^1}
\]
It is the total space of a differentiable \(\RP^5\)-bundle over \(\CP^4\) \cite{Z:examples}:
\begin{equation}\label{eq:B13:fibre-bundle}
  \RP^5 \to B^{13} \to \CP^4
\end{equation}

\subsection{Representation rings}
We collect here some information on the representation rings of the groups used to construct \(B^{13}\). Let \(v\) denote the standard five-dimensional complex representation of \(SU(5)\), and let \(x\) be the standard one-dimensional representation of the circle group \(S^1\).  Let \(u\) denote the restriction of the standard four-dimensional representation of \(SU(4)\) to \(Sp(2)\), viewed as a quaternionic representation.  The complex representation rings of \(SU(5)\) and of \(Sp(2)\times S^1\) can be written as follows:
\begin{alignat*}{7}
  & R(SU(5))           &  & \cong \ZZ [v,\lambda^2 v, \lambda^3 v,\lambda^4 v] \\
  & R(Sp(2)\times S^1) &  & \cong R(Sp(2))\otimes R(S^1)                       \\
  &                    &  & \cong \ZZ[\cx' u,\lambda^2(\cx' u), x^{\pm 1}]
\end{alignat*}
There are no hidden relations:  the first is a polynomial ring, while the second is a tensor product of a polynomial ring and the ring of Laurent polynomials \(\ZZ[x^{\pm 1}]\).

\begin{lem}\label{B13:R(H)}
  The representation ring \(R(Sp(2)\times_{\ZZ_2}S^1)\) is the subring of \(R(Sp(2)\times S^1)\) generated by \((\cx' u)^2\), \(\lambda^2(\cx' u)\), \((\cx' u)x^{\pm 1}\) and \(x^{\pm 2}\).  We may sometimes write this ring as
  \[
    R(Sp(2)\times_{\ZZ_2} S^1) \cong \ZZ[(\cx' u)^2,\lambda^2(\cx' u), (\cx' u)x^{\pm 1}, x^{\pm 2}],
  \]
  but note that there are relations among the generators.
\end{lem}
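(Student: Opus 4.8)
The plan is to identify $R(Sp(2)\times_{\ZZ_2}S^1)$ with a subring of $R(Sp(2)\times S^1)$ via the usual description of the representation ring of a quotient by a finite central subgroup, and then read off generators.  Write $G := Sp(2)\times S^1$ and $N := \ker i = \{(Id,1),(-Id,-1)\}$, with generator $t := (-Id,-1)$, so that $i$ presents $H := Sp(2)\times_{\ZZ_2}S^1$ as the quotient $G/N$.  Pullback along $i$ embeds $R(H)$ into $R(G)$ as the subring of those virtual representations on which $t$ acts trivially; concretely, since $t$ is central it acts on each irreducible representation $V$ of $G$ by a scalar $\omega_V(t)\in\{\pm 1\}$, the rule $V\mapsto \omega_V(t)\,V$ extends $\ZZ$-linearly to a ring involution of $R(G)$, and $R(H)$ is its fixed subring.  (This is also the mechanism behind the embedding referred to in \cref{rem:t-trivial-on-R}.)

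First I would compute $\omega(t)$ on the polynomial generators of $R(G) \cong \ZZ[\cx' u,\lambda^2(\cx' u),x^{\pm1}]$ recorded above.  Under the standard embedding $j$, the central element $-Id\in Sp(2)$ maps to $-\mathrm{Id}_4\in SU(4)$, so it acts by $-1$ on the standard four-dimensional representation and hence on $\cx' u$, and therefore by $(-1)^2=1$ on $\lambda^2(\cx' u)$; meanwhile $-1\in S^1$ acts by $-1$ on $x$.  The monomials $(\cx' u)^a(\lambda^2(\cx' u))^b x^c$ with $a,b\ge 0$ and $c\in\ZZ$ form a $\ZZ$-basis of $R(G)$, and the involution acts diagonally on it, multiplying such a monomial by $(-1)^{a+c}$.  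Hence the fixed subring $R(H)$ is exactly the $\ZZ$-span of those monomials with $a+c$ even.

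Next I would check that this span coincides with the subring $\ZZ\langle (\cx' u)^2,\ \lambda^2(\cx' u),\ (\cx' u)x^{\pm1},\ x^{\pm2}\rangle$.  Each of these four elements is a monomial with $a+c$ even, giving one inclusion.  For the reverse inclusion, an $(a+c)$-even monomial with $a$ even has $c$ even, so it is a product of powers of $(\cx' u)^2$, $\lambda^2(\cx' u)$ and $x^{\pm2}$; one with $a$ odd has $c$ odd (in particular $c\neq 0$), and pulling out a single factor $(\cx' u)x^{\mathrm{sign}(c)}$ reduces it to the previous case.  This proves the stated description.  Finally, to substantiate the warning that the four generators satisfy relations, I would record identities such as $\big((\cx' u)x\big)\big((\cx' u)x^{-1}\big)=(\cx' u)^2$ and $x^2\cdot x^{-2}=1$, which show that the generators are far from algebraically independent, in contrast to $R(SU(5))$ and $R(Sp(2)\times S^1)$.

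The argument is essentially bookkeeping and I do not expect a genuine obstacle.  The two points deserving care are the identification of $R(H)$ with the $t$-fixed subring of $R(G)$ (standard for quotients by finite central subgroups) and the short combinatorial verification in the surjectivity step that every $(a+c)$-even monomial lies in the subring generated by the four listed elements.
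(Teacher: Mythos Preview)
Your proposal is correct and follows essentially the same approach as the paper: identify $R(H)$ with the fixed subring of $R(G)$ under the involution induced by the central element $(-Id,-1)$, compute that this element scales $\cx' u$ and $x$ by $-1$ and fixes $\lambda^2(\cx' u)$, and conclude that the fixed monomials are exactly those with $a+c$ even. If anything, you are slightly more explicit than the paper in verifying that the span of these monomials coincides with the subring generated by the listed elements and in exhibiting sample relations among them.
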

\begin{proof}
  In general, given a finite central subgroup of a compact Lie group, \(\Gamma\subset G\), the representation ring of the quotient \(G/\Gamma\) can be computed as the ring of fixed points under the action of \(\Gamma\) on \(R(G)\).  More precisely, we have an action of \(\Gamma\) on \(R(G)\otimes_\ZZ \CC\), as follows.  Take some \(\gamma \in \Gamma\) and an irreducible
  representation \(\rho\).  As \(\gamma\) is central, Schur's Lemma
  implies that \(\rho(\gamma)\) acts as multiplication by some complex scalar.  Identify \(\rho(\gamma)\) with this scalar. Define \(\gamma.\rho\) to be the element \(\rho \otimes \rho(\gamma)  \in R(G)\otimes_\ZZ \CC\), for any irreducible \(\rho\), and extend this action \(\CC\)-linearly.  A representation of \(G\) descends to a representation of \(G/\Gamma\) if and only if is fixed by this action, so:
  \[
    R(G/\Gamma) \cong (R(G)\otimes \CC)^{\Gamma}\cap R(G)
  \]
  \cite{Steinberg:Pittie}*{\S\,3}. In our case, where \(\Gamma = \{(I,1),(-I,-1)\}\) and \(G=Sp(2)\times S^1\), the action of \(\Gamma\) on the ring generators of \(R(G)\) is easily computed:  \((-I,-1)\) acts as multiplication by \(-1\) on \(\cx' u\) and \(x^{\pm 1}\), but as the identity on \(\lambda^2(\cx' u)\).  It follows that the monomials in these generators that are fixed by the action of \(\Gamma\) are those monomials \((\cx' u)^i(x)^j(\lambda^2 \cx' u)^k\) with \(i+j\) even. All these monomials are linearly independent, so the above result follows.
\end{proof}

\begin{lem}\label{B13:restricting-RG-to-RH}
  Let \(\iota \colon Sp(2)\times_{\ZZ_2} S^1 \hookrightarrow SU(5)\) denote the inclusion of the image of \(i\) \eqref{eq:B13i}.  The restriction \(\iota^*\colon R(SU(5))\to R(Sp(2)\times_{\ZZ_2}S^1)\)   is determined by:
  \begin{align*}
    \iota^* (v) & =  (\cx' u) x^{1} +  x^{-4}      &
                                                  \iota^* (\lambda^2 v) & =  \lambda^2(\cx' u) x^{2} +  (\cx' u) x^{-3}\\
    \iota^* (\lambda^3 v) & = \lambda^2(\cx' u) x^{-2} +  (\cx' u) x^{3} &
                                                                     \iota^* (\lambda^4 v) & = (\cx' u) x^{-1} +  x^{4}
  \end{align*}
\end{lem}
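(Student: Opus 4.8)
The plan is to reduce everything to the covering homomorphism $i$ of \eqref{eq:B13i}. Write $p\colon Sp(2)\times S^1\twoheadrightarrow Sp(2)\times_{\ZZ_2}S^1$ for the quotient, so that $i=\iota\circ p$. The proof of \cref{B13:R(H)} realizes $R(Sp(2)\times_{\ZZ_2}S^1)$ as a subring of $R(Sp(2)\times S^1)$ via $p^*$, and under this identification the composite $R(SU(5))\xrightarrow{\iota^*}R(Sp(2)\times_{\ZZ_2}S^1)\hookrightarrow R(Sp(2)\times S^1)$ is $(\iota\circ p)^*=i^*$. Hence it suffices to compute $i^*$ on the ring generators $v,\lambda^2v,\lambda^3v,\lambda^4v$ of $R(SU(5))$ and then to check that the results lie in the subring. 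For $v$ this is immediate from the block form $i(A,z)=\diag(j(A)z,\bar z^{4})$: the restriction of the standard representation on $\CC^{5}$ splits as the action of $(A,z)$ on $\CC^{4}$ by $j(A)z$, which is $(\cx' u)\otimes x$, plus the action on the last coordinate $\CC$ by $\bar z^{4}$, which is $x^{-4}$; thus $i^*v=(\cx' u)x+x^{-4}$.

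For the exterior powers I would expand $\lambda^{k}(V\oplus W)=\bigoplus_{a+b=k}\lambda^{a}V\otimes\lambda^{b}W$ together with $\lambda^{k}(V\otimes L)=\lambda^{k}(V)\otimes L^{\otimes k}$ for a line $L$, applied to $V=(\cx' u)x$ and $W=x^{-4}$. Since $W$ is one-dimensional only $b\in\{0,1\}$ contributes, so each $\lambda^{k}v$ (for $k=2,3,4$) becomes a sum of $\lambda^{k}(\cx' u)$ and $\lambda^{k-1}(\cx' u)$ tensored with suitable powers of $x$. The one input needing a word of justification is the value of the exterior powers of $\cx' u$ itself: since $Sp(2)\subset SU(4)$ we have $\lambda^{4}(\cx' u)=1$, and since $Sp(2)$ preserves a symplectic form on $\CC^{4}$ the representation $\cx' u$ is self-dual, so $\lambda^{3}(\cx' u)\cong(\cx' u)^{*}\otimes\lambda^{4}(\cx' u)\cong\cx' u$. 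Substituting $\lambda^{1}(\cx' u)=\cx' u$, the generator $\lambda^{2}(\cx' u)$, $\lambda^{3}(\cx' u)=\cx' u$ and $\lambda^{4}(\cx' u)=1$ and collecting powers of $x$ then yields exactly the four identities stated in the lemma.

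Finally I would verify that each right-hand side indeed lies in $R(Sp(2)\times_{\ZZ_2}S^1)$, which is clear from the generating set of \cref{B13:R(H)}: $(\cx' u)x^{\pm1}$ and $x^{\pm2}$ are listed generators, and the remaining monomials that occur, namely $x^{\pm4}=(x^{\pm2})^{2}$, $(\cx' u)x^{\pm3}=(\cx' u)x^{\pm1}\cdot x^{\pm2}$ and $\lambda^{2}(\cx' u)x^{\pm2}=\lambda^{2}(\cx' u)\cdot x^{\pm2}$, are products of generators. There is no genuine obstacle here; the computation is elementary, and the only spot where care is required is bookkeeping the twists by $x$ under the exterior powers and correctly identifying $\lambda^{3}(\cx' u)$ via self-duality of the standard $Sp(2)$-representation, since an exponent slip there would propagate through all four formulas.
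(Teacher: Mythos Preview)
Your proof is correct and follows essentially the same approach as the paper: read off $i^*v$ from the block form of $i$, then compute the exterior powers using the standard decomposition formulas, with the key observation that $\lambda^3(\cx' u)=\cx' u$ by self-duality of the standard $Sp(2)$-representation. You simply spell out in more detail the reduction along the covering $p$ and the verification that the results land in the subring of \cref{B13:R(H)}, which the paper leaves implicit.
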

\begin{proof}
  The restriction of \(v\) can be read off the definition of \(i\). For the other ring generators, we observe that restriction commutes with exterior powers and use the usual formulas for manipulating exterior powers (\eg \cite{FultonLang}*{p.\,5}).
  Note that \(\lambda^3(u) = u\):  as a representation of \(SU(4)\), the standard representation \(u\) is only conjugate to \(\lambda^3 u\), but its restriction to \(Sp(2)\) is of quaternionic type, hence self-conjugate.
\end{proof}

\begin{lem}\label{B13:RO(H)-generators}
  \(RO(Sp(2)\times_{\ZZ_2} S^1)\) is generated as a ring by \(1\), \(u^2\), \(\lambda^2 u\) and the image of the realification map.
\end{lem}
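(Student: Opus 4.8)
The plan is to reduce the statement to a direct computation of the Tate cohomology $h^{+}(R(H),\tx)$ of the complex representation ring, where $H:=Sp(2)\times_{\ZZ_{2}}S^{1}$. Throughout, $u^{2}\in RO(H)$ and $\lambda^{2}u\in RO(H)$ denote the (unique, since $\cx$ is injective on $RO(H)$) real representations with $\cx(u^{2})=(\cx'u)^{2}$ and $\cx(\lambda^{2}u)=\lambda^{2}(\cx'u)$; these exist because $(\cx'u)^{2}=\cx'u\otimes\cx'u$ and $\lambda^{2}(\cx'u)$ are of real type (a tensor square and an exterior square of the quaternionic representation $\cx'u$), and they descend from $Sp(2)$ to $H$ because $-I\in Sp(2)$ acts trivially on both. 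Write $B\subseteq RO(H)$ for the subring generated by $u^{2}$ and $\lambda^{2}u$. I would first observe that $\rx(R(H))$ is an ideal of $RO(H)$: since $\rx$ is a homomorphism of $\bigl(RO(H)\oplus RSp(H)\bigr)$-modules (the identities of \cref{sec:c,r,q,t} hold for representation rings exactly as for K-theory), one has $a\cdot\rx(b)=\rx(\cx(a)\cdot b)\in\rx(R(H))$ for $a\in RO(H)$, $b\in R(H)$. Hence $B+\rx(R(H))$ is a subring of $RO(H)$, and it is precisely the subring generated by $1$, $u^{2}$, $\lambda^{2}u$ and $\rx(R(H))$; the assertion to be proved is that it equals $RO(H)$.

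Consider the ring homomorphism $\phi\colon RO(H)\to h^{+}(R(H),\tx)$, $a\mapsto[\cx(a)]$ (well defined, since $\tx\cx=\cx$ forces $\cx(a)\in\ker(\id-\tx)$). Using $\cx\circ\rx=\id+\tx$ and the injectivity of $\cx$ on $RO(H)$, one checks that $\ker\phi=\rx(R(H))$: if $\phi(a)=0$ then $\cx(a)\in\im(\id+\tx)=\im(\cx\circ\rx)$, so $\cx(a)=\cx(\rx(b))$ and hence $a=\rx(b)$; the reverse inclusion is clear. Thus $RO(H)=B+\ker\phi=B+\rx(R(H))$ as soon as $\phi(B)=h^{+}(R(H),\tx)$. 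Now $\phi(B)$ is a subring of $h^{+}(R(H),\tx)$ containing $\phi(u^{2})=[(\cx'u)^{2}]$ and $\phi(\lambda^{2}u)=[\lambda^{2}(\cx'u)]$, so it suffices to prove that these two classes generate $h^{+}(R(H),\tx)$ as a ring.

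It remains to compute $h^{+}(R(H),\tx)$. By \cref{B13:R(H)} and its proof, $R(H)$ has $\ZZ$-basis the monomials $(\cx'u)^{i}(\lambda^{2}\cx'u)^{k}x^{j}$ with $i+j$ even; since $\tx$ fixes $\cx'u$ and $\lambda^{2}(\cx'u)$ and sends $x\mapsto x^{-1}$, it permutes this basis, fixing exactly the monomials with $j=0$ --- that is, the monomials $(\cx'u)^{2a}(\lambda^{2}\cx'u)^{k}$ --- and pairing all others in two-element orbits $\{m,\tx m\}$. Therefore $\ker(\id-\tx)$ is spanned by the fixed monomials together with the orbit sums $m+\tx m$, while $\im(\id+\tx)$ is spanned by twice the fixed monomials together with the same orbit sums; consequently $h^{+}(R(H),\tx)$ has $\ZZ_{2}$-basis the images of the fixed monomials, and since a product of fixed monomials is again fixed, $h^{+}(R(H),\tx)\cong\ZZ_{2}\bigl[(\cx'u)^{2},\lambda^{2}(\cx'u)\bigr]$ as a ring --- exactly what was needed. (Alternatively, split off the polynomial factor $\ZZ[\lambda^{2}\cx'u]$ and invoke \cref{Kunneth-for-Tate}.) The only real content lies in this last step, and the point to get right is simply that $\tx$ acts on the monomial basis by a permutation with a transparent fixed set, so that $h^{+}(R(H),\tx)$ comes out as the polynomial ring on precisely the two classes $(\cx'u)^{2}$ and $\lambda^{2}(\cx'u)$ we want, with no hidden extra generators; everything else is bookkeeping with the structural maps $\cx,\rx,\tx$ and the presentation of \cref{B13:R(H)}.
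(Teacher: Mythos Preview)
Your proof is correct and follows the same essential strategy as the paper: reduce the question to a computation of \(h^{+}(R(H),\tx)\) and identify it with the \(\ZZ_2\)-polynomial ring on \((\cx'u)^{2}\) and \(\lambda^{2}(\cx'u)\). The execution differs slightly. The paper first treats the covering group \(\tilde H=Sp(2)\times S^{1}\), computing \(h^{+}(R\tilde H)=\ZZ_2[\cx'u,\lambda^{2}(\cx'u)]\) via the K\"unneth lemma for Tate cohomology, then separates the real and quaternionic summands there, and finally computes \(h^{+}(RH)\) by an additive decomposition argument to transfer the result to \(H\). You bypass \(\tilde H\) entirely and compute \(h^{+}(R(H))\) in one step from the permutation action of \(\tx\) on the monomial basis of \cref{B13:R(H)}; your formulation via \(\phi\colon RO(H)\to h^{+}(R(H))\) with \(\ker\phi=\rx(R(H))\) is just an unpacked version of \cref{Bousfield's-lemma-for-RG}. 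Your route is a little more direct and makes transparent why no quaternionic contribution survives for \(H\) (the fixed monomials all have \(i\) even), whereas the paper's detour through \(\tilde H\) separates the real\slash quaternionic parts first and then restricts.
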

\begin{proof}
  We first consider the real representation ring of the product \(\tilde H := Sp(2)\times S^1\).
  As \(u\) is a quaternionic representation, both \(u^2\) and \(\lambda^2 u\) are real representations.
  We claim that \(RO(\tilde H)\) is generated as a ring by \(1\), \(u^2\), \(\lambda^2u\) and the image of the realification map.
  To see this, recall from \cref{Bousfield's-lemma-for-RG} that
  \(\cx \) and \(\cx' \) define an isomorphism \(RO(\tilde H)/\rx  \oplus RSp(\tilde H)/\qx \cong h^+(R\tilde H)\) while \(h^-(R\tilde H)=0\).  Using \cref{Kunneth-for-Tate}, we find that:
  \begin{align*}
    h^+(R(\tilde H))
    &= h^+(R(Sp(2)))\otimes h^+(R(S^1)) \\
    &= \ZZ_2[\cx' u,\lambda^2(\cx' u)] \otimes \ZZ_2\\
    &= \ZZ_2[\cx' u,\lambda^2(\cx' u)]
  \end{align*}
  As we know which monomials in \(\cx' u\) and \(\lambda^2(\cx' u)\) are real and quaternionic, respectively, we find that \(RO(\tilde H)/\rx  = \ZZ_2[u^2,\lambda^2u]\).  This implies the claim.

  Now consider the quotient \(H := Sp(2)\times_{\ZZ_2}S^1\).  We will show that the inclusion \(RH\hookrightarrow R\tilde H\) induces an isomorphism \(h^*(RH)\cong h^*(R\tilde H)\).
  To simplify the notation, we introduce the polynomial ring \(A:= \ZZ[\lambda^2(\cx' u)]\), which we view as a subring of \(R(H)\).  \Cref{B13:R(H)} implies that, additively, we can decompose \(R(H)\) and \(R(\tilde H)\) as follows:
  \begin{alignat*}{7}
    R(\tilde H) & \cong \textstyle\bigoplus_{(i,j)\in \tilde J} A\cdot (\cx' u)^ix^j          \\
    R(H)        & \cong \textstyle\bigoplus_{(i,j)\in J} A\cdot (\cx' u)^ix^j
  \end{alignat*}
  The index set \(\tilde J\) ranges over all pairs \((i,j)\) with \(i\in\NN_0\) and \(j\in \ZZ\), while the subset \(J\subset \tilde J\) ranges only over those pairs with \(i+j\) even.
  The involution \(\tx\) is trivial on \(A\) and on \(\cx' u\), while \(\tx(x) = x^{-1}\).
  Thus:
  \begin{align*}
    h^+(RH)
                & \cong h^+\big(\textstyle\bigoplus_{i+j\text{ even}} A\cdot(\cx' u)^ix^j\big) \\
                & =h^+\big(\textstyle\bigoplus_{i\text{ even}\phantom{+j}} A\cdot(\cx' u)^i\big) \oplus
      \smash{\textstyle\bigoplus_{\substack{j>0                                                   \\i+j\text{ even}}} \underbrace{h^+\left( A\cdot(\cx' u)^ix^j \oplus \tx(A\cdot(\cx' u)^ix^j)\right)}_{0}} \\
                & =h^+(\ZZ[\lambda^2(\cx' u),(\cx' u)^2])                               \\
                & \cong h^+(R\tilde H)
  \end{align*}
  The claim of the \namecref{B13:RO(H)-generators} concerning \(RO(H)\) now follows as it did above for \(RO(\tilde H)\).
\end{proof}

\subsection{Cohomology}
The cohomology of the Bazaikin spaces is known.  For the Berger space \(B^{13}\), we have
\(
H^*(B^q,\ZZ) \cong \ZZ[\beta,\gamma]/(5\beta^3,\beta^5, \gamma^2,\gamma\beta^3)
\)
with \(\deg{\beta} = 2\) and \(\deg{\gamma} = 9\) \cite{FloritZiller:Bazaikin}*{Prop.~2.1}.
Thus, the Atiyah-Hirzebruch spectral sequences (AHSS) computing K- and KO-theory will have the following lines:
\begin{@empty}
  \newcommand{\nr}[1]{{\text{\tiny(#1)}}}
  \begin{alignat*}{20}
    &
    &  & \nr{0}
    &  & \nr{2}
    &  & \nr{4}
    &  & \nr{6}
    &  & \nr{8}
    &  & \nr{9}
    &  & \nr{11}
    &  & \nr{13}
    \\
    H^*(B^{13},\ZZ):\quad   &
    &  & \ZZ  \cdot 1             \quad
    &  & \ZZ  \cdot \beta         \quad
    &  & \ZZ  \cdot \beta^2       \quad
    &  & \ZZ_5\cdot \beta^3       \quad
    &  & \ZZ_5\cdot \beta^4       \quad
    &  & \ZZ  \cdot \gamma        \quad
    &  & \ZZ  \cdot \gamma\beta   \quad
    &  & \ZZ  \cdot \gamma\beta^2
    \\
    H^*(B^{13},\ZZ_2):\quad &
    &  & \ZZ_2\cdot 1
    &  & \ZZ_2\cdot \beta
    &  & \ZZ_2\cdot \beta^2
    &  &
    &  &
    &  & \ZZ_2\cdot \gamma
    &  & \ZZ_2\cdot \gamma\beta
    &  & \ZZ_2\cdot \gamma\beta^2
  \end{alignat*}
\end{@empty}
\begin{lem}\label{B13:Sq2}\label{B13:w-and-Wu}
  The total Stiefel-Whitney class of \(B^{13}\) is \(w(B^{13})   = 1 + \beta + \beta^2\) and its total Wu class is \(\nu(B^{13})  = 1 + \beta\).
  The second Steenrod square \(Sq^2\) operates on \(H^*(B^{13},\ZZ_2)\) as follows:
  \[
    Sq^2(\beta)   = \beta^2        \quad\quad
    Sq^2(\beta^2)   = 0        \quad\quad
    Sq^2(\gamma)   = 0        \quad\quad
    Sq^2(\gamma\beta) = \gamma\beta^2      \quad\quad
  \]
\end{lem}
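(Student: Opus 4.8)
The plan is to reduce everything to two elementary inputs: the mod~$2$ cohomology ring of $B^{13}$, and the Stiefel--Whitney classes $w_2,w_4$, which I obtain by restricting the tangent bundle to a fibre of~\eqref{eq:B13:fibre-bundle}; all remaining assertions then fall out of Wu's formula, the Cartan formula and the ring structure.

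First, reducing the ring $H^*(B^{13},\ZZ)$ recalled above modulo two (note that $5\beta^3$ becomes $\beta^3$) gives $H^*(B^{13},\ZZ_2)\cong\ZZ_2[\beta,\gamma]/(\beta^3,\gamma^2)$, which is $\ZZ_2$ in degrees $0,2,4,9,11,13$ and zero otherwise; in particular multiplication by $\beta$ defines isomorphisms $H^9\xrightarrow{\sim}H^{11}\xrightarrow{\sim}H^{13}$, and $H^k=0$ for $k\in\{1,3,5,6,7,8\}$. Since $H^3(B^{13},\ZZ)=H^{10}(B^{13},\ZZ)=0$, the reductions $H^2(B^{13},\ZZ)\to H^2(B^{13},\ZZ_2)$ and $H^9(B^{13},\ZZ)\to H^9(B^{13},\ZZ_2)$ are onto, so $\beta$ and $\gamma$ are mod~$2$ reductions of integral classes; as $Sq^1$ is a derivation vanishing on reductions of integral classes, $Sq^1=0$ on all of $H^*(B^{13},\ZZ_2)$. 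Moreover $Sq^2\beta=\beta^2$ (a degree-$n$ class squares under $Sq^n$), and $Sq^2(\beta^2)\in H^6(B^{13},\ZZ_2)=0$.

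Next I compute $w_2,w_4$ of $B^{13}$ by restricting $TB^{13}$ to a fibre $F\cong\RP^5$ of $\pi\colon B^{13}\to\CP^4$. The normal bundle of $F$ is $(\pi|_F)^*T\CP^4$, which is trivial because $\pi|_F$ is constant, so $TB^{13}|_F\cong TF\oplus\underline{\RR}^8$ and hence
\[
  w(B^{13})|_F = w(T\RP^5) = (1+t)^6 = 1+t^2+t^4 \quad\text{in } H^*(\RP^5,\ZZ_2)=\ZZ_2[t]/(t^6).
\]
In particular $w_2(B^{13})|_F=t^2\neq0$ and $w_4(B^{13})|_F=t^4\neq0$. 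Writing $w_2(B^{13})=\epsilon\beta$ with $\epsilon\in\ZZ_2$ (recall $H^2(B^{13},\ZZ_2)=\ZZ_2\langle\beta\rangle$), restriction to $F$ gives $\epsilon\cdot(\beta|_F)=t^2\neq0$, which forces $\epsilon=1$ and $\beta|_F=t^2$. Then $\beta^2|_F=t^4\neq0$, and writing $w_4(B^{13})=\delta\beta^2$ the same argument forces $\delta=1$. Thus $w_2(B^{13})=\beta$ and $w_4(B^{13})=\beta^2$.

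Finally I bootstrap via Wu's formula $Sq(\nu)=w$. Since $\nu_k\in H^k(B^{13},\ZZ_2)$ and $Sq^k$ vanishes on $H^{13-k}$ for $k\geq7$, while $H^1=H^3=H^5=H^6=0$, the only possibly nonzero components are $\nu_0,\nu_2,\nu_4$; and $\nu_0=1$, the degree-$2$ part of $Sq(\nu)=w$ reads $w_2=\nu_2$ so $\nu_2=\beta$, and the degree-$4$ part reads $w_4=\nu_4+Sq^2\nu_2$ so $\nu_4=\beta^2+\beta^2=0$. Hence $\nu(B^{13})=1+\beta$, and re-applying Wu's formula, $w(B^{13})=Sq(1+\beta)=1+\beta+\beta^2$. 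For the remaining Steenrod values, apply the defining property of the Wu class with $k=2$ and $x=\gamma\beta\in H^{11}$:
\[
  \langle Sq^2(\gamma\beta),[B^{13}]\rangle=\langle\nu_2\cup\gamma\beta,[B^{13}]\rangle=\langle\gamma\beta^2,[B^{13}]\rangle=1,
\]
so $Sq^2(\gamma\beta)=\gamma\beta^2$, the nonzero class in $H^{13}(B^{13},\ZZ_2)$. On the other hand, the Cartan formula together with $Sq^1=0$ gives $Sq^2(\gamma\beta)=(Sq^2\gamma)\beta+\gamma\,(Sq^2\beta)=(Sq^2\gamma)\beta+\gamma\beta^2$; comparing, $(Sq^2\gamma)\beta=0$, and since multiplication by $\beta$ is injective on $H^{11}$ and $Sq^2\gamma\in H^{11}$, we conclude $Sq^2\gamma=0$. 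The only non-formal step here is the computation of $w_2$ and $w_4$; the point to get right is that the normal bundle of a fibre is trivial and that $\beta$ restricts nontrivially to a fibre, both of which are handled above without recourse to the Serre spectral sequence.
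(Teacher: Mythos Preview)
Your proof is correct and follows essentially the same route as the paper: restrict the tangent bundle to a fibre $\RP^5$ of \eqref{eq:B13:fibre-bundle} to determine $w_2$ and $w_4$, feed these into Wu's formula to obtain $\nu(B^{13})=1+\beta$ and hence the full $w(B^{13})$, and then compute $Sq^2(\gamma\beta)$ from the defining property of $\nu_2$ and $Sq^2\gamma$ from the Cartan formula. You are more explicit than the paper in a few places (spelling out the mod~$2$ cohomology ring, verifying $Sq^1=0$, and justifying why $i^*w_2\neq 0$ forces $w_2=\beta$ rather than leaving this implicit), but there is no substantive difference in approach.
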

\begin{proof}
  We first compute the low degree Stiefel-Whitney classes using the fibre bundle \eqref{eq:B13:fibre-bundle}.  In general, given a differentiable fibre bundle
  \(
  F\xrightarrow{i} E \xrightarrow{\pi} X,
  \)
  we have \(i^*w(E) = w(F)\), \cf \cite{BorelHirzebruch:III}*{p.\,5.1}.
  In our situation, \(F=\RP^5\) with \(H^*(\RP^5,\ZZ_2) = \ZZ_2[\alpha]/\alpha^6\) and
  \(
  w(\RP^5) = 1 + \alpha^2 + \alpha^4
  \)
  \cite{MilnorStasheff}*{Cor.~11.15 (p.\,133)}. So we find
  \begin{align*}
    w(B^{13})   & = 1 + \beta + \beta^2 + \text{ terms of degrees}\geq 9.
                  \intertext{%
                  The Wu classes \(\nu_k\) are determined by the Steenrod squares by the formula
                  \(
                  Sq^k(x) = \nu_k\cup x
                  \)
                  for \(x \in H^{13-k}(B^{13},\ZZ_2)\) \cite{MilnorStasheff}*{top of p.\,132}. As \(Sq^k\) vanishes on elements of degree less than \(k\), we necessarily have \(\nu_k = 0\) for \(k>13-k\), \ie for \(k\geq 7\).  So the total Wu class reduces to
                  }
                  \nu(B^{13}) & = 1 + \nu_2 + \nu_4.
  \end{align*}
  Both \(\nu_2\) and \(\nu_4\) and the higher Stiefel-Whitney classes are now determined by the relation between the Wu classes and the Stiefel-Whitney classes \cite{MilnorStasheff}*{Thm~11.14}:
  \[
    w_k = \nu_k + Sq^1(\nu_{k-1}) + Sq^2(\nu_{k-2}) + \cdots
  \]
  Keeping in mind that \(Sq^i\) vanishes on classes of degree less than~\(i\), we find that \(\nu_2 = \beta\), \(\nu_4 = 0\) and \(w_9=w_{11}=w_{13}=0\).

  Finally, we compute all Steenrod squares. For \(\beta\) and \(\beta^2\) the claims are clear.
  For \(\gamma\beta\), we use the formula \(Sq^2(\gamma\beta) = \nu_2\cup \gamma\beta\) defining \(\nu_2\) and the result \(\nu_2 = \beta\) from above.
  The second Steenrod square of \(\gamma\) is now determined by the Cartan formula \(Sq^2(ab) = Sq^2(a)b + Sq^1(a)Sq^1(b) + aSq^2(b)\).
\end{proof}

\subsection{K-theory: additive structure}
\begin{prop}\label{B13:K-additive}
  The complex K-groups of the Berger space \(B^{13}\) are as follows:
  \begin{align*}
    K^0(B^{13})                            & = [\ZZ]\oplus\ZZ\oplus\ZZ\oplus\ZZ_5\oplus\ZZ_5                                                 \\
    K^1(B^{13})                            & =  \ZZ\oplus\ZZ\oplus\ZZ
  \end{align*}
  The (reduced) real K-groups \(\reduced{KO}^*(B^{13})\) and the maps \(\eta\), \(\cx \) and \(\rx \) are as displayed in \cref{fig:B13:Bott}.
  \begin{figure}
    \begin{adjustwidth}{-5em}{-5em}
    \[\xymatrix@R=18pt@C=4pt{
        {\reduced{K}^0} \ar@{=}[r] \ar[d]_{\rx }        & *++{\ZZ^2\oplus\ZZ_5^2} \ar@{->>}[d] \\
        {\reduced{KO}^{-6}} \ar@{=}[r] \ar[d]_{\eta} & *++{\ZZ\oplus\ZZ_5}                  \\
        {\reduced{KO}^{-7}} \ar@{=}[r] \ar[d]_{\cx }    & *++{\ZZ^2} \ar@^{^{(}->}[d]          \\
        {\reduced{K}^{-1}} \ar@{=}[r] \ar[d]_{\rx }     & *++{\ZZ^3} \ar@{->>}[d]              \\
        {\reduced{KO}^{-5}} \ar@{=}[r] \ar[d]_{\eta} & *++{\ZZ}                             \\
        {\reduced{KO}^{-6}} \ar@{=}[r] \ar[d]_{\cx }    & *++{\ZZ\oplus\ZZ_5} \ar@{^{(}->}[d]  \\
        {\reduced{K}^0} \ar@{=}[r]                   & *++{\ZZ^2\oplus\ZZ_5^2}
      }\quad
      \xymatrix@R=18pt@C=4pt{
        {\reduced{K}^0} \ar@{=}[r] \ar[d]_{\rx }        & *++{\ZZ^2\oplus\ZZ_5^2}\ar@{->>}[d] \\
        {\reduced{KO}^{-4}} \ar@{=}[r] \ar[d]_{\eta} & *++{\ZZ\oplus\ZZ_5}                  \\
        {\reduced{KO}^{-5}} \ar@{=}[r] \ar[d]_{\cx }    & *++{\ZZ}   \ar@{^{(}->}[d]           \\
        {\reduced{K}^{-1}} \ar@{=}[r] \ar[d]_{\rx }     & *++{\ZZ^3} \ar@{->>}[d]              \\
        {\reduced{KO}^{-3}} \ar@{=}[r] \ar[d]_{\eta} & *++{\ZZ^2}                           \\
        {\reduced{KO}^{-4}} \ar@{=}[r] \ar[d]_{\cx }    & *++{\ZZ\oplus\ZZ_5} \ar@{^{(}->}[d]  \\
        {\reduced{K}^0} \ar@{=}[r]                   & *++{\ZZ^2\oplus\ZZ_5^2}
      }\quad
      \xymatrix@R=18pt@C=4pt{
        {\reduced{K}^0} \ar@{=}[r] \ar[d]_{\rx }        & *++{\ZZ^2\oplus\ZZ_5^2} \ar@{->>}[d] \\
        {\reduced{KO}^{-2}} \ar@{=}[r] \ar[d]_{\eta} & *++{\ZZ\oplus\ZZ_5}                  \\
        {\reduced{KO}^{-3}} \ar@{=}[r] \ar[d]_{\cx }    & *++{\ZZ^2} \ar@{^{(}->}[d]           \\
        {\reduced{K}^{-1}} \ar@{=}[r] \ar[d]_{\rx }     & *++{\ZZ^3} \ar@{->>}[d]              \\
        {\reduced{KO}^{-1}} \ar@{=}[r] \ar[d]_{\eta} & *++{\ZZ\oplus\ZZ_2}                  \\
        {\reduced{KO}^{-2}} \ar@{=}[r] \ar[d]_{\cx }    & *++{\ZZ\oplus\ZZ_5} \ar@^{^{(}->}[d] \\
        {\reduced{K}^0} \ar@{=}[r]                   & *++{\ZZ^2\oplus\ZZ_5^2}
      }
      \xymatrix@R=18pt@C=4pt{
        {\reduced{K}^0} \ar@{=}[r] \ar[d]_{\rx }        & *++{\ZZ^2\oplus\ZZ_5^2} \ar[d] \\
        {\reduced{KO}^0} \ar@{=}[r] \ar[d]_{\eta}    & *++{\ZZ\oplus\ZZ_5\oplus\ZZ_2}    \ar[d]^{\substack{\text{image $= \ZZ_2$} \\\text{iso on $2$-torsion}}}        \\
        {\reduced{KO}^{-1}} \ar@{=}[r] \ar[d]_{\cx }    & *++{\ZZ\oplus\ZZ_2} \ar[d]^{\substack{\text{$0$ on $\ZZ_2$}                \\\text{mono on $\ZZ$}}}                               \\
        {\reduced{K}^{-1}} \ar@{=}[r] \ar[d]_{\rx }     & *++{\ZZ^3} \ar[d]                                                          \\
        {\reduced{KO}^{-7}} \ar@{=}[r] \ar[d]_{\eta} & *++{\ZZ^2} \ar[d]^{\text{image $=\ZZ_2$}}                                  \\
        {\reduced{KO}^0} \ar@{=}[r]\ar[d]_{\cx }        & *++{\ZZ\oplus\ZZ_5\oplus\ZZ_2}  \ar[d]^{\substack{\text{$0$ on $\ZZ_2$}    \\\text{mono on $\ZZ\oplus\ZZ_5$}}}    \\
        {\reduced{K}^0} \ar@{=}[r]                   & *++{\ZZ^2\oplus\ZZ_5^2}
      }
    \]
    \caption{The Bott sequence for \(B^{13}\). Missing arrows indicate the zero map.}\label{fig:B13:Bott}
  \end{adjustwidth}
  \end{figure}
  In particular, the quaternionification \(\qx \colon \reduced{K}^0(B^{13})\to \reduced{KO}^{-4}(B^{13})\) is surjective, while the realification \(\rx \colon \reduced{K}^0(B^{13})\to \reduced{KO}^0(B^{13})\) has cokernel isomorphic to \(\ZZ_2\).
\end{prop}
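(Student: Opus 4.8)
The plan is to compute the complex K-groups from the Atiyah--Hirzebruch and Hodgkin spectral sequences, pass to the real K-groups via the Bott sequence~\eqref{EQN: Bott sequence}, and then read off the statements about $\qx$ and~$\rx$. First I would run the AHSS for $K^*(B^{13})$ on the cohomology of \cref{B13:Sq2}. The only torsion in $H^*(B^{13},\ZZ)$ is a $\ZZ_5$ in degrees $6$ and~$8$; since every AHSS differential has torsion image (\cref{sec:AHSS}, fact~(a)) and $\Hom(\ZZ_5,\ZZ)=\Hom(\ZZ,\ZZ_5)=0$, no differential can reach the diagonals computing $K^0$ and $K^1$, so $E_2=E_\infty$ there. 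The $K^1$-diagonal is all free, so $K^1(B^{13})=\ZZ^3$; the $K^0$-diagonal has graded pieces $\ZZ,\ZZ,\ZZ,\ZZ_5,\ZZ_5$, and as the extensions by free groups split we obtain $K^0(B^{13})=[\ZZ]\oplus\ZZ\oplus\ZZ\oplus F$ with $F=F^6K^0(B^{13})$ an extension of $\ZZ_5$ by~$\ZZ_5$, hence $\ZZ_5\oplus\ZZ_5$ or~$\ZZ_{25}$. To decide this I would invoke Hodgkin's spectral sequence (\cref{fig:Hodgkin-SS}): since $\rank SU(5)-\rank(Sp(2)\times_{\ZZ_2}S^1)=1$, \cref{cor:proj-dim} makes it a two-column sequence that collapses, giving $K^0(B^{13})\cong\ZZ\otimes_{R(SU(5))}R(Sp(2)\times_{\ZZ_2}S^1)$, and a direct computation of this tensor product using the restriction formulas of \cref{B13:restricting-RG-to-RH} shows the torsion is $\ZZ_5\oplus\ZZ_5$. (Alternatively, Adams operations on the line bundle with $c_1$ a generator of $H^2(B^{13},\ZZ)$ rule out the $\ZZ_{25}$.)

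Next I would feed the groups $\reduced K^0(B^{13})=\ZZ^2\oplus\ZZ_5^2$ and $\reduced K^1(B^{13})=\ZZ^3$ into the Bott sequence~\eqref{EQN: Bott sequence}, running it once around its period, and combine this with the AHSS for $KO^*(B^{13})$, whose first differential $d_2$ is determined by \cref{B13:Sq2}: the relations $Sq^2\beta=\beta^2$ and $Sq^2(\gamma\beta)=\gamma\beta^2$ cancel, on the appropriate rows, the $\ZZ_2$'s from $H^2$ against those from $H^4$ and the $\ZZ_2$'s from $H^{11}$ against those from $H^{13}$, while $Sq^2$ vanishes on $\beta^2$ and on~$\gamma$. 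It is cleanest to argue one prime at a time. Away from~$2$ one has $\eta=0$, so the Bott sequence breaks into short exact sequences $0\to\reduced{KO}^i_{(5)}\to\reduced K^i_{(5)}\to\reduced{KO}^{i+2}_{(5)}\to 0$; together with the $4$-periodicity of $KO$ away from~$2$ and the value of $\reduced K^*_{(5)}$ above these force a single $\ZZ_5$ in each of $\reduced{KO}^0,\reduced{KO}^{-2},\reduced{KO}^{-4},\reduced{KO}^{-6}$ and no $5$-torsion elsewhere. Away from odd primes $\reduced K^*(B^{13})$ is free, so the surviving $2$-torsion in the $KO$-AHSS reduces to at most one $\ZZ_2$ on each relevant diagonal; the Bott sequence and the explicit $E_2$-level descriptions of $\cx$, $\rx$, $\tx$ (\cref{sec:AHSS}) then fix the remaining higher differentials and extensions --- in particular they force a nonsplit extension merging one of the $\ZZ_2$'s into the free summand of $\reduced{KO}^0(B^{13})$. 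Assembling the $2$- and $5$-primary parts produces \cref{fig:B13:Bott}.

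From the resulting diagram the last two assertions follow. Under the identification $\reduced{KSp}^0\cong\reduced{KO}^{-4}$ of~\eqref{eq:KSp=KO-4}, the quaternionification $\qx\colon\reduced K^0\to\reduced{KO}^{-4}$ is (up to the standard Bott identifications) the map $\rx\beta^{-1}$ of~\eqref{EQN: Bott sequence} with target $\reduced{KO}^{-4}$, so its cokernel injects via $\eta$ into $\reduced{KO}^{-5}(B^{13})=\ZZ$; since $2\eta=0$ this forces $\eta=0$ there, hence $\qx$ is surjective. For the realification, exactness of~\eqref{EQN: Bott sequence} gives $\coker\bigl(\rx\colon\reduced K^0\to\reduced{KO}^0\bigr)\cong\ker\bigl(\cx\colon\reduced{KO}^{-1}\to\reduced K^{-1}\bigr)$; as $\rx\circ\cx=2$ this kernel consists of $2$-torsion, and as $\reduced K^{-1}=\ZZ^3$ is torsion-free it must be all the torsion of $\reduced{KO}^{-1}(B^{13})=\ZZ\oplus\ZZ_2$, namely~$\ZZ_2$.

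The main obstacle is the $KO$-step: the AHSS $E_2$-page alone does not pin down $KO^*(B^{13})$, because there really are nonzero higher differentials and a nonsplit extension (the $\ZZ_2$ hidden in the free part of $\reduced{KO}^0$), so one must run the Bott sequence around its period while keeping the $2$- and $5$-primary information rigorously separate and using the $E_2$-level descriptions of $\cx,\rx,\tx$ at each step. A second, independent subtlety is the complex extension $F^6K^0\in\{\ZZ_5^2,\ZZ_{25}\}$, which is invisible to the AHSS and needs Hodgkin's spectral sequence (or the Adams-operation argument) to settle.
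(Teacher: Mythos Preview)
Your overall strategy---AHSS for $K^*$, then Bott sequence plus AHSS for $KO^*$---matches the paper's, and your final deductions about $\qx$ and $\rx$ from the Bott sequence are exactly right. Two points of difference are worth noting.

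For the $K^0$ extension problem ($\ZZ_5\oplus\ZZ_5$ versus $\ZZ_{25}$), the paper uses a more elementary argument than either of yours: conjugation $\tx$ induces a morphism of the AHSS which on the $\ZZ_5$ at $(6,-6)$ is $-\id$ and on the $\ZZ_5$ at $(8,-8)$ is $+\id$. A cyclic group $\ZZ_{25}$ admits no filtration-preserving automorphism inducing these signs on the graded pieces, so $F_5=\ZZ_5\oplus\ZZ_5$. This avoids the tensor-product computation in $R(H)$ and any Adams-operation work.

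One factual slip: you write that ``there really are nonzero higher differentials'' in the $KO$-AHSS. In fact, after $d_2$ the only possibly nonzero differentials on the relevant diagonals are two $d_7$'s and one $d_9$, and a comparison with the $K$-AHSS (via $\cx$ or $\rx$) shows they all vanish. The genuine obstacle is purely the extension problem for $\reduced{KO}^0$ (and, to a lesser extent, for $\reduced{KO}^{-1}$ and $\reduced{KO}^{-7}$). The paper resolves these not by separating primes but by tracking the realification morphism between the two AHSSs filtration-step-by-filtration-step, producing explicit short exact sequences such as $0\to 0\to F^8K\to F^8KO\to\ZZ_2\to 0$, which simultaneously bound the cokernel of $\rx$ and settle the extensions. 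Your prime-separation approach can be made to work, but as stated the $5$-local argument does not by itself force a single $\ZZ_5$ in each even $\reduced{KO}^i$: the short exact sequences $0\to\reduced{KO}^i_{(5)}\to\reduced K^i_{(5)}\to\reduced{KO}^{i+2}_{(5)}\to 0$ together with $4$-periodicity still leave, a priori, the possibility that both $\ZZ_5$'s land in $\reduced{KO}^0_{(5)}$ and none in $\reduced{KO}^{-2}_{(5)}$. You would need to invoke the explicit behaviour of $\rx$ on the $E_2$-page (as the paper does) to finish.
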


\begin{proof}
  The AHSS for complex K-theory collapses: the torsion elements in \(\ZZ_5\) cannot be reached by any nontrivial differential.  This immediately determines \(K^1(B^{13})\), and it determines \(K^0(B^{13})\) up to an extension problem:  there is a subgroup \(F_5\subset K^0(B^{13})\) that fits into a short exact sequence $0\to\ZZ_5\to F_5\to\ZZ_5\to 0$.  In order to determine whether \(F_5\) is $\ZZ_5\oplus\ZZ_5$ or $\ZZ_{25}$, we consider the involution \(\tx\) on \(K^0\) and the induced morphism of spectral sequences, as described in \cref{sec:AHSS}.  We obtain, in particular, the following commutative diagram, which implies \(F_5 = \ZZ_5\oplus \ZZ_5\).
  \[\begin{tikzcd}
      0 \arrow{r}                        & \ZZ_5 \arrow{r}\arrow{d}{\id} & F_5 \arrow{r}\arrow{d}{\tx} & \ZZ_5 \arrow{r} \arrow{d}{-\id} & 0 \\
      0 \arrow{r}                        & \ZZ_5 \arrow{r}               & F_5  \arrow{r}            & \ZZ_5 \arrow{r}                 & 0
    \end{tikzcd}\]
  We now turn to KO-theory. A portion of the $E_2$-page of the AHSS is displayed in \cref{fig:B13:KO-AHSS}.
  \begin{figure}
    \begin{center}
    \begin{tikzpicture}[x=5ex,y=4ex]
      \renewcommand{\d}{|[fill=gray!15!white]|}      
      \newcommand{\nr}[1]{{\text{\small\texttt{#1}}}}
      \newcommand{\X}{|[circle,thick,draw,red,inner sep=0pt, minimum size=1ex,text=gray]|} 
      \renewcommand{\S}{{\color{red} 2}} 
      \matrix (m)
      [
      matrix of math nodes,
      nodes in empty cells,
      nodes={
        minimum width=5ex,
        minimum height=4ex,
        outer sep=-2pt},
      column sep=0ex,
      row sep=0ex
      ]
      {
        \quad\strut & \nr{0}  & \nr{1} & \nr{2}    & \nr{3} & \nr{4}    & \nr{5} & \nr{6} & \nr{7} & \nr{8}    & \nr{9}    & \nr{10} & \nr{11}   & \nr{12} & \nr{13}   \\
        \nr{0}     & \d{\ZZ} & 0      & \S{\ZZ}   & 0      & \ZZ       & 0      & \ZZ_5  & 0      & \ZZ_5     & \ZZ       & 0       & \S{\ZZ}  & 0       & \ZZ       \\
        \nr{-1}     & \ZZ_2   & \d{0}  & \X{\ZZ_2} & 0      & \X{\ZZ_2} & 0      & 0      & 0      & 0         & \ZZ_2     & 0       & \X{\ZZ_2} & 0       & \X{\ZZ_2} \\
        \nr{-2}     & \ZZ_2   & 0      & \d{\ZZ_2} & 0      & \X{\ZZ_2} & 0      & 0      & 0      & 0         & \ZZ_2     & 0       & \ZZ_2     & 0       & \X{\ZZ_2} \\
        \nr{-3}     & 0       & 0      & 0         & \d{0}  & 0         & 0      & 0      & 0      & 0         & 0         & 0       & 0  & 0       & 0         \\
        \nr{-4}     & \ZZ     & 0      & \ZZ       & 0      & \d{\ZZ}   & 0      & \ZZ_5  & 0      & \ZZ_5     & \ZZ       & 0       & \ZZ       & 0       & \ZZ       \\
        \nr{-5}     & 0       & 0      & 0         & 0      & 0         & \d{0}  & 0      & 0      & 0         & 0         & 0       & 0         & 0       & 0         \\
        \nr{-6}     & 0       & 0      & 0         & 0      & 0         & 0      & \d{0}  & 0      & 0         & 0         & 0       & 0  & 0       & 0         \\
        \nr{-7}     & 0       & 0      & 0         & 0      & 0         & 0      & 0      & \d{0}  & 0         & 0         & 0       & 0  & 0       & 0         \\
        \nr{-8}     & \ZZ     & 0      & \S{\ZZ}   & 0      & \ZZ       & 0      & \ZZ_5  & 0      & \d{\ZZ_5} & \ZZ       & 0       & \S{\ZZ}  & 0       & \ZZ       \\
        \nr{-9}     & \ZZ_2   & 0      & \X{\ZZ_2} & 0      & \X{\ZZ_2} & 0      & 0      & 0      & 0         & \d{\ZZ_2} & 0       & \X{\ZZ_2} & 0       & \X{\ZZ_2} \\
        \nr{-10}    & \ZZ_2   & 0      & \ZZ_2     & 0      & \X{\ZZ_2} & 0      & 0      & 0      & 0         & \ZZ_2     & \d{0}   & \ZZ_2     & 0       & \X{\ZZ_2} \\
        \nr{-11}    & 0       & 0      & 0         & 0      & 0         & 0      & 0      & 0      & 0         & 0         & 0       & \d{0}     & 0       & 0         \\
        \nr{-12}    & \ZZ     & 0      & \ZZ       & 0      & \ZZ       & 0      & \ZZ_5  & 0      & \ZZ_5     & \ZZ       & 0       & \ZZ       & \d{0}   & \ZZ       \\
        \nr{-13}    & 0       & 0      & 0         & 0      & 0         & 0      & 0      & 0      & 0         & 0         & 0       & 0         & 0       & \d{0}     \\
      };
      \path[arrow]
      (m-2-4)  edge[red,shorten >=1ex]  (m-3-6)
      (m-3-4)  edge[red,shorten >=1ex,shorten <=1ex]  (m-4-6)
      (m-10-4)  edge[red,shorten >=1ex]  (m-11-6)
      (m-11-4)  edge[red,shorten >=1ex,shorten <=1ex]  (m-12-6)
      (m-2-11) edge[lightgray,shorten >=1ex] (m-3-13)
      (m-3-11) edge[lightgray] (m-4-13)
      (m-10-11) edge[lightgray,shorten >=1ex] (m-11-13)
      (m-11-11) edge[lightgray] (m-12-13.west)
      (m-2-13) edge[red,shorten >=1ex]  (m-3-15)
      (m-3-13) edge[red,shorten >=1ex,shorten <=1ex]  (m-4-15)
      (m-10-13) edge[red,shorten >=1ex]  (m-11-15)
      (m-11-13) edge[red,shorten >=1ex,shorten <=1ex]  (m-12-15)
      (m-6-4)  edge[lightgray] ($(m-12-11.west)!0.5!(m-12-11.north west)$)
      (m-6-6)  edge[lightgray] ($(m-12-13.west)!0.5!(m-12-13.north west)$)
      (m-4-4.east)  edge[lightgray] (m-12-13)
      ;
      \path[arrow]
      (m-15-1.south east) edge ($(m-15-1.south east)+(0,15.5)$)
      (m-1-1.south west) edge ($(m-1-1.south west)+(15,0)$) ;
      \node at ($(m-1-1.south west)+(15.3,0)$) {\small\texttt{p}};
      \node at ($(m-15-1.south east)+(-0.4,15.4)$) {\small\texttt{q}};
    \end{tikzpicture}
    \end{center}
    \caption{The AHSS computing \(KO^*(B^{13})\), with shaded zero diagonal.
      The red arrows are the nonzero differentials,  and entries circled in red are those that vanish when passing to the \(E_3\)-page. The red twos indicate that \(\ZZ\) is replaced by the subgroup \(2\ZZ\) when passing to the \(E_3\)-page.}\label{fig:B13:KO-AHSS}
  \end{figure}
  The differentials \(d_2\) on the \(E_2\)-page are determined by \(Sq^2\), which we computed in \cref{B13:Sq2}.  \Cref{fig:B13:KO-AHSS} displays those \(d_2\)'s which are zero in gray and those which are not in red.  Once we have passed to the \(E_3\)-page, there are only three further differentials which could be nonzero, two \(d_7\)'s and one \(d_9\).  However, a comparison with the spectral sequence for \(K\)-theory shows that they all vanish.  Thus, the non-circled entries in \cref{fig:B13:KO-AHSS} constitute the \(E_\infty\)-page.

  The groups \(\reduced{KO}^i(B^{13})\) with \(i=-2,\dots,-6\) can be read off the spectral sequence directly. To compute \(KO^{-1}(B^{13})\), consider the morphism of spectral sequences induced by realification, and concentrate first on the diagonals computing \(K^0(B^{13})\) and \(KO^0(B^{13})\).  Compare the filtrations and the exact sequences that we obtain from the spectral sequences:
  \[\begin{tikzcd}
      0 \arrow{r}                         & F^{n+1} K \arrow{r}\arrow{d}{r} & F^{n}K \arrow{r}\arrow{d}{r}    & E_\infty^{n,-n}(K) \arrow{r} \arrow{d}{\rx_\infty} & 0 \\
      0 \arrow{r}                         & F^{n+1} KO \arrow{r}         & F^{n}KO \arrow{r}            & E_\infty^{n,-n}(KO) \arrow{r}          & 0
    \end{tikzcd}\]
  The vertical arrow on the right comes from the change-of-coefficients morphism \(H^n(-,K^{-n})\to H^n(-,KO^{-n})\) induced by the realification \(\rx \colon K^{-n}(pt) \to KO^{-n}(pt)\), described in detail in \cref{sec:AHSS}.  So we can compute cokernel and kernel of the realification map step by step. This gives us the following exact sequences:
  \begin{equation}
    \label{eq:B13:filtered-realification}
    \begin{aligned}
      0\to  0                                 & \to F^8K \to F^8KO \to \ZZ_2 \to 0                                                                       \\
      0\to  \ZZ_5                             & \to F^6K \to F^6KO \to \ZZ_2 \to 0                                                                       \\
      0\to  \ZZ_5                             & \to F^4K \to F^4KO \to \ZZ_2 \to 0                                                                       \\
      0\to  \ZZ\oplus\ZZ_5                    & \to F^2K \to F^2KO \to (\ZZ_2 \text{ or } 0) \to 0
    \end{aligned}
  \end{equation}
  Note that \(F^2 K = \reduced K\) and \(F^2 KO = \reduced{KO}\), so that we find:
  \begin{quote}
    The realification map \(\reduced{K}(B^{13}) \to \reduced{KO}(B^{13})\) has cokernel at most \(\ZZ_2\).
  \end{quote}
  Next, we analyse the map \(\eta\colon \reduced{KO}^{-1}(B^{13}) \to \reduced{KO}^0(B^{13})\), playing the same game with the filtrations as above. There is only one step:
  \[\begin{tikzcd}
      0 \arrow{r}                         & 0 \arrow{r}\arrow{d}         & F^{9}KO^0 \arrow{r}\arrow{d} & \ZZ_2 \arrow{r} \arrow{d}{\cong}       & 0 \\
      0 \arrow{r}                         & \ZZ \arrow{r}                & F^{9}KO^{-1} \arrow{r}       & \ZZ_2 \arrow{r}                        & 0
    \end{tikzcd}\]
  Thus, we find that
  \(
  \reduced{KO}^{-1}(B^{13}) \cong \ZZ \oplus \ZZ_2
  \).
  On the other hand, we know from above that \(\reduced{K}^{-1}(B^{13})\) is free.  So the Bott sequence
  \(
  \reduced{K} \to \reduced{KO} \to \reduced{KO}^{-1} \to \reduced{K}^{-1}
  \)
  implies:
  \begin{quote}
    The realification map \(\reduced{K}(B^{13}) \to \reduced{KO}(B^{13})\) has cokernel exactly \(\ZZ_2\).
  \end{quote}

  Next, to compute \(\reduced{KO}^{-7}(B^{13})\), we consider the realification \(\reduced{K}^{-6}(B^{13})\to \reduced{KO}^{-6}(B^{13})\). Note that the entries \(H^2(B^{13},\ZZ)\cong\ZZ\) in the rows \(q=0\) of the spectral sequence for KO-theory get replaced by the subgroup \(2\ZZ\) when passing to the \(E_3\)-page, so that \(\rx_\infty^{2,0}\) is surjective.  Again arguing filtration-step-by-filtration-step, we find that \(\reduced{K}^{-6}(B^{13})\to \reduced{KO}^{-6}(B^{13})\) is surjective.  The Bott sequence therefore implies that \(\reduced{KO}^{-7}(B^{13})\) injects into \(\reduced{K}^{-1}(B^{13})\), so \(\reduced{KO}^{-7}(B^{13})\) must be torsion-free.  Thus, we find that \(\reduced{KO}^{-7}(B^{13})\) is as displayed.

  Finally, for \(KO^0(B^{13})\), the spectral sequence initially only tells us that
  \(\reduced{KO}^0(B^{13}) \cong \ZZ\oplus\ZZ_5\oplus X\) with \(X=\ZZ_2\) or \(X=\ZZ_2\oplus\ZZ_2\) or \(X=\ZZ_4\).
  But we also find that \(\rx \colon \reduced{K}^1(B^{13})\to \reduced{KO}^1(B^{13})\) has cokernel \(\ZZ_2\).  As \(\reduced{K}^0(B^{13})\) contains no two-torsion, the Bott sequence implies that
  \(\reduced{KO}^0(B^{13})\) can contain at most one element of order a power of two.  Thus, \(\reduced{KO}^0(B^{13})\) is as displayed.

  The remaining maps in the Bott sequence are easily computed.  As \(\eta\in KO^{-1}(*)\) is two-torsion, the fact that most of the maps \(\eta\) are zero is clear.  For the two nonzero maps, we again use the spectral sequence and argue filtration-step-by-filtration-step.  The additional partial information concerning the maps \(\rx \) and \(\cx \) displayed in \cref{fig:B13:Bott} follows from the exactness of the sequence.
\end{proof}

\subsection{K-theory: multiplicative structure}
\begin{prop}\label{B13:K-ring}\label{B13:KO-ring}
  The real and complex K-rings of \(B^{13}\) are as follows:\footnote{The set of relations displayed for the complex K-ring is not minimal.}
  \begin{align*}
    K(B^{13}) &\cong \ZZ[u,y]/(5u^3,5u^4,u^5, y^3, uy^2, u^3y, y^2 - u^4, \\[-1ex]
              & \phantom{\cong\ZZ[u,y]/(} y - u^2 + u^3 - u^4, uy - u^3 + u^4, u^2y-u^4)\\
              &\cong \ZZ \cdot 1 \oplus \ZZ u \oplus \ZZ y \oplus \ZZ_5 u^3 \oplus \ZZ_5 u^4\\
    KO(B^{13})   & = \ZZ[y',w]/(5y'^2, 2w, w^2, wy', y'^3)                                              \\
                 & = \ZZ \cdot 1 \oplus \ZZ y' \oplus \ZZ_5 y'^2 \oplus \ZZ_2 w
  \end{align*}
  The ring homomorphisms \(\tx\colon K(B^{13})\to K(B^{13})\) and \(\cx \colon KO(B^{13}) \to K(B^{13})\) and the group homomorphism \(\rx \colon K(B^{13})\to KO(B^{13})\) are determined by:
  \begin{align*}
    \tx(y) &= y &&&&& \cx (y')        & = y        &  &&&                & \rx (u)    & = y'            & \rx (u^3)  & = 3y'^2       \\
    \tx(u) &= y-u &&&&& \cx (w)         & = 0        &  &&&                & \rx (y)    & = 2y'           & \rx (u^4)  & = 2y'^2
  \end{align*}
\end{prop}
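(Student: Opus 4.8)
The additive groups \(K(B^{13})\) and \(KO(B^{13})\) are already determined (\cref{B13:K-additive}), so the remaining task is to pin down the two multiplications and the maps \(\tx\), \(\cx\), \(\rx\). The plan is to generate everything in sight from a single line bundle. Since \(\rank SU(5)-\rank(Sp(2)\times_{\ZZ_2}S^1)=1\), \cref{Improvement of Pittie's} tells us that \(\alpha\colon R(Sp(2)\times_{\ZZ_2}S^1)\to K(B^{13})\) is surjective; in particular the one-dimensional representation \(x^2\) gives an honest line bundle \(L:=\alpha(x^2)\), and I set \(u:=[L]-1\in\reduced K(B^{13})\). The first point to check is that \(c_1(L)\) generates \(H^2(B^{13},\ZZ)\). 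Using \cref{B13:restricting-RG-to-RH} and the identification of the fibration \eqref{eq:B13:fibre-bundle} with the obvious map \(SU(5)/H\to SU(5)/S(U(4)\times U(1))\), the bundle \(L^{\pm2}\) is the pullback of the (dual) tautological bundle of \(\CP^4\); and from the homotopy exact sequence of \eqref{eq:B13:fibre-bundle} (using \(\pi_2(\RP^5)=0\) and \(\pi_1(B^{13})=0\)) the map \(\pi_2(B^{13})\to\pi_2(\CP^4)\) is the inclusion \(2\ZZ\hookrightarrow\ZZ\), so \(\pi^*\colon H^2(\CP^4)\to H^2(B^{13})\) has index two and \(c_1(L)=\pm\beta\) is a generator.

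For the complex ring, \(u\) lies in \(F^2K(B^{13})\) and maps to a generator of \(E_\infty^{2,-2}\cong H^2(B^{13},\ZZ)\); by multiplicativity of the Atiyah--Hirzebruch spectral sequence \(u^k\) maps to a generator of \(E_\infty^{2k,-2k}\cong H^{2k}(B^{13},\ZZ)\) for \(k=1,\dots,4\), while \(u^5\in F^{10}K(B^{13})=0\) since \(H^{2k}(B^{13},\ZZ)=0\) for \(k\geq5\). A standard filtration argument then shows \(1,u,u^2,u^3,u^4\) generate \(K(B^{13})\) additively, and since \(u^3\in F^6K(B^{13})\), which equals \(\ZZ_5\oplus\ZZ_5\) by the computation in \cref{B13:K-additive}, we get \(5u^3=0\) (hence \(5u^4=0\)). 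Comparing orders, the evident surjection \(\ZZ[u]/(5u^3,u^5)\twoheadrightarrow K(B^{13})\) between two groups isomorphic to \(\ZZ^3\oplus\ZZ_5^2\) is an isomorphism. Setting \(y:=[L]+[L^\vee]-2=u^2-u^3+u^4\) recovers the (redundant) presentation with generators \(u,y\) in the statement, all of whose relations follow by substitution. Finally \(\tx\) takes a line bundle to its dual, and \(u\) is nilpotent, so \(\tx(u)=[L^\vee]-1=(1+u)^{-1}-1=-u+u^2-u^3+u^4=y-u\); and \(\tx(y)=y\) because \(\tx\) merely swaps \([L]\) and \([L^\vee]\).

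For the real ring I set \(y':=\rx(u)\). Then \(\cx(y')=(\id+\tx)(u)=y\) and \(\rx(y)=(\rx\circ\cx)(y')=2y'\). Exploiting that \(\rx\) is a \(KO(B^{13})\)-module map (so \(\rx(a)\cdot\rx(x)=\rx(\cx\rx(a)\cdot x)\)), one gets \(y'^2=\rx(uy)=\rx(u^3-u^4)\), \(y'^3=\rx(uy^2)=\rx(u^5)=0\), and \(\cx(y'^2)=(\id+\tx)(u^3-u^4)=u^4\); as \(u^4\) has order \(5\), this forces \(y'^2\) to generate the \(5\)-torsion of \(\reduced{KO}(B^{13})\), so \(5y'^2=0\). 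Computing \(\tx(u^3)=\tx(u)^3=-u^3+3u^4\) and \(\tx(u^4)=\tx(u)^4=u^4\), applying \(\cx\), and using that \(\rx\circ\cx=2\) is an automorphism of the \(5\)-torsion (hence \(\cx\) is injective there), the identities \(\cx\rx(u^3)=3u^4=\cx(3y'^2)\) and \(\cx\rx(u^4)=2u^4=\cx(2y'^2)\) give \(\rx(u^3)=3y'^2\) and \(\rx(u^4)=2y'^2\). The last generator \(w\) of \(\reduced{KO}(B^{13})\) is the nonzero two-torsion class: \(2\cx(w)=\cx(2w)=0\) and \(\reduced K(B^{13})\) is torsion-free at \(2\), so \(\cx(w)=0\). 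As for its products, \(2w=0\) is the definition; \(wy'^2\) is two-torsion in the \(5\)-torsion group \(\ZZ_5y'^2\), hence \(0\); if \(wy'=w\) then \(w=wy'^3=0\), a contradiction, so \(wy'=0\); and by the filtration analysis behind \cref{B13:K-additive} the class \(w\) lies in the deepest nonzero filtration step \(F^9KO(B^{13})\), so \(w^2\in F^{18}KO(B^{13})=0\). Collecting relations yields \(KO(B^{13})\cong\ZZ[y',w]/(5y'^2,2w,w^2,wy',y'^3)\), and comparison with the additive answer shows there are no further relations.

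The main obstacle is the control at the two small primes and, more fundamentally, step (i): identifying \(L^{\pm2}\) with a pullback from \(\CP^4\) and deducing that \(c_1(L)=\pm\beta\) is a \emph{generator} — this is what makes \(u\) a filtration-two generator of \(\reduced K(B^{13})\) rather than merely some reduced class, and everything downstream rests on it. The prime-\(2\) bookkeeping for \(KO\) — that \(\cx(w)=0\), that \(w\) has trivial Pontryagin classes and sits in \(F^9\), and that \(F^{\geq10}KO(B^{13})=0\) — is where the argument leans hardest on the detailed AHSS/Bott-sequence work already done for \cref{B13:K-additive}, and will need to be cited with some care.
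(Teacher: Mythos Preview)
Your approach is correct, and for the complex K-ring it is genuinely different from the paper's. The paper computes \(K(B^{13})\) by invoking the full strength of \cref{Improvement of Pittie's}, namely the isomorphism \(K(B^{13})\cong \ZZ\otimes_{RG}RH\): it writes down five generators \(a,b,c,d,e\) of \(R(H)\), imposes the relations coming from the reduced generators of \(R(SU(5))\), and then performs a ``tedious but straightforward'' change of basis to arrive at \(u,y\). You instead pick the single line bundle \(L=\alpha(x^{2})\), verify via the fibration \eqref{eq:B13:fibre-bundle} and the \(\pi_2\)-computation that \(c_1(L)=\pm\beta\) generates \(H^2\), and then let the multiplicativity of the collapsed AHSS do all the work, yielding \(K(B^{13})\cong\ZZ[u]/(5u^3,u^5)\) directly. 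This is cleaner and more conceptual; the paper's route has the advantage that it never needs to identify \(c_1(L)\), at the cost of heavier bookkeeping in \(R(H)\).

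For the KO-ring your argument is essentially the paper's, run in the opposite order (you \emph{define} \(y':=\rx(u)\) and derive its properties; the paper constructs \(y'\) from the filtration and then proves \(\rx(u)=y'\)). Two small points deserve to be made explicit. First, that \(y'\) is \emph{primitive} modulo torsion: this follows because \(\cx(y')=y\equiv u^2\) modulo torsion is primitive in \(\reduced K\), but you should say so before concluding that \(y'\) generates the free summand. Second, and more importantly, your sentence ``as \(u^4\) has order \(5\), this forces \(y'^2\) to generate the \(5\)-torsion'' is not quite enough: a priori \(y'^2\) could equal a generator of \(\ZZ_5\) plus \(w\), and \(\cx\) would not detect the difference. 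The missing step is exactly the one you flag in your final paragraph --- you have \(y'^2=\rx(uy)\in\im(\rx)\), and \cref{B13:K-additive} (specifically the filtered realification \eqref{eq:B13:filtered-realification}) shows \(w\notin\im(\rx)\); since \(\ker(\cx)=\ZZ_2 w\), complexification is injective on \(\im(\rx)\), and then \(\cx(y'^2)=u^4\) of order \(5\) forces \(y'^2\) to be a genuine generator of \(\ZZ_5\). The paper faces the identical issue (its ``\(\delta\)'') and resolves it the same way. Once this is said, your derivations of \(\rx(u^3),\rx(u^4)\) by injectivity of \(\cx\) on the \(5\)-torsion, and your slick argument \(wy'=w\Rightarrow w=wy'^3=0\) for \(wy'=0\), are fine; the latter is in fact neater than the paper's filtration argument.
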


\begin{proof}
  Let \(G=SU(5)\) and \(H=Sp(2)\times_{\ZZ_2}S^1\), so that \(B^{13} = G/H\).
  We first compute the complex K-ring.  By \cref{Improvement of Pittie's},
  \(K(B^{13})\) is isomorphic to \(R(H)\) modulo the ideal generated by the restrictions of reduced generators of \(R(G)\).
  Let us write
  \begin{align*}
    a     & :=(\cx' u)^2 & b       & :=x^2 & c  & := x^{-2} & d & :=(\cx' u)x & e & :=\lambda^2(\cx' u)
  \end{align*}
  for the generators of \(R(H)\) determined in \cref{B13:R(H)}.  Then in \(R(H)\) we have the relations displayed in line \eqref{rel1} below.
  Using \cref{B13:restricting-RG-to-RH}, we find that the restriction of reduced generators from \(R(G)\) to \(R(H)\) yield lines \eqref{rel2} and \eqref{rel3} as additional relations in \(K(B^{13})\):
  \begin{align}
   \label{rel1}
    bc    & = 1       & ab      & =d^2
 \\
    \label{rel2}
    d+c^2 & =5        & eb+dc^2 & = 10
 \\
    \label{rel3}
    ec+db & =10       & dc+b^2  & =5
  \end{align}
Thus, \(K(B^{13})\) is isomorphic to a quotient of the polynomial ring \(\ZZ [a, b, c,d,e ]\) by all relations above. In particular, we see that a possible set of additive generators is given by \(\{b^2, b, 1, c, c^2\}\).  Equivalently, a set of additive generators is given by \(\{b'^2, b', 1, c' , c'^2\}\), where now \(b' := b-1\) and \(c' := c-1\) both lie \(\reduced K(B^{13})\).  However, for our purposes, the following set of additive generators seems most suitable:
  \begin{align*}
    1   &                               &
    u   & :=  b'                        &
    u^3 & \;= 2b'^2  + c'^2 - 3b' - 3c' \\
    &&
    y   & :=  b' + c'                   &
    u^4 & \;= b'^2  + c'^2  - 2b' - 2c'
  \end{align*}
A tedious but straightforward calculation shows that these satisfy the relations and have the properties indicated above.

For our computation of the \(KO\)-ring, we will also need to understand in terms of our chosen generators the filtration \(F^i K := F^iK(B^{13})\) that appears in the AHSS.  We claim that this filtration is as follows:
\begin{equation}\label{eq:B13:K-filtration}
  \begin{aligned}
    F^1 K = F^2 K   & =  (u,       y )                                                                                    \\
    F^3 K = F^4 K   & = (u^2,     y )                                                                                     \\
    F^5 K = F^6 K   & =  (u^3, uy, y^2)                                                                                   \\
    F^7 K = F^8 K   & =  (u^4        )                                                                                    \\
    F^9 K &= 0
  \end{aligned}
\end{equation}
For \(F^1 K\), the kernel of the rank homomorphism, the claim is clear.  As the filtration is multiplicative, we moreover know ``\(\supset\)'' in each of the indicated equalities:
  \begin{compactitem}[]
  \item \(F^3 K = F^4 K\) must contain \((F^2 K)^2\), in particular \(u^2\) and \(uy = u^2 - y\), so \(u^2\) and \(y\).
  \item \(F^5 K = F^6 K\) must contain \(F^1 K\cdot F^2 K\), in particular \(u^3\), \(uy\) and \(y^2\).
  \item \(F^7 K = F^8 K\) must contain \((F^1 K)^4\), in particular \(u^4\).
  \end{compactitem}
  In our specific situation, equality in \eqref{eq:B13:K-filtration} will follow if we can show that, assuming equality, each quotient \(F^i K/F^{i+1} K\) is isomorphic to the corresponding entry of the \(E_2=E_{\infty}\)-page of the AHSS.  This is indeed the case:
  \begin{align*}
    F^2 K / F^4 K &\cong \ZZ \cdot u \\
    F^4 K / F^6 K &\cong \ZZ \cdot \{u^2, y\}/F^6 K \cong \ZZ\cdot u^2  & \text{ (since \(u^2 = y\) modulo \(uy)\) }\\
    F^6 K / F^8 K &\cong \ZZ \cdot \{u^3, uy, u^2y, u^3y\} / F^8 K \cong \ZZ_5 \cdot  u^3 \\
    F^8 K &\cong \ZZ_5 \cdot u^4
  \end{align*}
  For \(F^6 K /F^8 K\) note that \(uy = u^3 \mod F^8 K\):  We know that \(u^2 = uy + y\), so multiplying by \(u\) we obtain \(u^3 = u^2y + uy\).  Using that \(u^2y = y^2\), we can rewrite this as:
  \(
  u^3 = y^2 + uy
  \).
  The other possible generators of \(F^6 K / F^8 K\) vanish: \(u^2y = y^2\) vanishes modulo \(F^8 K\), and \(u^3y = y^3 = 0\) anyway.

  We now turn to the KO-ring.  The associated graded ring that the AHSS converges to has no nontrivial products, so this ring is not very helpful. We therefore run through the additive computation again, choosing names for all generators and taking a note of where they map under complexification.

  First, we have \(F^9 KO = \ZZ_2 w\) for some mysterious element \(w\) mapping to zero under complexification.  Next we know that \(F^8 KO \cong \ZZ_5 \oplus \ZZ_2\), and that \(F^8 K \to F^8 KO\) has cokernel \(\ZZ_2\).  So the five-torsion in \(F^8 KO\) must be generated by some element of the form \(\rx (y^2)\).  The image of \(\rx (y^2)\) under \(\cx \) is \(2y^2\), since \(y^2\) is self-conjugate.  Equivalently:
  \[
    F^8 KO = \ZZ_5 y'' \oplus \ZZ_2 w
  \]
  for \(y'' := 3r(y^2)\), and \(\cx (y'') = y^2\).
  The next filtration step for \(\cx \colon KO \to K\) is:
  \[\xymatrix@R=6pt{
      \ZZ_2 w           \oplus \ZZ_5 y''  \ar[r] \ar[d] & F^4 KO                     \ar[r]\ar[d]    & \ZZ \ar[d]^{2} \\
      \ZZ_5 u^3 \oplus \ZZ_5 y^2  \ar[r]       & \ZZ y \oplus \ZZ_5 u^3 \oplus \ZZ_5 y^2 \ar[r] & \ZZ y
    }\]
  The vertical map on the right is \(\cx \colon KSp(*) \to K(*)\), which is multiplication by \(2\).
  So \(F^4 KO\)  contains an element mapping to \(2y + \alpha u^3 + \beta y^2\) for some \(\alpha\), \(\beta\).
  As \(y''\) maps to \(y^2\), there must also be an element mapping to \(2y + \alpha u^3\).
  As the image of any element under complexification is self-conjugate, we must have \(\alpha=0\).
  We conclude that there must be some element \(v \in F^4 KO\) such that \(\cx (v) = 2y\).
  So \(F^4 KO = \ZZ v \oplus \ZZ_5 y'' \oplus \ZZ_2 w\).

  Finally, \(F^2 KO = \reduced{KO}\), \(F^2 K = \reduced K\), and we have seen that the cokernel of \(\cx \colon \reduced{KO} \to \reduced{K}\) has no two-torsion.  This implies that there is an element \(y'\in F^2 KO\) such that \(\cx (y') = y\).  Consider the two short exact sequences corresponding to this last filtration step:
  \[\xymatrix@R=9pt{
      {\ZZ u \phantom{\;\;\oplus \ZZ u^3} \oplus \ZZ_5 y'' \oplus \ZZ_2 w}  \ar[r]\ar[d] & {F^2 KO}                                              \ar[r]\ar[d] & {\ZZ_2 y'} \ar[d]^{0}\\
      {\ZZ  y \oplus \ZZ u^3 \oplus \ZZ_5 y^2 \phantom{\;\oplus \ZZ_2 w}} \ar[r]         & {\ZZ u \oplus \ZZ y \oplus \ZZ_5 u^3 \oplus \ZZ_5 y^2} \ar[r]      & {\ZZ u}
    }\]
  The short exact sequence along the top gives us:
  \[
    F^2 KO = \ZZ y' \oplus \ZZ_5 y'' \oplus \ZZ_2 w.
  \]
  As \(\cx (2y') = \cx (v)\), we have \(v = 2y' + \alpha w\) for some \(\alpha\).  As \(w\) is in the smallest piece of the filtration, we can replace the generator \(v\) by \(2y'\) everywhere above. Thus, altogether, the filtration on \(KO(B^{13})\) may be written as follows:
  {\newcommand{\soplus}{\;\oplus\;}
    \begin{alignat*}{13}
      F^1 KO = F^2 KO  & = (y',w)      &  & = \ZZ\cdot y'  & \soplus & \ZZ_5\cdot y'^2 & \soplus & \ZZ_2 \cdot w \\
      F^3 KO = F^4 KO  & = (2y',w)     &  & = \ZZ\cdot 2y' & \soplus & \ZZ_5\cdot y'^2 & \soplus & \ZZ_2 \cdot w \\
      F^5 KO = F^8 KO  & = (y'^2,w)    &  & =              &         & \ZZ_5\cdot y'^2 & \soplus & \ZZ_2 \cdot w \\
      F^9 KO           & = (w)         &  & =              &         &                 &         & \ZZ_2 \cdot w \\
      F^{10} KO        & = 0
    \end{alignat*}
  }%
  It remains to determine the products of \(y'\), \(y''\) and \(w\).  For filtration reasons, we have \((y')^3 = (y'')^2 = w^2 = y'w = y''w  = 0\).  As \(\cx ((y')^2) = \cx (y'')\), we moreover obtain the relation \((y')^2 = y'' + \delta w\) for some \(\delta \in \{0,1\}\).  We will return to the value of \(\delta\) at the end of the proof.

  The behaviour of \(\cx\) has already been determined.  It follows that \(\rx(y) = \rx(\cx y') = 2y'\) and likewise \(\rx(y^2) = \rx (\cx y'') = 2y''\).  From \(\cx\rx(u) = u + \tx u = y = \cx(y')\), we deduce
  \(
    \rx(u) = y' + \alpha w
  \)
  for some \(\alpha \in \ZZ_2\).  As we already know that \(w\) does not lie in the image of \(\rx\) (\cf \eqref{eq:B13:filtered-realification} in the additive computation), \(\alpha\) must be zero.  Similarly, \(\cx\rx(u^3) = u^3 + \tx(u^3) = u^3 + (y-u)^3 = \dots = 3y^2\) implies \(\rx(u^3) = 3y''\).  Now  \((y')^2 = \rx(u)y' = \rx(u\cdot \cx y')\) and  \(y''=\rx(3y^2)\) are both in the image of \(\rx\), so their difference \(\delta w\) must also be in the image of \(\rx\).  But again, we already know that \(w\) is not in the image of \(\rx\). So \(\delta = 0\), \ie \((y')^2 = y''\).
\end{proof}

\subsection{The multiplicative map \texorpdfstring{$\phi$}{phi}}
For any Lie group \(G\), there is a multiplicative map \[\phi\colon R(G) \to RO(G)\] such that \(\cx \phi(x) = x (\tx x)\) for any element \(x\) \cite{Adams:LieLectures}*{Lemma 7.2}.  Likewise, we have such map \(\phi\colon K(X) \to KO(X)\).  The idea is that for any complex vector bundle \(E\), there is a canonical conjugate linear automorphism \(J\) on \(E\otimes \tx{E}\) such that \(J^2=\id\).  More precisely, we have the following:
\begin{prop}[Bousfield]\label{properties-of-phi}
  For any finite CW complex \(X\), there is a natural map
  \[
    \phi\colon K(X) \to KO(X)
  \]
  such that for all \(x, y \in K(X)\):
  \begin{align*}
    \phi(xy)    & = \phi(x)\phi(y)                                                \\
    \phi(x+y)   & = \phi(x) + \phi(y) + \rx ((\tx x) y)                           \\
    \cx \phi(x) & = x(\tx x)                                                      \\
    \phi(\tx x) & = \phi(x)                       \\
    \phi(x)     & = \lambda^2(\rx x) - \rx(\lambda^2 x)
  \end{align*}
\end{prop}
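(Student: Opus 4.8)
The plan is to \emph{take the last formula as the definition}: set $\phi(x) := \lambda^2(\rx x) - \rx(\lambda^2 x)$ for $x\in K(X)$. This is manifestly a natural, well-defined map $K(X)\to KO(X)$, so the fifth identity holds tautologically, and it remains only to establish the other four. Three of these are short formal manipulations with the structure maps of \cref{sec:c,r,q,t}, using that $\cx$ and $\tx$ commute with exterior powers and the $\lambda$-ring identity $\lambda^2(a+b) = \lambda^2 a + ab + \lambda^2 b$.

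For the conjugation identity, $\phi(\tx x) = \lambda^2(\rx\tx x) - \rx(\lambda^2\tx x) = \lambda^2(\rx x) - \rx(\tx\lambda^2 x) = \phi(x)$, using $\rx\tx = \rx$. For the complexification identity, since $\cx$ commutes with $\lambda^2$ and $\cx\rx = \id + \tx$,
\[
\cx\phi(x) = \lambda^2(x+\tx x) - \lambda^2 x - \tx\lambda^2 x = x\,\tx x .
\]
For additivity, expanding $\lambda^2$ on the sums $\rx x + \rx y$ and $\lambda^2 x + xy + \lambda^2 y$ gives $\phi(x+y) = \phi(x) + \phi(y) + \rx x\cdot\rx y - \rx(xy)$; and $\rx x\cdot\rx y = \rx(\cx(\rx x)\cdot y) = \rx((x+\tx x)y) = \rx(xy) + \rx(\tx x\cdot y)$ by the projection formula for the $KO(X)\oplus KSp(X)$-module structure on $K(X)$, so $\phi(x+y) = \phi(x)+\phi(y)+\rx(\tx x\cdot y)$.

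The remaining and main point is multiplicativity, $\phi(xy) = \phi(x)\phi(y)$, and I would first reduce it to honest vector bundles. Fixing a vector bundle $E$ and letting $y$ vary, the three identities just proved, together with $[E]\cdot\tx[E] = \cx\phi([E])$ and the projection formula, show that both $y\mapsto\phi([E]\cdot y)$ and $y\mapsto\phi([E])\cdot\phi(y)$ obey the same non-additive addition law $H(y+y') = H(y) + H(y') + \phi([E])\cdot\rx(\tx y\cdot y')$; hence $\phi([E]\cdot y) - \phi([E])\cdot\phi(y)$ is additive in $y$. The same device then makes $\phi(xy) - \phi(x)\phi(y)$ additive in $x$ for each fixed $y$. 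Since $K(X)$ is generated as an abelian group by classes of vector bundles, multiplicativity for all $x,y$ follows once we know $\phi([E][F]) = \phi([E])\phi([F])$ for all vector bundles $E,F$ over $X$.

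For vector bundles the cleanest route is the classical geometric model of $\phi$: equipping $E$ with a Hermitian metric, the real bundle $\mathrm{Herm}(E)$ of self-adjoint endomorphisms has complexification $\End_\CC(E) \cong E\otimes\overline E$, and the canonical orthogonal decomposition of real bundles $\lambda^2(\rx E) \cong \rx(\lambda^2 E)\oplus\mathrm{Herm}(E)$ identifies $\phi([E])$ with $[\mathrm{Herm}(E)]$ in $KO(X)$. Now $\mathrm{Herm}(E)$ is precisely the real form (in the sense of \cref{SS:K-theory rings}) of the complex bundle $\End_\CC(E)$ carrying the conjugate-linear involution $A\mapsto A^*$, an involution squaring to $+1$; the tensor product of two such involutions again squares to $+1$ and is the complexification of the tensor product of the real forms, so $\mathrm{Herm}(E\otimes F)\cong\mathrm{Herm}(E)\otimes_\RR\mathrm{Herm}(F)$, i.e.\ $\phi([E][F]) = \phi([E])\phi([F])$. (Alternatively this can be checked by a direct if lengthy computation in the special $\lambda$-ring $KO(X)$, using $\lambda^2(ab) = a^2\lambda^2 b + b^2\lambda^2 a - 2\lambda^2 a\,\lambda^2 b$ and the exact identity $\rx(E\otimes F) = \rx E\cdot\rx F - \rx(E\otimes\overline F)$.) The one genuinely non-formal ingredient is this monoidality of $\mathrm{Herm}$; it is also the real obstacle, because $\cx\colon KO(X)\to K(X)$ need not be injective --- the class $w$ over $B^{13}$ is a case in point --- so multiplicativity cannot be detected after complexification, nor via the complex splitting principle (whose transfer argument degenerates exactly on the relevant $2$-torsion), and the geometric model (or the $\lambda$-ring grind) is unavoidable.
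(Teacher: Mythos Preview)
Your proof is correct and, in fact, does more than the paper: the paper offers no argument of its own here but simply cites Bousfield's article \cite{Bousfield:2-primary}, pointing to where each identity appears (the first three in Theorem~6.5 and \S6.11, the $\lambda$-formula implicit in Theorem~6.7 via 6.2(iv), and the conjugation identity as a consequence). So there is no overlap of method to compare --- you have supplied a self-contained proof where the paper defers entirely to the literature.

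A small remark on your reduction step: you begin by fixing a \emph{vector bundle} $E$, but the computation showing that $y\mapsto\phi(xy)$ and $y\mapsto\phi(x)\phi(y)$ obey the same non-additive law uses only the already-established identity $x\,\tx x=\cx\phi(x)$, which holds for arbitrary $x\in K(X)$. So $D(x,y):=\phi(xy)-\phi(x)\phi(y)$ is additive in $y$ for every fixed $x$, and bi-additivity follows immediately by symmetry; there is no need to bootstrap through vector bundles at this stage. This is purely cosmetic and does not affect correctness. The geometric core --- the decomposition $\Lambda^2_\RR(\rx E)\cong \rx(\Lambda^2_\CC E)\oplus\mathrm{Herm}(E)$ and the monoidality $\mathrm{Herm}(E\otimes_\CC F)\cong\mathrm{Herm}(E)\otimes_\RR\mathrm{Herm}(F)$, coming from the fact that a tensor product of real structures is again a real structure --- is exactly the right non-formal input, and your closing comment that complexification cannot detect multiplicativity (because of classes like $w$ over $B^{13}$) is well taken.
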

\begin{proof}[Reference]
  See \cite{Bousfield:2-primary}:  the existence of \(\phi\) and the first three properties are asserted at the beginning of 6.11 and in Thm~6.5.  The expression for \(\phi(x)\) for \(x\in K^0(X)\) in terms of exterior powers is implicit in Thm~6.7, see 6.2 (iv). The relation \(\phi(\tx x) = \phi(x)\) follows.
\end{proof}

Note that the above properties allow us to easily compute \(\phi\) \emph{modulo the kernel of \(\cx \)}.  For example, in the case of \(X=B^{13}\), we must have \(\phi(y) = y'^2 + \delta w \) for some \(\delta\in\{0,1\}\).  In fact, \(\delta = 0\) in this case:

\begin{lem}\label{B13:phi-in-r}
  The map  \(\phi\colon K(B^{13}) \to KO(B^{13})\) is determined by
  \begin{align*}
    \phi(u) &= -y'\\
    \phi(y) &= y'^2
  \end{align*}
  and the above properties. In particular, the image of \(\phi\) is contained in the image of \(\rx\), and hence does \emph{not} contain the two-torsion summand of \(KO(B^{13})\).
\end{lem}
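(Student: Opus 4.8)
The plan is to determine $\phi(u)$ and $\phi(y)$ exactly by working in the representation rings, where --- unlike in $KO(B^{13})$ --- there is no indeterminacy modulo $\ker\cx$. I recall from the proof of \cref{B13:K-ring} that, writing $x$ for the standard character of the $S^1$-factor of $H := Sp(2)\times_{\ZZ_2}S^1$, the class $u$ is the image under $\alpha$ of the virtual representation $x^2-1$, so that $\alpha(x^2) = u+1$ and $y' = \rx(u) = \alpha_O\big(\rx(x^2)-2\big)$. By \cref{alpha:exterior-powers} the homogeneous-bundle maps $\alpha$ and $\alpha_O$ are $\lambda$-ring homomorphisms commuting with realification, so I can compute $\lambda^2(u)$ and $\lambda^2(y')$ by pushing forward the corresponding exterior squares from $R(H)$ and $RO(H)$. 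There $\lambda^2(x^2-1) = 1-x^2$ (since $x^2$ is one-dimensional), while for the underlying real rotation representation $\rho := \rx(x^2)$ one has $\lambda^2\rho = 1$ (its determinant is trivial), hence $\lambda^2(\rho-2) = 4-2\rho$. Applying $\alpha$ and $\alpha_O$ and using $\alpha_O(\rho) = \rx(\alpha(x^2)) = y'+2$ yields $\lambda^2(u) = -u$ in $K(B^{13})$ and $\lambda^2(y') = -2y'$ in $KO(B^{13})$. The exterior-power formula of \cref{properties-of-phi} then gives
\[
\phi(u) \;=\; \lambda^2(\rx u) - \rx(\lambda^2 u) \;=\; \lambda^2(y') - \rx(-u) \;=\; -2y' + y' \;=\; -y'.
\]

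For $\phi(y)$ I would avoid a second representation-theoretic computation and instead use the relation $\phi(\tx x) = \phi(x)$ together with $\tx u = y-u$ and $\tx y = y$ from \cref{B13:K-ring}. Expanding $\phi(y-u) = \phi(u)$ by means of the additivity rule $\phi(a+b) = \phi(a) + \phi(b) + \rx(\tx(a)\,b)$ reduces everything to $\rx(u^2)$; substituting the relation $u^2 = y + u^3 - u^4$ and the values $\rx(y) = 2y'$, $\rx(u^3) = 3y'^2$, $\rx(u^4) = 2y'^2$ from \cref{B13:K-ring} gives $\rx(u^2) = 2y' + y'^2$, whence $\phi(y) = 2\phi(u) + \rx(u^2) = y'^2$. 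That $\phi$ is then determined on all of $K(B^{13})$ is purely formal: $u$ and $y$ generate $K(B^{13})$ as a ring, $\phi$ is multiplicative, and the additivity rule, together with the explicit formulas for $\rx$ and $\tx$ in \cref{B13:K-ring}, lets one compute $\phi$ on an arbitrary $\ZZ$-linear combination of monomials in $u$ and $y$ by induction on the number of summands.

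For the image assertion: both $\phi(u) = \rx(-u)$ and $\phi(y) = y'^2 = \rx(2u^3)$ (the last equality because $\rx(u^3) = 3y'^2$ and $3$ is invertible modulo $5$) lie in $\im\rx$; moreover $\im\rx$ is a subgroup and is closed under multiplication, since $K(B^{13})$ is a $KO(B^{13})$-module via $\cx$, the map $\rx$ is a module homomorphism, and therefore $\rx(a)\rx(b) = \rx\big((b + \tx b)a\big)$. As every correction term $\rx(\tx(a)b)$ occurring in the additivity rule also lies in $\im\rx$, the inductive computation of $\phi$ never leaves $\im\rx$, so $\im\phi \subseteq \im\rx$. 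Reading off the $\rx$-values from \cref{B13:K-ring} one finds $\im\rx = 2\ZZ\cdot 1 \oplus \ZZ\, y' \oplus \ZZ_5\, y'^2$, which contains no element with a nonzero $w$-component; in particular $\im\phi$ omits the two-torsion summand $\ZZ_2 w$ of $KO(B^{13})$.

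The main obstacle is the very first step. From the ``soft'' properties of $\phi$ alone --- multiplicativity, the additivity rule, $\phi(\tx x) = \phi(x)$ and $\cx\phi(x) = x\,\tx x$ --- one can only pin $\phi(u)$ down modulo $\ker\big(\cx\colon KO(B^{13})\to K(B^{13})\big) = \ZZ_2 w$, and indeed $\phi(u) = -y' + w$ satisfies all of them equally well. Resolving this ambiguity is exactly what the passage to $R(H)$ and $RO(H)$ achieves: upstairs every quantity in sight is $\rx$ of an honest representation, and no two-torsion indeterminacy is present. Once $\phi(u)$ is known, the remainder is bookkeeping with the ring structures already computed in \cref{B13:K-ring}.
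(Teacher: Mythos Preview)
Your proof is correct and follows essentially the same strategy as the paper: resolve the $\ZZ_2 w$ ambiguity in $\phi(u)$ by lifting to the representation rings via $\alpha$ and $\alpha_O$, then deduce $\phi(y)$ from $\phi(u)$ using the additivity rule and the known values of $\rx$. The only real difference is cosmetic: the paper computes $\phi(x^2-1)$ in $RO(H)$ directly from the characterisation $\cx\phi(z)=z\,\tx z$ (using that $\cx$ is injective on $RO(H)$), whereas you use the exterior-power formula $\phi=\lambda^2\circ\rx-\rx\circ\lambda^2$ and compute the relevant $\lambda^2$'s in $R(H)$ and $RO(H)$; both yield $\phi(u)=-y'$ on the nose.

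One small caveat (which the paper shares): the literal inclusion $\im\phi\subseteq\im\rx$ fails at the unit, since $\phi(1)=1$ while $1\notin\im\rx=2\ZZ\oplus\ZZ y'\oplus\ZZ_5 y'^2$. Your inductive argument is fine on $\reduced{K}(B^{13})$, and for a general element $n+z$ with $z\in\reduced{K}$ one gets $\phi(n+z)=n^2+\phi(z)+n\,\rx(z)\in\ZZ\cdot 1+\im\rx$; either way the image has no $w$-component, which is all that is used downstream.
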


\begin{proof}
  \(\phi(u)\) can be computed as follows.
  It is easy to compute \(\phi\) on representation rings because there \(\cx \) is injective, so \(\phi\) is completely determined by \(\cx \phi(x) = x(\tx x)\).  For the generator \(x\in R(S^1)\), we find \(\phi(x) = 1\), so \(\phi(x^2)=1\) for the element \(x^2 \in R(H)\).
  Moreover, \(\phi\) is compatible with the maps \(\alpha\), \ie \(\phi\circ\alpha=\alpha_O\circ\phi\).  Thus, we find:
  \begin{align*}
    \phi(u) &= \phi(\alpha(x^2-1))
              = \alpha_O(\phi(x^2-1))
              = \alpha_O(2-\rx (x^2)) \\
            &= \alpha_O(\rx(1-x^2))
              = \rx (\alpha(1-x^2))
              = \rx (-u) \\
            &= -y'
  \end{align*}
  The value of \(\phi(y)\) can be deduced from this:
  \begin{align*}
    \phi(y) &= \phi(u+\tx u) = \phi(u) + \phi(\tx u) + \rx (u^2)\\
            &= 2\phi(u) + \rx (y+u^3-u^4) = -2y' + 2y' + y'^2\\
            &= y'^2
  \end{align*}
  Finally, note that \(K(B^{13})\) is generated by \(u\) and \(y\) as a ring.  Both \(\phi(u)\) and \(\phi(y)\) lie in the image of \(\rx \).  The properties of \(\phi\) therefore imply that the whole image of \(\phi\) is contained in the image of \(\rx \).  The image of \(\rx \) does not contain the two-torsion.
\end{proof}

\subsection{Non-surjectivity of \texorpdfstring{$\alpha_O$}{alpha-O}}
\begin{prop}\label{B13:image-of-alphaO}
  In the notation of \cref{B13:KO-ring}, the image of the ring homomorphism \(\alpha_O\colon RO(Sp(2)\times_{\ZZ_2}S^1)\to KO(B^{13})\) is the subring
  \[
    S := \ZZ\cdot 1 \oplus \ZZ y'\oplus \ZZ_5 y'^2
  \]
  generated by \(y'\).  As a subgroup, \(S\) is of index two.
\end{prop}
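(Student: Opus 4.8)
The plan is to squeeze $\im(\alpha_O)$ between $S$ and all of $KO(B^{13})$. Since \cref{B13:KO-ring} presents $KO(B^{13})$ as $S\oplus\ZZ_2 w$ as an abelian group, $S$ has index two, so it suffices to prove, on the one hand, $S\subseteq\im(\alpha_O)$, and on the other, $\im(\alpha_O)\neq KO(B^{13})$. For the first inclusion I would note that $\alpha$ commutes with realification (\cref{alpha:c-r-t-q}) and is surjective (\cref{Improvement of Pittie's}); hence the relation $\rx(u)=y'$ from \cref{B13:KO-ring} — in which $u\in K(B^{13})$ is $\alpha$ applied to the virtual representation $x^2-1$ — gives $y'=\alpha_O(\rx(x^2-1))\in\im(\alpha_O)$. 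As $\alpha_O$ is a unital ring homomorphism and $S$ is the subring of $KO(B^{13})$ generated by $y'$, this yields $S\subseteq\im(\alpha_O)$, so $\im(\alpha_O)$ equals $S$ or all of $KO(B^{13})$.

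For the reverse direction I would use \cref{B13:RO(H)-generators}: $RO(H)$ is generated as a ring by $1$, $u^2$, $\lambda^2 u$ and the image of $\rx\colon R(H)\to RO(H)$. Applying $\alpha_O$ and using $\alpha_O\circ\rx=\rx\circ\alpha$ together with surjectivity of $\alpha$, the image of $\alpha_O$ is generated as a ring by $1$, $\alpha_O(u^2)$, $\alpha_O(\lambda^2 u)$ and $\rx(K(B^{13}))$; the latter lies in $S$ (read off from the realification values listed in \cref{B13:KO-ring}), and $1\in S$. So it remains to show $\alpha_O(u^2)\in S$ and $\alpha_O(\lambda^2 u)\in S$. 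The first is handled by the multiplicative map $\phi$ of \cref{properties-of-phi,B13:phi-in-r}: set $d:=(\cx'u)x\in R(H)$, which is a ring generator by \cref{B13:R(H)}. Since $\cx'u$ is self-conjugate and $\tx(x)=x^{-1}$, we have $d\cdot\tx(d)=(\cx'u)^2=\cx(u^2)$; as $\cx\phi(d)=d\cdot\tx(d)$ and $\cx\colon RO(H)\hookrightarrow R(H)$ is injective, this forces $\phi(d)=u^2$ in $RO(H)$. Because $\alpha_O$ is compatible with $\phi$, it follows that $\alpha_O(u^2)=\phi(\alpha(d))$ lies in the image of $\phi\colon K(B^{13})\to KO(B^{13})$, which by \cref{B13:phi-in-r} is contained in $\im(\rx)\subseteq S$.

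The crux is therefore $\alpha_O(\lambda^2 u)\in S$, equivalently, that $\alpha_O$ is not surjective. Complexifying and using $\cx(\lambda^2 u)=\lambda^2(\cx'u)$ together with $\alpha_O$–$\cx$ compatibility, one gets $\cx(\alpha_O(\lambda^2 u))=\alpha(\lambda^2(\cx'u))$, and an explicit computation in the ring $K(B^{13})$ — rewriting $\lambda^2(\cx'u)$ via the restriction formulas of \cref{B13:restricting-RG-to-RH} and the presentation of \cref{B13:KO-ring} — shows that this class lies in $\cx(S)$; in fact it equals $\cx(s_0)$ for an explicit $s_0\in S$. Since $\ker\bigl(\cx\colon KO(B^{13})\to K(B^{13})\bigr)=\ZZ_2 w$ by \cref{B13:KO-ring}, this pins $\alpha_O(\lambda^2 u)$ down to one of the two classes $s_0$ or $s_0+w$, and the whole proposition reduces to excluding $s_0+w$. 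This is the real obstacle: by the characterisation of $w$ in \cref{THM:B13}, $s_0$ and $s_0+w$ have the same Pontryagin classes, so no characteristic-class bookkeeping of the type used above can separate them — the difference lives in the very top filtration step of the Atiyah–Hirzebruch spectral sequence.

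To overcome this I would pass to the homogeneous bundle itself, $G\times_H(\lambda^2 u)\cong\underline{\RR}\oplus(G\times_H V)$, where $V$ is the $5$-dimensional real vector representation of $SO(5)$ pulled back along $H\twoheadrightarrow Sp(2)=\Spin(5)\to SO(5)$. Its $w$-summand vanishes precisely when $\alpha_O(\lambda^2 u)\in\im(\rx)$, equivalently when $\eta\cdot\alpha_O(\lambda^2 u)=0$ in $KO^{-1}(B^{13})$ (via the Bott sequence and the description of $\eta$ in \cref{B13:K-additive}, under which $\eta$ is nonzero only on the $\ZZ_2 w$ line). So the plan is to compute this $\eta$-multiple directly — for instance by chasing the relevant $Sq^2$-differential and filtration jump in the Atiyah–Hirzebruch spectral sequence for $KO^*(B^{13})$, using the Steenrod squares from \cref{B13:Sq2} — and verify that it vanishes; alternatively one can feed the data into Seymour's cokernel formula \cite{Seymour}*{Thm~5.24}. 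Once $\alpha_O(\lambda^2 u)\in S$ is established, the two inclusions above give $\im(\alpha_O)=S$, and the index-two assertion is simply $\lvert KO(B^{13})/S\rvert=\lvert\ZZ_2\rvert=2$ from \cref{B13:KO-ring}.
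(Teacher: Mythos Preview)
Your argument for \(S\subseteq\im(\alpha_O)\) and your handling of \(\alpha_O(u^2)\) via \(\phi(d)=u^2\) are fine (the paper instead writes \(\alpha_O(u^2)=\alpha_{Sp}(u)^2\) and uses \(\qx x\cdot\qx y=\rx(x\cdot\cx'\qx y)\), but your route is equally valid). The problem is the step you yourself flag as ``the real obstacle'': you never actually show \(\alpha_O(\lambda^2 u)\in S\). You reduce to the single-bit ambiguity \(\alpha_O(\lambda^2 u)\in\{s_0,s_0+w\}\) and then \emph{propose} to resolve it by an AHSS differential chase or by Seymour's formula, without carrying either out. As you note, characteristic classes cannot separate the two candidates, and the AHSS/\(\eta\)-multiplication approach you sketch would require tracking how the specific class \(\alpha_O(\lambda^2 u)\) sits in the top filtration step --- which is precisely the information you do not yet have. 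So as written, this is a genuine gap, not merely a routine omission.

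The paper closes this gap by a conceptual trick that avoids any direct computation of \(\alpha_O(\lambda^2 u)\). The key ingredient you are missing is the \emph{surjectivity of the quaternionification} \(\qx\colon K(B^{13})\to KSp(B^{13})\), established in \cref{B13:K-additive}. Since \(\alpha_O(\lambda^2 u)=\lambda^2(\alpha_{Sp}u)\) (compatibility of \(\alpha\) with exterior powers, \cref{alpha:exterior-powers}) and \(\alpha_{Sp}(u)\in KSp(B^{13})\), we may write \(\alpha_{Sp}(u)=\qx(z)\) for some \(z\in K(B^{13})\). Now the identity \(\phi(z)=\lambda^2(\rx z)-\rx(\lambda^2 z)\) from \cref{properties-of-phi} gives
\[
  \lambda^2(\qx z)=\lambda^2(\rx z)=\rx(\lambda^2 z)+\phi(z),
\]
and both summands lie in \(\im(\rx)\subseteq S\) by \cref{B13:phi-in-r}. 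Thus \(\alpha_O(\lambda^2 u)\in S\) with no case distinction, no AHSS chase, and no appeal to Seymour. In short: instead of trying to decide between \(s_0\) and \(s_0+w\), show abstractly that \(\lambda^2(KSp(B^{13}))\subset\im(\rx)\) using surjectivity of \(\qx\) and the \(\phi\)-formula.
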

Note that the product of \(y'\) with the additional generator \(w\) of \(KO(B^{13})\) is trivial.  Thus, if we take \(\reduced{S}:=\ZZ y'\oplus \ZZ_5 y'^2\) to denote the rank zero ideal of \(S\), we obtain the splitting
\(\reduced{KO}(B^{13}) \cong \reduced{S} \times \ZZ_2 w\) alluded to in \cref{THM:B13}.

\begin{proof}
By \cref{B13:KO-ring}, the subring \(S\) is additively generated by the multiplicative unit \(1\) and by the image \(\im(\rx)\) of the realification map \(\rx \colon K(B^{13})\to KO(B^{13})\). For the inclusion \(S\subset\im(\alpha_O)\), it therefore suffices to note that \(1\in\im(\alpha_O)\) (as \(\alpha_O\) is a ring homomorphism) and \(\im(\rx)\subset \im(\alpha_O)\) (as the complex version \(\alpha\) is surjective by \cref{Improvement of Pittie's} and \(\alpha_O \rx = \rx \alpha\) by \cref{alpha:c-r-t-q}).  It remains to show the converse inclusion: the image of \(\alpha_O\) is contained in the subgroup of \(KO(B^{13})\) generated by \(1\) and \(\im(\rx )\).

  By \cref{B13:RO(H)-generators}, \(RO(H)\) is generated as a ring by \(1\), \(u^2\), \(\lambda^2 u\) and the image of the realification map. Clearly, \(\alpha_O\) maps the image of the realification map to \(\im(\rx )\), and it maps \(1\) to \(1\).  So it suffices to show that \(\alpha_O\) also sends the two additional generators \(u^2\) and \(\lambda^2 u\) to \(\im(\rx )\).  As noted in \cref{alpha:exterior-powers}, the different flavours of \(\alpha\) are compatible with multiplication and exterior powers.  In particular,
  \(\alpha_O(u^2) = \alpha_{Sp}(u)^2\) and \(\alpha_O(\lambda^2u) = \lambda^2(\alpha_{Sp}u)\).
  It therefore suffices to show that:
  \begin{alignat*}{8}
    KSp(B^{13})\cdot KSp(B^{13}) &\subset \im(\rx ) &\quad& \text{ in } KO(B^{13})\\
    \lambda^2(KSp(B^{13})) &\subset \im(\rx ) && \text{ in } KO(B^{13})
  \end{alignat*}
  As the quaternionification \(\qx \colon K(B^{13})\to KSp(B^{13})\) is surjective (\cref{B13:K-additive}), we only need to show that \(\im \qx  \cdot \im \qx  \subset \im \rx \) and \(\lambda^2(\im \qx )\subset \im \rx \).  The first inclusion is clear from the usual multiplication rules for \(\rx \): explicitly, \(\qx x\cdot \qx y = \rx (x\cdot \cx' \qx y)\).
  For the second inclusion, note that, for any \(X\) and any \(z\in K(X)\), we have
  \(\lambda^2(\qx z) = \lambda^2(\rx z) = \rx (\lambda^2 z) + \phi(z)\) (\cref{properties-of-phi}).
  For \(X=B^{13}\), \cref{B13:phi-in-r} shows that \(\im\phi \subset \im \rx \), so the claim follows.
\end{proof}

\subsection{Pontryagin classes}
\begin{prop}\label{B13:Pontryagin}
  In the notation of \cref{B13:KO-ring}, the generator \(w\in \reduced{KO}(B^{13})\) is the unique nonzero element with trivial Pontryagin classes.
\end{prop}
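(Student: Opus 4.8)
The plan is to compute the Pontryagin class homomorphism on $\reduced{KO}(B^{13})$ directly, using the explicit generators of \cref{B13:KO-ring} together with the fact that the relevant cohomology groups of $B^{13}$ are $H^4(B^{13},\ZZ) = \ZZ\cdot\beta^2$, $H^8(B^{13},\ZZ) = \ZZ_5\cdot\beta^4$, and $H^{4k}(B^{13},\ZZ) = 0$ for $k\geq 3$. Recall that for a virtual real bundle $\xi$ one sets $p_k(\xi) := (-1)^k c_{2k}(\cx\xi)$; thus over $B^{13}$ only $p_1$ and $p_2$ can be nonzero. Since the total Chern class $c\colon (K(B^{13}),+)\to H^*(B^{13},\ZZ)$ is multiplicative ($c(\xi+\eta) = c(\xi)c(\eta)$) and $\cx$ is a ring homomorphism, $\xi\mapsto p_1(\xi)$ is a group homomorphism $\reduced{KO}(B^{13})\to H^4(B^{13},\ZZ)$, and $p_2$ restricts to a homomorphism on $\ker p_1$.

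The key is to realize $y'$ via a line bundle so that Chern classes become computable by hand --- Chern-character or AHSS-edge-map arguments are useless in degree $8$, where the target is pure torsion. By \cref{B13:R(H)}, $x^{\pm 2}$ are honest one-dimensional representations of $H = Sp(2)\times_{\ZZ_2}S^1$; let $L := \alpha(x^2)$ be the associated complex line bundle over $B^{13}$, with $\bar L = \alpha(x^{-2})$ and $L\otimes\bar L$ trivial. In the notation of the proof of \cref{B13:KO-ring} one has $u = x^2 - 1$ and $y = (x^2-1)+(x^{-2}-1)$, i.e.\ $u = L - 1$ and $\cx(y') = y = L + \bar L - 2$ in $K(B^{13})$. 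Moreover $u$ generates $F^2K/F^4K$ by the filtration \eqref{eq:B13:K-filtration}, so by fact~(\ref{AHSS-facts:characteristic-classes}) in \cref{sec:AHSS} we get $c_1(L) = \pm\beta$, a generator of $H^2(B^{13},\ZZ)$.

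Now one simply computes. First,
\[
  c(\cx y') \;=\; c(L)\,c(\bar L) \;=\; (1+\beta)(1-\beta) \;=\; 1 - \beta^2,
\]
so $p_1(y') = -c_2(\cx y') = \beta^2 \neq 0$ in $H^4 \cong \ZZ$. As $H^4$ is torsion-free, $p_1$ annihilates the torsion subgroup $\ZZ_5 y'^2 \oplus \ZZ_2 w$, so $p_1(ay' + by'^2 + \epsilon w) = a\beta^2$ and $\ker p_1 = \ZZ_5 y'^2 \oplus \ZZ_2 w$. On this kernel $p_2$ is additive; since $\cx(w) = 0$ we have $p_2(w) = 0$, while, using that $L\otimes\bar L$ is trivial, $\cx(y'^2) = (\cx y')^2 = (L + \bar L - 2)^2 = L^2 + \bar L^2 - 4L - 4\bar L + 6$, hence
\[
  c(\cx(y'^2)) \;=\; \frac{(1+2\beta)(1-2\beta)}{(1+\beta)^4(1-\beta)^4} \;=\; \frac{1-4\beta^2}{(1-\beta^2)^4} \;=\; 1 - 6\beta^4,
\]
where the last equality uses $\beta^5 = 0$ and $(1-\beta^2)^{-4} = 1 + 4\beta^2 + 10\beta^4$. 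Therefore $p_2(y'^2) = c_4(\cx(y'^2)) = -6\beta^4 = 4\beta^4 \neq 0$ in $H^8 = \ZZ_5\cdot\beta^4$, and $p_2(by'^2 + \epsilon w) = -6b\beta^4$, which vanishes exactly when $5\mid b$. Combining the two kernels, an element of $\reduced{KO}(B^{13})$ has all Pontryagin classes trivial if and only if $a = 0$ and $b = 0$, i.e.\ if and only if it lies in $\ZZ_2 w = \{0, w\}$; hence $w$ is the unique nonzero such element.

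The only point needing a moment's care is the primitivity of $L$, i.e.\ $c_1(L) = \pm\beta$; this is forced by the explicit K-theory filtration \eqref{eq:B13:K-filtration} from \cref{B13:KO-ring}. The sign ambiguity is harmless, since $c(L)c(\bar L)$ and $c(L^2)c(\bar L^2)/\bigl(c(L)^4 c(\bar L)^4\bigr)$ depend only on $c_1(L)^2 = \beta^2$. Everything else is routine bookkeeping with Chern classes of sums of line bundles.
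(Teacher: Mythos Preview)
Your proof is correct and follows essentially the same route as the paper's: both identify \(u=L-1\) for the line bundle \(L=\alpha(x^2)\) with \(c_1(L)=\pm\beta\), compute \(c(\cx y')=1-\beta^2\) and \(c(\cx(y'^2))=(1-4\beta^2)/(1-\beta^2)^4=1-6\beta^4\), and conclude from \(\cx(w)=0\). The only cosmetic difference is that the paper writes down the total Pontryagin class of a general element \(\mu y'+\nu y'^2+\delta w\) directly, whereas you organize the same computation as ``\(p_1\) is a homomorphism with kernel the torsion, then \(p_2\) is a homomorphism on that kernel''; the content is identical.
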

\begin{proof}
  We first compute the total Chern class \(c=1+c_1+c_2+\dots\) of the generators \(u\) and \(y\) of \(K(B^{13})\).  We know that \(b=u+1\) represents a line bundle, and the spectral sequence computations imply \(c_1(b) = c_1(u) = \pm \beta\).  So \(c(u) = 1 \pm \beta\), and we deduce:
  \begin{align*}
    c(y)   &= c(u + \tx u) = 1-\beta^2\\
    c(y^2) &= c(b^2 + \tx b^2 -4b- 4\tx b + 6)
           = (1\pm 2\beta)(1\mp 2\beta)/\left((1\pm \beta)^4(1\mp \beta)^4\right)\\
           &= (1 -4\beta^2)/(1-\beta^2)^4
           = 1 - 6\beta^4\
           = 1 - \beta^4
  \end{align*}

  The only nontrivial Pontryagin classes that the cohomology of \(B^{13}\) admits are \(p_1\), with values in \(\ZZ \beta^2\),  and \(p_2\), with values in \(\ZZ_5\beta^4\).  For the total Pontryagin classes \(p=1+p_1+p_2\) we find:
  \begin{alignat*}{10}
    p(y') &= 1-c_2(y)+c_4(y) &&= 1 + \beta^2 &&\\
    p(y'^2) &= 1-c_2(y^2) + c_4(y^2) &&= 1 - \beta^4 &&\\
    p(w) &= 1-c_2(\cx (w))+c_4(\cx (w)) &&= 1  &&\text{ (as \(\cx(w) = 0\)) }
  \end{alignat*}
  So for an arbitrary element \(\mu y' + \nu y'^2 + \delta w \in \reduced{KO}(B^{13})\), we find:
  \begin{align*}
    p(\mu y' + \nu y'^2 +\delta w)
    &= (1+\beta^2)^\mu(1-\beta^4)^\nu \\
    &= (1+\mu\beta^2 + \textstyle\binom{\mu}{2}\beta^4)(1-\nu\beta^4)\\
    &= 1 + \mu\beta^2 +(\textstyle\binom{\mu}{2}-\nu)\beta^4 \in 1 + \ZZ\beta^2 + \ZZ_5 \beta^4
  \end{align*}
  This is equal to \(1\) if and only if \(\mu=0\) in \(\ZZ\) and \(\nu=0\) in \(\ZZ_5\).
\end{proof}


\begin{bibdiv}
\begin{biblist}
  \renewcommand*{\MR}[1]{\href{http://www.ams.org/mathscinet-getitem?mr=#1}{\tiny{\sffamily ~~[MR#1]}}}
  \newcommand*{\arxiv}[1]{\href{http://arxiv.org/abs/#1}{arXiv:#1}}
  \newcommand*{\doi}[1]{\href{https://doi.org/#1}{doi:#1}}
  \bibselect{references}
\end{biblist}
\end{bibdiv}
\end{document}